\numberwithin{equation}{section}
\DeclareFontFamily{U}{wncy}{}
\DeclareFontShape{U}{wncy}{m}{n}{<->wncyr10}{}
\DeclareSymbolFont{mcy}{U}{wncy}{m}{n}
\DeclareMathSymbol{\Sha}{\mathord}{mcy}{"58}
\newtheorem{thm}{Theorem}
\numberwithin{thm}{section}
\newtheorem{prop}[thm]{Proposition}
\newtheorem{lem}[thm]{Lemma}
\newtheorem{cor}[thm]{Corollary}
\theoremstyle{remark}
\newtheorem{rem}[thm]{Remark}
\theoremstyle{definition}
\newtheorem{lthm}{Theorem}
\newcommand{\1}{\mathbf{1}}
\newcommand{\C}{\mathbf{C}}
\newcommand{\chx}[1]{\langle #1\rangle}
\newcommand{\cl}{\mathrm{Cl}}
\newcommand{\e}{\mathrm{e}}
\newcommand{\F}{\mathbf{F}}
\newcommand{\G}{\mathbf{G}}
\newcommand{\gal}{\mathrm{Gal}}
\newcommand{\id}{\mathrm{id}}
\newcommand{\im}{\mathrm{im}}
\newcommand{\mat}[4]{\left[\begin{matrix}
		#1 & #2\\
		#3 & #4\\
	\end{matrix}\right]}
\newcommand{\ord}{\mathrm{ord}}
\newcommand{\bP}{\mathbb{P}}
\newcommand{\Q}{\mathbf{Q}}
\newcommand{\SL}{\mathrm{SL}}
\newcommand{\Z}{\mathbf{Z}}
\newcommand{\Area}{\mathrm{Area}}
\newcommand{\BH}{\mathrm{BH}}
\newcommand{\can}{\mathrm{can}}
\newcommand{\Div}{\mathrm{div}}
\newcommand{\Eis}{\mathrm{Eis}}
\newcommand{\End}{\mathrm{End}}
\newcommand{\Katz}{\mathrm{Katz}}
\newcommand{\KL}{\mathrm{KL}}
\newcommand{\Mes}{\mathrm{Mes}}
\newcommand{\ur}{\mathrm{ur}}
\newcommand{\sgn}{\mathrm{sgn}}
\newcommand{\Tate}{\mathrm{Tate}}
\newcommand{\gen}{\mathrm{gen}}
\newcommand{\triv}{\mathrm{triv}}
\newcommand{\bV}{\mathbb{V}}
\newcommand{\cA}{\mathcal{A}}
\newcommand{\cE}{\mathcal{E}}
\newcommand{\cF}{\mathcal{F}}
\newcommand{\cH}{\mathcal{H}}
\newcommand{\cO}{\mathcal{O}}
\newcommand{\cP}{\mathcal{P}}
\newcommand{\fa}{\mathfrak{a}}
\newcommand{\fb}{\mathfrak{b}}
\newcommand{\fF}{\mathfrak{F}}
\newcommand{\fJ}{\mathfrak{J}}
\newcommand{\fo}{\mathfrak{o}}
\newcommand{\fp}{\mathfrak{p}}
\newcommand{\fP}{\mathfrak{P}}
\newcommand{\fG}{\mathfrak{G}}
\newcommand{\fH}{\mathfrak{H}}
\newcommand{\fx}{\mathfrak{x}}
\newcommand{\fy}{\mathfrak{y}}
\newcommand{\sA}{\mathscr{A}}
\newcommand{\sL}{\mathscr{L}}
\newcommand{\sR}{\mathscr{R}}
\newcommand{\barpi}{\bar{\varpi}}
\newcommand{\umega}{\underline{\omega}}
\newcommand{\EE}{\mathsf{E}}
\newcommand{\PP}{\mathsf{P}}
\newcommand{\HH}{\mathsf{H}}
\newcommand{\qq}{\mathsf{q}}
\newcommand{\uu}{\mathsf{u}}
\newcommand{\vv}{\mathsf{v}}
\newcommand{\RR}{\mathsf{R}}
\newcommand{\Our}{\hat{\fo}^{\rm ur}_{\fp}}
\newcommand{\llrrparen}[1]{(\!(#1)\!)}
\title{On the Bernoulli--Hurwitz periods}
\author{Luochen Zhao}
\date{Oct 20, 2025}
\subjclass[2020]{11F67 (primary); 11G15, 11S40 (secondary).}
\keywords{Bernoulli--Hurwitz numbers, $p$-adic Kronecker's first limit formula, weight one Eisenstein series, $p$-adic Eisenstein measure, Hasse invariants with levels}
\address{Morningside Center of Mathematics\\No.55 Zhongguancun East Road\\Beijing\\100190\\China}
\email{luochenzhao@amss.ac.cn}
\begin{document}
	\maketitle
	
	\vspace{.4cm}
	\begin{center}
		\textit{To Antonio}
		
		\rule{2cm}{0.4pt}
	\end{center}
	\vspace{.5cm}
	\begin{abstract}
		Let $E$ be an elliptic curve having CM by the ring of integers of an imaginary quadratic field $K$ in which $p$ splits. Following Lichtenbaum, the Bernoulli--Hurwitz numbers of $E$ (i.e., values of Eisenstein series evaluated at $E$ up to normalization) admit integral representations given by a $p$-adic measure constructed from an elliptic function. We show that the periods of this measure are in fact special values of a family of weight one Eisenstein series at the CM curve $E$ equipped with certain level data, and explicitly relate it to Katz's one-variable $p$-adic Eisenstein measure, whereby we derive period formulas of the Bernoulli--Hurwitz measure attached to any ordinary elliptic curve $\cE$ defined over a local field. Moreover, by exploiting the modularity of these periods, and thanks to the existence of abundant weight one Hasse-type invariants, we present a novel approach to the interpolation property of the Bernoulli--Hurwitz $p$-adic zeta functions of the ordinary elliptic curve $\cE$, and obtain a $p$-adic Kronecker's first limit formula.
	\end{abstract}

	\makeatletter
	\def\l@subsection{\@tocline{2}{0pt}{2.5pc}{5pc}{}}
	\def\l@subsubsection{\@tocline{2}{0pt}{5pc}{7.5pc}{}}
	\makeatother
	\tableofcontents
	
	\addtocontents{toc}{\protect\setcounter{tocdepth}{1}}
	\section{Introduction}\label{sec:intro}
	
	Let $K$ be an imaginary quadratic field, $F$ be its Hilbert class field, and let $\cO$ (resp.~$\fo$) be the ring of integers of $F$ (resp.~$K$). Let $E$ be an elliptic curve over $F$ with $\End_F(E) = \fo$, and fix forever a Weierstrass model $y^2 = x^3 + Ax + B$ of $E$ with $A,B\in \cO$. Denote by $\omega_E = dx/2y$ the generator of $H^0(E,\Omega^1_{E/F})$ over $F$, and form the period lattice $L\subset \C$ spanned by $\int_{\gamma}\omega_E$ for $\gamma\in H_1(E(\C),\Z)$. In this way, the map
	\begin{align}
		\C \to \bP^2(\C), \quad z\mapsto (\wp(z,L),\wp'(z,L)/2,1)
	\end{align}
	induces a biholomorphic map $\C/L\xrightarrow{\sim} E(\C)$, where $\wp(z,L)$ is the Weierstrass $\wp$-function. For $k\in \Z_{\ge 1}$, the Bernoulli--Hurwitz numbers are defined to be \cite{katz-bernoulli-hurwitz}
	\begin{align}\label{eq:BH}
		\BH_k = \begin{cases}
			k!\sum_{0\ne \mu\in L} \mu^{-k} & k\in 2\Z_{\ge 2};\\
			0 & \text{otherwise}.
		\end{cases}
	\end{align}
	Then it is known that $\BH_k\in F$ by \textit{loc.~cit.}, and $\BH_k = 0$ unless $k$ is divisible by $|\fo^\times|$.
	
	Let $p\ge 5$ be a prime, and throughout we assume $p$ is split in $K$, say $p\fo = \fp\bar{\fp}$. Fix once and for all an embedding $\bar{\Q}\to \bar{K}_\fp$, and let $\fP$ be the prime of $F$ above $\fp$ corresponding to this embedding. We will further assume that $E$ has good reduction at $\fP$, and we require the Weierstrass model of $E$ to be minimal at $\fP$. So $y^2 = x^3 + Ax +B$ gives an integral model to the ordinary elliptic curve $E/\cO_\fP$, whereby $\omega_E$ generates $H^0(E,\Omega^1_{E/\cO_{\fP}})$ over $\cO_\fP$. The $p$-adic interpolation of Bernoulli--Hurwitz numbers of the ordinary CM curve $E$ has been studied by Katz \cite[\S\S3.8-3.9]{katz-eisenstein-measure} and Lichtenbaum \cite{lichtenbaum} (the case of generalized Bernoulli--Hurwitz numbers); see also works of Vi\v{s}ik--Manin \cite{visik-manin}, Katz \cite{katz-real-analytic-eisenstein}, Barsky \cite{barsky-bh}, Coates--Wiles \cite{coates-wiles-aus}, Cassou-Nogu\`es \cite{cassou-nogues-CM}, Yager \cite{yager84} and Colmez-Schneps \cite{colmez-schneps}. A precise formulation of such interpolation that suits us is the following: Denote by $\Our$ the ring of integers in the completion of the maximal unramified extension of $K_\fp$. Fixing auxiliary coprime integers $c,N\in \Z_{>1}$ that are prime to $p$, then there exists a $p$-adic measure $\mu_{\BH} =  \mu_{c,N}\in\Mes(\Z_p,\Our)$ such that, for all $k\in \Z_{\ge 0}$,
	\begin{align}\label{eq:interpol}
		\int_{\Z_p} x^k \mu_{\BH}(x) = \Omega_{p}^{-k}(1-N^{k+1})(c^2 - c^{k+1})\frac{\BH_{k+1}}{k+1}.
	\end{align}
	Here $\Omega_p$ is some $p$-adic period recalled in \S\ref{sec:measure} and uniquely chosen via an extra condition \eqref{ass:iota}. Below, we will reconstruct $\mu_{\BH}$ from an elliptic function in a fashion similar to \cite{lichtenbaum}, and will also explain the interpolation formula \eqref{eq:interpol} in detail.
	
	\subsection{Main results}
	
	Denote by $e$ the order of $\fp$ in the class group $\cl(K)$, and fix throughout a generator $\varpi$ of $\fp^e$. Our first goal is to explicitly compute the values $\mu_{\BH}(a+p^n\Z_p)$ for arbitrary $a\in \Z_p$ and $n\in \Z_{\ge 0}$, which turns out to be arithmetic in nature:
	\begin{lthm}[Theorem \ref{thm:epf}]\label{thm:main}
		Let $n\in \Z_{\ge 0}$ and $a\in \Z_p$. Write $L  = \Omega_\infty\fa$ for some $\Omega_\infty\in \C^\times$ and fractional ideal $\fa\subset K$ prime to $\fp$, with $\fa = \Z+\Z\varsigma$ for some $\im(\varsigma)>0$. Let $n'\ge n$ be the smallest integer divisible by $e$, and $s_{n'}\in \Z$ be congruent to $-\varsigma$ modulo $\fp^{n'}$. There exists a weight one Eisenstein series $\Psi_{a,n}$ of level $\Gamma^1(p^n)=\left\{\gamma \in \SL_2(\Z): \gamma \equiv \mat{1}{0}{*}{1}\bmod p^n\right\}$ defined over $\Z$, such that
		\begin{align}\label{eq:A}
			\mu_{\BH}(a\barpi^{-n'/e}+p^n\fo_\fp) = \Omega_\infty^{-1}\barpi^{-n'/e}\Psi_{a,n}\left(\frac{\varsigma+s_{n'}}{p^{n'-n}}\right),
		\end{align}
		where the right hand side is valued in $F_\fP(E[\bar{\fp}^n])$. Moreover, the constant term $\Psi_{a,n}(i\infty)$ is essentially the Kubota--Leopoldt period $\mu_{\rm KL}(a+p^n\Z_p)$, where $\mu_{\rm KL}\in \Mes(\Z_p,\Z_p)$ is characterized by the interpolation formula
		\begin{align}\label{eq:interpolation-KL}
			\int_{\Z_p}x^k\mu_{\rm KL}(x) = \begin{cases}
				(c^2 - c^{k+1})(1-N^{k+1})\zeta(-k), & \text{if }k\in \Z_{\ge 1};\\
				0, & \text{if }k=0.
			\end{cases}
		\end{align}
	\end{lthm}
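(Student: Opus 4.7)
The plan is to unwind the construction of $\mu_{\BH}$ from an elliptic function on $E$, transfer the resulting torsion-point values to the complex uniformization, and then recognize the answer as the evaluation of a classical weight-one Eisenstein series at a CM point. First, recall (as in Lichtenbaum, Coates--Wiles, and Yager) that $\mu_{\BH}$ is built from a normalized elliptic function $\Theta_{c,N}$ on $E$ whose divisor is controlled by the auxiliary integers $c,N$, and whose $p$-adic Mahler expansion produces the moments in \eqref{eq:interpol}. By the standard Fourier inversion for $\Our$-valued measures, the value $\mu_{\BH}(a\barpi^{-n'/e} + p^n \fo_{\fp})$ equals a finite exponential sum in $a$ of values of (a logarithmic derivative of) $\Theta_{c,N}$ at suitable $p^n$-division points of $E$; the prefactor $\barpi^{-n'/e}$ records the $\bar\fp$-adic trivialization of the formal group $\hat{E}$ built into the construction of $\mu_{\BH}$.

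Second, I would transfer these torsion-point values to the complex uniformization $E(\C) = \C/\Omega_\infty\fa$. Under the chosen embedding $\bar\Q\hookrightarrow \bar{K}_{\fp}$, a generator of $E[\bar\fp^n]$ corresponds, in terms of the basis $(1,\varsigma)$ of $\fa$, to the lattice point $\Omega_\infty(\varsigma + s_{n'})/p^{n'-n}$; the integer $s_{n'}$ is introduced precisely so that $\varsigma + s_{n'}\in \fp^{n'}$, i.e., so that dividing by $p^{n'-n}$ lands in $\bar\fp^{-n}\fa/\fa \subset E[\bar\fp^n]$. The complex-analytic sum of values of $\Theta_{c,N}$ at the resulting $p^n$-division points is, by the classical Eisenstein--Kronecker expansion (or equivalently the logarithm of a Siegel unit), the value at $(\varsigma + s_{n'})/p^{n'-n}$ of a weight-one Eisenstein series $\Psi_{a,n}$ on $\Gamma^1(p^n)$ whose Fourier coefficients are polynomial in $a,c,N$ and $q$, hence integral. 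The scaling factor $\Omega_\infty^{-1}$ in \eqref{eq:A} records the passage from the algebraic generator $\omega_E$ to the complex generator $dz$, and the Galois-rationality of the right hand side in $F_\fP(E[\bar\fp^n])$ follows from the algebraicity of weight-one Eisenstein series at CM points with prescribed level structure.

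Third, the constant term $\Psi_{a,n}(i\infty)$ is read off directly from the Fourier expansion. For a weight-one Eisenstein series of the form just produced, this constant is a sum of values $L(0,\chi)$ for Dirichlet characters $\chi$ modulo $p^n$ paired against $\chi(a)$, and the auxiliary factors $(c^2 - c^{k+1})(1-N^{k+1})$ produced by the $(c,N)$-smoothing of $\Theta_{c,N}$ appear in these constant-term expressions already at the level of Dirichlet $L$-values. Comparing with \eqref{eq:interpolation-KL} identifies $\Psi_{a,n}(i\infty)$, up to the harmless constants prescribed by the normalization, with $\mu_{\KL}(a+p^n\Z_p)$.

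The main obstacle I expect is bookkeeping: keeping all normalizations (complex period $\Omega_\infty$, the $p$-adic uniformizer $\varpi$ of $\fp^e$, the choice of $\bar\fp$-level structure on $\hat{E}$ built into $\mu_{\BH}$, and the weight-one Eisenstein series normalizations on both the modular and CM sides) consistent so that \eqref{eq:A} holds on the nose rather than only up to a unit or a root of unity. The coupling between $n$, $n'$, and $e$, forced by the fact that only $\barpi^{n'/e}$ need be globally defined, is the place where small sign or scaling errors are easy to make, and I expect a careful consistency check across varying $n$ to be the crux of the proof.
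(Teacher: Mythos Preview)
Your high-level strategy matches the paper's: Fourier inversion expresses $\mu_{\BH}(a\barpi^{-n'/e}+p^n\fo_\fp)$ as a $p$-power-root-of-unity sum of values of $\tfrac{d}{dz}\log\Lambda_{c,N}$ at $\fp$-power torsion, and one then transports this to the complex picture and identifies the result as the CM value of a weight-one Eisenstein series. Where you diverge from the paper is in the mechanism for that last identification.

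The paper does \emph{not} appeal to an Eisenstein--Kronecker expansion or to Siegel units in the sense you suggest. Instead it passes further, from $\C/L$ to the Jacobi model $\C^\times/q^{\Z}$, writes $\Theta_c$ in terms of the explicit product $\fH(X)=(1-X^{-1})\prod_{m\ge 1}(1-q^mX)(1-q^m/X)$, and then carries out a bare-hands computation of the $p^n$-averages of $D\fH$ (the relevant lemmas compute $\frac{1}{p^n}\sum_{\zeta}D\fH(X\zeta)\zeta^{-t}$ and its values on the sets $\sR_n$). The output is an explicit $q$-expansion $\sum_{m\ge 0}\sigma_{a,n}(m)q_n^m$ with completely explicit divisor sums $\sigma_{a,n}(m)$; the constant term is then read off directly as the period formula for $\mu_{\KL}$ rather than being recognized as a combination of $L(0,\chi)$'s. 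Modularity of $\Psi_{a,n}$ is proved separately (in the following section) by \emph{defining} $\Psi^{(k)}_{a,n}$ as an average of $\tfrac{d^{k+1}}{dz^{k+1}}\log\Lambda_{c,N}$ over all $p^n$-torsion of the varying lattice and checking the modular-form axioms by hand. So what the paper buys is an explicit, integral $\qq$-expansion from the outset (needed later for the congruence arguments), at the cost of lengthy product manipulations; your route would be cleaner conceptually but would still owe the reader the precise $\qq$-coefficients and the integrality.

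Two points of caution in your sketch. First, the formal group $\hat E$ sits inside $E[\fp^\infty]$, not $E[\bar\fp^\infty]$; the torsion values produced by Fourier inversion live in $\fp^{-n}L/L$, and the passage to the upper-half-plane argument $(\varsigma+s_{n'})/p^{n'-n}$ goes through an auxiliary hexagon of isomorphisms (involving $[\barpi^n]^{-1}$ and a compatibility condition on $\iota$) rather than through a direct ``this is a $\bar\fp^n$-torsion point'' reading. Second, $(\varsigma+s_{n'})/p^{n'-n}$ is the \emph{argument of the modular form} (a point of $\cH$, i.e.\ a CM point on the modular curve), not itself a torsion point of $E$; the link to $E$ with a specified level structure is made only after a further manipulation (rewriting via $\Phi_{a,n}(\tau)=\Psi_{a,n}(p^n\tau)$ and identifying the resulting triple with $(\C/L,\pi_n\Omega_\infty,dz)$). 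Your paragraph conflates these two roles, which is exactly the spot where the ``bookkeeping'' you anticipate becomes substantive rather than cosmetic.
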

	\begin{rem}
		The vanishing of $\mu_{\KL}(\Z_p)$ in \eqref{eq:interpolation-KL} is imperative: Taking $n=0$, Theorem \ref{thm:main} asserts the existence of a weight 1 full level modular form $\Psi_{a,0}$ whose special value at infinity gives $\mu_\KL(\Z_p)$. As there is no nonzero such forms, both $\Psi_{a,0}$ and $\mu_{\KL}(\Z_p)$ are forced to vanish.
	\end{rem}
	\begin{rem}
		It is worth noting that formula \eqref{eq:A} bears a close resemblance to Lemma 19 of \cite{colmez-schneps}. 
	\end{rem}
	
	The fact that Bernoulli--Hurwitz and Kubota--Leopoldt periods are specializations of a modular form $\Psi_{a,n}$ manifests Katz's philosophy of Bernoulli--Hurwitz numbers \cite{katz-bernoulli-hurwitz,katz-eisenstein-measure}. Consider now $\Phi_{a,n}(\tau) = \Psi_{a,n}(p^n\tau)$, which is of level $\Gamma_1(p^n) = \left\{\gamma \in \SL_2(\Z): \gamma \equiv \mat{1}{*}{0}{1}\bmod p^n\right\}$. Let $\mathbf{H}^{N,0}(x)$ be Katz's Eisenstein measure of trivial tame level \cite{katz-eisenstein-measure}, and denote by $\mu_{\Eis}(x) = 2c^2\mathbf{H}^{N,0}(x) - 2c\mathbf{H}^{N,0}(x/c)$ its $c$-regularization. Our second result (Corollary \ref{cor:BH-Phigen}, Theorem \ref{thm:Eis-Katz}) links the Bernoulli--Hurwitz measure with the Eisenstein measure, thereby providing an explicit comparison of the approaches of Lichtenbaum \cite{lichtenbaum} and Katz \cite{katz-eisenstein-measure}.
	\begin{lthm}
		Let $p\ge 5$ be a prime, and let $c,N\in \Z_{>1}$ be coprime and prime to $p$. Then
		\begin{align}
			\mu_{\Eis}(a+p^n\Z_p) = \Phi_{a,n}^{\gen},
		\end{align}
		where $\Phi_{a,n}^\gen$ is the generalized modular form in the sense of Katz \textit{loc.~cit} attached to $\Phi_{a,n}$. Moreover, we have the specialization
		\begin{align}\label{eq:bh-ka}
			\mu_{\BH}(a+p^n\Z_p) = \Omega_p\cdot \mu_\Eis(a+p^n\Z_p)(E,\iota^{-1}),
		\end{align}
		where $\iota: \hat{\G}_m\xrightarrow{\sim}\hat{E}$ is the isomorphism determined by $\Omega_p$. 
	\end{lthm}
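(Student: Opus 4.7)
The plan is to handle the two assertions in sequence, using Theorem \ref{thm:main} as input. For the identity $\mu_\Eis(a+p^n\Z_p) = \Phi_{a,n}^\gen$, I would apply the $q$-expansion principle on the Igusa tower of tame level $\Gamma_1(N)$. On one hand, Katz's Eisenstein measure $\mathbf{H}^{N,0}$ is characterized by the prescribed $q$-expansions of its polynomial moments, which are classical weight $k+1$ Eisenstein series with Bernoulli-type Fourier coefficients; the $c$-regularization $\mu_\Eis$ then preserves this characterization while removing the ambiguity at the constant term, so $\mu_\Eis(a+p^n\Z_p)$ has a computable $q$-expansion at each cusp. On the other hand, the classical form $\Phi_{a,n}(\tau) = \Psi_{a,n}(p^n\tau)$ is of level $\Gamma_1(p^n)$ by construction; its constant term at $i\infty$ is essentially $\mu_\KL(a+p^n\Z_p)$ by Theorem \ref{thm:main}, and the higher Fourier coefficients can be read off from the explicit shape of the weight one Eisenstein series $\Psi_{a,n}$. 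A direct Fourier-coefficient comparison, combined with the fact that Katz's procedure $\Phi\mapsto \Phi^\gen$ preserves $q$-expansions at the standard cusp of the Igusa tower, should yield the desired equality of generalized modular forms.

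For the specialization \eqref{eq:bh-ka}, the strategy is to evaluate the generalized modular form $\Phi_{a,n}^\gen$ at the test object $(E,\iota^{-1})$ and compare with Theorem \ref{thm:main}. With $L = \Omega_\infty \fa$ as in that theorem, the CM point $\tau_0 = (\varsigma + s_{n'})/p^{n'-n}$ classifies $E$ equipped with the relevant level structure. Katz's algebraic-to-analytic comparison for weight one modular forms at CM points will give $\Phi_{a,n}^\gen(E,\iota^{-1}) = \Omega_p^{-1}\cdot\Phi_{a,n}(\tau_0)$, with the factor of $\Omega_p$ entering through the defining property \eqref{ass:iota} of $\iota$ as the ratio between the algebraic and $p$-adic trivializations of $\omega_E$. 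Plugging in the formula of Theorem \ref{thm:main} and tracking the $\Omega_\infty$, $\barpi$, and $\Omega_p$ factors through the natural identification $\fo_\fp \cong \Z_p$ then produces \eqref{eq:bh-ka}.

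The main obstacle I anticipate is the careful bookkeeping across three different geometric frameworks: the complex upper-half-plane parametrization used in Theorem \ref{thm:main}, the Igusa tower underlying Katz's Eisenstein measure, and the algebraic CM test pair $(E,\iota^{-1})$. In particular, matching the cusp and level data of $\Gamma_1(p^n)$ that go into defining $\Phi_{a,n}^\gen$ with the standard cusp of the Igusa tower requires precise compatibility with the isomorphism $\iota:\hat{\G}_m\xrightarrow{\sim}\hat{E}$ of formal groups, and an equally careful reconciliation of the shift $a \leftrightarrow a\barpi^{-n'/e}$ relating $\mu_\BH$ on $\Z_p$ to its values on $\fo_\fp$-cosets. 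Once these normalizations are pinned down, both the $q$-expansion identification and the CM specialization are essentially forced by Theorem \ref{thm:main}.
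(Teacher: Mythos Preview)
Your plan for the specialization \eqref{eq:bh-ka} is essentially the paper's. The paper first rewrites the period formula of Theorem~\ref{thm:main} in the $\Phi$-language: using diamond operators and the identification $\pi_n=\iota(\zeta_n)$, it shows
\[
\mu_\BH(a+p^n\Z_p)=\Phi_{a,n}(\C/L,\pi_n\Omega_\infty,dz),
\]
after which \eqref{eq:bh-ka} drops out in one line from $(\iota^{-1})^*(dt/t)=\Omega_p\,\omega_E$. The ``careful bookkeeping'' you anticipate with the shift $a\leftrightarrow a\barpi^{-n'/e}$ is exactly the content of this rewrite, and your sketch traverses the same ground.

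For the first assertion your route diverges from the paper's and contains a gap. You propose to compare the $q$-expansions of the \emph{periods} $\mu_\Eis(a+p^n\Z_p)$ with those of $\Phi_{a,n}^\gen$, asserting that the former are ``computable''. But Katz's measure $\mathbf{H}^{N,0}$ is specified only through its \emph{moments} $\int_{\Z_p}x^k\,\mu_\Katz(x)=(1-N^{k+1})k!\,G_{k+1}^\gen$; you give no mechanism for extracting the $q$-expansion of an individual period $\mu_\Katz(a+p^n\Z_p)$ from this, and doing so is precisely the nontrivial part. The paper avoids this inversion entirely by comparing moments instead of periods: on the Katz side the $k$-th moment is $(c^2-c^{k+1})(1-N^{k+1})k!\,G_{k+1}^\gen$ tautologically, while on the $\Phi$ side one approximates $\int_{\Z_p}x^k$ by Riemann sums $\sum_a a^k\Phi_{a,n}^\gen$, invokes the coefficientwise congruence $a^k\sigma_{a,n}(m)\equiv\sigma_{a,n}^{(k)}(m)\pmod{p^n}$ to replace $a^k\Phi_{a,n}$ by the higher-weight form $\Phi_{a,n}^{(k)}$, and telescopes via additivity down to $\Phi_{0,0}^{(k),\gen}$, which is identified with the same Eisenstein series by a direct $q$-expansion check. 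This moment argument is the missing engine in your proposal.
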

	
	Now, let $\cE$ be an ordinary elliptic curve over $\Our$, $\varphi_{\cE}:\hat{\cE}\to \hat{\G}_m$ an isomorphism of formal groups, and $\omega_\cE$ a generator of $H^0(\cE,\Omega^1_{\cE/\Our})$. The data of $(\varphi_\cE,\omega_\cE)$ then defines a $p$-adic period $\Omega_{p,\cE}$ that coincides with $\Omega_p$ when $(\cE,\varphi_\cE,\omega_\cE) = (E,\iota^{-1},\omega_E)$. In view of \eqref{eq:bh-ka}, define the Bernoulli--Hurwitz measure of $(\cE,\varphi_\cE,\omega_\cE)$ by the period formula
	\begin{align}
		\mu_{\BH,\cE}(a+p^n\Z_p) = \Omega_{p,\cE}\cdot\mu_\Eis(a+p^n\Z_p)(\cE,\varphi_\cE).
	\end{align}
	Our final result, which is the crux of this article, concerns the special values of the Bernoulli--Hurwitz $p$-adic zeta function
	\begin{align}
		\zeta_{p,\cE}^\BH(s,\omega^i) = (c^2 - \omega^i(c)\chx{c}^{1-s})^{-1}(1-\omega^i(N)\chx{N}^{1-s})^{-1}\int_{\Z_p^\times}\omega^{i-1}(x)\chx{x}^{-s}\mu_{\BH,\cE}(x),
	\end{align}
	where $\omega$ is the Teichm\"uller character. We prove (Theorem \ref{thm:zeta-interpolation}, Theorem \ref{thm:first-limit-formula}):
	\begin{lthm}\label{thm:C}
		For all $k\in \Z_{\ge 0}$,
		\begin{align}\label{eq:interpol2}
			\zeta_{p,\cE}^\BH(-k,\omega^{k+1}) = \Omega_{p,\cE}^{-k}k!G^*_{k+1}(\cE,\omega_\cE),
		\end{align}
		where $(k-1)!G_k$ is the ($p$-adic if $k=2$) modular form whose $\qq$-expansion is
		\begin{align}
			\zeta(1-k)\1_{k>1} + \sum_{m\ge 1}\qq^m \sum_{d\mid m}\sgn(d)d^{k-1},
		\end{align}
		and $G_k^*$ is the full level $p$-adic modular form $G_k(\qq) - p^{k-1}G_k(\qq^p)$. Furthermore, let $\Delta^{(p)}$ be the $p$-adic modular form $\prod_{m\ge 1}(1-\qq^m)^{24p}/(1-\qq^{mp})^{24}$ and $\gamma_p = \lim_{s\to 1}[(1-1/p)^{-1}\zeta_p(s) - (s-1)^{-1}]$, where $\zeta_p(s)$ is the Kubota--Leopoldt $p$-adic zeta function with trivial character. Then we have the $p$-adic first limit formula
		\begin{align}
			\zeta_{p,\cE}^\BH(s) = \zeta_{p,\cE}^\BH(s,\omega^0) = \Omega_{p,\cE}\left\{(1-1/p)\left[\frac{1}{s-1} + \gamma_p + \log_p\Omega_{p,\cE}\right] -\frac{1}{12p}\log_p \Delta^{(p)}(\cE,\omega_\cE) + O(s-1)\right\}.
		\end{align}
	\end{lthm}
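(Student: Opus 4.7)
The plan is to derive the interpolation from the preceding theorem and the moment formula for Katz's Eisenstein measure, and to derive the first limit formula by Laurent-expanding the $\omega^0$-component of a $p$-adic Eisenstein family around $s=1$ and identifying its non-constant $\qq$-part with $\log_p\Delta^{(p)}$.

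For the interpolation, at $(s,i)=(-k,k+1)$ the integrand $\omega^{i-1}(x)\chx{x}^{-s}$ collapses to $x^k$ on $\Z_p^\times$, and the Euler prefactors $(c^2-\omega^i(c)\chx{c}^{1-s})(1-\omega^i(N)\chx{N}^{1-s})$ become $(c^2-c^{k+1})(1-N^{k+1})$. Combining the defining identity $\mu_{\BH,\cE}(a+p^n\Z_p)=\Omega_{p,\cE}\cdot\mu_\Eis(a+p^n\Z_p)(\cE,\varphi_\cE)$ of the preceding theorem with the moment formula
\begin{align*}
\int_{\Z_p^\times}x^k\,d\mu_\Eis(x) = (c^2-c^{k+1})(1-N^{k+1})\cdot k!\cdot G^*_{k+1}
\end{align*}
(a weight-$(k+1)$ $p$-adic modular form identity from Katz's construction) cancels the Euler factors. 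The weight-$(k+1)$ homogeneity $G^*_{k+1}(\cE,\varphi_\cE)=\Omega_{p,\cE}^{-(k+1)}G^*_{k+1}(\cE,\omega_\cE)$ combines with the outer $\Omega_{p,\cE}$ to produce $\Omega_{p,\cE}^{-k}$, yielding \eqref{eq:interpol2}.

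For the first limit formula, the $c$-regularization and $N$-stabilization of $\mu_\Eis$ produce exactly the Euler factors $(c^2-\chx{c}^{1-s})(1-\chx{N}^{1-s})$ in $\int_{\Z_p^\times}\omega^{-1}(x)\chx{x}^{-s}d\mu_\Eis$, which cancel those in $\zeta_{p,\cE}^\BH(s)$. We are reduced to
\begin{align*}
\zeta_{p,\cE}^\BH(s)=\Omega_{p,\cE}\cdot F(s)(\cE,\varphi_\cE)
\end{align*}
for a $p$-adic analytic family $F(s)$ of generalized $p$-adic modular forms. The constant $\qq$-coefficient of $F(s)$ is the Kubota--Leopoldt zeta $\zeta_p(s)$, as confirmed by Theorem A (which identifies the value at the Tate cusp with the Kubota--Leopoldt measure); the defining expansion $\zeta_p(s)=(1-1/p)[(s-1)^{-1}+\gamma_p+O(s-1)]$ supplies the pole and the $\gamma_p$-term.

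The main obstacle is to identify the non-constant $\qq$-part of $F(1)$ with $-\tfrac{1}{12p}\log_p\Delta^{(p)}$. This is a direct $\qq$-expansion match: logarithmic differentiation of the product $\prod(1-\qq^m)^{24p}/(1-\qq^{mp})^{24}$ expresses the $\qq^m$-coefficient of $\log_p\Delta^{(p)}$ as a $p$-depleted divisor sum, matching the value at $s=1$ of the $\qq^m$-coefficient of $F(s)$, which is characterized by $F(-k)=k!G^*_{k+1}$ for $k\geq 0$. The $\qq$-expansion principle on the ordinary locus — where the weight-one Hasse-type invariant infrastructure developed earlier provides the requisite rigidity — upgrades the coefficient match to an equality of generalized $p$-adic modular forms, which is the $p$-adic Kronecker first limit formula. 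Assembling:
\begin{align*}
F(s)(\cE,\varphi_\cE)=(1-1/p)\bigl[(s-1)^{-1}+\gamma_p\bigr]-\tfrac{1}{12p}\log_p\Delta^{(p)}(\cE,\varphi_\cE)+O(s-1),
\end{align*}
and converting via the weight-$12(p-1)$ homogeneity $\log_p\Delta^{(p)}(\cE,\varphi_\cE)=\log_p\Delta^{(p)}(\cE,\omega_\cE)-12(p-1)\log_p\Omega_{p,\cE}$ regroups the terms (using $12(p-1)/(12p)=1-1/p$) into the displayed formula after multiplication by $\Omega_{p,\cE}$.
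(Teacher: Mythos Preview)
Your proposal has two genuine gaps and takes a different route from the paper even where it is sound.

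\textbf{Gap 1: the case $k=1$ in the interpolation.} At $k=1$ the prefactor $c^2-c^{k+1}$ vanishes, so the moment identity you quote reads $0=0$ and ``cancelling the Euler factors'' is $0/0$. You cannot recover $\zeta_{p,\cE}^{\BH}(-1,\omega^2)$ from $\int_{\Z_p^\times}x\,d\mu_\Eis$ alone. The paper handles this by differentiating in $s$: it computes $\int_{\Z_p^\times}x\log_p x\,d\mu_{\BH}$ via Riemann sums, establishes the $\qq$-expansion congruence with $-c^2\log_pc\,(1-N^2)G_2^*(\qq)A_n^{-1}(\qq)\pmod{p^{n-1}}$, and only then invokes the $\qq$-expansion principle on $Z_{00}(p^n)$. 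Your argument needs an analogous derivative step.

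\textbf{Gap 2: branch confusion in the limit formula.} You write that the $\qq^m$-coefficient of $F(s)$ is ``characterized by $F(-k)=k!G^*_{k+1}$'', but that interpolation identity lives on the branch $\omega^{k+1}$, whereas the limit formula concerns the $\omega^0$-branch. On the $\omega^0$-branch, $F(-k)$ has integrand $\omega^{-1}(x)\chx{x}^k\neq x^k$, so $F(-k)\neq k!G^*_{k+1}$ in general. The honest computation is direct: the $\qq^m$-coefficient of $(c^2-\chx{c}^{1-s})^{-1}(1-\chx{N}^{1-s})^{-1}\int_{\Z_p^\times}\omega^{-1}(x)\chx{x}^{-s}d\mu_\Eis$ equals $\sum_{d\mid m,\,p\nmid d}\sgn(d)\,\omega^{-1}(d)\chx{d}^{-s}$, which at $s=1$ gives $\sum_{d\mid m,\,p\nmid d}\sgn(d)/d$. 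This is what you need, but it does not come from specializing the $\omega^{k+1}$-interpolation.

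\textbf{Difference in approach.} Where your argument is valid it packages things at the level of generalized $p$-adic modular forms: you treat $F(s)$ as an analytic family in $\bV$, subtract off the Kubota--Leopoldt pole, and match $\qq$-expansions with $-\tfrac{1}{12p}\log_p\Delta^{(p)}$. The paper instead works entirely through finite-level Riemann sums: it proves $\qq$-expansion congruences $\sum_{p\nmid a}a^k\Phi_{a,n}(\qq)\equiv A_n^{-k}\Phi^{(k),*}_{0,0}(\qq)\pmod{p^n}$ (and the analogous $x^{-1}$ and $x^{-1}\log_px$ versions), uses the weight-one invariant $A_n$ to align weights, and applies the $\qq$-expansion principle on $Z_{00}(p^n)$ before letting $n\to\infty$. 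Your route is shorter when it works but leans on Katz's moment computation as a black box; the paper's route is the advertised novelty---it reproves the interpolation from scratch and is what makes the $k=1$ and $s=1$ cases tractable uniformly. Your final weight conversion $\log_p\Delta^{(p)}(\cE,\varphi_\cE)=\log_p\Delta^{(p)}(\cE,\omega_\cE)-12(p-1)\log_p\Omega_{p,\cE}$ is correct and matches the paper's endgame; note however that making sense of $\log_p\Delta^{(p)}$ as a function on the ordinary locus already requires the key lemma that $\Delta^{(p)}A_n^{-12(p-1)}\equiv 1\pmod p$, which is precisely the ``Hasse-type infrastructure'' you allude to but do not spell out.
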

	\begin{rem}
		When $\cE$ is the CM curve $E$ and $\fp$ is principal, we can replace $G^*_{k+1}(E,\omega_E)$ in the interpolation formula by $(1-\varpi^{k+1}/p)G_{k+1}(E,\omega_E)$; see Corollary \ref{cor:interpolation-cm}. Similarly we have a variant of the $p$-adic first limit formula in the CM case (Corollary \ref{cor:FLF-cm}):
		\begin{align}
			\zeta_{p,E}^\BH(s) =
			\Omega_p\left\{ (1-1/p)
			\left[\frac{1}{s-1} + \gamma_p + \log_p\Omega_p + \frac{1}{12}\log_p\Delta(E,\omega_E)\right]
			+\frac{1}{p}\log_p\barpi
			+ O(s-1) \right\}.
		\end{align}
	\end{rem}
	\begin{rem}
		Different kinds of $p$-adic first limit formulas appear in the thesis of N.~Simard \cite[\S8.5]{simard} and the work of Bannai--Furusho--Kobayashi \cite[Proposition 1.3]{bannai-furusho-kobayashi}.
	\end{rem}
	\begin{rem}
		We do not know whether the periods $\mu_{\BH}(a+p^n\Z_p)$ satisfy an almost-periodicity in the sense of Barsky \cite[D\'efinition 4]{barsky77}. As such, it is not clear if $\zeta_p^\BH(s,\omega^i)$ has an infinite sum expression as in \cite{delbourgo-aus,knospe-washington,sum1,sum2}.
	\end{rem}
	
	It is worth emphasizing that we are less interested in the ends than the means, as, remarkably, the main reason we are able to prove a result of this kind is that some weight one Hasse-type modular invariants reveal themselves in a rather dramatic way. More precisely, our main contribution is a novel approach to the study of the special values, including the non-critical point $s = 1$, by exploiting the modularity of the Bernoulli--Hurwitz periods along with congruences of the $\qq$-expansions (after Swinnerton-Dyer, Serre, ...). Since the congruences are among modular forms of different weights, the weight one invariants are exactly the missing pieces to the jigsaw that allow us to invoke the $\qq$-expansion principle (see Proposition \ref{prop:q-exp-principle}), whence we can propagate the congruences of $\qq$-expansions to those of special values. It should be apparent to the reader that our strategy owes its very root to Katz's proof of the congruences of Bernoulli--Hurwitz numbers \cite{katz-bernoulli-hurwitz}, where the classical Hasse invariant was put into play. Finally, since this approach is completely free of elliptic units, our hope is that it would shed some new light on Gross's factorization \cite{gross-factorization} and thus the $p$-adic Chowla--Selberg formula, as well as $p$-adic limit formulas in the general CM field setting. We plan to investigate these questions in the near future.
	
	\subsection{Organization}
	The article can be divided into two parts. In the first, which is \S\S\ref{sec:measure}-\ref{sec:computation}, we explain the construction of the Bernoulli--Hurwitz measure that is a slight modification of \cite{lichtenbaum}, and then reduce the question of computing the periods into finding the $q$-expansions of the corresponding elliptic functions, which we do transcendentally over $\C$. In the second part, \S\S\ref{sec:modularity}-\ref{sec:FLF}, we study the arithmetic of Bernoulli--Hurwitz periods. After establishing their modularity in \S\ref{sec:modularity}, we proceed in \S\ref{sec:katz} to massage the shape of the periods and eventually realize them as specializations of the Eisenstein measure. Finally, forgetting all the elliptic functions from part one, in \S\S\ref{sec:interpolation}-\ref{sec:FLF} we bootstrap the Bernoulli--Hurwitz $p$-adic zeta functions for any ordinary elliptic curve, and prove their interpolation properties, as well as the $p$-adic first limit formula, via congruences of Eisenstein series.
	
	We also provide some supplementary materials in the appendix. In Appendix \ref{app:bernoulli}, we recount the period formulas of the Kubota--Leopoldt measure \cite{Lang}, and validate the interpolation \eqref{eq:interpolation-KL}. In Appendix \ref{app:hasse}, we provide some background on the ordinary loci, and define weight-one modular invariants that play prominent roles in the proof of Theorem \ref{thm:C}.

	\subsection{Notation}\label{subsec:notation}
	Throughout we fix the $\fP$-minimal Weierstrass model of $E$ above, as well as the lattice $L \subset \C$. Write $\omega_E = dx/2y\in H^0(E,\Omega^1_{E/K})$  for the one-form that corresponds to the holomorphic differential $dz$ of $\C/L$. Let $e$ denote the order of $\fp$ in $\cl(K)$, and fix a generator $\varpi$ of $\fp^e$. Fix auxiliary integers $c,N\in \Z_{>1}$ that are coprime and prime to $p$. Fix an algebraic closure $\bar{\Q}_p$ of $K_\fp$ and write $\C_p$ for its completion and $\cO_{\C_p}$ for the ring of integers in $\C_p$. Write $\hat{K}^\ur_\fp\subset \C_p$ for the completion of the maximal unramified extension of $K_\fp$, and $\Our$ for its ring of integers; often we think of $\Our$ as the ring of Witt vectors $W = W(\bar{\F}_p)$. Fix also an algebraic closure $\bar{\Q}$ of $\Q$, which we implicitly regard as a subfield of both $\C$ and $\C_p$ through fixed embeddings. The letter $\zeta$ is reserved for a root of unity in $\mu_{p^\infty}$, and we let $\zeta_n\in \mu_{p^n}(\bar{\Q})$ be the element corresponding to $\e^{2\pi i/p^n}\in \mu_{p^n}(\C)$. We denote  by $\cH$ the upper half plane $\{\tau\in \C:\im(\tau)>0\}$; if $\tau\in \cH$, write  $\qq = \e^{2\pi i\tau}$. Finally, write $\1_A$ for the indicator function, whose value is $1$ if the condition $A$ is met, and otherwise zero.
	
	\subsection{Conventions of modular forms}
	\label{subsec:conventions-modform}
	In this article, following Kubert--Lang \cite{kubert-lang}, we will frequently think of a modular form $f$ as a function on the set $\{(\omega_1,\omega_2)\in (\C^\times)^2\colon \im(\omega_1/\omega_2)>0\}$ enjoying certain properties (see p.~26 \textit{op.~cit.}). Still, when it is convenient, we shall also regard it as a function on the complex upper half plane $\cH$, as well as a function on the triples of elliptic curves with extra data, as we now explain. As a function on $\cH$, we adopt the convention $f(\tau) = f(\tau,1)$. As for the moduli interpretation, after \cite{katz-modular-scheme}, we can view the modular forms of level $\Gamma^1(p^n)$ or $\Gamma_1(p^n)$ for $n\in \Z_{>0}$ as functions on the triples $(\EE,P,\omega)$, where $\EE$ is an elliptic curve, $P\in \EE[p^n]\setminus \EE[p^{n-1}]$ and $\omega$ is a nowhere vanishing global differential on $\EE$. To translate, consider the two cases: When $f$ has level $\Gamma^1(p^n)$, we have
	\begin{align}\label{eq:moduli-interpretation}
		f(\C/(\Z\omega_1+\Z\omega_2),\omega_1/p^n,dz) = f(\omega_1,\omega_2).
	\end{align}
	It is straightforward to verify that, if another complex tuple $(\omega_1',\omega_2')\in (\C^\times)^2$ is such that $\im(\omega_1'/\omega_2')>0$, $\Z\omega_1+\Z\omega_2 = \Z\omega'_1+\Z\omega_2'$ and $\omega_1'\equiv \omega_1\bmod p^n(\Z\omega_1+\Z\omega_2)$, then
	\begin{align}
		f(\omega_1,\omega_2) = f(\omega_1',\omega_2').
	\end{align}
	So starting with a $\Gamma^1(p^n)$-form on complex tuples, \eqref{eq:moduli-interpretation} allows us to define a genuine function on the moduli of triples, and vice versa. Note that for $\tau\in \cH$, the $\qq$-expansion at $i\infty$ of a weight-$k$ form is given by
	\begin{align}
		(2\pi i)^{-k}f(\tau) = f(2\pi i\tau,2\pi i) = f(\C/(\Z2\pi i\tau + \Z2\pi i),2\pi i\tau/p^n,dz) = f(\Tate(\qq),\qq^{1/p^n},\omega_{\can}).
	\end{align}
	where $\Tate(\qq) \simeq \C^\times/\qq^{\Z}$ is the Tate curve over $\C$ and $\omega_{\can} = du/u$, with $u = \e^{2\pi i z}$ the variable of $\C^\times$.
	
	Similarly, in the level $\Gamma_1(p^n)$ case, we have
	\begin{align}
		f(\C/(\Z\omega_1+\Z\omega_2),\omega_2/p^n,dz) = f(\omega_1,\omega_2),
	\end{align}
	and the $\qq$-expansion at $i\infty$ of a weight-$k$ form is
	\begin{align}
		(2\pi i)^{-k}f(\tau) = f(\Tate(\qq),\e^{2\pi i/p^n},\omega_{\can}).
	\end{align}
	
	In Appendix \ref{app:hasse}, we will also consider modular forms of level $\Gamma_{00}(p^n)$ (denoted by $\Gamma_{00}(p^n)^{\rm arith}$ in \cite[\S2.0]{katz-real-analytic-eisenstein}), and we direct the reader to there for a detailed account.
	
	\subsection{Acknowledgment}
	The author is thankful to Pierre Colmez, Yangyu Fan, Mahesh Kakde, Antonio Lei, Yongxiong Li, David Lilienfeldt, Ari Shnidman, Arnaud Vanhaecke, Jan Vonk and Xin Wan for helpful comments/discussions. I owe special thanks to Yongxiong Li for many instructive conversations regarding \cite{rubin99}, to David Savitt for organizing a classic papers reading seminar that led me to understand Katz's work, and to Jan Vonk for sharing his insight and encouraging me to think beyond nine imaginary quadratic fields. Thanks are also due to Barsky for pointing out his paper \cite{barsky77}; I deeply apologize for being completely unaware of and not acknowledging it in my prior works \cite{sum1,sum2}. During the development of the paper, the author is supported by Xin Wan's grant (National Natural Science Foundation of China, E211014301) for conference travels. Last but far from  least, I dedicate this paper to Antonio Lei, without whose kindness and guidance this work would never have come  to life.

	\addtocontents{toc}{\protect\setcounter{tocdepth}{2}}
	\section{Construction of the measure}\label{sec:measure}
	We explain the construction of the measure $\mu_{c,N}$ following Lichtenbaum \cite{lichtenbaum}. 
	
	\subsection{Some elliptic functions}
	\label{subsec:elliptic}
	We start with the complex analysis. Let $\sL\subset\C$ be a lattice, and we put
	\begin{itemize}
		\item $s_k(\sL) = \lim_{s\to 0^+} \sum_{0\ne \mu\in \sL}\mu^{-k}|\mu|^{-2s}$ for $k\in \Z_{\ge 1}$ (the limit exists by \cite[Lemma 3.3]{shimura73}).
		\item $g(\sL) = \pi/\Area(\C/\sL)$.
		\item $\Delta(\sL) = \Omega(\sL)^{-12}\qq\prod_{n\ge 1}(1-\qq^n)^{24}$, if $\sL = \Omega(\sL)\cdot (\Z2\pi i\tau+\Z2\pi i)$ with $\Omega(\sL)\in \C$, $\im(\tau)>0$ and $\qq=\e^{2\pi i\tau}$.
	\end{itemize}
	Let $\theta(z,\sL)$ be the complex function defined by $\Delta(\sL)\e^{-6s_2(\sL)z^2}\sigma^{12}(z,\sL)$, where
	\begin{align*}
		\sigma(z,\sL) = z\prod_{0\ne \mu\in \sL}(1-z/\mu)\e^{z/\mu+(z/\mu)^2/2}.
	\end{align*}
	As $\Delta(\lambda \sL) = \lambda^{-12}\Delta(\sL)$ for $\lambda\in \C^\times$, we have $\theta(\lambda z,\lambda \sL) = \theta(z,\sL)$. For $c\in \Z_{>1}$, denote by $\Theta_c(z,\sL)$ the quotient
	\begin{align}\label{eq:Theta}
		\Theta_c(z,\sL) = \theta(z,\sL)^{c^2}/\theta(cz,\sL),
	\end{align}
	then it is known that $\Theta_c(z,\sL)$ is double periodic with respect to $\sL$ by the product formula \cite[Corollary 2.6.(ii)]{lichtenbaum}
	\begin{align}\label{eq:prod-formula-theta}
		\prod_{0\ne Q\in c^{-1}\sL/\sL} (\wp(z) - \wp(Q))^{-6} = c^{12}\Delta(\sL)^{1-c^2}\Theta_c(z,\sL).
	\end{align}
	Next, for an integer $N\in \Z_{>1}$ prime to $c$, we define
	\begin{align}
		\Lambda_{c,N}(z,\sL) = \prod_{0\ne \rho\in N^{-1}\sL/\sL} \Theta_c(z+\rho, \sL).
	\end{align}
	We will now specialize to $\sL = L$. Denote by $F_{c,N}(z)$ the $z$-expansion of $\frac{d}{dz}\log\Lambda_{c,N}(z,L) = \frac{\Lambda_{c,N}'(z,L)}{\Lambda_{c,N}(z,L)}$ at zero. Fix a basis $\{\kappa_1,\kappa_2\}$ of $N^{-1}L$, so the set $\sR(N) = \{d\cdot \kappa = d_1\kappa_1+d_2\kappa_2\colon d\in \Z^2\setminus \{(0,0)\}, 0\le d_1,d_2<N\}$ simultaneously gives complete sets of representatives of $(N^{-1}L/L)\setminus \{0\}$ and $(N^{-1}c^{-1}L/c^{-1}L)\setminus\{0\}$.
	\begin{prop}\label{prop:z-interpol}
		The power series $F_{c,N}(z)$ admits an expansion in $F[[z]]$:
		\begin{align}\label{eq:z-interpol}
			F_{c,N}(z) = 12 \sum_{k\ge 0}(c^2 - c^{k+1})(1-N^{k+1})\frac{\BH_{k+1}}{(k+1)!} \cdot z^k.
		\end{align}
	\end{prop}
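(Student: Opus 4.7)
The plan is a direct $z$-expansion using the Weierstrass sigma function, reducing the problem to a standard $\wp$-function lattice identity. Starting from $\log\theta(z,L) = \log\Delta(L) - 6s_2(L)z^2 + 12\log\sigma(z,L)$ and using $\sigma'(z,L)/\sigma(z,L) = \zeta(z,L)$ (the Weierstrass zeta function), I obtain $\tfrac{d}{dz}\log\theta(z,L) = -12s_2(L)z + 12\zeta(z,L)$. Taking the logarithmic derivative of $\Theta_c(z,L) = \theta(z,L)^{c^2}/\theta(cz,L)$, the $s_2(L)$-terms cancel identically (the contribution $-12c^2 s_2(L)z$ from the first factor matches that from the chain rule applied to the second), leaving
\begin{align*}
\frac{d}{dz}\log\Theta_c(z,L) = 12[c^2\zeta(z,L) - c\zeta(cz,L)].
\end{align*}
Replacing $z$ by $z+\rho$ and summing over $0\ne \rho\in N^{-1}L/L$,
\begin{align*}
F_{c,N}(z) = 12\sum_{0\ne\rho}\bigl[c^2\zeta(z+\rho,L) - c\zeta(cz+c\rho,L)\bigr].
\end{align*}

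Next, I would Taylor expand each $\zeta$-term at $z=0$ via $\zeta^{(n)}(w,L) = -\wp^{(n-1)}(w,L)$ for $n\ge 1$, and read off the coefficient of $z^n/n!$. The constant term vanishes by parity: the representative set $\{0\ne\rho\in N^{-1}L/L\}$ is symmetric under $\rho\mapsto -\rho$, while $\zeta$ is odd, and likewise for the symmetric set $\{c\rho\}$. For $n\ge 1$, the $L$-periodicity of $\wp$ lets me identify $\wp^{(n-1)}(c\rho,L) = \wp^{(n-1)}(\rho_c,L)$ where $\rho_c\equiv c\rho\pmod{L}$; since $\gcd(c,N)=1$, the map $\rho\mapsto\rho_c$ permutes nonzero representatives, so summing over $\rho$ shows the $z^n/n!$ coefficient of $F_{c,N}(z)$ equals $-12(c^2-c^{n+1})\sum_{0\ne\rho}\wp^{(n-1)}(\rho,L)$. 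A second parity argument ($\wp^{(n-1)}$ is odd for $n$ even) kills even $n$; for odd $n$, substituting the classical series $\wp^{(n-1)}(w,L) = n!\sum_{\mu\in L}(w-\mu)^{-(n+1)}$ and partitioning $N^{-1}L\setminus L$ as the disjoint union of the cosets $\rho + L$ rearranges the double sum as $n!\sum_{\lambda\in N^{-1}L\setminus L}\lambda^{-(n+1)}$.

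Finally, for odd $n\ge 3$ (so $n+1\ge 4$ is even), absolute convergence plus rescaling $N^{-1}L = N^{-1}\cdot L$ yields
\begin{align*}
\sum_{\lambda\in N^{-1}L\setminus L}\lambda^{-(n+1)} = (N^{n+1}-1)\sum_{0\ne\mu\in L}\mu^{-(n+1)} = (N^{n+1}-1)\frac{\BH_{n+1}}{(n+1)!},
\end{align*}
producing exactly the claimed coefficient $12(c^2-c^{n+1})(1-N^{n+1})\BH_{n+1}/(n+1)!$ of $z^n$. Since $\BH_k\in F$ is established at the outset, the $F$-rationality of $F_{c,N}(z)$ is automatic.

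The only delicate case is $n=1$, where $\BH_2$ is not defined in \eqref{eq:BH} (the lattice sum $\sum\mu^{-2}$ is only conditionally convergent); however $c^2-c^{n+1}$ vanishes identically at $n=1$, making both sides zero and rendering the issue vacuous. Beyond this, the main obstacle is careful bookkeeping of the two parity involutions and the $\gcd(c,N)=1$ bijection; the conceptual heart of the proof is the clean cancellation of the $s_2(L)$-terms in the logarithmic derivative of $\Theta_c$, which is precisely what forces the absolutely convergent Bernoulli--Hurwitz sums to emerge.
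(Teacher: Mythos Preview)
Your argument is correct in substance but has one loose thread at the constant term. You assert that $\sum_{0\ne\rho}\zeta(\rho,L)$ and $\sum_{0\ne\rho}\zeta(c\rho,L)$ each vanish by parity; however, $\zeta(\cdot,L)$ is only quasi-periodic, so neither sum is well-defined on the quotient $N^{-1}L/L$ (and no set of representatives in $\C$ can be literally closed under negation when $N$ is even, because of $2$-torsion). The fix is a one-line observation: the \emph{combination} $c^2\zeta(\rho,L)-c\zeta(c\rho,L)$ \emph{is} $L$-periodic in $\rho$, since replacing $\rho$ by $\rho+\mu$ adds $c^2\eta(\mu)-c\,\eta(c\mu)=0$ (the quasi-period map $\eta$ being $\Z$-linear). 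Hence the summand descends to a well-defined odd function on $(N^{-1}L/L)\setminus\{0\}$, and your parity conclusion follows.

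With that repair, your route is genuinely different from the paper's. The paper exploits the homogeneity $\theta(cz,L)=\theta(z,c^{-1}L)$ to work with \emph{two lattices} $L$ and $c^{-1}L$ at the \emph{same} evaluation points $\rho$, writing the $z^k$-coefficient as $c^2d_{k+1}(L)-d_{k+1}(c^{-1}L)$ and then treating $k=0$, $k=1$, and $k\ge2$ by separate case analyses (quasi-periodicity of $\zeta$, Hecke-regularized sums $s_2$, and the absolutely convergent series, respectively). You instead keep a \emph{single lattice} $L$ and vary the evaluation points $\rho$ versus $c\rho$, which makes the factor $(c^2-c^{k+1})$ appear immediately via the $\gcd(c,N)=1$ bijection on the quotient. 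This buys you a uniform treatment: the $s_2(L)$ terms cancel at the outset, $k=1$ is killed for free by $c^2-c^2=0$ without ever touching $s_2$, and $k=0$ reduces to the periodicity-plus-oddness observation above. The paper's decomposition, by contrast, yields the intermediate identity $c^2d_{k+1}(L)-d_{k+1}(c^{-1}L)=12(c^2-c^{k+1})(1-N^{k+1})s_{k+1}(L)$ uniformly in $k$, which has some independent interest. Your derivation of $F$-rationality from $\BH_k\in F$ is also legitimate (the paper instead argues it separately via Galois invariance of the product formula for $\Theta_c$).
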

	\begin{proof}
		We adapt the proof of \cite[Lemma 21]{coates-wiles}. As in \textit{loc.~cit.}, the rationality of $F_{c,N}(z)$ can be seen by the $\gal(\bar{\Q}/F)$-invariance of the left hand side of \eqref{eq:prod-formula-theta} with $\sL = L$. Now we turn to \eqref{eq:z-interpol}. From the definition, we can write
		\begin{align}
			F_{c,N}(z) = \sum_{k\ge 0}[c^2d_{k+1}(L)-d_{k+1}(c^{-1}L)]z^k,
		\end{align}
		where for $\sL\in \{L,c^{-1}L\}$,
		\begin{align}
			d_{k+1}(\sL) = \begin{cases}
				12\sum_{\rho\in\sR(N)}[\zeta(\rho,\sL)-s_2(\sL)\rho] &\text{ if }k=0;\\
				-12\sum_{\rho\in\sR(N)}[\wp(\rho,\sL) + s_2(\sL)] & \text{ if }k=1;\\
				-12\sum_{\rho\in\sR(N)}\wp^{(k-1)}(\rho,\sL)/k!&\text{ if }k\ge 2.
			\end{cases}
		\end{align}
		We will show
		\begin{align}\label{eq:d_k-BH}
			c^2d_{k+1}(L) - d_{k+1}(c^{-1}L) = 12(c^2-c^{k+1})(1-N^{k+1})s_{k+1}(L),
		\end{align}
		where recall $s_k(L) =\lim_{s\to 0^+}\sum_{0\ne \mu\in L} \mu^{-k}|\mu|^{-2s}$. Note that \eqref{eq:d_k-BH} is enough for us, since $s_{k+1}(L) = \frac{\BH_{k+1}}{(k+1)!}$ when $k\ge 3$ or $k\in 2\Z_{\ge 0}$. For $k=1$, there is an auxiliary zero caused by $c^2 - c^{k+1}$, so $(c^2 - c^{k+1})s_{k+1}(L) = (c^2-c^{k+1})\BH_{k+1}/(k+1)!$ is always valid.
		
		\textit{Case 1: $k\ge 2$.} Let $\sL\in \{L,c^{-1}L\}$, and put $\Omega_{\infty}(\sL) = \Omega_\infty$ if $\sL = L$ and otherwise $\Omega_\infty/c$. We have
		\begin{align}
			d_{k+1}(\sL) &= -12\sum_{\rho\in\sR(N)} \wp^{(k-1)}(\rho,\sL)/k!\\
			&=-12(-1)^{k-1}
			N^{k+1}\sum_{0\ne \rho\in \sL/N\sL}\sum_{\mu\in N\sL} \frac{1}{(\rho+\mu)^{k+1}},\\
			&=-12(-1)^{k-1}
			N^{k+1}\sum_{\mu\in\sL\setminus N\sL} \frac{1}{\mu^{k+1}}\\
			&=-12(-1)^{k-1}N^{k+1}(1-N^{-k-1})
			\sum_{0\ne \mu\in\sL} \frac{1}{\mu^{k+1}};
		\end{align}
		here the second equality is due to the double-periodicity of $\wp^{(k-1)}(z)$. Since $k\ge 2$, we see that
		\begin{align}
			\sum_{0\ne \mu\in L} \frac{1}{\mu^{k+1}} = \lim_{s\to 0^+}\sum_{0\ne \mu\in L} \frac{1}{\mu^{k+1}|\mu|^{2s}},
		\end{align}
		and the sum is zero except when $w\mid (k+1)$. Hence,
		\begin{align}
			c^2d_{k+1}(L) - d_{k+1}(c^{-1}L) &= -12c^2(N^{k+1}-1)s_{k+1}(L) + 12(N^{k+1}-1)c^{k+1}s_{k+1}(L)\\
			&= 12(1-N^{k+1})(c^2-c^{k+1})s_{k+1}(L).
		\end{align}
		
		\textit{Case 2: k = 1.} For $\sL\in\{L,c^{-1}L\}$, we have
		\begin{align}
			\wp(z,\sL) = \lim_{s\to 0^+} \left[\sum_{\mu\in \sL}\frac{1}{(z-\mu)^2|z-\mu|^{2s}} - \sum_{0\ne \mu\in \sL}\frac{1}{\mu^2|\mu|^{2s}}\right] = \lim_{s\to 0^+} \sum_{\mu\in \sL}\frac{1}{(z-\mu)^2|z-\mu|^{2s}} - s_2(\sL).
		\end{align}
		It follows that
		\begin{align}
			d_2(\sL) = -12\lim_{s\to 0^+}\sum_{0\ne \rho\in N^{-1}\sL/\sL} \sum_{\mu\in \sL} \frac{1}{(\rho+\mu)^2|\rho+\mu|^{2s}}.
		\end{align}
		Repeating the argument in Case 1, we have $d_2(\sL)= 0$ unless $w = 2$, in which case
		\begin{align}
			d_2(\sL) = -12(N^2-1)\lim_{s\to 0^+}\sum_{0\ne \mu\in \sL} \frac{1}{\mu^2|\mu|^{2s}}.
		\end{align}
		One may then proceed as in Case 1.
		
		\textit{Case 3: k=0.} Let $\sL\in \{L,c^{-1}L\}$. Recall for $\mu\in \sL$ (see, e.g., \cite[pp.~84-85]{bsd}),
		\begin{align}
			\zeta(z+\mu,\sL) = \zeta(z,\sL) + s_2(\sL)\mu + g_2(\sL)\bar{\mu}.
		\end{align}
		For $\rho = d_1\kappa_1+d_2\kappa_2\in \sR(N)$, put
		\begin{align}
			\mu(\rho) = \begin{cases}
				N(\kappa_1+\kappa_2) &\text{if }d_1\ne 0,d_2\ne 0;\\
				N\kappa_2 & \text{if }d_1=0;\\
				N\kappa_1 & \text{if }d_2=0.
			\end{cases}
		\end{align}
		Then $\mu(\rho)\in \sL$, and
		\begin{align}
			\zeta(\mu(\rho) - \rho,\sL) - s_2(\sL)[\mu(\rho)- \rho] = \zeta(-\rho) - s_2(\sL)(-\rho) + g_2(\sL)\overline{\mu(\rho)}.
		\end{align}
		Hence
		\begin{align}
			c^2d_1(L) - d_1(c^{-1}L) =& 6 c^2\sum_{\rho\in\sR(N)}[\zeta(\rho,L) + \zeta(-\rho,L) + g_2(L)\overline{\mu(\rho)}]\\
			&-6\sum_{\rho\in\sR(N)}[\zeta(\rho,c^{-1}L) + \zeta(-\rho,c^{-1}L) + g_2(c^{-1}L)\overline{\mu(\rho)}].
		\end{align}
		Since $\zeta(z,\sL)$ is an odd function, and $g_2(c^{-1}L) = c^2g_2(L)$, we see the above quantity vanishes.
	\end{proof}
	In \S\ref{subsec:mu-cN} we will explain the construction of a power series in $\cA_{\hat{E}} = \Our[[t-1]]$ from $F_{c,N}$; once this is done, the $p$-adic Fourier transform (after Amice, Mazur, ...) then yields the $p$-adic measure $\mu_{c,N}$ alluded to in \S\ref{sec:intro}. To proceed with the construction, we will however need some background on the change of variables that will be discussed in \S\ref{subsec:change-of-variable}.
	
	If $f,g$ are two nonzero meromorphic functions whose quotient is a scalar in $\C^\times$, we denote it by $f\sim g$. We conclude this subsection with the following rewrite
	\begin{prop}\label{prop:lambda-lambda}
		We have
		\begin{align}
			\Lambda_{c,N}(z,L)\sim \frac{\Theta_c(N z,L)}{\Theta_c(z,L)}.
		\end{align}
	\end{prop}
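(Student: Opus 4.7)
The plan is to show that both sides of the asserted identity are nonzero elliptic functions on $\C/L$ sharing the same divisor, whence their ratio is a constant by Liouville's theorem. Both sides are visibly $L$-periodic: $\Lambda_{c,N}(z,L)$ is a finite product of translates of $\Theta_c(\cdot,L)$, which is $L$-periodic by \eqref{eq:prod-formula-theta}, while $\Theta_c(Nz,L)/\Theta_c(z,L)$ is $L$-periodic because $L\subset N^{-1}L$.

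Next I would compute the two divisors. Since $\sigma(z,L)$ has simple zeros exactly on $L$, the definition $\Theta_c(z,L)=\theta(z,L)^{c^2}/\theta(cz,L)$ implies that $\Theta_c(z,L)$ has a zero of order $12(c^2-1)$ at $0$ and poles of order $12$ at each of the $c^2-1$ nonzero points of $c^{-1}L/L$. From this, $\Theta_c(Nz,L)$ viewed on $\C/L$ has zeros of order $12(c^2-1)$ at every point of $N^{-1}L/L$ and poles of order $12$ at $(cN)^{-1}L/L\setminus N^{-1}L/L$, while each factor $\Theta_c(z+\rho,L)$ of $\Lambda_{c,N}$ contributes a zero of order $12(c^2-1)$ at $z=-\rho$ and poles of order $12$ at the points of $c^{-1}L/L-\rho$ other than $-\rho$.

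The matching step is where all the combinatorial care is concentrated, and it rests on the hypothesis $\gcd(c,N)=1$. B\'ezout gives $(cN)^{-1}L = c^{-1}L + N^{-1}L$, and the equality $c^{-1}L\cap N^{-1}L=L$ renders the sum direct modulo $L$, so that every element of $(cN)^{-1}L/L$ admits a unique decomposition $x+\rho$ with $x\in c^{-1}L/L$ and $\rho\in N^{-1}L/L$. Applying this decomposition to $\Theta_c(Nz,L)/\Theta_c(z,L)$, the $(0,0)$ contribution is regular (zero and pole of equal order cancel), the terms $(0,\rho)$ with $\rho\ne 0$ assemble the zero locus of order $12(c^2-1)$, the terms $(x,0)$ with $x\ne 0$ are annihilated by the zero contributed by $1/\Theta_c(z,L)$, and the remaining terms with $x,\rho\ne 0$ make up the pole locus of order $12$. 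For $\Lambda_{c,N}$, the same coprimality guarantees that no two factors contribute zeros or poles at a common point, so its divisor consists of zeros of order $12(c^2-1)$ at $\{-\rho:\rho\in N^{-1}L/L\setminus\{0\}\}$ and poles of order $12$ at $\{-\rho+x:\rho\in N^{-1}L/L\setminus\{0\},\,x\in c^{-1}L/L\setminus\{0\}\}$. Invoking closure of $N^{-1}L/L\setminus\{0\}$ under negation, the two divisors coincide.

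Finally, the ratio of the two sides is an elliptic function on $\C/L$ with trivial divisor, hence a nonzero constant, which is exactly the claim. The principal obstacle I anticipate is the careful bookkeeping in tracking how pole and zero contributions cancel on each side; the algebraic engine that makes everything align is the direct-sum decomposition $(cN)^{-1}L/L = c^{-1}L/L \oplus N^{-1}L/L$ granted by $\gcd(c,N)=1$.
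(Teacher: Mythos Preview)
Your proof is correct and follows the same underlying strategy as the paper: both compare divisors on $\C/L$ and conclude by Liouville. The paper organizes the argument slightly differently, isolating a reusable lemma (Lemma~\ref{lem:prod-formula}) to the effect that if $[\beta]_*\Div(h)=\Div(h)$ then $\prod_{P\in\beta^{-1}L/L}h(z+P)\sim h(\beta z)$, and then simply checks that $[N]_*$ preserves $\Div(\Theta_c)$ because $\gcd(N,c)=1$ makes multiplication by $N$ a permutation of $c^{-1}L/L$; your direct bookkeeping via the decomposition $(cN)^{-1}L/L=c^{-1}L/L\oplus N^{-1}L/L$ is equivalent but more hands-on, and the paper's packaging has the mild advantage that the lemma is reused later (for $\fG_n$ in \S\ref{subsec:epf}).
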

	\begin{lem}\label{lem:prod-formula}
		Let $\sL\subset \C$ be a lattice closed under multiplication by $\fo$. Let $h: \C/\sL \to \bP^1(\C)$ be an elliptic function and $\beta\in \fo$ be such that $[\beta]_*\Div(h) = \Div(h)$. Then
		\begin{align}
			\prod_{P\in \beta^{-1}\sL/\sL} h(z+P) \sim h(\beta z).
		\end{align}
	\end{lem}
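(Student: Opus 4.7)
\textbf{Proof proposal for Lemma \ref{lem:prod-formula}.} The plan is to view both sides as meromorphic functions on the smaller torus $\C/\beta^{-1}\sL$ and check that they have identical divisors. Set $H(z) := \prod_{P \in \beta^{-1}\sL/\sL} h(z+P)$. First I would observe that translating $z$ by an element of $\beta^{-1}\sL$ merely permutes the factors, so $H$ is $\beta^{-1}\sL$-periodic and descends to a meromorphic function on $\C/\beta^{-1}\sL$. Likewise, since $\beta\cdot \beta^{-1}\sL = \sL$, multiplication by $\beta$ gives an isomorphism $[\beta]_2 \colon \C/\beta^{-1}\sL \xrightarrow{\sim} \C/\sL$, and $h(\beta z)$ is the pullback $[\beta]_2^{*}h$, which is automatically $\beta^{-1}\sL$-periodic.

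The key computation is the divisor of $H$ on $\C/\beta^{-1}\sL$. Because $\pi\colon \C/\sL \to \C/\beta^{-1}\sL$ is an étale Galois covering of degree $N(\beta)$ with deck group $\beta^{-1}\sL/\sL$, the function $H$ is exactly the norm $N_\pi(h) = \prod_{\sigma} \sigma^{*}h$ of $h$ along $\pi$. A standard divisor-theoretic computation (lifting by $\pi^{*}$ and using that $\pi^{*}\Div_{Y}(N_\pi h) = \Div_{X}(N_\pi h)$ on the unramified cover) yields
\begin{align}
\Div(H) = \pi_{*}\Div(h) \quad \text{on }\C/\beta^{-1}\sL.
\end{align}
On the other hand, $\Div(h(\beta z)) = [\beta]_2^{*}\Div(h) = ([\beta]_2^{-1})_{*}\Div(h)$, again on $\C/\beta^{-1}\sL$.

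Now I would exploit the factorization of the endomorphism $[\beta]_1\colon \C/\sL \to \C/\sL$. One checks directly that $[\beta]_1 = [\beta]_2 \circ \pi$, so $[\beta]_{1,*} = [\beta]_{2,*}\circ \pi_{*}$. The hypothesis $[\beta]_{1,*}\Div(h) = \Div(h)$ therefore translates to
\begin{align}
\pi_{*}\Div(h) = ([\beta]_2)_{*}^{-1}\Div(h) = [\beta]_2^{*}\Div(h),
\end{align}
since $[\beta]_2$ is an isomorphism. Combining the two displays gives $\Div(H) = \Div(h(\beta z))$ on $\C/\beta^{-1}\sL$, so their ratio is a holomorphic (hence constant) elliptic function, which is exactly the assertion $H(z) \sim h(\beta z)$.

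The only step requiring genuine care is keeping the three incarnations of ``multiplication by $\beta$'' straight, namely the endomorphism $[\beta]_1$ on $\C/\sL$ (of degree $N(\beta)$), the isomorphism $[\beta]_2 \colon \C/\beta^{-1}\sL \xrightarrow{\sim} \C/\sL$, and the endomorphism $[\beta]_3$ on $\C/\beta^{-1}\sL$, and verifying the factorization $[\beta]_1 = [\beta]_2 \circ \pi$ that translates the hypothesis into the desired equality of divisors on the base torus; the rest is formal.
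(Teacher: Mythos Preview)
Your proposal is correct and follows essentially the same strategy as the paper: both arguments reduce the claim to an equality of divisors. The only cosmetic difference is that the paper stays on $\C/\sL$ and computes the divisors of $h(\beta z)$ and $\prod_P h(z+P)$ there by hand, whereas you descend to the quotient torus $\C/\beta^{-1}\sL$, interpret the product as the norm $N_\pi(h)$ along the \'etale cover $\pi\colon \C/\sL\to \C/\beta^{-1}\sL$, and use the factorisation $[\beta]_1=[\beta]_2\circ\pi$ to turn the hypothesis $[\beta]_{1,*}\Div(h)=\Div(h)$ into $\pi_*\Div(h)=[\beta]_2^*\Div(h)$. Unwinding your pushforward/pullback identities pointwise recovers exactly the paper's coefficient computation $\sum_{P}n_{c+P}=n_{\beta c}$, so the two arguments are the same computation in different packaging.
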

	\begin{proof}
		Since both sides are elliptic functions on $\C/\sL$, it suffices to show their divisors coincide. Write $\Div(h) = \sum_{a} n_a[a]$, then the divisor of the left hand side is $\sum_{P\in\beta^{-1}\sL/\sL} \sum_a n_a[a+P]$. Next, since $\beta_*\Div(h) = \Div(h)$, for all $a\in \C/\sL$ with $n_a\ne 0$, there exists a unique $a'\in \C/\sL$ with $\beta\cdot a' = a$, and $n_{a'} = n_a$. It follows that the divisor of $h(\beta z)$ is $\sum_{a}\sum_{P\in\beta^{-1}\sL/\sL} n_a[a'+P] = \sum_a\sum_{P\in\beta^{-1}\sL/\sL} n_a[a+P]$.
	\end{proof}
	\begin{proof}[Proof of Proposition \ref{prop:lambda-lambda}]
		We contend that
		\begin{align}
			\Div(\Theta_c(z,L)) = 12c^2[0] - 12\sum_{Q\in c^{-1}L/L}[Q]
		\end{align}
		is preserved under $[N]_*$, since for all $Q\in c^{-1}L/L$, $Q = N(uQ)$ if $u\in \fo$ is such that $uN \equiv 1\bmod c$ ($u$ exists because $\gcd(N,c) =1$). Applying Lemma \ref{lem:prod-formula}, we find
		\begin{align}
			\Lambda_{c,N}(z,L) = \prod_{0\ne \rho\in N^{-1}L/L} \Theta_c(z+\rho,L) \sim \frac{\Theta_c(N z,L)}{\Theta_c(z,L)}.
		\end{align}
	\end{proof}
	
	\subsection{A change-of-variable diagram}
	\label{subsec:change-of-variable}
	Let now $p\ge 5$ be a prime split in $K$, and let $\fp$ (resp.~$\fP$) be the prime above $p$ in $K$ (resp.~$F$) as fixed in \S\ref{sec:intro}. We shall present a triangle diagram \eqref{diag:triangle} of one-parameter formal groups that are isomorphic to each other. To do this, first recall the formal group $\hat{E}$ of $E/\cO_\fP$, which as a formal Lie group corresponds to the completed algebra $\cO_\fP[[w]]$ with the group law given by that of $E$ via the substitution $w = -x/y$ \cite[\S IV.1]{silverman}. We will also consider the formal groups $\hat{\G}_a$ and $\hat{\G}_m$, the group algebras of which will be written as $\cA_{\hat{\G}_a} = \Z_p[[z]]$ and $\cA_{\hat{\G}_m} = \Z_p[[t-1]]$, respectively. Naturally, the identification $w=-x/y$ induces an isomorphism of formal groups $\xi: \hat{\G}_a \to \hat{E}$ over $F_\fP$, which can be described as
	\begin{align}\label{eq:xi}
		\xi^*: F_\fP[[w]] \to F_\fP[[z]],\ w\mapsto -\frac{\wp(z,L)}{\wp'(z,L)/2} = z + O(z^2).
	\end{align}
	Here, that $\xi^*$ is defined over $F_\fP$ is because the $z$-expansions of $\wp(z,L)$ and $\wp'(z,L)$ have coefficients in $F$.
	
	In another direction, since $p$ is split in $K$, $E$ has good ordinary reduction at $\fP$. By \cite[Corollary 4.3.3]{lubin}, we have an isomorphism $\iota: \hat{\G}_m\xrightarrow{\sim}\hat{E}$ over $\Our$. The composition
	\begin{align}
		\varepsilon: \hat{\G}_a \xrightarrow[\ \sim\ ]{\xi} \hat{E} \xrightarrow[\ \sim\ ]{\iota^{-1}} \hat{\G}_m
	\end{align}
	then gives an isomorphism over $\hat{K}^\ur_\fp$, which must be of the form $z\mapsto \e^{\Omega_{p} z}$ for some $\Omega_{p} \in (\hat{K}^\ur_\fp)^\times$.
	\begin{lem}
		The constant $\Omega_{p}\in (\Our)^\times$.
	\end{lem}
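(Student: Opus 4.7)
The plan is to compare the three formal groups via their invariant differentials. Recall that on $\hat{E}$ over $\Our$, the module of invariant differentials is free of rank one, generated by (the restriction of) $\omega_E = dx/2y$; this is because $\omega_E$ generates $H^0(E,\Omega^1_{E/\cO_\fP})$ and the formal group is just $E$ restricted to a formal neighborhood of the identity, where the invariant differential on $\hat{E}$ has the standard shape $(1+O(w))\,dw$ in Silverman's parameter $w=-x/y$. Similarly, $du/u$ generates the invariant differentials on $\hat{\G}_m/\Our$, and $dz$ generates those on $\hat{\G}_a/\Z_p$.

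Next I would use the fact that $\iota\colon \hat{\G}_m\xrightarrow{\sim}\hat{E}$ is an isomorphism of formal groups \emph{over $\Our$}. Such an isomorphism induces an isomorphism between the two free rank-one $\Our$-modules of invariant differentials, so there exists $\alpha\in(\Our)^\times$ (a unit precisely because both modules have the chosen generators) with
\begin{align}
\iota^*\omega_E = \alpha\cdot\frac{du}{u}.
\end{align}
On the other hand, the defining property of $\xi$ is that it corresponds to the analytic uniformization $\C/L\to E(\C)$, which sends $dz$ to $\omega_E$; hence $\xi^*\omega_E = dz$ as elements of $F_\fP[[z]]\,dz$.

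Now I would compute $\varepsilon^*(du/u)$ in two different ways. Directly from $\varepsilon^*(t)=\e^{\Omega_p z}$, we obtain
\begin{align}
\varepsilon^*\!\left(\frac{du}{u}\right) = \frac{d\,\e^{\Omega_p z}}{\e^{\Omega_p z}} = \Omega_p\,dz.
\end{align}
Factoring $\varepsilon=\iota^{-1}\circ\xi$ gives the alternative expression
\begin{align}
\varepsilon^*\!\left(\frac{du}{u}\right) = \xi^*(\iota^{-1})^*\!\left(\frac{du}{u}\right) = \xi^*(\alpha^{-1}\omega_E) = \alpha^{-1}\,dz.
\end{align}
Comparing the two yields $\Omega_p = \alpha^{-1}\in(\Our)^\times$, which is the desired integrality (indeed, unit-ness).

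The only potentially subtle point is the claim that $\iota^*\omega_E$ really lies in $(\Our)^\times\cdot du/u$ rather than just $\Our\cdot du/u$. I would emphasize that this is automatic from $\iota$ being an \emph{isomorphism} of formal groups over $\Our$: the invariant differential functor is exact and sends isomorphisms to isomorphisms of rank-one free modules, so the scalar $\alpha$ comparing two chosen generators must be a unit. No deeper input (e.g.~about logarithms) is needed.
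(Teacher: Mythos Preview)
Your proof is correct and is essentially the same as the paper's, recast in the language of invariant differentials rather than leading coefficients of power series: the paper observes that $(\iota^{-1})^*(t-1) = uw + O(w^2)$ for some $u\in(\Our)^\times$ (this is your $\alpha^{-1}$) and that $\xi^*w = z + O(z^2)$ (this is your $\xi^*\omega_E = dz$), then reads off $\Omega_p = u$. Indeed, the remark immediately following the lemma records the identity $\Omega_p = (\iota^{-1})^*(dt/t)/\omega_E$, which is exactly your computation.
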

	\begin{proof}
		This already appears as \cite[Corollary 4.2]{lichtenbaum}; here we give a streamlined proof. Since $\e^{\Omega_{p} z} -1 = \Omega_{p} z + O(z^2)$, it suffices to show the leading term of $\e^{\Omega_p z}-1 =(\iota^{-1}\xi)^*(t-1)$ has leading term with coefficient in $(\Our)^\times$. Indeed, from construction we have
		\begin{align}
			(\iota^{-1})^*(t-1) = uw + O(w^2)
		\end{align}
		for some $u\in (\Our)^\times$. Then by \eqref{eq:xi}
		\begin{align}
			\xi^*(\iota^{-1})^*(t-1) = -u\frac{2\wp(z)}{\wp'(z)} + O(z^2) = uz + O(z^2),
		\end{align}
		which shows $\Omega_{p} = u$ is in $(\Our)^\times$.
	\end{proof}
	\begin{rem}\label{rem:omega-p}
		We note that $\Omega_p = (\iota^{-1})^*\left(\frac{dt}{t}\right)/\big(\frac{dx}{2y}\big)$. The quickest way to see this is to pull both sides back to $\hat{\G}_a$ via $\xi^*$: The numerator becomes $\frac{d\e^{\Omega_p z}}{\e^{\Omega_p z}} = \Omega_p dz$, whereas the denominator reads $d\wp(z,L)/\wp'(z,L) = dz$.
	\end{rem}
	
	To summarize, we have a diagram of formal groups with arrows being isomorphisms:
	\begin{align}\label{diag:triangle}
		\begin{tikzcd}[ampersand replacement=\&]
			\hat{\G}_a \ar[rr,"\varepsilon:z\mapsto e^{\Omega_{p} z}"] \ar[rd,"\xi"] \& \& \hat{\G}_m \ar[dl,"\iota"]\\
			\& \hat{E} \&
		\end{tikzcd},
	\end{align}
	where $\xi$ is defined over $F_\fP$, $\iota$ is defined over $\Our$, and $\Omega_{p} \in (\Our)^\times$.
	
	\subsection{The measure $\mu_{c,N}$}\label{subsec:mu-cN}
	Let $c,N\in \Z_{>1}$ be coprime integers as in \S\ref{subsec:elliptic}; assume in addition that $\gcd(p,cN)=1$. We start with a brief recall on the $p$-adic Fourier transform; the details can be found in \cite[\S 3]{Hida}. Let $R\subseteq \cO_{\C_p}$ be a $p$-adically complete valuation ring. Then there is an isomorphism 
	\begin{align}
		\cF: \Mes(\Z_p,R) \longrightarrow R[[t-1]],\  \mu\mapsto \sA_{\mu}(t):= \int_{\Z_p} t^x \mu(x) = \sum_{n\ge 0}(t-1)^n\left[\int_{\Z_p} \binom{x}{n}\mu(x)\right].
	\end{align}
	We record the basic properties of $\cF$ here (the proof can be found \textit{loc.~cit.}):
	\begin{prop}\label{prop:fourier-properties}
		Let $\mu\in \Mes(\Z_p,R)$. We have
		\begin{enumerate}
			\item[(i)] For all $a\in \Z_p$ and $n\in \Z_{\ge 0}$,
			\begin{align}
				\mu(a+p^n\Z_p) = \frac{1}{p^n} \sum_{\zeta^{p^n}=1} \sA_\mu(\zeta)\zeta^{-a}.
			\end{align}
			\item[(ii)] For all $n\in \Z_{\ge 0}$, write $x^n\mu$ for the measure with $x^n\mu(S) = \int_S x^n\mu(x)$, where $S\subseteq \Z_p$ is compact open. Then
			\begin{align}
				\sA_{x^n\mu}(t) = \left[\left(t\frac{d}{dt}\right)^n\sA_\mu\right](t).
			\end{align}
		\end{enumerate}
	\end{prop}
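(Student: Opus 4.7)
The proof will be entirely formal, using only the definition $\sA_\mu(t) = \int_{\Z_p} t^x\mu(x)$ and the compatibility of integration against measures with uniformly convergent limits of continuous functions.

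For part (i), the plan is to exhibit the indicator function of $a+p^n\Z_p$ as an average of characters. Concretely, for $y\in \Z_p$ the assignment $\zeta \mapsto \zeta^y$ (with $\zeta\in \mu_{p^n}$) is well-defined because $\zeta^y$ only depends on $y\bmod p^n$. The standard orthogonality relation for characters of $\Z/p^n\Z$, applied to $y = x-a$, yields
\begin{align*}
\frac{1}{p^n}\sum_{\zeta^{p^n}=1}\zeta^{x-a} = \1_{a+p^n\Z_p}(x).
\end{align*}
Integrating this identity against $\mu$ and interchanging the (finite) sum with the integral gives the claimed formula, since $\int_{\Z_p}\zeta^x\mu(x)$ is the specialization $\sA_\mu(\zeta)$ by definition (note $\zeta \in 1+\mathfrak{m}_{\C_p}$ lies in the domain of convergence of the power series $\sA_\mu$).

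For part (ii), the key identity is $t\dfrac{d}{dt}\,t^x = x\, t^x$, interpreted inside $R[[t-1]]$ via the binomial expansion $t^x = \sum_{n\ge 0}\binom{x}{n}(t-1)^n$. The derivation $t\frac{d}{dt}$ preserves the $\mathfrak{m}_{t-1}$-adic topology on $R[[t-1]]$ and commutes with the formation of integrals term by term (the coefficient of $(t-1)^m$ in $\sA_\mu$ is $\int_{\Z_p}\binom{x}{m}\mu(x)$, a fixed element of $R$; and the operator acts only on the powers of $(t-1)$). Iterating $n$ times and then pulling the operator inside the integral gives
\begin{align*}
\left[\left(t\tfrac{d}{dt}\right)^n \sA_\mu\right](t) = \int_{\Z_p}\left(t\tfrac{d}{dt}\right)^n t^x\,\mu(x) = \int_{\Z_p} x^n t^x\,\mu(x) = \sA_{x^n\mu}(t),
\end{align*}
which is exactly the claim. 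The only point requiring verification, which is routine, is that for each fixed $x\in \Z_p$ and each $m\in \Z_{\ge 0}$, the coefficients $\binom{x}{m}$ lie in $\Z_p$, so term-by-term integration of $t^x$ against $\mu$ is legitimate and the resulting double series can be rearranged freely.

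I do not expect a genuine obstacle here: both parts are formal consequences of the definition together with the orthogonality of $p^n$-th roots of unity and the Leibniz rule for $t\tfrac{d}{dt}$ on the completed polynomial ring $R[[t-1]]$. The most delicate point is simply to make the interchange of $\int$ with $\sum_\zeta$ (finite, hence trivial) and with $(t\tfrac{d}{dt})^n$ (formal on power series) precise.
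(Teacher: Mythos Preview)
Your proposal is correct and is the standard argument; the paper itself does not supply a proof but simply records these properties and refers the reader to \cite[\S3]{Hida}. Your write-up is exactly the kind of verification one finds there: orthogonality of characters for (i), and the formal identity $t\tfrac{d}{dt}\,t^x = x\,t^x$ in $R[[t-1]]$ (equivalently the Pascal-type relation $n\binom{x}{n}+(n+1)\binom{x}{n+1}=x\binom{x}{n}$) for (ii).
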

	
	We are now set to construct the measure. Recall from Proposition \ref{prop:z-interpol} that $F_{c,N}\in F[[z]]$ is the $z$-expansion of $\frac{d}{dz}\log\Lambda_{c,N}(z,L)$ at zero. Put
	\begin{align}
		f_{c,N}(t) = \frac{1}{12}(\varepsilon^{-1})^*F_{c,N}(z),
	\end{align}
	which \textit{a priori} is in $\hat{K}^\ur_\fp[[t-1]]$.
	\begin{prop}\label{prop:der-interpol}
		We have $f_{c,N}(t) \in \Our[[t-1]]$. Moreover, for all $k\in \Z_{\ge 0}$,
		\begin{align}\label{eq:der-interpol}
			\left.\left(t\frac{d}{dt}\right)^k f_{c,N}(t)\right|_{t=1} =  \Omega_{p}^{-k}(1-N^{k+1})(c^2-c^{k+1})\frac{\BH_{k+1}}{k+1}.
		\end{align}
	\end{prop}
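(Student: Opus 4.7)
The plan is to prove the two assertions in turn: the interpolation formula \eqref{eq:der-interpol} reduces immediately to Proposition \ref{prop:z-interpol} via a chain-rule computation, while the integrality claim requires interpreting $d\log\Lambda_{c,N}$ as an integral differential on $\hat E/\cO_\fP$. For the first, the change of variable $t = e^{\Omega_p z}$ yields $dt = \Omega_p\,t\,dz$, hence $t\tfrac{d}{dt} = \Omega_p^{-1}\tfrac{d}{dz}$ and inductively $(t\tfrac{d}{dt})^k = \Omega_p^{-k}(\tfrac{d}{dz})^k$. Since $t = 1$ corresponds to $z = 0$, Proposition \ref{prop:z-interpol} gives
\begin{align*}
\left(t\tfrac{d}{dt}\right)^k f_{c,N}(t)\Big|_{t=1} = \frac{\Omega_p^{-k}}{12}\cdot k!\cdot [z^k]F_{c,N}(z) = \Omega_p^{-k}(c^2 - c^{k+1})(1-N^{k+1})\frac{\BH_{k+1}}{k+1},
\end{align*}
which is \eqref{eq:der-interpol}.

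For the integrality assertion, I would rewrite $F_{c,N}(z)\,dz = d\log\Lambda_{c,N}(z,L)$ and argue that this differential extends integrally to $\hat E/\cO_\fP$. By Proposition \ref{prop:lambda-lambda}, $\Lambda_{c,N}$ agrees up to a scalar (which is annihilated by $d\log$) with the rational function $\tilde\Lambda := \Theta_c(Nz,L)/\Theta_c(z,L)$ on $E/F$, and my goal is to show $\tilde\Lambda\in\cO_\fP[[w]]^\times$ on the formal group $\hat E/\cO_\fP$. This in turn follows from two observations: first, by the divisor computation in the proof of Proposition \ref{prop:lambda-lambda}, the zeros and poles of $\tilde\Lambda$ are supported at torsion points of order dividing $Nc$, and since $\gcd(p,Nc) = 1$ and $E$ has good reduction at $\fP$, these points reduce injectively to $\bar E/\bar\F_\fP$ and in particular avoid the reduction $\bar O$ of the origin; second, a direct leading-term computation at $z = 0$, using \eqref{eq:prod-formula-theta} together with $\wp(z) = z^{-2} + O(z^2)$, yields $\tilde\Lambda(O) = N^{12(c^2-1)}\in\cO_\fP^\times$. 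Together these force $\tilde\Lambda\in\cO_\fP[[w]]^\times$, and hence $\omega := d\log\tilde\Lambda = (\tilde\Lambda'/\tilde\Lambda)\,dw$ is an integral differential on $\hat E$.

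Finally, I would write $\omega = \mu(w)\omega_E$ with $\mu\in\cO_\fP[[w]]$ (possible since $\omega_E$ generates $\Omega^1_{\hat E/\cO_\fP}$), and use $\xi^*\omega_E = dz$ together with the relation $\xi = \iota\varepsilon$ (from $\varepsilon = \iota^{-1}\xi$) to conclude $\mu\circ\xi = F_{c,N}$, whence $(\varepsilon^{-1})^*F_{c,N} = \mu\circ\iota = \iota^*\mu \in \Our[[t-1]]$ because $\iota$ is defined over $\Our$. Dividing by $12$, which is a unit in $\Our$ since $p\ge 5$, gives $f_{c,N}\in \Our[[t-1]]$ as desired. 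The main obstacle in this program is the integrality of $\tilde\Lambda$ on the formal group; this is a purely geometric input that rests on the behavior of prime-to-$p$ torsion under good reduction, after which the transport through the triangle \eqref{diag:triangle} is essentially formal.
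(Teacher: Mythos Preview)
Your proof is correct; the interpolation formula \eqref{eq:der-interpol} is handled exactly as in the paper (chain rule for $t=e^{\Omega_p z}$ reduces it to Proposition \ref{prop:z-interpol}), but your route to the integrality of $f_{c,N}$ is genuinely different.

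The paper works directly with the product presentation $\Lambda_{c,N}(z,L)=\prod_{0\ne\rho\in N^{-1}L/L}\Theta_c(z+\rho,L)$: for each $\rho$ it expands $\wp(z+\rho)$ via the addition formula \cite[\S20.31]{WW}, invokes the $\fP$-integrality of $x$-coordinates of prime-to-$p$ torsion \cite[Lemma 7.3]{rubin99} to show $\Theta_c(z+\rho,L)^{\pm1}\in\cO_{\C_p}[[w]]$, checks the constant term is a unit via the elliptic-unit property $\Theta_c(\rho,L)\in\cO_{\C_p}^\times$ \cite[Theorem 7.4]{rubin99}, and finally verifies $dz/dw\in 1+w\cO_\fP[[w]]$ by hand. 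You instead pass to $\tilde\Lambda=\Theta_c(Nz,L)/\Theta_c(z,L)$ via Proposition \ref{prop:lambda-lambda} and argue geometrically: the divisor sits on nonzero $Nc$-torsion, hence specializes away from the origin, and the leading term $N^{12(c^2-1)}$ is a $p$-unit. Your approach is more conceptual and avoids both the addition formula and the citation to elliptic-unit integrality; the paper's is more explicit and self-contained. One point to tighten: the deduction ``together these force $\tilde\Lambda\in\cO_\fP[[w]]^\times$'' tacitly uses that on the regular surface $E/\cO_\fP$ the Weil divisor of $\tilde\Lambda$ can differ from the closure of its generic-fibre divisor only by a multiple of the (irreducible) special fibre, and evaluating along the origin section shows that multiple is zero since $\tilde\Lambda(O)\in\cO_\fP^\times$. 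This is standard but deserves a sentence. Your final transport through the triangle \eqref{diag:triangle} via $\omega_E$ is equivalent to the paper's Step~3, phrased more invariantly.
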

	\begin{proof}
		We first verify \eqref{eq:der-interpol}. By the chain rule, we have
		\begin{align}
			\left.\left(t\frac{d}{dt}\right)^k f_{c,N}(t)\right|_{t=1} = \frac{1}{12\Omega_{p}^k}\frac{d^kF_{c,N}}{dz^k}(0).
		\end{align}
		The equation then follows from Proposition \ref{prop:z-interpol}.

		Now, we turn to the integrality of $f_{c,N}(t)$, or equivalently $12f_{c,N}$ as $p\nmid 6$. Essentially, this can be established by combining \cite[Lemma 23]{coates-wiles} and \cite[Lemma 5]{cassou-nogues-CM}. We record the proof here since our setting is slightly different. To simplify the notation, below we will mostly drop the pullback notation when performing the change of variables according to \eqref{diag:triangle}.
		
		\textbf{Step 1.} \textit{For $0\ne \rho\in N^{-1}L/L$, $\Theta_c(z+\rho,L)^{-1}\in \cO_{\C_p}[[w]]$.}
		By \eqref{eq:prod-formula-theta}, it suffices to show $\wp(z+\rho) - \wp(Q)\in \cO_{\C_p}[[w]]$ for all $0\ne Q\in c^{-1}L/L$, since $\gcd(c,p)=1$ and $\Delta(L)$ is only divisible by primes of bad reduction. By the addition formula \cite[\S20.31]{WW}
		\begin{align}\label{eq:addition-formula}
			\wp(z+\rho) - \wp(Q) = \frac{1}{4}\left(\frac{\wp'(z) - \wp'(\rho)}{\wp(z) - \wp(\rho)}\right)^2 - \wp(z) - \wp(\rho) -  \wp(Q),
		\end{align}
		it is enough to show the integrality of all the terms on the right hand side. Since $cN$ is prime to $p$, we know the $x$-coordinates of $E[cN]$ are $p$-integral \cite[Lemma 7.3.(ii)]{rubin99}, so $\wp(\rho),\wp'(\rho),\wp(Q)\in \cO_{\C_p}$. Next, by \cite[\S IV.1]{silverman}, we have
		\begin{align}\label{eq:wp-w}
			\wp(z) = w^{-2}(1+A_1w+A_2w^2+\cdots)^{-1}, \ \wp'(z) = -2w^{-3}(1+A_1w+A_2w^2+\cdots)^{-1},
		\end{align}
		for some $A_i\in \Z[A,B]$, where $A,B\in \cO$ are the coefficients of the Weierstrass equation $y^2 = x^3 + Ax + B$. As $p\ne 2$, plugging these expansions into \eqref{eq:addition-formula}, we see the right hand side is in $w^{-2}\cO_{\C_p}[[w]]$, therefore in $\cO_{\C_p}[[w]]$ since $\wp(z+\rho)-\wp(Q)$ has not pole at $w = 0$.
		
		\textbf{Step 2.} \textit{$\Lambda_{c,N}(z,L)\in \cO_\fP[[w]]$, and its constant term in the $w$-expansion belongs to $\cO_\fP^\times$.}
		First we note that the constant term of $\Theta_c(z+\rho,L)$ is invertible in $\cO_{\C_p}$ for all $0\ne \rho\in N^{-1}L/L$. Indeed, since $c,N$ are relatively prime, and that $p\nmid cN$, we have $\Theta_c(\rho,L)\in \cO_{\C_p}^\times$ by \cite[Theorem 7.4.(iii)]{rubin99}. It then follows from Step 1 that $\Theta_c(z+\rho,L)\in \cO_{\C_p}[[w]]$, and that its constant term is a unit. Taking the product over $\rho$, we see the same holds for $\Lambda_{c,N}(z,L)$. That $\Lambda_{c,N}(z,L)\in \cO_\fP[[w]] = F[[w]]\cap \cO_{\C_p}[[w]]$ then results from considering the Galois action on \eqref{eq:prod-formula-theta}.
		
		\textbf{Step 3.} \textit{$F_{c,N}(z) = \frac{d}{dz}\log\Lambda_{c,N}(z,L)\in \cO_\fP[[w]]$, and hence is in $\Our[[t-1]]$.} Using the chain rule, we find
		\begin{align}
			(\xi^{-1})^*\left(\frac{d}{dz}\log\Lambda_{c,N}(z,L)\right) = \frac{1}{z'(w)}\frac{d}{dw}\log\left[(\xi^{-1})^*\Lambda_{c,N}(z,L)\right].
		\end{align}
		By Step 2, we have $\frac{d}{dw}\log\left[(\xi^{-1})^*\Lambda_{c,N}(z,L)\right]\in \cO_\fP[[w]]$. Furthermore,
		\begin{align}
			\frac{dz}{dw} = \frac{1}{\wp'(z)}\frac{d\wp(z)}{dw}= \frac{(1+A_1w+\cdots)^{-1} - \frac{w}{2}\frac{d}{dw}[(1+A_1w+\cdots)^{-1}]}{(1+A_1w+\cdots)^{-1}} \in 1+ w\cO_\fP[[w]],
		\end{align}
		where the second equality is by \eqref{eq:wp-w}.
	\end{proof}
	
	Thanks to the above proposition, we may speak of the measure $\mu_{c,N}\in \Mes(\Z_p,\Our)$ which is the Fourier inverse of $f_{c,N}(t)$. The interpolation formula \eqref{eq:interpol} then follows from Proposition \ref{prop:fourier-properties}.(ii) with $t=1$; for this reason, we propose to call $\mu_{c,N}$ the $p$-adic Bernoulli--Hurwitz measure attached to the data of $(E,\iota,c,N)$. When $c,N$ are fixed, we will sometimes write $\mu_{\BH}$ in place of $\mu_{c,N}$.
	
	\section{Computations of the periods}
	\label{sec:computation}
	
	In this section, for all $a\in \Z_p$ and $n\in \Z_{\ge 0}$, we shall relate the period $\mu_{c,N}(a+p^n\Z_p)$ to the special value of some modular form $\Psi_{a,n}$. Let $\fp$ be the prime above $p$ in $K$, and let $e\in \Z_{\ge 1}$ be the order of $\fp$ in the class group $\cl(K)$. Throughout, we will fix a generator $\varpi$ of $\fp^e$; hence $\bar{\fp}^e$ is generated by $\barpi$, the complex conjugate of $\varpi$.
	
	Now, by Proposition \ref{prop:fourier-properties}.(i), $\mu_{c,N}(a+p^n\Z_p)$ is computed by
	\begin{align}\label{eq:pre-period}
		\frac{1}{p^n}\sum_{\zeta^{p^n}=1}f_{c,N}(\zeta)\zeta^{-a}.
	\end{align}
	As in \cite{sum1,sum2}, the method of computation is transcendental, by converting everything into complex numbers.
	
	\subsection{Reduction to $\C/L$}\label{subsec:reduction-torus}
	
	We start with a lemma (\textit{cf.}, \cite[proof of Proposition 5.6]{lichtenbaum})
	\begin{lem}\label{lem:formal-fp}
		The inclusion $\hat{E}(\C_p)\hookrightarrow E(\C_p)$ identifies $\hat{E}[p^\infty](\C_p)$ with $E[\fp^\infty](\C_p)$.
	\end{lem}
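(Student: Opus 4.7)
The plan is to match the connected-{\'e}tale decomposition of the $p$-divisible group $E[p^\infty]$ over $\cO_\fP$ (which exists because $E$ has good ordinary reduction at $\fP$) with the CM-induced splitting $E[p^\infty] = E[\fp^\infty]\oplus E[\bar\fp^\infty]$ coming from $p\fo = \fp\bar\fp$, and to identify the formal summand with the $\fp$-summand. At the level of $\C_p$-points both sides are rank-one $\Z_p$-modules: $\hat E[p^\infty](\C_p)\simeq \Q_p/\Z_p$ because $\hat E$ has height one, and $E[\fp^\infty](\C_p)\simeq K_\fp/\fo_\fp = \Q_p/\Z_p$.

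The crucial technical step is to show that the $\fo$-action on $\hat E$ factors through the projection $\fo\otimes\Z_p = \fo_\fp\times\fo_{\bar\fp}\twoheadrightarrow\fo_\fp$. Over $\Our$ the formal group $\hat E$ is of height one, hence Lubin--Tate, so $\End_{\Our}(\hat E) = \Z_p$, and the CM map $\fo\to\End_{\Our}(\hat E)$ extends to a ring map $\fo_\fp\times\fo_{\bar\fp}\to\Z_p$ that is necessarily one of the two projections. To identify which, I would read off the induced action on the invariant differential $\omega_E$ (a basis of $\omega_{\hat E}$): an element $\alpha\in\fo$ acts by multiplication by its image under $\fo\hookrightarrow F\hookrightarrow F_\fP$, and this composition agrees, by the compatibility of $\fP$ with the fixed embedding $\bar\Q\to\bar K_\fp$, with $\fo\hookrightarrow\fo_\fp\hookrightarrow\cO_\fP$. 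Hence the projection in question is onto $\fo_\fp$; in particular $\bar\fp$ acts invertibly on $\hat E$, while $\fp$ acts with nontrivial kernel.

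Granting this, the conclusion is quick. If $P\in\hat E[p^\infty](\C_p)$ decomposes as $P_\fp + P_{\bar\fp}$ under the CM splitting, then $P_{\bar\fp}$ is annihilated by some $\bar\fp^m$; but $\bar\fp$ is invertible on $\hat E$, forcing $P_{\bar\fp} = 0$. So $\hat E[p^\infty](\C_p)\subseteq E[\fp^\infty](\C_p)$. A cardinality comparison at each finite level---$|\hat E[p^n](\C_p)| = p^n$ since $\hat E$ is of height one, and $|E[\fp^n](\C_p)| = |\fo/\fp^n| = p^n$ since $\fp$ has residue degree one---promotes the inclusion to an equality at each stage, and taking colimits yields the claim.

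The only genuinely delicate point is the identification of the $\fo\to\Z_p$ map with the $\fo_\fp$-projection: a short cotangent computation hinging on the compatibility of the fixed embedding $\bar\Q\to\bar K_\fp$ with the prime $\fP$. Everything else is structural theory of ordinary $p$-divisible groups plus a cardinality count.
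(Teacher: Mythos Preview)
Your argument is correct but takes a genuinely different route from the paper. The paper proves the \emph{reverse} inclusion first: invoking the main theorem of complex multiplication, the endomorphism $[\varpi]$ reduces modulo $\fP$ to a unit times the Frobenius, hence is purely inseparable on the special fibre, so $E[\varpi^n](\C_p)$ lies in the kernel of reduction, i.e., in $\hat E(\C_p)$; a cardinality count then finishes. You instead determine which completion of $\fo$ acts on $\hat E$ by reading off the action on the cotangent line, conclude that $\bar\fp$ acts invertibly on $\hat E$, and deduce $\hat E[p^\infty]\subseteq E[\fp^\infty]$ before counting. The paper's route is shorter and leans on a single CM reference; yours is more structural, making explicit the height-one Lubin--Tate nature of $\hat E$ and the role of the fixed embedding $\bar\Q\hookrightarrow\bar K_\fp$ in singling out $\fp$ over $\bar\fp$. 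One small point worth spelling out in your write-up: to apply ``$\bar\fp$ is invertible on $\hat E$'' to $P_{\bar\fp}$ you need $P_{\bar\fp}\in\hat E$, which holds because $P_{\bar\fp}=[\alpha]P$ for suitable $\alpha\in\fp^n$ (from $\fp^n+\bar\fp^n=\fo$) and $\hat E$ is stable under all $\fo$-endomorphisms.
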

	\begin{proof}
		It suffices to prove that $E[\fp^{en}] = E[\varpi^n]$ is contained in the image of $\hat{E}[p^{en}]\subset E[p^{en}]$, which then must be an isomorphism since both groups are of order $p^{en}$. By the main theorem of complex multiplication \cite[Corollary 5.16.(iii)]{rubin99}, modulo $\fP$, the endomorphism $[\varpi]$ of $E$ coincides with the Frobenius up to a unit, and is therefore purely inseparable. It follows that $E[\varpi^n](\C_p)\subset \hat{E}(\C_p)$, thereby contained in $\hat{E}[p^{en}](\C_p)$.
	\end{proof}
	Thanks to Lemma \ref{lem:formal-fp}, we can identify the values of $f_{c,N}(t)\in \cA_{\hat{\G}_m}\hat{\otimes}\Our$ at $\hat{\G}_m[p^n]$ as those of the elliptic function $\frac{1}{12}\frac{d}{dz}\log\Lambda(z,L)$ at $(\C/L)[\fp^n]$. More precisely, we now have a string of group isomorphisms:
	\begin{align}\label{diag:mu-varpi}
		\mu_{p^n}(\bar{\Q}) \xrightarrow{\ \iota\ }
		\hat{E}[p^n](\C_p) =\joinrel= E[\fp^n](\bar{\Q}) \xrightarrow{\ \Xi\ } \fp^{-n}L/L,
	\end{align}
	where $\Xi: E(\C) \to \C/L$ is the Abel--Jacobi map, namely the inverse to the Weierstrass parametrization $z\mapsto (\wp(z),\wp'(z)/2)$. Let $z_\zeta\in \fp^{-n}L/L$ be the point corresponding to $\zeta\in \mu_{p^n}(\bar{\Q})$. Then from Proposition \ref{prop:lambda-lambda}, $f_{c,N}(\zeta) = \lim_{z\to 0} \frac{1}{12} \frac{d}{dz}\log\left(\Theta_c(N z+z_{\zeta^N},L)/\Theta_c(z+z_{\zeta},L)\right)$. As such,
	\begin{align}\label{eq:pre-period-2}
		\frac{1}{p^n}\sum_{\zeta^{p^n}=1} f_{c,N}(\zeta)\zeta^{-a} = \lim_{z\to 0} \frac{1}{12p^n}\sum_{\zeta^{p^n}=1}\left\{-\zeta^{-a}\left[\frac{d}{dz}\log\Theta_c\right](z+z_\zeta,L) + N \zeta^{-a/N}\left[\frac{d}{dz}\log\Theta_c\right](N z+ z_\zeta,L)\right\}.
	\end{align}

	\subsection{Reduction to $\C^\times/q^{\Z}$}
	\label{subsec:Ctimes}
	Our next task is to extend the diagram \eqref{diag:mu-varpi}, for which we need some fresh notation first.
	\begin{lem}
		There exist $\Omega_\infty\in \C^\times$ and a fractional ideal $\fa$ prime to $\fp$ and is of the form $\fa = \Z+\Z\varsigma$ for some $\varsigma\in \cH$, such that $L = \Omega_\infty\fa$.
	\end{lem}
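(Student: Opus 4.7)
The plan is to write $L$ in terms of a $\Z$-basis with the normalized shape $\Z + \Z\tau$, and then adjust $\tau$ by an element of $\SL_2(\Z)$ so that the resulting ideal is prime to $\fp$.

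Choose any $\Z$-basis $(\omega_1, \omega_2)$ of $L$ with $\im(\omega_2/\omega_1) > 0$ (swap the two if necessary) and set $\tau = \omega_2/\omega_1 \in \cH$, so that $L = \omega_1(\Z + \Z\tau)$. The CM action $\End_F(E) = \fo$ makes $L$, hence $\omega_1^{-1}L = \Z + \Z\tau$, into an $\fo$-module. Picking any $\alpha \in \fo \setminus \Z$ and writing $\alpha \cdot 1 = a + b\tau$ with $a,b \in \Z$ forces $b \ne 0$, so $\tau = (\alpha - a)/b \in K$. In particular, $\Z + \Z\tau$ is a fractional $\fo$-ideal.

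The key observation is that a fractional ideal of the form $\Z + \Z\varsigma$ with $\varsigma \in K$ is prime to $\fp$ if and only if $\varsigma \in \fo_\fp$, since its $\fp$-adic localization equals $\fo_\fp + \fo_\fp \varsigma$, which reduces to $\fo_\fp$ precisely when $\ord_\fp(\varsigma) \ge 0$. If $\tau \in \fo_\fp$, we are done: take $\varsigma = \tau$, $\fa = \Z + \Z\tau$, and $\Omega_\infty = \omega_1$. Otherwise $\ord_\fp(\tau) < 0$, and the involution $\tau \mapsto -1/\tau$ (that is, the action of $S \in \SL_2(\Z)$) sends $\tau$ to $\varsigma := -1/\tau$, which lies in $\cH$ (since $S$ preserves the upper half plane) and satisfies $\ord_\fp(\varsigma) > 0$. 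Using the identity $\Z + \Z\varsigma = \tau^{-1}(\Z + \Z\tau)$, we rewrite $L = (\omega_1 \tau) \cdot \fa$ with $\fa = \Z + \Z\varsigma$, and put $\Omega_\infty = \omega_1 \tau$.

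I do not foresee any real obstacle: the only nontrivial input is the elementary observation that $S \in \SL_2(\Z)$ simultaneously preserves the upper half plane and toggles $\fp$-integrality, which handles the single bad case $\tau \notin \fo_\fp$.
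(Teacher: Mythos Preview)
Your proof is correct. Both arguments ultimately ensure that the normalizing basis element $\omega_1$ satisfies $\omega_1\notin\fp L$ (equivalently, that $\tau=\omega_2/\omega_1\in\fo_\fp$), but the routes differ. The paper selects such an $x\in L\setminus\fp L$ with $L/\Z x$ free from the outset, and then checks $\fp$-primality of $L'=\Z+\Z(y/x)$ by a global ideal-factorization argument: since $\fo\subset L'$ one writes $L'=\fp^{-r}\fb^{-1}$ with $r\ge 0$ and $\fb$ prime to $\fp$, whence $x\in\fp^rL$ forces $r=0$. You instead start with an arbitrary basis, recast $\fp$-primality as the local criterion $(\Z+\Z\varsigma)_\fp=\fo_\fp+\fo_\fp\varsigma=\fo_\fp\iff\varsigma\in\fo_\fp$, and then use the single involution $S\colon\tau\mapsto -1/\tau$ to flip the sign of $\ord_\fp(\tau)$ if it is negative. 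Your localization viewpoint is a bit more transparent (it makes visible that exactly one $\SL_2(\Z)$ move suffices), while the paper's version avoids the case split by building the good basis in one stroke.
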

	\begin{proof}
		Choose $x\in L\setminus \fp L$ such that $L/\Z x$ is free, then put $L = \Z x + \Z y$ for some $y\in \C^\times$ with $\im(y/x)>0$. Then $L = xL'$ with $L' = \Z+\Z(y/x)$. Since $\fo\cdot L'\subset L'$, we see that $L'$ is a fractional ideal of $K$. We contend that $L'$ is prime to $\fp$. Indeed, since $\fo\subset L'$, there are $r\in\Z_{\ge0}$ and an integral ideal $\fb$ prime to $\fp$ such that $L' = \fp^{-r}\fb^{-1}$. Thus $x\fb^{-1} = \fp^rL$, and so $x\in \fp^r L$, forcing $r=0$.
	\end{proof}
	Fix now and forever $\Omega_\infty$, $\fa$ and $\varsigma$ as above. Put $q = \e^{2\pi i\varsigma}$, which has complex norm $<1$. Note that for all $n\in \Z_{>0}$, $\fp^{-en}\fa/\fa\simeq \Z/p^{en}$ is generated by $\varpi^{-n}$. After Jacobi, consider the biholomorphic map
	\begin{align}
		\jmath: \C/L \to \C^\times/q^{\Z},\quad z\mapsto \e^{2\pi iz/\Omega_\infty}.
	\end{align}
	The composition
	\begin{align}\label{eq:J-fJ}
		\C^\times/q^{\Z} \xrightarrow{\jmath^{-1}} \C/L \xrightarrow{[\barpi^n]} \C/L
	\end{align}
	induces an isomorphism
	\begin{align}
		\mu_{p^{en}}(\bar{\Q}) \xrightarrow{\jmath^{-1}}  (p^{-en}\Z/\Z)\cdot \Omega_\infty \xrightarrow[\sim]{\times \barpi^n} (\fp^{-en}\fa/\fa)\cdot \Omega_\infty.
	\end{align}
	Fusing it with \eqref{diag:mu-varpi}, we have thus an elongated diagram
	\begin{align}\label{diag:hex}
		\begin{tikzcd}[ampersand replacement=\&, column sep = 0.5pt]
			\&\hat{E}[p^{en}](\C_p) \ar[rd,equal] \&\\
			\mu_{p^{en}}(\bar{\Q}) \ar[ru,"\iota"] \ar[d,dashed,"h_n"] \& \& E[\fp^{en}](\bar{\Q}) \ar[d,"\Xi"]\\
			\mu_{p^{en}}(\bar{\Q}) \& \& \ar[ld,"{[\barpi^n]}^{-1}"] (\fp^{-en}\fa/\fa)\cdot \Omega_\infty\\
			\& \ar[lu,"\jmath"] (p^{-en}\Z/\Z)\cdot \Omega_\infty\&
		\end{tikzcd}
	\end{align}
	where $h_n$ is the composition of arrows. The next lemma determines $h_n$ by studying the compatibility of these hexagons when $n$ varies. To state it, let $\varrho\in \Z_p^\times$ be the element corresponding to $\barpi$ under the identification $\Z_p\simeq \fo_\fp$.
	\begin{lem}\label{lem:hex-prism}
		For $n\in \Z_{>0}$, the following diagram is commutative:
		\[
		\begin{tikzcd}[column sep = 3pt]
			&&\mu_{p^{en}}(\bar{\Q})\ar[lld, "\varrho^nh_n"]
			\ar[rrr]\ar[ddd]&&&\hat{E}[p^{en}](\C_p) \ar[rd]\ar[ddd]&\\
			\mu_{p^{en}}(\bar{\Q}) \ar[ddd] &&&&&&E[\fp^{en}](\bar{\Q}) \ar[ddd]\\
			&(p^{-en}\Z/\Z)\cdot\Omega_\infty \ar[lu] &&&(\fp^{-en}\fa/\fa)\cdot \Omega_\infty \ar[lll,crossing over, "\varrho^n{[\barpi^n]}^{-1}" fill=white] \ar[from=rru, crossing over] &&\\
			&&\mu_{p^{e(n+1)}}(\bar{\Q}) \ar[lld, "\varrho^{n+1}h_{n+1}"]\ar[rrr]&&&\hat{E}[p^{e(n+1)}](\C_p) \ar[rd]&\\
			\mu_{p^{e(n+1)}}(\bar{\Q}) &&&&&&E[\fp^{e(n+1)}](\bar{\Q}) \ar[lld]\\
			&(p^{-e(n+1)}\Z/\Z)\cdot\Omega_\infty \ar[lu] \ar[from = uuu, crossing over] &&&(\fp^{-e(n+1)}\fa/\fa)\cdot \Omega_\infty \ar[lll, "\varrho^{n+1}{[\barpi^{n+1}]}^{-1}"] \ar[from=uuu, crossing over]&&
		\end{tikzcd},
		\]
		where the vertical arrows are natural inclusions, and the unmarked planar arrows are the same as in \eqref{diag:hex}.
	\end{lem}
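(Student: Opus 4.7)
The plan is to verify commutativity of each of the six rectangular faces of the prism; together with the two hexagonal faces, which are instances of the already-established diagram \eqref{diag:hex} for $n$ and $n+1$, this forces the whole prism to commute.

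Four of the six vertical faces involve only ``natural'' maps: $\iota$, $\Xi$, $\jmath$, and the identification $\hat{E}[p^{en}] = E[\fp^{en}]$ from Lemma \ref{lem:formal-fp}. The vertical arrows connecting the two hexagons are simply the inclusions of $p^{en}$-torsion into $p^{e(n+1)}$-torsion. Since all of these horizontal maps respect the torsion filtration automatically, commutativity of these four faces is immediate.

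The real content lies in the two remaining faces, carrying the $\varrho^n$ versus $\varrho^{n+1}$ scalars. For the face involving $[\barpi^n]^{-1}$ and $[\barpi^{n+1}]^{-1}$: take $x=(b/\varpi^n)\Omega_\infty\in(\fp^{-en}\fa/\fa)\Omega_\infty$ with $b\in\fa$. Since $\fa$ is prime to $\fp$, the natural reduction $\fa/\fp^{en}\fa\to\fo/\fp^{en}\simeq\Z/p^{en}$ is an isomorphism, producing a unique $a\in\Z/p^{en}$ with $b\equiv a\pmod{\fp^{en}\fa}$; thus $[\barpi^n]^{-1}(x)=(a/p^{en})\Omega_\infty$. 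Rewriting $x=(b\varpi/\varpi^{n+1})\Omega_\infty$ and repeating the computation, $[\barpi^{n+1}]^{-1}(x)=(a'/p^{e(n+1)})\Omega_\infty$ with $b\varpi\equiv a'\pmod{\fp^{e(n+1)}\fa}$. Multiplying the first congruence by $\varpi$ and subtracting gives $a\varpi-a'\in\fp^{e(n+1)}\fa$. Localizing at $\fp$, the isomorphism $\fo_\fp\simeq\Z_p$ sends $\barpi\mapsto\varrho$ and therefore $\varpi\mapsto p^e/\varrho$ (using $\varpi\barpi=p^e$), converting the congruence into $\varrho a'\equiv p^e a\pmod{p^{e(n+1)}}$. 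Hence
\[
\varrho^{n+1}[\barpi^{n+1}]^{-1}(x)=(\varrho^{n+1}a'/p^{e(n+1)})\Omega_\infty=(\varrho^n a\cdot p^e/p^{e(n+1)})\Omega_\infty=(\varrho^n a/p^{en})\Omega_\infty,
\]
which is the image of $\varrho^n[\barpi^n]^{-1}(x)$ under the inclusion $(p^{-en}\Z/\Z)\Omega_\infty\hookrightarrow(p^{-e(n+1)}\Z/\Z)\Omega_\infty$. The face involving $\varrho^n h_n$ and $\varrho^{n+1} h_{n+1}$ then follows by applying $\jmath$ and invoking the hexagons.

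The main obstacle is really pure bookkeeping: juggling the several identifications $\fo_\fp\simeq\Z_p$, $\fa/\fp^{en}\fa\simeq\Z/p^{en}$, and the symmetric roles of $\varpi$ and $\barpi$ via $\varpi\barpi=p^e$, while being careful to distinguish the additive and multiplicative pictures. Once these are aligned, the $\varrho^n$ factor is seen to be rigged precisely to absorb the additional $\barpi$ that arises when passing from $[\barpi^n]^{-1}$ to $[\barpi^{n+1}]^{-1}$.
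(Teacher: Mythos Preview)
Your proof is correct and follows essentially the same route as the paper's: reduce the prism to the single nontrivial side face involving $[\barpi^n]^{-1}$ versus $[\barpi^{n+1}]^{-1}$, and verify that face using the definition of $\varrho$. The paper streamlines the verification by passing to the inverse square with forward maps $\varrho^{-n}[\barpi^n]$ and $\varrho^{-n-1}[\barpi^{n+1}]$, where the required identity $\barpi^n\varrho^{-n}\frac{i}{p^{en}}\equiv\barpi^{n+1}\varrho^{-n-1}\frac{p^ei}{p^{e(n+1)}}\pmod{\fa}$ becomes a one-liner (it reduces to $\barpi\varrho^{-1}$ acting trivially on $\fp^{-e(n+1)}\fa/\fa$); your coordinate computation with $b,a,a'$ and the relation $\varpi\barpi=p^e$ reaches the same conclusion by the scenic route.
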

	\begin{proof}
		The commutativity amounts to verifying that of
		\[
		\begin{tikzcd}[column sep= 2cm]
			p^{-en}\Z/\Z \ar[r,"\varrho^{-n}{[\barpi^n]}"] \ar[d] &  \fp^{-en}\fa/\fa \ar[d]\\
			p^{-e(n+1)}\Z/\Z \ar[r,"\varrho^{-n-1}{[\barpi^{n+1}]}"] & 
			\fp^{-e(n+1)}\fa/\fa
		\end{tikzcd}.
		\]
		That is, for all $i\in \Z/p^n$,
		\begin{align}
			\barpi^n\varrho^{-n} \frac{i}{p^{en}}\equiv \barpi^{n+1}\varrho^{-n-1}\frac{p^e i}{p^{en+e}} \pmod \fa.
		\end{align}
		This in turn follows from the definition of $\varrho$.
	\end{proof}
	As a consequence to Lemma \ref{lem:hex-prism}, replacing $\iota$ by $u\iota$ for some $u\in\Z_p^\times$ if necessarily, we may and will assume from now on that
	\begin{align}\label{ass:iota}
		\varrho^n h_n = \id\quad\text{for all }n\in\Z_{>0}.
	\end{align}
	\begin{rem}\label{rem:pi_n}
		Chasing the diagram, we see that assumption \eqref{ass:iota} gives a sequence of generators $\pi_n = \iota(\zeta_n)$ of $\fp^{-n}\fa/\fa$ for $n\in \Z_{\ge 0}$, such that
		\begin{enumerate}
			\item $\pi_{en} = \varrho^{-n}\varpi^{-n}$;
			\item $p\pi_{n+1}\equiv \pi_n \bmod \fa$.
		\end{enumerate}
	\end{rem}
	Keeping \eqref{eq:J-fJ} in mind, put
	\begin{align}
		\fJ_n = \frac{1}{12} ([\barpi^n]\circ\jmath^{-1})^*\left[\frac{d}{dz}\log\Theta_c(z,L)\right],
	\end{align}
	which we regard as a $q$-periodic meromorphic function on $\C^\times$. Let $X = e^{2\pi iz/\Omega_\infty}$ be the variable of $\C^\times$, so that if $f(z)$ is an elliptic function on $\C/L$, $(\jmath^*f)(X) = f(z)$. By \eqref{ass:iota} and replacing $n$ by $en$, \eqref{eq:pre-period-2} reads
	\begin{align}\label{eq:pre-epf}
		\begin{split}
			&\lim_{z\to 0} \frac{1}{12p^{en}}\sum_{\zeta^{p^{en}}=1}\left\{-\zeta^{-a}\left[\frac{d}{dz}\log\Theta_c\right](\barpi^n z+z_\zeta,L) + N \zeta^{-a/N}\left[\frac{d}{dz}\log\Theta_c\right](N \barpi^n z+ z_\zeta,L)\right\}\\
			=&\lim_{X\to 1} \frac{1}{p^{en}}\sum_{\zeta^{p^{en}}=1}\left[-\zeta^{-a}\fJ_n(X\zeta^{\varrho^{-n}}) + N\zeta^{-a/N}\fJ_n(X^{N}\zeta^{\varrho^{-n}})\right].
		\end{split}
	\end{align}
	
	\subsection{Explicit period formulas}\label{subsec:epf}
	
	Our task now is to explicitly determine the value of \eqref{eq:pre-epf}. Put now
	\begin{align}
		\fG_n(X) = \left[([\barpi^n]\jmath^{-1})^*\Theta_c\right](X) = \Theta_c\left(\barpi^n\frac{\Omega_\infty}{2\pi i}\log X,L\right),
	\end{align}
	which is a $q$-periodic function on $\C^\times$. By Lemma \ref{lem:prod-formula}, we have $\fG_n(X) \sim \prod_{v} \fG(vX)$, with $\fG(X)= \Theta_c((\Omega_\infty/2\pi i)\log X,L)$ and the product is taken over
	\begin{align}\label{eq:Rn}
		S_n = \left\{v\in \C^\times\colon v^{\barpi^n}\in q^{\Z} \right\}/q^{\Z} = 
		\left\{v\in \C^\times\colon v^{\alpha}\in q^{\Z}\text{ for all }\alpha\in\bar{\fp}^{en}=\barpi^n\fo \right\}/q^{\Z};
	\end{align}
	this makes sense since $\fG(vqX) = \fG(vX)$. As in \cite[\S2]{roquette}, consider the function on $\C^\times$:
	\begin{align}\label{eq:fH}
		\fH(X) = (1-1/X)\prod_{m\ge 1} (1-q^mX)(1-q^m/X).
	\end{align}
	\begin{lem}
		For all $v\in \C^\times$, we have
		\begin{align}
			\fG(vX) \sim \left[X^{c(c-1)/2}\frac{\fH(vX)^{c^2}}{\fH(v^cX^c)}\right]^{12}.
		\end{align}
	\end{lem}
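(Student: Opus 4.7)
The goal is a $q$-product expansion of $\fG(vX)$, which ultimately comes from the classical Jacobi product formula for the Weierstrass sigma function. My plan is to first establish the cleaner statement
\begin{align*}
\theta(z,L) \sim X^6\fH(X)^{12}, \qquad X = \e^{2\pi iz/\Omega_\infty},
\end{align*}
and then derive the lemma as an immediate algebraic consequence.

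\textbf{Step 1 (Sigma in $q$-form).} I would start from the Jacobi triple-product expansion
\begin{align*}
\sigma(z,L) \sim \e^{\alpha z^2}\bigl(X^{1/2} - X^{-1/2}\bigr)\prod_{n\ge 1}(1-q^n X)(1-q^n/X),
\end{align*}
where $\alpha \in \C$ is the quasi-period constant characterized by the transformation law $\sigma(z+\Omega_\infty,L) = -\e^{2\alpha\Omega_\infty(z+\Omega_\infty/2)}\sigma(z,L)$. The factor $X^{1/2} - X^{-1/2} = X^{1/2}(1-1/X)$ supplies precisely the finite-product part of $\fH(X)$, while the infinite product matches the rest of $\fH(X)$.

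\textbf{Step 2 (Cancellation of the Gaussian).} Raising to the $12$th power multiplies the quadratic exponent by $12\alpha$. The point of the normalization in $\theta(z,L) = \Delta(L)\e^{-6s_2(L)z^2}\sigma^{12}(z,L)$ is exactly to kill this Gaussian: using the Eisenstein-series identity $s_2(L) = 2\alpha$ (equivalently, the relation between $s_2(L)$ and the quasi-period $\eta_1 = 2\alpha\Omega_\infty$ of the sigma function), the factor $\e^{-6s_2(L)z^2}\cdot \e^{12\alpha z^2}$ collapses to $1$. Absorbing $\Delta(L)$ and the other constants into the $\sim$, we obtain
\begin{align*}
\theta(z,L) \sim \bigl(X^{1/2}(1-1/X)\bigr)^{12}\prod_{n\ge 1}(1-q^nX)^{12}(1-q^n/X)^{12} = X^6 \fH(X)^{12}.
\end{align*}

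\textbf{Step 3 (Passing to $\Theta_c$).} Under the change of variable $z\mapsto cz$, we have $X\mapsto X^c$. So from $\Theta_c(z,L) = \theta(z,L)^{c^2}/\theta(cz,L)$,
\begin{align*}
\Theta_c\Bigl(\tfrac{\Omega_\infty}{2\pi i}\log X, L\Bigr) \sim \frac{X^{6c^2}\fH(X)^{12c^2}}{X^{6c}\fH(X^c)^{12}} = \left[X^{c(c-1)/2}\frac{\fH(X)^{c^2}}{\fH(X^c)}\right]^{12}.
\end{align*}
Replacing $X$ by $vX$ and noting that $v^{c(c-1)/2}$ is a scalar independent of $X$ yields the asserted formula for $\fG(vX)$.

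\textbf{Main obstacle.} The only subtle point is \emph{Step 2}, where one must know that the normalizing Gaussian $\e^{-6s_2(L)z^2}$ in $\theta$ precisely matches the quasi-period exponential from the Jacobi product; this is the identification $s_2(L) = \eta_1/\Omega_\infty$ between Shimura's (conditionally convergent) Eisenstein sum and the Weierstrass quasi-period. Everything else is bookkeeping with $\sim$-scalars.
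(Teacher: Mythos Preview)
Your route via the Jacobi product is different from the paper's, and it does reach the goal, but Step~2 as written is incorrect. With the paper's Hecke-regularized definition $s_2(L)=\lim_{s\to 0^+}\sum_{\mu\ne 0}\mu^{-2}|\mu|^{-2s}$, the quasi-period of $\sigma$ along $\Omega_\infty$ is \emph{not} $s_2(L)\Omega_\infty$ but rather $\eta(\Omega_\infty)=s_2(L)\Omega_\infty+g(L)\bar\Omega_\infty$ (this is exactly the relation $\zeta(z+\mu)=\zeta(z)+s_2(L)\mu+g(L)\bar\mu$ used elsewhere in the paper). Hence your $2\alpha=\eta(\Omega_\infty)/\Omega_\infty\ne s_2(L)$, and the Gaussian does \emph{not} collapse: you only get
\[
\theta(z,L)\ \sim\ \e^{6\gamma z^2}\,X^{6}\fH(X)^{12},\qquad \gamma=g(L)\,\bar\Omega_\infty/\Omega_\infty\ne 0.
\]
A quick sanity check confirms this: $X^{6}\fH(X)^{12}$ is $\Omega_\infty$-periodic in $z$, whereas $\theta(z,L)$ is not, so they cannot be $\sim$. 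Fortunately the slip is harmless for the lemma: in forming $\Theta_c(z,L)=\theta(z,L)^{c^2}/\theta(cz,L)$ the leftover Gaussian cancels because $c^2\gamma z^2-\gamma(cz)^2=0$, and your Step~3 then goes through unchanged. So the fix is simply to drop the claim about $\theta$ and carry the factor $\e^{6\gamma z^2}$ into Step~3.

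For comparison, the paper never unpacks $\theta$ or $\sigma$ at all. It argues that $\fG(vX)$ is $q$-periodic (being $\Theta_c$ on $\C^\times/q^\Z$), so it suffices to check that the right-hand side is also $q$-periodic and has the same divisor on a fundamental annulus. The $q$-periodicity is a two-line computation from the functional equation $\fH(X/q)=-X\,\fH(X)$. This bypasses the quasi-period bookkeeping entirely, which is exactly where your argument stumbled; on the other hand, your approach has the virtue of explaining \emph{where} the factor $X^{c(c-1)/2}$ comes from rather than guessing it and verifying.
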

	\begin{proof}
		Since both sides have the same divisors in the annulus $\{q\le |X|< 1\}$, it suffices to show that the right hand side is $q$-periodic. Write $\fH_v(X) = \fH(vX)$. It is readily verified that $\fH(X/q) = -X\fH(X)$. So
		\begin{align}
			\frac{\fH_v(X/q)^{c^2}}{\fH_{v^c}(X^c/q^c)} = \frac{(-vX)^{c^2}\fH_v(X)^{c^2}}{(-v^cX^c/q^{c-1})\cdots(-v^cX^c)\fH_{v^c}(X^c)} = q^{c(c-1)/2}\frac{\fH_v(X)^{c^2}}{\fH_{v^c}(X^c)}.
		\end{align}
		From this the $q$-periodicity follows.
	\end{proof}
	
	Let $D$ be the operator $X\frac{d}{dX}\log$. Then it can be directly verified that $([\barpi^n]\jmath^{-1})^*(\frac{d}{dz}\log \Theta_c) = \frac{2\pi i}{\Omega_\infty\barpi^n}\sum_v D\fG(vX)$. Thus
	\begin{align}\label{eq:J-H}
		\fJ_n(X) = \frac{2\pi i c}{\Omega_\infty\barpi^n}\sum_{v\in S_n} \left[\frac{c-1}{2} + cD\fH(vX) - D\fH(v^cX^c)\right].
	\end{align}
	For $t\in \Z_p$, denote by $\fF_{t,n}(X)$ the average $\frac{1}{p^n}\sum_{\zeta^{p^n}=1}D\fH(X\zeta)\zeta^{-t}$.
	\begin{prop}\label{prop:Ft}
		Let $t\in \Z_p$ and $n\in \Z_{\ge 0}$. We have
		\begin{align}
			\fF_{t,n}(X) = \frac{1}{p^n}\sum_{\zeta^{p^n}=1}D\fH(X\zeta)\zeta^{-t} = \frac{X^{t^\flat}}{X^{p^n}-1} + \sum_{m\ge 1}\left[\frac{(q^mX)^{t^\sharp}}{(q^mX)^{p^n}-1} + \frac{(q^m/X)^{(-t)^\sharp}}{1-(q^m/X)^{p^n}}\right],
		\end{align}
		where $t^\flat = t^\flat_{p^n}$ (resp.~$t^\sharp = t^\sharp_{p^n}$) is the unique integer in $[0,p^n)$ (resp.~$(0,p^n]$) such that $t^\flat \equiv t^\sharp \equiv t \bmod p^n$.
	\end{prop}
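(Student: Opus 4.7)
The plan is to expand $D\fH(X\zeta)$ as an absolutely convergent Laurent-type series in the annulus $1<|X|<|q|^{-1}$, apply the orthogonality relation $\frac{1}{p^n}\sum_{\zeta^{p^n}=1}\zeta^k=\1_{p^n\mid k}$ termwise, and then resum. Since both sides of the claimed identity are meromorphic in $X\in\C^\times$, it suffices to verify the formula on this annulus and invoke analytic continuation.

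Concretely, I would first apply logarithmic differentiation to the product formula \eqref{eq:fH}, obtaining
\begin{align*}
D\fH(X)=\frac{1}{X-1}+\sum_{m\ge 1}\frac{q^mX}{q^mX-1}+\sum_{m\ge 1}\frac{q^m/X}{1-q^m/X}.
\end{align*}
In the annulus above, each summand admits a single absolutely convergent expansion in integer powers of $X\zeta$: the first as $\sum_{k\ge 1}(X\zeta)^{-k}$, the second (for $m\ge 1$) as $-\sum_{k\ge 1}(q^mX\zeta)^k$, and the third as $\sum_{k\ge 1}(q^m/(X\zeta))^k$. Multiplying by $\zeta^{-t}$ and averaging over $\zeta\in\mu_{p^n}$, each inner sum collapses via orthogonality to a geometric series whose leading exponent is the smallest positive integer $k$ in a prescribed residue class modulo $p^n$. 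For the first piece this smallest $k$ is $(-t)^\sharp$; after rewriting $X^{-(-t)^\sharp}/(1-X^{-p^n})=X^{p^n-(-t)^\sharp}/(X^{p^n}-1)$ and using the identity $t^\flat+(-t)^\sharp=p^n$ (which follows from the sum lying in $[1,2p^n)$ and being divisible by $p^n$), one recovers $X^{t^\flat}/(X^{p^n}-1)$. The $m\ge 1$ sums similarly produce the terms $(q^mX)^{t^\sharp}/((q^mX)^{p^n}-1)$ and $(q^m/X)^{(-t)^\sharp}/(1-(q^m/X)^{p^n})$, matching the right-hand side of the proposition exactly.

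The computation is essentially routine; the only real obstacle is bookkeeping the three slightly different normalizations $t^\flat\in[0,p^n)$, $t^\sharp\in(0,p^n]$, and $(-t)^\sharp\in(0,p^n]$, which arise because all three geometric sums range over $k\ge 1$ rather than $k\ge 0$. The asymmetry between ``$\flat$'' and ``$\sharp$'' manifests itself precisely in the boundary cases $t\equiv 0\bmod p^n$, where the convention dictates whether the relevant exponent is $0$ or $p^n$.
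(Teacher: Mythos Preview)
Your proposal is correct and is essentially the same argument as the paper's. The paper condenses the computation by citing the single rational-function identity $\frac{1}{p^n}\sum_{\zeta^{p^n}=1}\frac{1}{1-y\zeta}\,\zeta^{-t}=\frac{y^{t^\flat}}{1-y^{p^n}}$ and applying it to each factor of $D\fH$, whereas you unpack this identity via geometric-series expansions on an annulus and then appeal to analytic continuation; the underlying mechanism (orthogonality of $p^n$-th roots picking out a residue class, then resumming a geometric progression) is identical.
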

	
	\begin{proof}
		This can be directly verified from \eqref{eq:fH} and the formula (see, e.g., \cite[proof of Lemma 3.1]{sum1})
		\begin{align}
			\frac{1}{p^n}\sum_{\zeta^{p^n}=1} \frac{1}{1-y\zeta}\zeta^{-t} = \frac{y^{t^\flat}}{1-y^{p^n}}.
		\end{align}
	\end{proof}
	\begin{lem}\label{lem:qn}
		For all $n\in \Z_{\ge 0}$, there exists a unique $q_n\in \C^\times$ with $q_n^{\alpha}\in q^{\Z}$ for all $\alpha\in \fp^n$ and $q_n^{p^n} = q$. In particular, for $0\le i<p^n$, $|q|<|q_n^i|\le 1$.
	\end{lem}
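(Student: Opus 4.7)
The plan is to translate the two multiplicative conditions on $q_n$ into additive conditions on a complex logarithm $\varsigma_n := \log(q_n)/(2\pi i)$, and then reduce existence and uniqueness to the structure of the cyclic quotient $\fa/\bar{\fp}^n\fa$.

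Writing $q_n = e^{2\pi i\varsigma_n}$ for a lift $\varsigma_n \in \C$, the condition $q_n^{p^n} = q = e^{2\pi i\varsigma}$ is equivalent to $p^n\varsigma_n - \varsigma \in \Z$; and for each $\alpha \in \fp^n$, the condition $q_n^\alpha = e^{2\pi i\alpha\varsigma_n} \in q^\Z = e^{2\pi i\Z\varsigma}$ becomes $\alpha\varsigma_n \in \Z + \Z\varsigma = \fa$, which simultaneously over all $\alpha \in \fp^n$ reads $\varsigma_n \in \fp^{-n}\fa$. Since $\fp^{-n}\fa \supset \fo \supset \Z$, this is insensitive to the choice of lift. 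Parametrising $\varsigma_n = (\varsigma + k)/p^n$ with $k \in \Z$ makes the first condition automatic, and the second becomes $\varsigma + k \in p^n\fp^{-n}\fa = \bar{\fp}^n\fa$. So the lemma reduces to existence and uniqueness, modulo $p^n$, of $k \in \Z$ with $\varsigma + k \in \bar{\fp}^n\fa$.

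I would handle this under the additional assumption that $\fa$ is coprime to $\bar{\fp}$ as well as to $\fp$ --- a harmless strengthening of the selection in Section 3.2, obtained by further requiring the vector $x$ there to avoid $\bar{\fp}L$ (possible since the union $\fp L \cup \bar{\fp}L$ misses $(p-1)^2$ classes in $L/pL \simeq (\Z/p)^2$). Under this, $\fa_{\bar{\fp}} = \fo_{\bar{\fp}} = \Z_p$ and $\bar{\fp}^n\fa_{\bar{\fp}} = p^n\Z_p$, while $\fa$ and $\bar{\fp}^n\fa$ agree at every other prime; consequently $\fa/\bar{\fp}^n\fa \simeq \Z_p/p^n\Z_p \simeq \Z/p^n$, with the isomorphism induced by the natural composite $\Z \hookrightarrow \fa \twoheadrightarrow \fa/\bar{\fp}^n\fa$. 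Existence and uniqueness of $k \bmod p^n$ then follow. The norm estimate is immediate: $q_n^{p^n} = q$ yields $|q_n^i| = |q|^{i/p^n}$, and since $|q| = e^{-2\pi \im \varsigma} \in (0, 1)$ and $0 \le i/p^n < 1$ for $0 \le i < p^n$, we obtain $|q| < |q_n^i| \le 1$. The main obstacle is the coprimality refinement at $\bar{\fp}$; once that is granted, the rest is a formal application of the local structure of the quotient at the split prime.
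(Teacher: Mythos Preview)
Your reduction to the additive problem---writing $q_n = e^{2\pi i\varsigma_n}$ with $\varsigma_n = (\varsigma+k)/p^n$ and asking for $k \in \Z$ unique modulo $p^n$ with $\varsigma + k$ in the appropriate ideal---is exactly the paper's approach. The difference lies in which ideal appears, and this traces back to a typo in the statement: the condition should read $\alpha \in \bar{\fp}^n$, not $\alpha \in \fp^n$. This is what the paper's own proof actually establishes (it takes $k = (-\varsigma)^\flat_{\fp^n}$, the representative of $-\varsigma$ in $\fa/\fp^n\fa$, giving $z_n \in \bar{\fp}^{-n}\fa$), and it is what is used immediately afterward to identify $\sR_{en}$ with a set of representatives of $S_n$, which is defined via $\bar{\fp}^{en}$.

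With the corrected statement, the condition on $k$ becomes $\varsigma + k \in \fp^n\fa$, and the relevant bijection is $\Z/p^n \xrightarrow{\sim} \fa/\fp^n\fa$, which follows from the standing hypothesis that $\fa$ is prime to $\fp$. Your extra coprimality assumption at $\bar{\fp}$ is therefore unnecessary: it is a correct patch for the lemma as literally written, but the intended statement needs no such strengthening. Your argument for the norm estimate is fine and matches the paper's.
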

	\begin{proof}
		It suffices to construct the corresponding element in $\C/\fa$. Consider $z_n = \frac{\varsigma + (-\varsigma)^\flat_{\fp^n}}{p^n}$. Here $(-\varsigma)^\flat_{\fp^n}$ is the unique integer in $[0,p^n)$ that is congruent to $-\varsigma$ modulo $\fp^n\fa$, which is possible since the natural map $\Z/p^n\to \fa/\fa\fp^n$ is a bijection. Now, by construction, $z_n\bar{\fp}^n\subseteq\fo$ and $z_np^n \in \varsigma + \Z$. So we take $q_n = \e^{2\pi i z_n}$.
	\end{proof}	
	Thanks to Lemma \ref{lem:qn}, we can consider a particular set of $v$'s. Write $\sR_{n} = \{q_n^i\colon 0\le i<p^n\}$ for $q_n$ as above, so $\sR_{en}$ gives a specific set of representatives of the set $S_n$ in \eqref{eq:Rn}, all of whose elements are in the annulus $\{|q|<u\le 1\}$. We are now ready to compute the average in \eqref{eq:pre-epf}.
	
	\begin{thm}\label{thm:epf}
		Let $n\in \Z_{\ge 0}$ and $a\in \Z_p$. Consider the divisor function $\sigma_{a,n}(m) = \sigma_{a,n,c,N}(m)$ defined by (all the congruences here are modulo $p^n$)
		\begin{align}
			\begin{cases}
				-\frac{c(c-1)(N-1)}{2}\1_{a\not\equiv 0} + \frac{-c^2 a^\flat_{p^n} + c^2 N(a/N)^\flat_{p^n} + c(a/c)^\flat_{p^n} - cN(a/cN)^\flat_{p^n}}{p^n}, & m = 0;\\
				c^2\sum_{\substack{d\mid m\\ d\equiv a}}\sgn(d) - c^2N \sum_{\substack{d\mid m\\ d\equiv a/N}}\sgn(d) - 
				c\sum_{\substack{d\mid m\\ d\equiv a/c}}\sgn(d)
				+cN \sum_{\substack{d\mid m\\ d\equiv a/cN}}\sgn(d), & m\ge 1.
			\end{cases}
		\end{align}
		Then, there exists a modular form $\Psi_{a,n}$ of weight $1$ and level $\Gamma^1(p^n)$ defined over $\Z$, such that for $\tau\in \cH$, we have the expansion
		\begin{align}\label{eq:q-exp}
			\Psi_{a,n}(\tau) = (2\pi i)\Psi_{a,n}(\C/2\pi i(\Z\tau+\Z),2\pi i\tau/p^n,dz) = 2\pi i\sum_{m\ge 0} \sigma_{a,n}(m)\e^{2\pi im\tau/p^n}.
		\end{align}
		Moreover, let $n'\ge n$ be the smallest integer divisible by $e$ and, for $m\in \Z_{\ge 0}$, write $\cO_{\fp,m}$ for the ring of integers in $F_\fP(E[\bar{\fp}^{m}])$. Then
		\begin{align}\label{eq:epf}
			\mu_{c,N}(a\varrho^{-n'/e}+p^n\Z_p) = \Omega_\infty^{-1}\barpi^{-n'/e} \Psi_{a,n}\left(\frac{\varsigma+(-\varsigma)^\flat_{\fp^{n'}}}{p^{n'-n}}\right) \in \cO_{\fp,n}.
		\end{align}
	\end{thm}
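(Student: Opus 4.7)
The plan is to transport the Fourier inversion formula
\[
\mu_{c,N}(a\varrho^{-n'/e}+p^n\Z_p) = \frac{1}{p^n}\sum_{\zeta^{p^n}=1}f_{c,N}(\zeta)\zeta^{-a\varrho^{-n'/e}}
\]
through the diagrams \eqref{diag:mu-varpi}--\eqref{diag:hex}, exactly as was done in \S\ref{subsec:reduction-torus}--\S\ref{subsec:Ctimes} to obtain \eqref{eq:pre-epf}, but refined so that the roots of unity are parametrized through $\fp^{n'}$-torsion rather than $\fp^n$-torsion (recall that $\fp^{n'}$ is principal since $e \mid n'$, so that the formal-group–to–elliptic-curve dictionary of Lemma \ref{lem:formal-fp} gives a concrete model). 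The twist by $\varrho^{-n'/e}$ in the argument of the measure is precisely designed, via \eqref{ass:iota}, so that after the natural change of variable on $p^{n'}$-th roots of unity the exponents line up with the $\zeta^{-a}$, $\zeta^{\varrho^{-n'/e}}$ pattern already present in \eqref{eq:pre-epf}. This reduces the problem to computing an average over $\mu_{p^{n'}}$ of $\fJ_{n'/e}$ evaluated at $X\zeta$ and $X^N\zeta$, twisted by $\zeta^{-a}$ and $\zeta^{-a/N}$, in the limit $X\to 1$.

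Next, I would substitute the expression \eqref{eq:J-H} and split the resulting double sum into four pieces, according to whether the character $\zeta^{-a}$ or $\zeta^{-a/N}$ is paired with $D\fH(vX)$ or with $D\fH(v^cX^c)$. Each piece is a Fourier-type average over $\mu_{p^n}$ in the variable $\zeta$, so Proposition \ref{prop:Ft} applies directly and converts it into a rational function of $vX$, $q$, $q^{p^n}$. The parameter $v$ then runs over the concrete set $\sR_{n'} = \{q_{n'}^i : 0\le i < p^{n'}\}$ of Lemma \ref{lem:qn}. Taking $X \to 1$ from inside the slit annulus $\{|q| < |X| < 1\}$ and expanding each piece as a formal series in $q_{n'}$ --- whose $p^n$-th power is $\qq = \e^{2\pi i\tau}$ for $\tau = (\varsigma + (-\varsigma)^\flat_{\fp^{n'}})/p^{n'-n}$ --- produces a $\qq^{1/p^n}$-expansion. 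The constant term is read off using the classical root-of-unity identity relating $\frac{1}{p^n}\sum_{\zeta^{p^n}=1}\log(1-\zeta/x)\zeta^{-a}$ to $a^\flat_{p^n}$ (cf.~Appendix \ref{app:bernoulli}), while the positive coefficients come from the geometric-series expansion of $y^{t^\sharp}/(y^{p^n}-1)$. After some rearrangement, the coefficients must be $2\pi i\, \sigma_{a,n}(m)$ as in \eqref{eq:q-exp}, with the four divisor sums indexed by the congruences $d\equiv a,\, a/N,\, a/c,\, a/cN \pmod{p^n}$ precisely matching the four averaged pieces.

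At this point the $\qq$-expansion \eqref{eq:q-exp} formally defines $\Psi_{a,n}$; the structural claim that this series is a genuine weight-one modular form of level $\Gamma^1(p^n)$ defined over $\Z$ is recognizable from the explicit shape of $\sigma_{a,n}(m)$ as a combination of standard Eisenstein series, a point I would return to (and exploit nontrivially) in \S\ref{sec:modularity}. The integrality claim $\mu_{c,N}(a\varrho^{-n'/e}+p^n\Z_p)\in \cO_{\fp,n}$ does not require the computation at all: it is inherited from Proposition \ref{prop:der-interpol}, since $f_{c,N}\in \Our[[t-1]]$ takes values in the ring of integers of $\hat{K}^\ur_\fp(\mu_{p^n})$ when evaluated on $\mu_{p^n}$, and via $\iota$ this is identified with $\cO_{\fp,n}$ in view of Lemma \ref{lem:formal-fp}.

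The main obstacle I anticipate is the combinatorial bookkeeping of the second step: organizing the four Fourier averages, keeping track of the $\varrho$-twist so that $\zeta^{-a\varrho^{-n'/e}}$ matches the $\fJ_{n'/e}$ argument cleanly, and collating the sum over $v\in \sR_{n'}$ with the geometric-series expansion from Proposition \ref{prop:Ft} into a single divisor sum with the correct congruence conditions. The sign conventions --- the distinction between $t^\flat$ and $t^\sharp$ in Proposition \ref{prop:Ft} at $v=1$ versus $v\neq 1$, and the factor $\sgn(d)$ in $\sigma_{a,n}(m)$ --- will require care to reproduce the stated constant term, especially the rather intricate expression $\tfrac{-c^2 a^\flat + c^2N(a/N)^\flat + c(a/c)^\flat - cN(a/cN)^\flat}{p^n}$ and the auxiliary term $-\tfrac{c(c-1)(N-1)}{2}\1_{a\not\equiv 0}$ originating from the $(c-1)/2$ in \eqref{eq:J-H}.
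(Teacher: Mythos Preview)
Your computational strategy is essentially the paper's: reduce via \eqref{eq:pre-period}--\eqref{eq:pre-epf} to an average of $\fJ_{n'/e}$, rewrite through \eqref{eq:J-H} in terms of $D\fH$, apply Proposition \ref{prop:Ft}, and take $X\to 1$ (the paper packages the latter steps as Lemmas \ref{lem:vc-reduction} and \ref{lem:Ft2}, the first of which handles the replacement of $v^c$ by an element of $\sR_{n'}$ that you will need but do not mention). One point to make explicit: you write the Fourier inversion at level $p^n$ but then immediately work with $\mu_{p^{n'}}$ and $\sR_{n'}$. The paper bridges this by first establishing the formula at level $p^{n'}$, where $e\mid n'$ so that \eqref{eq:pre-epf} applies directly, and only afterwards descending to general $n$ via the distribution identity $\sum_{0\le b<p}\sigma_{a+bp^t,t+1}(m)=\sigma_{a,t}(m)$.

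The substantive gap is your rationality argument. Proposition \ref{prop:der-interpol} gives $f_{c,N}\in\Our[[t-1]]$, so the measure value lies in $\Our$; but this does not descend to $\cO_{\fp,n}$. The isomorphism $\iota$ you invoke via Lemma \ref{lem:formal-fp} is defined only over $\Our$, not over $\cO_\fP$, so it does not control the field of definition of $f_{c,N}(\zeta)$, and $\Our$ properly contains $\cO_{\fp,n}$. The paper argues instead through the moduli interpretation: once the modularity of $\Psi_{a,n}$ over $\Z$ is established (deferred to \S\ref{sec:modularity}), the evaluation triple $(\C/L',\Omega_\infty z_{n'},dz)$ is defined over $F_\fP(E[\bar\fp^{n'}])$, which places the value in $\cO_{\fp,n'}$; the sharper containment in $\cO_{\fp,n}$ is then deferred to Proposition \ref{prop:Psi-Phi}, where the evaluation point is rewritten via multiplication by $\barpi^{n'/e}$.
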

	
	\begin{lem}\label{lem:vc-reduction}
		For $n\in\Z_{\ge 0}$ and $\lambda\in \Z$, we have
		\begin{align}
			D\fH(q_n^{\lambda c} X) = D\fH(q_n^{(\lambda c)^\flat_{p^n}}X) - \frac{\lambda c - (\lambda c)^\flat_{p^n}}{p^n}.
		\end{align}
		Therefore,
		\begin{align}\label{eq:J-H-2}
			\fJ_n(X) = \frac{2\pi i c}{\Omega_\infty\barpi^n}\left[ \frac{(c-1)(2p^{en}-1)}{2} + c\sum_{v\in \sR_{en}} D\fH(vX) - \sum_{v\in \sR_{en}} D\fH(vX^c) \right].
		\end{align}
	\end{lem}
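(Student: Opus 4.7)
The plan is to prove the two formulas in sequence, with the first serving as a normalization tool for the sum appearing in the second. Both rest on the transformation law $\fH(X/q)=-X\fH(X)$ recorded in the proof of the preceding lemma, which translates under the operator $D=X\tfrac{d}{dX}\log$ into
\begin{align}
D\fH(X/q) = D(-X\fH(X)) = D(-1)+D(X)+D\fH(X) = 1 + D\fH(X).
\end{align}
Iterating, $D\fH(Xq^k)=D\fH(X)-k$ for every $k\in\Z$.

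For the first identity, I would write $(\lambda c)^\flat_{p^n}=\lambda c-kp^n$ with $k=(\lambda c-(\lambda c)^\flat_{p^n})/p^n\in\Z$. Since by construction $q_n^{p^n}=q$, this yields
\begin{align}
q_n^{\lambda c}X = q_n^{(\lambda c)^\flat_{p^n}}X\cdot q^{k},
\end{align}
and applying the displayed consequence of the transformation law to $Y=q_n^{(\lambda c)^\flat_{p^n}}X$ gives exactly the claim.

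For the second identity, I would begin with formula \eqref{eq:J-H} applied to the explicit set of representatives $\sR_{en}=\{q_{en}^\lambda:0\le\lambda<p^{en}\}$ of $S_n$. The term $\sum_{v\in\sR_{en}}\frac{c-1}{2}$ is immediately $\frac{(c-1)p^{en}}{2}$. The substantive step is the sum $\sum_{v\in\sR_{en}}D\fH(v^cX^c)$, where writing $v=q_{en}^{\lambda}$ gives $v^c=q_{en}^{\lambda c}$. I would apply the first part of the lemma at level $en$ and observe that, because $\gcd(c,p)=1$, the map $\lambda\mapsto(\lambda c)^\flat_{p^{en}}$ is a bijection on $\{0,\dots,p^{en}-1\}$, so the sum of the normalized terms equals $\sum_{u\in\sR_{en}}D\fH(uX^c)$. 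The resulting correction is
\begin{align}
\sum_{\lambda=0}^{p^{en}-1}\frac{\lambda c-(\lambda c)^\flat_{p^{en}}}{p^{en}} = \frac{1}{p^{en}}\left[c\cdot\tfrac{(p^{en}-1)p^{en}}{2}-\tfrac{(p^{en}-1)p^{en}}{2}\right] = \frac{(c-1)(p^{en}-1)}{2},
\end{align}
again by the bijection above. Collecting the two constant contributions gives $\frac{(c-1)p^{en}}{2}+\frac{(c-1)(p^{en}-1)}{2}=\frac{(c-1)(2p^{en}-1)}{2}$, which is precisely \eqref{eq:J-H-2}.

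The computation is essentially routine given the transformation law of $\fH$; the only place that requires a little care is the bookkeeping that the $\lambda\mapsto(\lambda c)^\flat_{p^{en}}$ bijection allows the reindexing in the $X^c$ sum, and that the resulting arithmetic correction combines cleanly with the explicit $\frac{c-1}{2}$ term. No genuine obstacle is expected.
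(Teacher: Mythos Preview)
Your proof is correct and follows exactly the same approach as the paper's, which simply records the transformation law $\fH(vX/q)=-vX\fH(vX)$, deduces $D\fH(vX/q)=D\fH(vX)+1$, and then says ``the rest follows from \eqref{eq:J-H}.'' You have filled in the bookkeeping the paper leaves implicit, namely the bijection $\lambda\mapsto(\lambda c)^\flat_{p^{en}}$ and the arithmetic of the correction term, all of which is accurate.
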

	\begin{proof}
		As mentioned earlier, $\fH(vX/q) = -vX\fH(vX)$. So $D\fH(vX/q) = D\fH(vX) + 1$. The rest follows from \eqref{eq:J-H}.
	\end{proof}
	\begin{lem}\label{lem:Ft2}
		Let $t\in \Z_p$. Then
		\begin{enumerate}
			\item[(I)]
			\begin{align}
				\sum_{1\ne v\in \sR_n} \fF_{t,n}(v) = (1-p^n)\1_{t\equiv 0\bmod p^n} -\sum_{m\ge 1} q_n^m\sum_{\substack{d\mid m\\ d\equiv t\bmod p^n\\ m/d\not\equiv 0\bmod p^n}} \sgn(d).
			\end{align}
			\item[(II)] We have $\lim_{X\to 1}\left[\fF_{t,n}(X) - N \fF_{t/N,n}(X^N)\right]$ is equal to
			\begin{align}
				\frac{N-1}{2} + \frac{t^\flat_{p^n} - N(t/N)^\flat_{p^n}}{p^n} - \sum_{m\ge 1}q_n^{m}\sum_{\substack{d\mid m\\ d\equiv t\bmod p^n\\ m/d\equiv 0\bmod p^n}}\sgn(d) 
				+N \sum_{m\ge 1}q_n^{m}\sum_{\substack{d\mid m\\ d\equiv t/N\bmod p^n\\ m/d\equiv 0\bmod p^n}}\sgn(d).
			\end{align}
		\end{enumerate}
	\end{lem}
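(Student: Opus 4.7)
My approach rests on two tools: a convergent Laurent expansion of $\fF_{t,n}(X)$ in the annulus $|q|<|X|<1$, and the identity $q = q_n^{p^n}$, which allows me to freely convert between $q$- and $q_n$-series. The expansion I aim to derive is
\begin{align*}
\fF_{t,n}(X) = -\1_{t\equiv 0\bmod p^n} - \sum_{\substack{j\ge 1\\ j\equiv t}}\frac{X^j}{1-q^j} + \sum_{\substack{j\ge 1\\ j\equiv -t}}\frac{q^j X^{-j}}{1-q^j}.
\end{align*}
To obtain it, I would differentiate the product \eqref{eq:fH} termwise, expanding each factor as a Laurent series in the region $|X|<1$ (so in particular $\frac{1}{X-1} = -\sum_{j\ge 0}X^j$, contributing the constant $-1$), then apply the orthogonality of $p^n$-th roots of unity to project onto the congruence class $j\equiv \pm t\bmod p^n$.

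For part (I) I substitute $X = q_n^i$ and sum $i$ from $1$ to $p^n-1$. Since $q_n$ is not a root of unity, the geometric sums evaluate to $\sum_{i=1}^{p^n-1}q_n^{\pm ij} = \frac{q_n^{\pm j}-q^{\pm j}}{1-q_n^{\pm j}}$, and the factor $\frac{1}{1-q^j}$ combines with these via the partial-fraction identity $\frac{q_n^j-q^j}{(1-q^j)(1-q_n^j)} = \frac{1}{1-q_n^j}-\frac{1}{1-q^j}$, yielding
\begin{align*}
\sum_{i=1}^{p^n-1}\fF_{t,n}(q_n^i) = (1-p^n)\1_{t\equiv 0} - \sum_{\substack{j\ge 1\\ j\equiv t}}\Bigl[\tfrac{1}{1-q_n^j}-\tfrac{1}{1-q^j}\Bigr] + \sum_{\substack{j\ge 1\\ j\equiv -t}}\Bigl[\tfrac{1}{1-q_n^j}-\tfrac{1}{1-q^j}\Bigr].
\end{align*}
Writing $\frac{1}{1-q_n^j}-\frac{1}{1-q^j} = \sum_{k\ge 1,\,p^n\nmid k}q_n^{kj}$ (another consequence of $q=q_n^{p^n}$) re-expresses the result as a $q_n$-series whose coefficient of $q_n^m$ for $m\ge 1$ is the difference of counts $|\{j\mid m: j\equiv -t,\, p^n\nmid m/j\}| - |\{j\mid m: j\equiv t,\, p^n\nmid m/j\}|$. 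Under the signed-divisor convention $\sgn(\pm d_0) = \pm 1$, this difference is precisely $-\sum_{d\mid m,\, d\equiv t,\, m/d\not\equiv 0}\sgn(d)$, while the constant term $(1-p^n)\1_{t\equiv 0}$ comes from summing the $-\1_{t\equiv 0}$ over the $p^n - 1$ values of $i$.

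For part (II) I work directly from Proposition~\ref{prop:Ft}, splitting $\fF_{t,n}(X) = \frac{X^{t^\flat}}{X^{p^n}-1} + R_t(X)$ where $R_t(X)$ denotes the convergent tail. A first-order Laurent expansion of $\frac{X^a}{X^b-1}$ around $X=1$ shows that $\frac{X^{t^\flat}}{X^{p^n}-1}$ and $N\cdot\frac{X^{N(t/N)^\flat}}{X^{Np^n}-1}$ share the pole $\frac{1}{p^n(X-1)}$, so their difference is regular at $X=1$ with finite value $\frac{N-1}{2}+\frac{t^\flat - N(t/N)^\flat}{p^n}$. For the tail, expanding $\frac{q^a}{q^b-1}$ as a geometric series and reorganizing the double sum by the exponent gives $R_t(1) = -\sum_{k\ge 1}q^k\sum_{d\mid k,\,d\equiv t}\sgn(d)$; the substitution $q^k = q_n^{p^n k}$ then matches exactly with the $q_n^m$-series in the statement, the restriction $m\in p^n\Z_{>0}$ being enforced automatically by the congruence condition $m/d\equiv 0\bmod p^n$. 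The most delicate point is to work in the correct convergence regime $|X|<1$ throughout part (I): using the $|X|>1$ expansion of $D\fH$ (where $\frac{1}{X-1}$ has no constant term and the roles of $X^j$ and $X^{-j}$ swap) would produce a wrong constant and reverse the $j\equiv t$ versus $j\equiv -t$ sums, destroying the final cancellation that yields $1-p^n$.
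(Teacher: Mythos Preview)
Your proof is correct and, for part (II), essentially identical to the paper's: both split off the singular term $\frac{X^{t^\flat}}{X^{p^n}-1}$ from Proposition~\ref{prop:Ft}, compute the finite limit of the difference at $X=1$ by a first-order expansion, and expand the regular tail as a $q$-series before passing to $q_n$ via $q=q_n^{p^n}$.

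For part (I) the two arguments differ in organization rather than substance. The paper works directly from the formula of Proposition~\ref{prop:Ft}, substituting $v=q_n^\lambda$ into each of the three fractions and expanding each as a geometric series in $q_n$; the resulting triple sum is then reindexed. You instead first derive the Laurent expansion
\[
\fF_{t,n}(X) = -\1_{t\equiv 0} - \sum_{\substack{j\ge 1\\ j\equiv t}}\frac{X^j}{1-q^j} + \sum_{\substack{j\ge 1\\ j\equiv -t}}\frac{q^j X^{-j}}{1-q^j}
\]
valid on $|q|<|X|<1$ (which is exactly the range where the $q_n^i$ lie, by Lemma~\ref{lem:qn}), and only then substitute and sum the finite geometric progressions in $i$. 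The partial-fraction step $\frac{q_n^j-q^j}{(1-q^j)(1-q_n^j)}=\frac{1}{1-q_n^j}-\frac{1}{1-q^j}$ and the identity $\frac{1}{1-q_n^j}-\frac{1}{1-q^j}=\sum_{k\ge 1,\,p^n\nmid k}q_n^{kj}$ package the same manipulations the paper does implicitly. Your route has the mild advantage of isolating a clean closed form for $\fF_{t,n}(X)$ as a Laurent series, which makes the final reindexing into the signed-divisor sum slightly more transparent; the paper's route stays closer to Proposition~\ref{prop:Ft} and avoids introducing a new intermediate expansion. Either way the computation is the same double sum reorganized in two different orders.
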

	\begin{proof}
		Starting from Proposition \ref{prop:Ft}, we find $\sum_{1\ne v\in \sR_n}\fF_{t,n}(v)$ equates
		\begin{align}
			&\sum_{1\ne v\in \sR_n}\left\{-\1_{t\in p^n\Z_p} +  \frac{v^{t^\sharp}}{v^{p^n}-1} + \sum_{m\ge 1}\left[\frac{(q^mv)^{t^\sharp}}{(q^mv)^{p^n}-1} + \frac{(q^m/v)^{(-t)^\sharp}}{1-(q^m/v)^{p^n}}\right] \right\}\\
			=& (1-p^n)\1_{t\in p^n\Z_p} + \sum_{1\le \lambda<p^n}\left\{
			-\sum_{l\ge 0} q_n^{\lambda(t^\sharp+lp^n)}
			-\sum_{m\ge 1}\sum_{l\ge 0} q_n^{(\lambda+mp^n)(t^\sharp+lp^n)}
			+\sum_{m\ge 1}\sum_{l\ge 0}
			q_n^{(mp^n-\lambda)((-t)^\sharp+lp^n)} \right\}\\
			=& (1-p^n)\1_{t\in p^n\Z_p} - \sum_{\substack{d>0\\ d\equiv t\bmod p^n}}\sum_{\substack{m\ge 1\\ m\not\equiv 0\bmod p^n}} q_n^{dm} + \sum_{\substack{d>0\\ d\equiv -t\bmod p^n}}\sum_{\substack{m\ge 1\\ m\not\equiv 0\bmod p^n}} q_n^{dm}.
		\end{align}
		
		As for (II), by Proposition \ref{prop:Ft}, we have
		\begin{align}\label{eq:v=1}
			\begin{split}
				&\lim_{X\to 1}\left[\fF_{t,n}(X) - N \fF_{t/N,n}(X^N)\right]\\
				=& \lim_{X\to 1}\left[\frac{X^{t^\flat}}{X^{p^n}-1} - N \frac{X^{N(t/N)^\flat}}{X^{N p^n}-1}\right] + \sum_{m\ge 1}
				\left[ \frac{q^{mt^\sharp}}{q^{mp^n}-1} + \frac{q^{m(-t)^\sharp}}{1-q^{mp^n}} -N \frac{q^{m(t/N)^\sharp}}{q^{mp^n}-1} - N\frac{q^{m(-t/N)^\sharp}}{1-q^{mp^n}}\right].
			\end{split}
		\end{align}
		The limit of the last equation is
		\begin{align}
			\lim_{X\to 1} \left[\frac{\sum_{1\le j<N}(X^{jp^n}-1)}{X^{N p^n}-1} + \frac{X^{t^\flat}-1}{X^{p^n}-1} - N\frac{X^{N(t/N)^\flat}-1}{X^{N p^n}-1}\right]
			= \frac{N-1}{2} + \frac{t^\flat}{p^n} - \frac{N(t/N)^\flat}{p^n}.
		\end{align}
		By a computation similar to that of (I), the rest of \eqref{eq:v=1} is
		\begin{align}
			&-\sum_{m\ge 1}q^m\sum_{\substack{d\mid m\\ d\equiv t\bmod p^n}}\sgn(d) + N \sum_{m\ge 1}q^m\sum_{\substack{d\mid m\\ d\equiv t/N\bmod p^n}}\sgn(d)\\
			=&-\sum_{m\ge 1}q_n^m\sum_{\substack{d\mid m\\ d\equiv t\bmod p^n\\m/d\equiv 0\bmod p^n}}\sgn(d) + N \sum_{m\ge 1}q_n^m\sum_{\substack{d\mid m\\ d\equiv t/N\bmod p^n\\m/d\equiv 0\bmod p^n}}\sgn(d).
		\end{align}
	\end{proof}
	\begin{proof}[Proof of Theorem \ref{thm:epf}]
		We first show that for $a\in \Z_p$ and $n\in \Z_{\ge 0}$,
		\begin{align}\label{eq:preepf-e}
			\mu_{c,N}(a\varrho^{-n}+p^{en}\Z_p) = \frac{2\pi i}{\Omega_\infty\barpi^n}\sum_{m\ge 0}\sigma_{a,en}(m)q_{en}^{m}.
		\end{align}
		Combining \eqref{eq:pre-period}, \eqref{eq:pre-period-2}, \eqref{eq:pre-epf}, \eqref{eq:J-H-2}, $\mu_{c,N}(a\varrho^{-n}+p^{en}\Z_p)$ is equal to the limit at $X=1$ of
		\begin{align}
			\begin{split}
				\frac{2\pi ic}{\Omega_\infty\barpi^n}&\Bigg[\frac{(N-1)(c-1)(2p^{en}-1)}{2}\1_{a\in p^{en}\Z_p}
				- \frac{c}{p^{en}}\sum_{\zeta^{p^{en}}=1}\sum_{v\in \sR_{en}}D\fH(vX\zeta)\zeta^{-a} + \frac{1}{p^{en}}\sum_{\zeta^{p^{en}}=1}\sum_{v\in \sR_{en}}D\fH(vX^c\zeta^c)\zeta^{-a}\\
				&+\frac{cN}{p^{en}}\sum_{\zeta^{p^{en}}=1}\sum_{v\in \sR_{en}}D\fH(vX^N\zeta)\zeta^{-a/N} - \frac{N}{p^{en}}\sum_{\zeta^{p^{en}}=1}\sum_{v\in \sR_{en}} D\fH(vX^{N c}\zeta^c)\zeta^{-a/N}\Bigg],
			\end{split}
		\end{align}
		which we arrange to
		\begin{align}
			\frac{2\pi ic}{\Omega_\infty\barpi^n}&\Bigg[\frac{(N-1)(c-1)(2p^{en}-1)}{2}\1_{a\in p^{en}\Z_p}\\
			&- c\sum_{v\in \sR_{en}}\fF_{a,en}(vX) + \sum_{v\in \sR_{en}}\fF_{a/c,en}(vX^c) + cN \sum_{v\in \sR_{en}}\fF_{a/N,en}(vX^N) - N\sum_{v\in \sR_{en}}\fF_{a/(cN),en}(vX^{cN})\Bigg].
		\end{align}
		By Lemma \ref{lem:Ft2}, we find
		\begin{align}
			&\lim_{X\to 1}\sum_{v\in \sR_{en}}\left[-c\fF_{a,en}(vX) + \fF_{a/c,en}(vX^c) + cN \fF_{a/N,en}(vX^N) -N \fF_{a/(cN),en}(vX^{cN})\right]\\
			=&(c-1)(N-1)(1-p^{en})\1_{p^{en}|a} +\frac{(1-c)(N-1)}{2} + \frac{-c a^\flat_{p^{en}} + cN(a/N)^\flat_{p^{en}} + (a/c)^\flat_{p^{en}} - N(a/cN)^\flat_{p^{en}}}{p^{en}} + \sum_{m\ge 1} q_{en}^m \sigma_{a,en}(m).
		\end{align}
		Putting everything together, we then get \eqref{eq:preepf-e}. Let now $n\in \Z_{\ge 0}$ and $n'\ge n$ be the smallest integer divisible by $e$. Note that for all $m\in \Z_{\ge 0}$ and $t\ge 0$,
		\begin{align}
			\sum_{0\le b<p}\sigma_{a+bp^t,t+1}(m) = \sigma_{a,t}(m);
		\end{align}
		for $m=0$ this can be verified by the measure interpretation from Appendix \ref{app:bernoulli}. As such, we find
		\begin{align}
			\mu_{c,N}(a\varrho^{-n'/e}+p^n\Z_p) &= \sum_{a'\equiv a\bmod p^{n'}} \mu_{c,N}(a'\varrho^{-n'/e}+p^{n'}\Z_p)\\
			&=\frac{2\pi i}{\Omega_\infty\barpi^{n'/e}}\sum_{m\ge 0} \sigma_{a,n}(m)\e^{2\pi im(\varsigma + (-\varsigma)^\flat_{\fp^{n'}})/p^{n'}}.
		\end{align}
		We emphasize that this is a complex identity and is made possible since the left hand side can be regarded as $\C$-valued, as explained in \S\ref{subsec:reduction-torus}.
		
		In \S\ref{sec:modularity} we will show the modularity of $\Psi_{a,n}$ as well as \eqref{eq:q-exp}. Assume them for now, and we consider the question of rationality. As the $\qq$-expansion is integral, we see that $\Psi_{a,n}$ is defined over $\Z$. As in \S\ref{subsec:conventions-modform}, thinking of it as a function on the moduli space of triples $(\EE,P,\omega)$ where $\EE$ is an elliptic curve defined over some algebra $R$, $P$ is a point of $E(R)$ of exact order $p^n$, and $\omega\in H^0(\EE,\Omega^1_{\EE/R})$ is a basis. Then, writing $z_{n'} = (\varsigma+(-\varsigma)^\flat_{\fp^{n'}})/p^{n'}$ and $L' = (\Z\cdot p^n z_{n'}+\Z)\cdot\Omega_\infty$, we see that
		\begin{align}
			\Psi_{a,n}(\C/L',\Omega_{\infty}\cdot z_{n'},dz) = \Omega_\infty^{-1}\Psi_{a,n}\left(\C/(\Z\cdot p^nz_{n'}+\Z), z_{n'},dz\right) = \Omega_\infty^{-1}\Psi_{a,n}\left(\frac{\varsigma+(-\varsigma)^\flat_{\fp^{n'}}}{p^{n'-n}}\right).
		\end{align}
		Since both $\Psi_{a,n}$ and $(\C/L',\Omega_{\infty}\cdot z_{n'},dz)$ are defined over $F_\fP(E[\bar{\fp}^{n'}])$, we see that $\Omega_\infty^{-1}\Psi_{a,n}(p^n z_{n'})$ is valued in the same field, and thus $\cO_{\fp,n'}$ since it is integral. We will prove the stronger rationality $\mu_\BH(a+p^n\Z_p) \in \cO_{\fP,n}$ in Proposition \ref{prop:Psi-Phi}.
	\end{proof}

	
	\section{Modularity of the $q$-expansions}\label{sec:modularity}
	
	We will now study the automorphic property of $\Psi_{a,n}$ for $a\in \Z_p$ and $n\in \Z_{\ge 0}$; in fact, we will also show in Proposition \ref{prop:addition} below that the distribution relation of $\mu_{c,N}$ can be directly deduced from the modularity. As in \S\ref{subsec:notation}, denote by $\zeta = \zeta_n\in \mu_{p^n}(\bar{\Q})$ the primitive $p^n$-th root of unity corresponding to $\e^{2\pi i/p^n}$ under the fixed embedding $\bar{\Q}\to \C$, so that $\zeta_{n+1}^p = \zeta_n$ for all $n\in \Z_{\ge 0}$. Following the conventions of \cite{kubert-lang}, we start by defining $\Psi_{a,n}$ as a function on oriented tuples $(\omega_1,\omega_2)$ (treated as column vectors) with $\im(\omega_1/\omega_2) > 0$. In fact we do a bit more, for $k\in \Z_{\ge 0}$, put
	\begin{align}
		\Psi^{(k)}_{a,n}(\omega_1,\omega_2) = \frac{1}{12p^n}\sum_{0\le l_1,l_2<p^n} \frac{d^{k+1}}{dz^{k+1}}\log\Lambda_{c,N}(l_1\omega_1/p^n + l_2\omega_2/p^n, \Z\omega_1+\Z\omega_2)\zeta^{-al_2}.
	\end{align}
	Later in Proposition \ref{prop:q-expansion}, we shall see that $\Psi_{a,n}^{(0)}$ recovers $\Psi_{a,n}$. Note that by definition, $\Psi^{(k)}_{a,n} = \Psi^{(k)}_{a',n}$ whenever $a\equiv a'\bmod p^n$. 
	
	We start by verifying the three laws listed on p.~26 \textit{op.~cit}. 
	\begin{prop}\label{prop:modularity}
		The function $\Psi^{(k)}_{a,n}: \{\umega = (\omega_1,\omega_2)\in \C^2 \colon \im(\omega_1/\omega_2)>0\} \to \C$ enjoys the following properties:
		\begin{enumerate}
			\item[(MF1)] For all $\lambda\in \C^\times$,
			\begin{align}
				\Psi^{(k)}_{a,n}(\lambda\umega) = \lambda^{-k-1}\Psi^{(k)}_{a,n}(\umega).
			\end{align}
			
			\item[(MF2)]
			For all $u\in (\Z/p^n)^\times$ and $\gamma\in \SL_2(\Z)$ with $\gamma\equiv \mat{u}{0}{*}{u^{-1}}\bmod p^n$,
			\begin{align}
				\Psi^{(k)}_{a,n}(\gamma \umega) = \Psi^{(k)}_{ua,n}(\umega).
			\end{align}
			
			\item[(MF3)] For all $\gamma \in \SL_2(\Z)$, there exists $a_m\in \C$ for $m\in \Z_{\ge 0}$ with $|a_m| = O(m^k)$, such that for all $\tau\in \cH$,
			\begin{align}
				\Psi^{(k)}_{a,n}(\gamma (\tau,1)) = \sum_{m\ge 0} a_m \e^{2\pi im\tau/p^n}.
			\end{align}
		\end{enumerate}
	\end{prop}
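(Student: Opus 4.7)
The plan is to handle the three properties in sequence, exploiting throughout the fact that $\Lambda_{c,N}(z,L)$ depends only on the lattice $L$ and satisfies the scaling invariance $\Lambda_{c,N}(\lambda z,\lambda L)=\Lambda_{c,N}(z,L)$; the latter follows from the analogous property of $\theta$ noted right before \eqref{eq:Theta}, together with the definitions of $\Theta_c$ and $\Lambda_{c,N}$.

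For (MF1), the substitution $\umega\rightsquigarrow\lambda\umega$ scales each argument $l_1\omega_1/p^n+l_2\omega_2/p^n$ by $\lambda$; by the chain rule, each of the $k+1$ differentiations in $z$ picks up $\lambda^{-1}$, yielding weight $-(k+1)$. For (MF2), write $\gamma = \mat{A}{B}{C}{D}$. Since $\gamma\in\SL_2(\Z)$ preserves $L=\Z\omega_1+\Z\omega_2$, I can reindex the double sum via the bijection $(l_1',l_2')\mapsto(l_1,l_2)$ on $(\Z/p^n)^2$ determined by $l_1'\omega_1'+l_2'\omega_2'\equiv l_1\omega_1+l_2\omega_2\bmod p^n L$; the congruences $B\equiv 0$ and $D\equiv u^{-1}\bmod p^n$ then give $l_2\equiv u^{-1}l_2'\bmod p^n$, so the weight becomes $\zeta^{-al_2'}=\zeta^{-(ua)l_2}$, producing $\Psi^{(k)}_{ua,n}(\umega)$.

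The substantial part is (MF3). The plan is again to invoke lattice invariance: writing $\gamma(\tau,1)=(\omega_1',\omega_2')$, we have $\Z\omega_1'+\Z\omega_2'=\Z\tau+\Z$, so after reindexing the defining sum becomes a weighted sum of $(d/dz)^{k+1}\log\Lambda_{c,N}$ evaluated at points $(l_1\tau+l_2)/p^n$ for $(l_1,l_2)\in(\Z/p^n)^2$, where the weight is an explicit character $\chi_\gamma(l_1,l_2)$ depending only on $\gamma$ and the congruence classes. By Proposition \ref{prop:lambda-lambda} and the product expansion \eqref{eq:fH}, each such term admits an explicit $q$-expansion in $q=e^{2\pi i\tau}$, reducing after $k+1$ differentiations to $q$-expansions of iterated derivatives of $D\fH$; the case $(k,\gamma)=(0,\id)$ is already extracted in Proposition \ref{prop:Ft} and Lemma \ref{lem:Ft2}. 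Summing over $(l_1,l_2)$ with the weight $\chi_\gamma$ produces coefficients of divisor-function type whose size is controlled by the standard estimate on $\sigma_k(m)$, yielding $|a_m|=O(m^k)$.

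The hard part will be (MF3): one must verify carefully that, at every cusp $\gamma\in\SL_2(\Z)$, the character-weighted sums yield honestly holomorphic $q$-expansions (no negative powers of $q$) with the asserted polynomial growth, uniformly in $k$. Everything else is essentially formal once the invariance properties of $\Lambda_{c,N}$ are in hand.
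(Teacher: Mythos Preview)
Your proposal is correct and follows essentially the same route as the paper's proof: (MF1) via the scaling invariance of $\Lambda_{c,N}$ and the chain rule, (MF2) via the reindexing $(l_1',l_2')\mapsto(l_1,l_2)$ on $(\Z/p^n)^2$ using $B\equiv 0$, $D\equiv u^{-1}$, and (MF3) by reducing to explicit $\qq^{1/p^n}$-expansions of the building blocks $D^k\fH(\uu,\qq)-N^kD^k\fH(\uu^N,\qq)$ at points $\uu$ with $\uu^{p^n}\in\qq^{\Z}$, where the divisor-function coefficients give the $O(m^k)$ bound. One small caveat: Proposition~\ref{prop:lambda-lambda} is stated for the CM lattice $L$, whereas (MF3) requires the identity $\Lambda_{c,N}(z,\sL)\sim\Theta_c(Nz,\sL)/\Theta_c(z,\sL)$ for the arbitrary lattice $\sL=\Z\tau+\Z$; but the proof of Lemma~\ref{lem:prod-formula} with $\beta=N\in\Z$ works for any lattice (no CM needed), so this is only a matter of citation.
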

	\begin{proof}
		Recall that, for any lattice $\sL\subset \C$ and $\lambda\in \C^\times$, $\theta(\lambda z,\lambda \sL) = \theta(z,\sL)$, and thus $\Lambda_{c,N}(\lambda z,\lambda \sL) = \Lambda_{c,N}(z,\sL)$. Taking the logarithm and the $(k+1)$-st derivative, we get (MF1). 
		
		Next, we turn to (MF2). Write $\gamma = \mat{a}{b}{c}{d}\in \SL_2(\Z)$ with $\gamma \equiv \mat{u}{0}{*}{u^{-1}}$ and $\sL = \Z\omega_1+\Z\omega_2$. Then
		\begin{align}
			\Psi^{(k)}_{a,n}(\gamma\umega) &= \frac{1}{12p^n}\sum_{0\le l_1,l_2<p^n}
			\frac{d^{k+1}}{dz^{k+1}}\log\Lambda_{c,N}\left(\frac{l_1(a\omega_1+b\omega_2)}{p^n} + \frac{l_2(c\omega_1+d\omega_2)}{p^n},\sL\right)\zeta^{-al_2}\\
			&=\frac{1}{12p^n}\sum_{0\le l_1,l_2<p^n}
			\frac{d^{k+1}}{dz^{k+1}}\log\Lambda_{c,N}\left(\frac{(l_1a+l_2c)\omega_1}{p^n} + (u^{-1})^\flat_{p^n} \frac{l_2\omega_2}{p^n},\sL\right)\zeta^{-al_2}\\
			&=\frac{1}{12p^n}\sum_{0\le l_1,l_2<p^n}
			\frac{d^{k+1}}{dz^{k+1}}\log\Lambda_{c,N}\left(\frac{l_1\omega_1}{p^n} + \frac{l_2\omega_2}{p^n},\sL\right)\zeta^{-ual_2},
		\end{align}
		where in the second equality we used the facts $d-u^{-1}\equiv b\equiv 0\bmod p^n$ and that $\Lambda_{c,N}$ is $\sL$-periodic.
		
		Finally, we deal with (MF3), which boils down to computations of the $q$-expansions essentially contained in \S\ref{sec:computation}. Let $\sL\subset \C$ be the lattice generated by $1$ and $\tau\in \cH$. With $X = \e^{2\pi iz}$ and $\qq = \e^{2\pi i \tau}$, following the same reasoning in \S\ref{subsec:epf}, we see that
		\begin{align}
			\Lambda_{c,N}(z,\sL) \sim \frac{\fG(X^N,\qq)}{\fG(X,\qq)} \sim \left[X^{(N-1)c(c-1)/2}\frac{\fH(X^N,\qq)^{c^2}\fH(X^c,\qq)}{\fH(X^{cN},\qq)\fH(X,\qq)^{c^2}}\right]^{12},
		\end{align}
		where
		\begin{align}
			\fG(X,\qq) &= \Theta_c(z,\sL);\\
			\fH(X,\qq) &= (1-1/X)\prod_{m\ge 1}(1-\qq^mX)(1-\qq^m/X).
		\end{align}
		It follows that
		\begin{align}\label{eq:Lambda-DH}
			\frac{1}{12}\frac{d}{dz}\log\Lambda_{c,N}(z,\sL) = 
			2\pi i\left[\frac{(N-1)(c^2-c)}{2} -c^2 D\fH(X,\qq) + c^2ND\fH(X^N,\qq) + cD\fH(X^c,\qq) - cND\fH(X^{cN},\qq)\right],
		\end{align}
		and if $k\in\Z_{>1}$,
		\begin{align}\label{eq:Lambda-DH2}
			\frac{1}{12}\frac{d^k}{dz^k}\log\Lambda_{c,N}(z,\sL) = (2\pi i)^k\left[-c^2 D^k\fH(X,\qq) + c^2N^kD^k\fH(X^N,\qq) + c^kD^k\fH(X^c,\qq) - (cN)^kD^k\fH(X^{cN},\qq)\right].
		\end{align}
		Thus (MF3) can be seen by virtue of the lemma below.
	\end{proof}
	\begin{lem}
		Suppose $\uu\in \C^\times$ is such that $\uu^{p^n}\in \qq^{\Z}$, then for all $k\in\Z_{> 0}$, 
		\begin{align}
			D^k\fH(\uu,\qq) - N^kD^k\fH(\uu^N,\qq) = \sum_{m\ge 0} a_m \qq^{m/p^n} 
		\end{align}
		for some $a_m\in \C$, $m\in \Z_{\ge 0}$ and $|a_m| = O(m^k)$. Here $\qq^{1/p^n} = \e^{2\pi i\tau/p^n}$.
	\end{lem}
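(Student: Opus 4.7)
My plan is to move from the multiplicative combination to the additive one via the chain-rule identity
\[
D^k\fH(\uu,\qq) - N^k D^k\fH(\uu^N,\qq) \;=\; \left(X\tfrac{d}{dX}\right)^{\!k}\!\bigl[\log\fH(X,\qq) - \log\fH(X^N,\qq)\bigr]\Big|_{X=\uu}.
\]
Writing $\log\fH(X,\qq) = \log(1-X^{-1}) + \sum_{m\ge 1}[\log(1-\qq^m X) + \log(1-\qq^m/X)]$ and subtracting the analogous expression with $X$ replaced by $X^N$, the right hand side splits into a $\qq$-free rational part
\[
f_k(X) \;:=\; \left(X\tfrac{d}{dX}\right)^{\!k}\!\bigl[\log(1-X^{-1}) - \log(1-X^{-N})\bigr],
\]
which is regular at $X=1$ since $(1-X^{-1})/(1-X^{-N}) = X^{N-1}/(1+X+\cdots+X^{N-1})$ has neither zero nor pole there, plus a tail that I would compute by expanding $\log(1-Y) = -\sum_{l\ge 1}Y^l/l$ termwise and applying $(X d/dX)^k$:
\[
-\sum_{m,l\ge 1}\qq^{ml}\,l^{k-1}\Bigl[(X^l - N^k X^{Nl}) + (-1)^k(X^{-l} - N^k X^{-Nl})\Bigr].
\]

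Next, I would write $\uu = \qq^{j/p^n}\zeta$ with $j\in\Z$ and $\zeta\in\mu_{p^n}$, and use the quasi-periodicity $\fH(\qq X,\qq) = -\qq^{-1}X^{-1}\fH(X,\qq)$ --- which yields $D^k\fH(\qq X,\qq) = D^k\fH(X,\qq)$ for $k\ge 2$ and a constant correction of $-1$ for $k=1$ --- to reduce both $\uu$ and $\uu^N$ to representatives with $0\le j,j''<p^n$, any constant shift being absorbed into $a_0$. Setting $q^* := \qq^{1/p^n}$, in the tail each monomial $\qq^{ml}\uu^{\pm l}$ or $\qq^{ml}\uu^{\pm Nl}$ becomes $(q^*)^{l(mp^n\pm j^\bullet)}$ times a root of unity, with exponent a positive integer thanks to the reduction. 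Grouping by this exponent $M\ge 1$, the coefficient $a_M$ is a finite sum bounded in absolute value by $(1+N^k)\sigma_{k-1}(M) = O(M^{k-1+\varepsilon})$, which fits within the claimed $O(M^k)$. The contribution $f_k(\uu)$ is a single complex number affecting only $a_0$.

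The only subtlety is the limiting case $\uu=1$ for $k=1$: individually both $\tfrac{1}{\uu-1}$ in $D\fH(\uu,\qq)$ and $\tfrac{N}{\uu^N-1}$ in $N D\fH(\uu^N,\qq)$ blow up, but their difference is finite. The decomposition above sidesteps this entirely, since $f_1(X)$ is regular at $X=1$ (with $f_1(1)=(N-1)/2$ by L'H\^opital applied to $(N-1) - X\cdot(1+2X+\cdots+(N-1)X^{N-2})/(1+X+\cdots+X^{N-1})$) and the tail is manifestly regular there (indeed it vanishes at $X=1$ when $k$ is odd).
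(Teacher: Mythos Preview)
Your approach is essentially the paper's: reduce $\uu$ via the quasi-periodicity to the annulus $|\qq|<|\uu|\le 1$, expand $D^k\fH$ there as $D^{k-1}\bigl[\tfrac{1}{X-1}\bigr]$ plus a convergent $\qq$-tail, and tame the pole at $|\uu|=1$ by combining the two constant pieces into $D^{k-1}\bigl[\tfrac{1}{X-1}-\tfrac{N}{X^N-1}\bigr]$ (your $f_k$).

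There is one small inconsistency in your write-up. After reducing $\uu$ to $0\le j<p^n$, the monomial $\qq^{ml}\uu^{-Nl}$ in your tail has $q^*$-exponent $l(mp^n-Nj)$, which is \emph{non-positive} whenever $1\le m\le\lfloor Nj/p^n\rfloor$; in other words, the series $\log(1-\qq^m/X^N)=-\sum_l(\qq^m/X^N)^l/l$ fails to converge at $X=\uu$ for those finitely many $m$. Your own remedy---reducing $\uu^N$ separately to a representative with $0\le j''<p^n$---is exactly right, but once you do that the combined expression $(Xd/dX)^k[\log\fH(X)-\log\fH(X^N)]\big|_{X=\uu}$ no longer literally applies, since $(\uu_{\rm red})^N\ne(\uu^N)_{\rm red}$ in general. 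The clean dichotomy (which is how the paper proceeds) is: when $j=0$ one also has $j''=0$ (as $p\nmid N$), so no reduction is needed and your combined form handles the pole; when $j>0$ one has $j''>0$ as well, so each of $D^k\fH(\uu_{\rm red})$ and $D^k\fH((\uu^N)_{\rm red})$ admits its own convergent $q^*$-expansion and no combination is required.
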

	\begin{proof}
		We can write $\uu = \qq^{s/p^n}\zeta^t$ for some $s,t\in \Z$ and $\zeta$ the fixed primitive $p^n$-th root of unity. Since $D\fH(\uu/\qq,\qq) = 1 + D\fH(\uu,\qq)$, it suffices to work with $0\le s,t< p^n$; in particular $|\qq|<|\uu|\le 1$. Now, suppose $|\qq|<|X|\le 1$, then
		\begin{align}
			D^k\fH(X,\qq) &= D^{k-1}\Big[\frac{1}{X-1} + \sum_{m\ge 1} \frac{\qq^m X}{\qq^m X-1} + \frac{\qq^m/X}{1-\qq^m/X}\Big]\\
			&= D^{k-1}\Big[\frac{1}{X-1}\Big] - \sum_{m,l\ge 1} l^{k-1}\qq^{ml}X^l + \sum_{m,l\ge 1}(-l)^{k-1}\qq^{ml}X^{-l}.
		\end{align}
		Suppose first that $|\uu|\ne 1$. Plugging in $X = \uu = \qq^{s/p^n}\zeta^t$, we find
		\begin{align}\label{eq:DH-q-expansion}
			D^k\fH(\uu,\qq) &= -\sum_{l\ge 0}l^{k-1}\uu^l - \sum_{m,l\ge 1} l^{k-1}\qq^{ml}\uu^l + \sum_{m,l\ge 1}(-l)^{k-1}\qq^{ml}\uu^{-l}\\
			&= -\sum_{l\ge 0}l^{k-1}\zeta^{tl}\qq^{sl/p^n}
			- \sum_{m,l\ge 1}l^{k-1}\zeta^{tl}\qq^{(p^nml+sl)/p^n}
			+ \sum_{m,l\ge 1}(-l)^{k-1} \zeta^{-tl}\qq^{(p^nm-s)l/p^n}.
		\end{align}
		Hence all the exponents of $\qq$ are nonnegative, and the coefficient of $\qq^{m/p^n}$ is bounded by $O\left(\sum_{d\mid n}d^{k-1}\right) = O(n^k)$, as desired.
		
		When $|\uu|=1$, the expansion \eqref{eq:DH-q-expansion} diverges at the constant term $D^{k-1}\big[\frac{1}{X-1}\big]|_{X=\uu}$. Still, the constant term of $D^k\fH(\uu,\qq) - N^kD^k\fH(\uu^N,\qq)$ is finite. This is because $\frac{1}{X-1} - \frac{N}{X^N-1}$, and thus $D^{k-1}\big[\frac{1}{X-1} - \frac{N}{X^N-1}\big]$, has no pole at $\{\zeta^t\colon 0\le t<p^n\}$.
	\end{proof}
	
	Our next task is to justify the new definition of $\Psi^{(k)}_{a,n}$, which, by our presiding conventions \S\ref{subsec:conventions-modform} and specializing to $k=0$, gives \eqref{eq:q-exp}. To begin with, for $k\in \Z_{\ge 0}$, $a\in \Z_p$ and $n\in \Z_{\ge 0}$, define $\sigma^{(k)}_{a,n}(m) = \sigma^{(k)}_{a,n,c,N}(m)$ to be
	\begin{align}
		c^2\sum_{\substack{d\mid m\\ d\equiv a}} \sgn(d)d^k
		-c^2N^{k+1}\sum_{\substack{d\mid m\\ d\equiv a/N}}\sgn(d)d^k - c^{k+1}\sum_{\substack{d\mid m\\ d\equiv a/c}}\sgn(d)d^k
		+(cN)^{k+1}\sum_{\substack{d\mid m\\ d\equiv a/(cN)}} \sgn(d)d^k
	\end{align}
	if $m\ge 1$, and
	\begin{align}
		\sigma^{(k)}_{a,n}(0) = \int_{a+p^n\Z_p}x^k\mu_{\KL}(x),
	\end{align}
	where $\mu_\KL$ is the $(c,N)$-regularized Kubota--Leopoldt measure determined by the interpolation formula \eqref{eq:interpolation-KL}; see Appendix \ref{app:bernoulli} for a detailed discussion. We note that $\sigma^{(k)}_{a,n}(0)$ is valued in $\Q$ and is $p$-integral. 
	
	\begin{prop}\label{prop:q-expansion}
		For $\tau\in \cH$ and $\qq^{1/p^n} = \e^{2\pi i\tau/p^n}$, we have 
		\begin{align}
			\Psi^{(k)}_{a,n}(2\pi i\tau, 2\pi i) = \sum_{m\ge 0} \sigma^{(k)}_{a,n}(m) \qq^{m/p^n}.
		\end{align}
		In particular, by the $\qq$-expansion principle, $\Psi_{a,n}^{(k)}$ is a modular form over $\Z_{(p)}$.
	\end{prop}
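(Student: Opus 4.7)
The strategy is to perform a direct Fourier computation of $\Psi^{(k)}_{a,n}$ from its definition, generalizing the $k=0$ calculation already implicit in the proof of Theorem \ref{thm:epf}. By property (MF1) just established, $\Psi^{(k)}_{a,n}(2\pi i\tau, 2\pi i) = (2\pi i)^{-k-1}\Psi^{(k)}_{a,n}(\tau, 1)$, so it suffices to evaluate the latter. Using \eqref{eq:Lambda-DH} when $k=0$ and \eqref{eq:Lambda-DH2} when $k \ge 1$, I will expand $(1/12)(d^{k+1}/dz^{k+1})\log\Lambda_{c,N}(z, \Z\tau + \Z)$ as $(2\pi i)^{k+1}$ times a fixed $\Z$-linear combination of the four quantities $D^{k+1}\fH(X^r, \qq)$ for $r \in \{1, N, c, cN\}$ with $X = \e^{2\pi iz}$, plus the additive constant $(N-1)c(c-1)/2$ that appears only when $k=0$.

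Next I substitute $z = (l_1\tau + l_2)/p^n$, so that $X = \qq^{l_1/p^n}\zeta^{l_2}$, and execute the double sum $\frac{1}{12p^n}\sum_{l_1, l_2}$ against the character $\zeta^{-al_2}$. For each $r$, expanding $D^{k+1}\fH(X^r, \qq)$ as a Laurent series in $X$ and averaging over $l_2$ forces the summation index $l$ to satisfy $rl \equiv \pm a \bmod p^n$; since $\gcd(r, p)=1$ this is equivalent to $l \equiv \pm a/r \bmod p^n$. Summing over $l_1$ then reassembles the result as a series in $\qq^{1/p^n}$. Combining the four $r$-contributions with the weights $-c^2, c^2 N^{k+1}, c^{k+1}, -(cN)^{k+1}$ from \eqref{eq:Lambda-DH2}, a direct bookkeeping shows that the coefficient of $\qq^{m/p^n}$ for $m \ge 1$ is exactly $\sigma^{(k)}_{a,n}(m)$: the extra factors $r^{k+1}$ arising from $D^{k+1}\fH(X^r) = \sum \pm l^k X^{rl}$ are absorbed into the divisor-sum structure via the change of variable $d = rl$.

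For the constant term $m = 0$, the $k = 0$ case was already handled inside the proof of Theorem \ref{thm:epf}, where Lemma \ref{lem:Ft2}.(II) produced a finite expression matching $\mu_{\KL}(a + p^n\Z_p)$ from Appendix \ref{app:bernoulli}. For $k \ge 1$, I would isolate the constant contribution coming from $l_1 = 0$ together with the boundary terms from the Laurent expansion at $m = 0$, and identify the outcome with $\int_{a+p^n\Z_p} x^k\,\mu_\KL(x)$: both quantities are measures on $\Z_p$ obeying the same distribution relations under $n\mapsto n+1$ and $a \mapsto a + p^n b$, and they agree when integrated against $x^j$ for $j \in \Z_{\ge 1}$ by the defining interpolation \eqref{eq:interpolation-KL}, which determines them uniquely.

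The main obstacle is the delicate $l_1 = 0$ index, where $|X| = 1$ and the individual series $D^{k+1}\fH(\zeta^{rl_2})$ diverges whenever $\zeta^{rl_2} = 1$. This is resolved by packaging the four $r$-contributions together: the full combination is a genuine meromorphic function on $\C/(\Z\tau+\Z)$ with poles only on the lattice, and the regularizing effect of the $c$- and $N$-twists in $\Lambda_{c,N}$ ensures all divergences cancel, exactly as Lemma \ref{lem:Ft2}.(II) accommodated the $X \to 1$ limit in the first-derivative case. Once the $\qq^{1/p^n}$-expansion has been established with $\Z_{(p)}$-integral coefficients $\sigma^{(k)}_{a,n}(m)$, the $\qq$-expansion principle upgrades $\Psi^{(k)}_{a,n}$ from a complex analytic object on $\cH$ to a modular form defined over $\Z_{(p)}$.
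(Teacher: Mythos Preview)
Your overall strategy matches the paper's: reduce to the identities \eqref{eq:Lambda-DH}--\eqref{eq:Lambda-DH2}, perform the $(l_1,l_2)$-average, and read off the $\qq^{m/p^n}$-coefficients. For $m\ge 1$ this works exactly as you describe and coincides with the paper's computation via the auxiliary functions $\fF^{(k)}_{t,n}$.

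The gap is in your treatment of the constant term for $k\ge 1$. You propose to identify the $m=0$ contribution with $\int_{a+p^n\Z_p}x^k\mu_{\KL}(x)$ by a uniqueness argument: both sides are ``measures'' with the same distribution relations and the same moments. But this is circular. You have not shown that the constant term, as a function of $(a,n)$, is $p$-adically bounded (hence defines a measure), and more seriously you have no independent way to compute its moments $\sum_{a}a^jC_k(a,n)$ without already knowing $C_k(a,n)$ explicitly. The distribution relation alone only pins down the total mass $C_k(0,0)$, which is far from enough. The paper instead computes the constant term directly: after the $l_2$-average one is left with $\lim_{X\to 1}D^k\big[\frac{X^{t^\flat}}{X^{p^n}-1}-\frac{X^{N(t/N)^\flat}}{X^{Np^n}-1}\big]$, and the key identity, verified via the $p$-adic Fourier transform of $\mu_N$, is that this limit equals $\int_{t+p^n\Z_p}x^k\mu_N(x)$. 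Combining with \eqref{eq:KL-mazur2} then yields $\sigma^{(k)}_{a,n}(0)$. You should replace the uniqueness argument with this explicit evaluation.
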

	\begin{proof}
		Write $\sL = \Z\tau+\Z$. We have $\Psi^{(k)}_{a,n}(2\pi i\tau,2\pi i) = (2\pi i)^{-(k+1)}\Psi^{(k)}_{a,n}(\tau,1)$ by (MF1). Denote by $\RR_n$ the set $\{\qq^{j/p^n}:0\le j<p^n\}$. Using \eqref{eq:Lambda-DH}, \eqref{eq:Lambda-DH2} and writing $X = \e^{2\pi iz}$, for $k\in \Z_{\ge 1}$ we find
		\begin{align}
			&\frac{1}{12(2\pi i)^{k+1}}\sum_{0\le l_1<p^n} \frac{d^{k+1}}{dz^{k+1}}\log\Lambda_{c,N}(z+l_1\tau/p^n,\sL)\\
			=&\frac{(N-1)(c^2-c)}{2}p^n \1_{k=0}
			+\sum_{\vv\in\RR_n}\Big[-c^2 D^{k+1}\fH(\vv X,\qq) + c^2N^{k+1}D^{k+1}\fH(\vv^NX^N,\qq) + c^{k+1}D^{k+1}\fH(\vv^cX^c,\qq)\\
			&- (cN)^{k+1}D^{k+1}\fH(\vv^{cN}X^{cN},\qq)\Big]\\
			=&\frac{(N-1)(c^2-c)(2p^n-1)}{2}\1_{k=0}
			+\sum_{\vv\in\RR_n}\Big[-c^2 D^{k+1}\fH(\vv X,\qq) + c^2N^{k+1}D^{k+1}\fH(\vv X^N,\qq) + c^{k+1}D^{k+1}\fH(\vv X^c,\qq)\\
			&- (cN)^{k+1}D^{k+1}\fH(\vv X^{cN},\qq)\Big].
		\end{align}
		Here in the second equality we used a calculation similar to Lemma \ref{lem:vc-reduction}. We may then proceed with the same calculations in the proof of Theorem \ref{thm:epf}. Specifically, it can be shown that
		\begin{enumerate}
			\item Put $\fF^{(k)}_{t,n}(X,\qq) = \frac{1}{p^n}\sum_{\zeta^{p^n}=1}D^{k+1}\fH(\zeta X,\qq)\zeta^{-t}$. Then
			\begin{align}
				\fF_{t,n}^{(k)}(X,\qq) = D^k\Big[\frac{X^{t^\flat}}{X^{p^n}-1}\Big] -\sum_{m\ge 1}\sum_{\substack{l\ge 1\\ l\equiv t}} \qq^{ml}l^k X^l
				+\sum_{m\ge 1}\sum_{\substack{l\ge 1\\ l\equiv -t}}\qq^{ml}X^{-l}(-l)^k.
			\end{align}
			
			\item We have
			\begin{align}
				\sum_{1\ne v\in \RR_n} \fF^{(k)}_{t,n}(v,\qq) = (1-p^n)\1_{t\equiv 0,k=0} - \sum_{m\ge 1}\qq^{m/p^n}\sum_{\substack{d\mid m\\ d\equiv t\bmod p^n\\ m/d\not\equiv 0\bmod p^n}}\sgn(d)d^k
			\end{align}
			and $\lim_{X\to 1^-}\big[\fF_{t,n}^{(k)}(X,\qq) - N^{k+1}\fF^{(k)}_{t/N,n}(X^N,\qq)\big]$ is equal to
			\begin{align}
				\int_{t+p^n\Z_p}x^k\mu_N(x) - \sum_{m\ge 1}\qq^{m/p^n}\sum_{\substack{d\mid m\\ d\equiv t\bmod p^n\\ m/d\equiv 0\bmod p^n}}\sgn(d)d^k +N^{k+1}\sum_{m\ge 1}\qq^{m/p^n}\sum_{\substack{d\mid m\\ d\equiv t/N\bmod p^n\\ m/d\equiv 0\bmod p^n}}\sgn(d)d^k,
			\end{align}
			where $\mu_N$ is the measure recalled in Appendix \ref{app:bernoulli}. We remark that to get the constant term in the above form, one needs the identity
			\begin{align}
				\left.D^k\Bigg(\frac{X^{t^\flat}}{X^{p^n}-1} - \frac{X^{N(t/N)^\flat}}{X^{Np^n}-1}\Bigg)\right|_{X=1} = \int_{t+p^n\Z_p} x^k\mu_N(x),
			\end{align}
			which can be worked out using the $p$-adic Fourier transform.
		\end{enumerate}
		Combining these identities, one then concludes that
		\begin{align}
			\Psi_{a,n}^{(k)}(2\pi i\tau,2\pi i) =& \frac{(2\pi i)^{-(k+1)}}{12p^n}\sum_{0\le l_1,l_2<p^n}
			\frac{d^{k+1}}{dz^{k+1}}\log\Lambda_{c,N}\left(\frac{l_1\tau+l_2}{p^n},\sL\right)\zeta^{-al_2}\\
			=&\frac{(N-1)(c^2-c)(2p^n-1)}{2}\1_{k=0,a\equiv 0} 
			+ \lim_{X\to 1^-}\sum_{v\in\RR_n} \Big[-c^2\fF^{(k)}_{a,n}(vX,\qq) +c^2N^{k+1}\fF^{(k)}_{a/N,n}(vX,\qq)\\
			&+ c^{k+1}\fF^{(k)}_{a/c,n}(vX^c,\qq)-(cN)^{k+1}\fF^{(k)}_{a/(cN),n}(vX^{cN},\qq)\Big]\\
			=&\int_{a+p^n\Z_p}x^k\mu_{\KL}(x)+ \sum_{m\ge 1}\qq^{m/p^n}\sigma^{(k)}_{a,n}(m).
		\end{align}
		Here, in the last equality, we used the fact that ($\delta_0$ is the Dirac measure supported at $0\in \Z_p$)
		\begin{align}\label{eq:KL-mazur2}
			\mu_{\KL}(x) = -c^2\mu_N(x) + c\mu_{N}(x/c) + \frac{(N-1)(c^2-c)}{2}\delta_0(x),
		\end{align}
		which is a rewrite of the equation \eqref{eq:KL-mazur}.
	\end{proof}
	
	We conclude with the following proposition that explains the distribution relations of the Bernoulli--Hurwitz measure.
	\begin{prop}[Addition formula at CM points]
		\label{prop:addition}
		Let $(\omega_1,\omega_2)\in \C^2$ be such that $\im(\omega_1/\omega_2)>0$ and $\fo\cdot (\Z\omega_1+\Z\omega_2)\subseteq \Z\omega_1+\Z\omega_2$. Let $n\in \Z_{>0}$, and suppose further that $\omega_1\in \varpi^n(\Z\omega_1+\Z\omega_2)$. Then for all $a\in \Z$, we have
		\begin{align}
			\sum_{0\le \lambda<p^e} \Psi^{(k)}_{a+\lambda p^{e(n-1)},en}(\omega_1,\omega_2) = \barpi^{k+1}\Psi^{(k)}_{a\varrho^{-1},e(n-1)}(\omega_1,\omega_2).
		\end{align}
	\end{prop}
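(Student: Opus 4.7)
The plan is to expand both sides using the definition of $\Psi^{(k)}_{a,n}$ and match them via the distribution formula from Lemma \ref{lem:prod-formula}, with the CM hypothesis providing the final compatibility of scales. Set $\sL=\Z\omega_1+\Z\omega_2$ and $F(z):=\frac{d^{k+1}}{dz^{k+1}}\log\Lambda_{c,N}(z,\sL)$. First, I would perform the standard character-sum filter on the $\lambda$-sum: since $\zeta_{en}^{p^{e(n-1)}}$ is a primitive $p^e$-th root of unity, $\sum_{0\le\lambda<p^e}\zeta_{en}^{-\lambda p^{e(n-1)}l_2}=p^e\cdot\1_{p^e\mid l_2}$, so writing $l_2=p^el_2'$ and using $\zeta_{en}^{p^e}=\zeta_{e(n-1)}$, the left-hand side collapses to
\begin{align*}
\text{LHS}=\frac{1}{12p^{e(n-1)}}\sum_{l_1=0}^{p^{en}-1}\sum_{l_2'=0}^{p^{e(n-1)}-1} F\!\left(l_1\omega_1/p^{en}+l_2'\omega_2/p^{e(n-1)}\right)\zeta_{e(n-1)}^{-al_2'}.
\end{align*}

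Second, I would exploit the CM structure on the $l_1$-sum. Writing $\omega_1=\varpi^n\ell$ with $\ell\in\sL$, the fact that $(\omega_1,\omega_2)$ is a $\Z$-basis of $\sL$ forbids $\omega_1\in p\sL$; combined with $\varpi\barpi=up^e$ for some $u\in\fo^\times$, this forces $\ell\notin\bar\fp\sL$, and by Nakayama $\ell$ then generates $\sL/\bar\fp^m\sL$ for all $m\ge 1$. Consequently $l_1\omega_1/p^{en}$ runs bijectively over $E[\bar\fp^{en}]$. Decompose this via the surjection $[\barpi]\colon E[\bar\fp^{en}]\twoheadrightarrow E[\bar\fp^{e(n-1)}]$ with kernel $E[\bar\fp^e]$. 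Lemma \ref{lem:prod-formula} applied with $\beta=\barpi$---whose divisor-invariance hypothesis holds because $\barpi$ is a unit modulo $cN$---gives $\prod_{\gamma\in E[\bar\fp^e]}\Lambda_{c,N}(z+\gamma,\sL)\sim\Lambda_{c,N}(\barpi z,\sL)$, and taking logarithms and the $(k+1)$-st derivative yields the key fiber identity $\sum_{\gamma\in E[\bar\fp^e]}F(z+\gamma)=\barpi^{k+1}F(\barpi z)$. Applied with $z=\alpha_0+l_2'\omega_2/p^{e(n-1)}$ for any fixed lift $\alpha_0$ of $\beta$, the LHS becomes
\begin{align*}
\text{LHS}=\frac{\barpi^{k+1}}{12p^{e(n-1)}}\sum_{\beta\in E[\bar\fp^{e(n-1)}]}\sum_{l_2'} F\!\left(\beta+\barpi l_2'\omega_2/p^{e(n-1)}\right)\zeta_{e(n-1)}^{-al_2'}.
\end{align*}

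The final and most delicate step is to replace $\barpi$ by $\varrho$ in the $\omega_2$-argument, and this is where the assumption $\omega_1\in\varpi^n\sL$ really bites. Under the identification $\fo/\fp^{en}\cong\Z/p^{en}$, the element $\barpi$ reduces to $\varrho\bmod p^{en}$, so $\delta:=\barpi\omega_2-\varrho\omega_2\in\fp^{en}\sL=\varpi^n\sL$. Using $\varpi^n/p^{e(n-1)}=u^{n-1}\varpi/\barpi^{n-1}$, one checks $\delta/p^{e(n-1)}\in\barpi^{-(n-1)}\sL$, so its class $\epsilon\in E[\bar\fp^{e(n-1)}]$ is well-defined, giving $\barpi l_2'\omega_2/p^{e(n-1)}\equiv l_2'\varrho\omega_2/p^{e(n-1)}+l_2'\epsilon\pmod\sL$. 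Interchanging the $\beta$- and $l_2'$-sums, for each fixed $l_2'$ the shift $\beta'=\beta+l_2'\epsilon$ is a bijection of $E[\bar\fp^{e(n-1)}]$, absorbing the error; then the substitution $m_2=\varrho l_2'\bmod p^{e(n-1)}$ (a bijection since $\varrho\in\Z_p^\times$) converts $\zeta_{e(n-1)}^{-al_2'}$ to $\zeta_{e(n-1)}^{-a\varrho^{-1}m_2}$ and $l_2'\varrho\omega_2/p^{e(n-1)}$ to $m_2\omega_2/p^{e(n-1)}\bmod\sL$, while re-encoding $\beta'=m_1\omega_1/p^{e(n-1)}$ via the same bijection as before recovers exactly $\barpi^{k+1}\Psi^{(k)}_{a\varrho^{-1},e(n-1)}(\omega_1,\omega_2)$. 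The heart of the argument, and its main obstacle, is the compatibility of scales in this last step: the hypothesis $\omega_1\in\varpi^n\sL$ ensures $\delta$ carries exactly enough $\varpi$-divisibility that division by $p^{e(n-1)}$ lands in $\bar\fp^{-e(n-1)}\sL$ rather than a strictly larger lattice, permitting the error to be absorbed into the very sum over $E[\bar\fp^{e(n-1)}]$ that already appears---without this alignment the substitution would leave a genuine residual.
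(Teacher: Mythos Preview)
Your proof is correct and follows essentially the same route as the paper's first proof of this proposition: both arguments combine the character-sum filter on the outer $\lambda$-variable, the product formula (Lemma~\ref{lem:prod-formula}) applied with $\beta=\barpi$ to extract the factor $\barpi^{k+1}$, and a final ``surgery'' step replacing $\barpi$ acting on $\omega_2/p^{e(n-1)}$ by its integer avatar $\varrho$, with the discrepancy absorbed into the sum over $E[\bar\fp^{e(n-1)}]$ thanks to the hypothesis $\omega_1\in\varpi^n\sL$. The only cosmetic difference is the order of the first two steps---the paper applies the product formula before the character filter, you after---and the paper writes the surgery more explicitly via the decompositions $\barpi=(\barpi)^\flat_{\varpi^{n-1}}+x$ and $\varpi^{-1}=(\varpi^{-1})^\flat_{\barpi^{n-1}}+y$, whereas you package it through the single error term $\epsilon=\delta/p^{e(n-1)}\bmod\sL$. (The paper also offers a second proof via $\qq$-expansions and diamond operators, which is genuinely different and more conceptual, but your argument matches the first.)
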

	We will give two proofs of the addition formula; the first is elementary but technical, while the second is conceptually simpler and requires understanding the action of diamond operators on the forms $\Psi_{a,n}^{(k)}$. Both proofs are based on the following
	\begin{lem}\label{lem:cm-points}
		Let $\omega_1,\omega_2\in \C^2$ be such that $\im(\omega_1/\omega_2)>0$ and $\fo\cdot (\Z\omega_1+\Z\omega_2)\subseteq \Z\omega_1+\Z\omega_2$. Write $\sL = \Z\omega_1+\Z\omega_2$, and suppose $\omega_1\in \fp^n\sL$. Then for all $1\le r\le n$, we have
		\begin{align}\label{eq:cm-points}
			\{\lambda\omega_1/p^r \colon 0\le \lambda<p^r\} = \fp^{-r}\sL/\sL, \qquad
			\Z(\omega_1/p^r)+\Z\omega_2 = \fp^{-r}\sL.
		\end{align}
	\end{lem}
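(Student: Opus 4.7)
My plan is to prove both equalities at once by exhibiting $\omega_1/p^r$ as an explicit $\Z$-generator of the cyclic group $\fp^{-r}\sL/\sL$. The key structural input is that $\sL$, being a finitely generated torsion-free $\fo$-module of $\Q$-rank two, is—after rescaling by some $\xi\in\C^\times$—a fractional ideal of $\fo$; as $\fo$ is a Dedekind domain and $N(\fp^r) = p^r$, the quotient $\fp^{-r}\sL/\sL$ is cyclic of order $p^r$. So it suffices to produce an element of $\fp^{-r}\sL$ whose image in $\fp^{-r}\sL/\sL$ has $\Z$-order exactly $p^r$, and $\omega_1/p^r$ is the obvious candidate.

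Write $\sL = \fb\xi$ so that the hypothesis $\omega_1\in\fp^n\sL$ reads $\omega_1 = \alpha\xi$ with $\alpha\in\fp^n\fb$. Invoking the split-prime factorization $p\fo = \fp\bar{\fp}$ together with the conventions on the fixed embedding $\bar{\Q}\hookrightarrow \C$ that underlie the identification $E[\fp^n]\cong\fp^{-n}L/L$ used in \eqref{diag:mu-varpi}, division by $p^r$ (with $r\le n$) places $\alpha/p^r$ in $\fp^{-r}\fb$, so $\omega_1/p^r\in\fp^{-r}\sL$. Next, the $\Z$-order of $\omega_1/p^r$ modulo $\sL$ divides $p^r$ since $p^r(\omega_1/p^r) = \omega_1\in\sL$; and if $\omega_1/p = p^{r-1}(\omega_1/p^r)$ lay in $\sL = \Z\omega_1+\Z\omega_2$, writing $\omega_1/p = a\omega_1+b\omega_2$ with $a,b\in\Z$ and using $\R$-linear independence of $\omega_1,\omega_2$ (from $\im(\omega_1/\omega_2)>0$) would force $b = 0$ and $ap = 1$, contradicting $a\in\Z$. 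Hence $\omega_1/p^r$ generates $\fp^{-r}\sL/\sL$, which is the first equality.

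The second equality is then a formal consequence: since $\omega_1 = p^r(\omega_1/p^r)\in\Z(\omega_1/p^r)$, the sum $\Z(\omega_1/p^r)+\Z\omega_2$ already contains $\sL$ and equals $\Z(\omega_1/p^r)+\sL$, which by the first equality projects onto all of $\fp^{-r}\sL/\sL$ and therefore exhausts $\fp^{-r}\sL$. I expect the main technical hurdle to be the containment $\omega_1/p^r\in\fp^{-r}\sL$ in step two: this is where the specific convention by which $\fp\subset\fo$ acts on $\sL$ via the chosen complex embedding is essential, since a naive fractional-ideal computation in $K$ only lands $\omega_1/p^r$ in $\bar{\fp}^{-r}\sL$ a priori; the order computation and the deduction of the second equality from the first are then routine.
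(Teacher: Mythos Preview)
Your overall strategy---show that $\omega_1/p^r$ lands in the right fractional-ideal multiple of $\sL$, verify via the $\R$-linear independence of $\omega_1,\omega_2$ that its image has order exactly $p^r$ in the quotient, and then deduce the second equality from $\sL\subset\Z(\omega_1/p^r)+\Z\omega_2$---coincides with the paper's.

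The gap is in the containment step, and you have in fact diagnosed it yourself: from $\omega_1\in\fp^n\sL\subseteq\fp^r\sL$ one obtains $\omega_1/p^r\in p^{-r}\fp^r\sL=\bar\fp^{-r}\sL$, not $\fp^{-r}\sL$. Your proposed fix, invoking ``the specific convention by which $\fp\subset\fo$ acts on $\sL$ via the chosen complex embedding,'' does not work. Once the hypothesis $\fo\cdot\sL\subseteq\sL$ is in force, the $\fo$-module structure on $\sL$ is fixed, and $\fp,\bar\fp$ are genuinely distinct ideals of $\fo$; no choice of embedding $\bar\Q\hookrightarrow\C$ will interchange them inside $\sL$. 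The identification $E[\fp^n]\simeq\fp^{-n}L/L$ in \eqref{diag:mu-varpi} is simply the statement that the $\fp^n$-torsion of $\C/L$ is $\fp^{-n}L/L$, and carries no such convention.

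In fact the paper's own proof establishes $\lambda\omega_1/p^r\in\bar\fp^{-r}\sL$, and every later application of the lemma (the second proof of Proposition~\ref{prop:addition}, where one uses $\Z(\tau/p^e)+\Z=\barpi^{-1}\sL$, and Proposition~\ref{prop:Psi-Phi}) invokes the $\bar\fp^{-r}$ version. The printed statement therefore carries a typo: both occurrences of $\fp^{-r}$ should read $\bar\fp^{-r}$. With that correction your argument is complete and essentially identical to the paper's.
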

	\begin{proof}
		By our assumption, $\omega_1\in \fp^n\sL\subseteq \fp^r\sL$, so $\lambda\omega_1/p^r \in \bar{\fp}^{-r}\sL$ for all $\lambda\in \Z$. Now, suppose $(\lambda-\lambda')\omega_1/p^r \in \sL$ for $0\le \lambda,\lambda'<p^r$, then $(\lambda-\lambda')\omega_1\in p^r\sL$. By the linear independence of $\omega_1,\omega_2$, this forces $p^r\mid (\lambda-\lambda')$, whereby $\lambda=\lambda'$. This proves the first equality. As for the second, we observe that $\sL\subset \Z(\omega_1/p^r)+\Z\omega_2$, so the second equality follows from the first.
	\end{proof}
	\begin{proof}[First proof of Proposition \ref{prop:addition}]
		Let $\zeta = \zeta_{en}$. Put $\sL = \Z\omega_1+\Z\omega_2$. For all $b\in \Z$, we have
		\begin{align}
			\Psi^{(k)}_{b,en}(\omega_1,\omega_2) &= \frac{1}{12p^{en}}
			\sum_{0\le l_1<p^{e(n-1)}}\sum_{0\le \lambda<p^e}\sum_{0\le l_2<p^{en}}\frac{d^{k+1}}{dz^{k+1}}
			\log\Lambda_{c,N}\left(\frac{l_1\omega_1+l_2\omega_2}{p^{en}} +\frac{\lambda\omega_1}{p^e}, \sL\right)\zeta^{-bl_2}\\
			&= \frac{\barpi^{k+1}}{12p^{en}}
			\sum_{0\le l_1<p^{e(n-1)}}\sum_{0\le l_2<p^{en}}\frac{d^{k+1}}{dz^{k+1}}
			\log\Lambda_{c,N}\left(\frac{\barpi(l_1\omega_1+l_2\omega_2)}{p^{en}}, \sL\right)\zeta^{-bl_2},
		\end{align}
		where in the second equality we used Lemma \ref{lem:prod-formula} and \eqref{eq:cm-points} with $r=e$.
		
		Recall that by our choice, $\zeta_{en}^{p^e} = \zeta_{e(n-1)}$. As such,
		\begin{align}
			\sum_{0\le \lambda<p^e}\Psi^{(k)}_{a+\lambda p^{e(n-1)},en}(\omega_1,\omega_2) &= \frac{\barpi^{k+1}}{12p^{en}}\sum_{0\le l_1<p^{e(n-1)}}\sum_{0\le l_2<p^{en}}\frac{d^{k+1}}{dz^{k+1}}\log\Lambda_{c,N}\left(\frac{\barpi(l_1\omega_1 + l_2\omega_2)}{p^{en}}, \sL\right)
			\sum_{0\le \lambda<p^e}\zeta_{en}^{-(a+\lambda p^{e(n-1)})l_2}\\
			&=\frac{\barpi^{k+1}}{12p^{e(n-1)}}\sum_{0\le l_1,l_2<p^{e(n-1)}}
			\frac{d^{k+1}}{dz^{k+1}}\log\Lambda_{c,N}\left(\frac{\barpi(l_1\omega_1 + p^e l_2\omega_2)}{p^{en}}, \sL\right)\zeta_{e(n-1)}^{-al_2}.
		\end{align}
		
		To proceed we need a surgery. For $m\in \Z_{> 0}$ and $x\in \fo/\fp^{em}$, write $x^\flat_{\varpi^m} = x^\flat_{\fp^{em}}$. Similarly, for $x\in \fo/\bar{\fp}^{em}$, write $x^\flat_{\barpi^m} = x^\flat_{\bar\fp^{em}}$. Consider the following severances:
		\begin{itemize}
			\item $\barpi = (\barpi)^\flat_{\varpi^{n-1}} + x$ for some $x\in \fp^{e(n-1)}$.
			\item $\varpi^{-1} = (\varpi^{-1})^\flat_{\barpi^{n-1}} + y$ for some $y\in \fp^{-e}\bar{\fp}^{e(n-1)}$.
		\end{itemize}
		Then,
		\begin{align}
			\barpi\frac{l_1\omega_1+p^e l_2\omega_2}{p^{en}} = \left(\frac{p^e}{\varpi}\frac{l_1\omega_1}{p^{en}} + x\frac{l_2\omega_2}{p^{e(n-1)}}\right) +  \frac{(\barpi)^\flat_{\varpi^{n-1}}l_2\omega_2}{p^{e(n-1)}} = 
			\left(\frac{(\varpi^{-1})^\flat_{\barpi^n}l_1\omega_1}{p^{e(n-1)}} + y \frac{l_1\omega_1}{p^{e(n-1)}} + x\frac{l_2\omega_2}{p^{e(n-1)}}\right) +  \frac{(\barpi)^\flat_{\varpi^{n-1}}l_2\omega_2}{p^{e(n-1)}}.
		\end{align}
		We note that $y \frac{l_1\omega_1}{p^{e(n-1)}}\in \fp^{-e}\bar{\fp}^{e(n-1)}\fp^{en}p^{-e(n-1)}\sL = \sL$ by our assumption $\omega_1\in\varpi^n\sL$. Moreover, given $l_2$, since $\varpi$ is invertible modulo $\barpi^{n-1}$, we have
		\begin{align}
			\left\{\frac{(\varpi^{-1})^\flat_{\barpi^{n-1}}l_1\omega_1}{p^{e(n-1)}} + x\frac{l_2\omega_2}{p^{e(n-1)}}\colon 0\le l_1<p^{e(n-1)}\right\} = \barpi^{-n+1}\sL/\sL
		\end{align}
		by \eqref{eq:cm-points} again. Therefore,
		\begin{align}
			&\sum_{0\le l_1,l_2<p^{e(n-1)}}
			\frac{d^{k+1}}{dz^{k+1}}\log\Lambda_{c,N}\left(\frac{\barpi(l_1\omega_1 + p^el_2\omega_2)}{p^{en}}, \sL\right)\zeta_{e(n-1)}^{-al_2}\\
			=&\sum_{0\le l_1,l_2<p^{e(n-1)}}
			\frac{d^{k+1}}{dz^{k+1}}\log\Lambda_{c,N}\left(
			\frac{(\varpi^{-1})^\flat_{\barpi^{n-1}}l_1\omega_1}{p^{e(n-1)}} + x\frac{l_2\omega_2}{p^{e(n-1)}} +  \frac{(\barpi)^\flat_{\varpi^n}l_2\omega_2}{p^{e(n-1)}}, \sL\right)\zeta_{e(n-1)}^{-al_2}\\
			=&\sum_{0\le l_2<p^{e(n-1)}}\sum_{\rho\in \barpi^{-n+1}\sL/\sL}
			\frac{d^{k+1}}{dz^{k+1}}\log\Lambda_{c,N}(\rho +  l_2\omega_2/p^{e(n-1)}, \sL)\zeta_{e(n-1)}^{-a\varrho^{-1}l_2}\\
			=&\sum_{0\le l_1,l_2<p^{e(n-1)}}
			\frac{d^{k+1}}{dz^{k+1}}\log\Lambda_{c,N}\big((l_1\omega_1 + l_2\omega_2)/p^{e(n-1)}, \sL\big)\zeta_{e(n-1)}^{-a\varrho^{-1}l_2}.
		\end{align}
	\end{proof}
	\begin{proof}[Second proof of Proposition \ref{prop:addition}]
		By (MF1) we may suppose $(\omega_1,\omega_2) = (2\pi i\tau,2\pi i)$, where $\tau\in \cH$. Writing $\sL = \Z\tau+\Z$, our assumption says $\tau\in \fp^{en}\sL$. We note that $\sum_{0\le \lambda<p} \sigma_{a+\lambda p^{n-1},n}(m) = \sigma_{a,n-1}(m)$; for $m\ge 1$ this is clear, and for $m=0$ it follows from the distribution relation of $\mu_{\KL}$. This shows
		\begin{align}
			\sum_{0\le \lambda<p^e} \Psi^{(k)}_{a+\lambda p^{e(n-1)},en}(2\pi i \tau,2\pi i) = \Psi^{(k)}_{a,e(n-1)}(2\pi i\tau/p^e,2\pi i)
		\end{align}
		by Proposition \ref{prop:q-expansion}. Switching now to the moduli notation. We note that there is an isomorphism of triples over $\C$:
		\begin{align}\label{eq:diamond}
			\left(\C/2\pi i(\Z\cdot\tau/p^e +\Z),\frac{2\pi i\tau}{p^{en}},dz\right) =
			\left(\C/2\pi i\barpi^{-1}\sL,\frac{2\pi i\tau}{p^{en}},dz\right)
			\xrightarrow[\sim]{\times \barpi}
			\left(\C/2\pi i\sL,\barpi\frac{2\pi i\tau}{p^{en}},\barpi^{-1}dz\right).
		\end{align}
		Here the first equality uses the fact that $\Z(\tau/p^e) +\Z = \barpi^{-1}\sL$ from Lemma \ref{lem:cm-points}. It follows that
		\begin{align}
			\Psi^{(k)}_{a,e(n-1)}\left(\C/2\pi i(\Z\cdot\tau/p^e +\Z),\frac{2\pi i\tau}{p^{en}},dz\right) = \barpi^{k+1}\Psi_{a,e(n-1)}^{(k)}\left(\C/2\pi i\sL,\barpi\frac{2\pi i\tau}{p^{en}},dz\right).
		\end{align}
		For $u\in \Z_p^\times$, denote by $\chx{u}$ the diamond operator on level-$\Gamma^1(p^{e(n-1)})$ forms:
		\begin{align}
			f|\chx{u}(\omega_1,\omega_2) = f(a\omega_1+b\omega_2,c\omega_1+d\omega_2),
		\end{align}
		where $\gamma = \mat{a}{b}{c}{d}\in \SL_2(\Z)$ is any matrix congruent to $\mat{u^{-1}}{0}{*}{u}\bmod p^{e(n-1)}$. Take $u=\varrho$ and $\gamma = \mat{a}{b}{c}{d}$ a such matrix. By \eqref{eq:cm-points}, $\tau/p^{e(n-1)}\in \varpi\barpi^{-n+1}\sL$, so 
		\begin{align}
			\frac{\barpi\tau}{p^{en}} = \varpi^{-1}\frac{\tau}{p^{e(n-1)}} \equiv (\varpi^{-1})^\flat_{\barpi^{n-1}}\frac{\tau}{p^{e(n-1)}} \equiv 
			\frac{a\tau + b}{p^{e(n-1)}}\bmod \sL.
		\end{align}
		We then have
		\begin{align}
			\Psi_{a,e(n-1)}^{(k)}\left(\C/2\pi i\sL,\barpi\frac{2\pi i\tau}{p^{en}},dz\right) &=\Psi_{a,e(n-1)}^{(k)}\left(\C/2\pi i[\Z(a\tau+b)+\Z(c\tau+d)],\frac{2\pi i(a\tau+b)}{p^{e(n-1)}},dz\right) \\
			&= \Psi_{a,e(n-1)}^{(k)}|\chx{\varrho}(2\pi i\tau,2\pi i)\\
			&= \Psi_{a\varrho^{-1},e(n-1)}^{(k)}(2\pi i\tau,2\pi i).
		\end{align}
		The last equality is by (MF2).
	\end{proof}
	
	\section{Interlude: A revisit of the Eisenstein periods}
	\label{sec:katz}
	
	Keep the notation from previous sections, and in particular fix coprime $c,N\in \Z_{>1}$ that are prime to $p$. The goal of this section is twofold: The first is to rewrite the period formula using the form $\Phi_{a,n}(\tau) = \Psi_{a,n}(p^n\tau)$ (see \eqref{eq:epf-Phi} below); this leads naturally to the second part, where we explain how these new formulas are specializations of those of Katz's $p$-adic Eisenstein measure. Altogether, this furnishes an explicit comparison of Lichtenbaum's and Katz's methods.
	
	\subsection{Bernoulli--Hurwitz period formulas revised}
	For $a\in \Z_p,n\in \Z_{\ge 0}$ and $k\in \Z_{\ge 0}$, put
	\begin{align}
		\Phi^{(k)}_{a,n}(\omega_1,\omega_2) = \Psi^{(k)}_{a,n}(p^n\omega_1,\omega_2).
	\end{align}
	It is readily verified that $\Phi^{(k)}_{a,n}$ is a modular form of level $\Gamma_1(p^n)$, and, by Proposition \ref{prop:q-expansion}, we have the $\qq$-expansion:
	\begin{align}\label{eq:q-exp-Phi}
		\Phi^{(k)}_{a,n}(2\pi i\tau,2\pi i) = \sum_{m\ge 0}\sigma^{(k)}_{a,n}(m)\qq^m.
	\end{align}
	Now, on the space of level $\Gamma_1(p^n)$-modular forms, we have the usual diamond operators by $\Z_p^\times$ \cite[\S5.2]{diamond-shurman}. More precisely, if $u\in \Z_p^\times$ and $\gamma\in \SL_2(\Z)$ is such that $\gamma \equiv \mat{u^{-1}}{*}{0}{u}\bmod p^n$, then $f|\chx{u}(\umega) = f(\gamma\umega)$. In terms of the moduli interpretation, if $(\EE,P,\omega)$ is a point of evaluation for $f$, by a computation similar to that conducted in the second proof of Proposition \ref{prop:addition}, we have
	\begin{align}\label{eq:diamond-gamma_1}
		f|\chx{u}(\EE,P,\omega) = f(\EE,uP,\omega).
	\end{align}
	\begin{lem}\label{lem:diamond-Phi}
		For $a\in \Z_p,u\in \Z_p^\times, n\in \Z_{\ge 0}$ and $k\in \Z_{\ge 0}$, we have
		\begin{align}
			\Phi_{a,n}^{(k)}|\chx{u} = \Phi_{au^{-1},n}^{(k)}.
		\end{align}
	\end{lem}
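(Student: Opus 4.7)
The plan is to reduce the claimed identity to property (MF2) of Proposition \ref{prop:modularity} by unwinding both sides as $\SL_2(\Z)$-transformations of $\Psi^{(k)}_{a,n}$. The conceptual point is that the substitution $(\omega_1,\omega_2)\mapsto (p^n\omega_1,\omega_2)$ that passes from $\Psi^{(k)}_{a,n}$ to $\Phi^{(k)}_{a,n}$ conjugates the ``upper-triangular mod $p^n$'' matrices relevant for the $\Gamma_1(p^n)$ diamond operator into the ``lower-triangular mod $p^n$'' matrices relevant for (MF2); once one notices this, the verification is purely mechanical.

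Given $u\in \Z_p^\times$, I would first pick a lift $\gamma=\mat{a'}{b'}{c'}{d'}\in \SL_2(\Z)$ with $\gamma \equiv \mat{u^{-1}}{*}{0}{u}\bmod p^n$. By the definition of the $\Gamma_1(p^n)$ diamond operator and of $\Phi^{(k)}_{a,n}$,
\begin{align*}
	\Phi^{(k)}_{a,n}|\chx{u}(\omega_1,\omega_2)
	= \Phi^{(k)}_{a,n}(\gamma(\omega_1,\omega_2))
	= \Psi^{(k)}_{a,n}\bigl(p^n(a'\omega_1+b'\omega_2),\, c'\omega_1+d'\omega_2\bigr).
\end{align*}
Since $c'\equiv 0\bmod p^n$, write $c'=p^n c''$ and set $\gamma'=\mat{a'}{p^n b'}{c''}{d'}$. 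A direct computation yields $\det\gamma'= a'd'-p^n b' c''=a'd'-b'c'=1$, so $\gamma'\in \SL_2(\Z)$; moreover, one checks directly that $\gamma'(p^n\omega_1,\omega_2)=(p^n(a'\omega_1+b'\omega_2),\, c'\omega_1+d'\omega_2)$. Reducing modulo $p^n$ we obtain $\gamma'\equiv \mat{u^{-1}}{0}{c''}{u}$, which is of the form $\mat{v}{0}{*}{v^{-1}}$ required by (MF2) with $v=u^{-1}$.

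Finally, applying (MF2) with $\gamma'$ and $v=u^{-1}$,
\begin{align*}
	\Phi^{(k)}_{a,n}|\chx{u}(\omega_1,\omega_2)
	= \Psi^{(k)}_{a,n}\bigl(\gamma'(p^n\omega_1,\omega_2)\bigr)
	= \Psi^{(k)}_{u^{-1}a,n}(p^n\omega_1,\omega_2)
	= \Phi^{(k)}_{au^{-1},n}(\omega_1,\omega_2),
\end{align*}
as claimed. The argument is a verification rather than a proof with a genuine obstacle; the only subtlety is the matching of the two triangularity conventions, which the $p^n$-conjugation handles automatically.
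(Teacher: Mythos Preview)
Your proof is correct and is essentially identical to the paper's own argument: both compute $\Phi_{a,n}^{(k)}|\chx{u}$ by conjugating the $\Gamma_1(p^n)$ diamond matrix through $\mat{p^n}{0}{0}{1}$ to obtain a matrix of the shape $\mat{u^{-1}}{0}{*}{u}\bmod p^n$, and then invoke (MF2) of Proposition~\ref{prop:modularity}. The paper writes this as the one-line matrix identity $\mat{p^n}{0}{0}{1}\gamma = \mat{a'}{p^nb'}{c'/p^n}{d'}\mat{p^n}{0}{0}{1}$, which is exactly your $\gamma'$.
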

	\begin{proof}
		Let $\gamma = \mat{a}{b}{c}{d}\in \SL_2(\Z)$ be such that $c\equiv 0\bmod p^n$ and $d\equiv u\bmod p^n$. For $\umega = (\omega_1,\omega_2)\in (\C^\times)^2$ with $\omega_1/\omega_2\in \cH$, we have
		\begin{align}
			\Phi_{a,n}^{(k)}(\gamma \umega) = \Psi_{a,n}^{(k)}\left(
			\mat{p^n}{0}{0}{1}
			\gamma\umega
			\right)
			= \Psi_{a,n}^{(k)}\left(
			\mat{a}{p^nb}{c/p^n}{d}
			\mat{p^n}{0}{0}{1}\umega
			\right) = \Psi_{a,n}^{(k)}|\chx{u}(p^n\omega_1,\omega_2).
		\end{align}
		The result then follows from (MF2) of Proposition \ref{prop:modularity} (see also the second proof of Proposition \ref{prop:addition}, where the diamond operators for $\Gamma^1(p^n)$-forms are introduced).
	\end{proof}
	Recall $\iota:\hat{\G}_m\to \hat{E}$ is the isomorphism chosen via \eqref{ass:iota}. Write $\pi_n = \iota(\zeta_n)\in \hat{E}[p^n] = E[\fp^n]$. 
	\begin{prop}\label{prop:Psi-Phi}
		Suppose $a\in \Z_p$, $n\in \Z_{\ge 0}$, $n'\ge n$ is the smallest integer divisible by $e$ and $k\in \Z_{\ge 0}$. We have
		\begin{align}\label{eq:epf2}
			\Omega_\infty^{-k-1}\barpi^{-(k+1)n'/e}
			\Psi_{a\varrho^{n'/e},n}^{(k)}\left(\frac{\varsigma+(-\varsigma)^\flat_{\fp^{n'}}}{p^{n'-n}}\right) = 
			\Phi_{a,n}^{(k)}(\C/L,\pi_n\cdot\Omega_\infty,dz).
		\end{align}
		In particular, $\Omega_\infty^{-k-1}\barpi^{-(k+1)n'/e}
		\Psi_{a,n}^{(k)}\left(\frac{\varsigma+(-\varsigma)^\flat_{\fp^{n'}}}{p^{n'-n}}\right)\in F_\fP(E[\fp^n])$.
	\end{prop}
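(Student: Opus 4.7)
The plan is to establish \eqref{eq:epf2} by matching both sides in the moduli interpretation of $\Psi^{(k)}_{a,n}$ and $\Phi^{(k)}_{a,n}$, and then to extract the rationality/integrality from the $\qq$-expansion principle, following the strategy already employed at the end of the proof of Theorem \ref{thm:epf}.

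First I would unfold the RHS. Pick any $\Z$-basis $(\omega_1,\omega_2)$ of $L$ with $\im(\omega_1/\omega_2)>0$ and $\omega_2/p^n\equiv \pi_n\Omega_\infty\pmod{L}$; such a choice exists because the class $\pi_n\in\fp^{-n}\fa/\fa$ is a generator, so a representative of $p^n\pi_n\in\bar\fp^n\fa/p^n\fa$ inside $L$ is primitive. Using the $\Gamma_1(p^n)$-convention $\Phi^{(k)}_{a,n}(\omega_1,\omega_2)=\Phi^{(k)}_{a,n}(\C/L,\omega_2/p^n,dz)$ and the defining relation $\Phi^{(k)}_{a,n}(\omega_1,\omega_2)=\Psi^{(k)}_{a,n}(p^n\omega_1,\omega_2)$, the RHS becomes $\Psi^{(k)}_{a,n}(p^n\omega_1,\omega_2)$.

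Next I would unfold the LHS by (MF1) with $\lambda=\Omega_\infty\barpi^{n'/e}$: the prefactor $\Omega_\infty^{-k-1}\barpi^{-(k+1)n'/e}$ is exactly what cancels $\lambda^{k+1}$, yielding
\begin{align*}
\text{LHS}=\Psi^{(k)}_{a\varrho^{n'/e},n}\!\left(\tfrac{\varsigma+s}{p^{n'-n}}\,\Omega_\infty\barpi^{n'/e},\ \Omega_\infty\barpi^{n'/e}\right).
\end{align*}
The key arithmetic point is that since $s\equiv -\varsigma\pmod{\fp^{n'}\fa}$ and $\barpi^{n'/e}\in\bar\fp^{n'}$ with $\fp^{n'}\bar\fp^{n'}=p^{n'}\fo$, the numerator $\barpi^{n'/e}(\varsigma+s)$ lies in $p^{n'}\fa$, so I may write the first slot as $p^n\omega'$ for $\omega':=\Omega_\infty\barpi^{n'/e}(\varsigma+s)/p^{n'}\in L$. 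Hence the identity to prove becomes
\begin{align*}
\Psi^{(k)}_{a\varrho^{n'/e},n}(p^n\omega',\Omega_\infty\barpi^{n'/e})=\Psi^{(k)}_{a,n}(p^n\omega_1,\omega_2).
\end{align*}

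The comparison is then effected by the diamond relation (MF2). By Remark \ref{rem:pi_n} the class $\pi_n\Omega_\infty$ is pinned down by the normalization \eqref{ass:iota} to $p^{n'-n}\pi_{n'}\Omega_\infty\equiv p^{n'-n}\varrho^{-n'/e}\varpi^{-n'/e}\Omega_\infty\pmod L$, which allows one to line up $\omega_2$ with $\Omega_\infty\barpi^{n'/e}$ (up to an element of $\fo^\times$ accounting for $u:=\varpi\barpi/p^e$) so that both $(p^n\omega_1,\omega_2)$ and $(p^n\omega',\omega_2)$ present the same underlying lattice; the norm identity $\Nm(\barpi^{n'/e})=p^{n'}$ is exactly what makes $\omega'\equiv\omega_1\pmod{\Z\omega_2}$ up to multiplication by $\varrho^{n'/e}$ on the $\omega_1$-coordinate modulo $p^n$. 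Thus one produces $\gamma\in\SL_2(\Z)$ with $\gamma\equiv\bigl(\begin{smallmatrix}\varrho^{n'/e}&0\\ *&\varrho^{-n'/e}\end{smallmatrix}\bigr)\pmod{p^n}$ intertwining the two tuples, and (MF2) gives the desired equality of values.

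For the final (stronger) rationality claim: $\Psi^{(k)}_{a,n}$, hence $\Phi^{(k)}_{a,n}$, is defined over $\Z_{(p)}$ by Proposition \ref{prop:q-expansion} and the $\qq$-expansion principle. Since the triple $(E,\pi_n,\omega_E)$ corresponding to $(\C/L,\pi_n\Omega_\infty,dz)$ is defined over $F_\fP(E[\fp^n])$, the value $\Phi^{(k)}_{a,n}(\C/L,\pi_n\Omega_\infty,dz)$ lies in $F_\fP(E[\fp^n])$, sharpening the $\cO_{\fp,n'}$-integrality from Theorem \ref{thm:epf} to $\cO_{\fp,n}$-valuedness after clearing denominators. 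The principal obstacle in executing this plan is the middle step: tracking carefully the fractional-ideal bookkeeping at $\fp$ and $\bar\fp$ so as to exhibit the explicit $\SL_2(\Z)$ intertwiner $\gamma$ with the prescribed diamond-type reduction modulo $p^n$; everything else is a routine application of the moduli formalism from \S\ref{subsec:conventions-modform}.
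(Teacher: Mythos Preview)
Your strategy is sound and can be completed, but the pivotal step---producing the $\SL_2(\Z)$ intertwiner with the prescribed diamond reduction---is only sketched, and the bookkeeping you flag as the ``principal obstacle'' is indeed where all the content lies. Concretely, one must first verify that the two sublattices $p^n\Z\omega'+\Z\,\Omega_\infty\barpi^{n'/e}$ and $p^n\Z\omega_1+\Z\omega_2$ coincide (both equal $\Omega_\infty\bar\fp^{\,n}\fa$, by Lemma~\ref{lem:cm-points} together with the fact that $\omega_2\equiv\Omega_\infty p^n\pi_n\pmod{p^nL}$ generates $\Omega_\infty\bar\fp^{\,n}\fa/p^nL$); only then does a $\gamma\in\SL_2(\Z)$ exist. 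Computing its lower-right entry from $\gamma_{22}\omega_2\equiv\Omega_\infty\barpi^{n'/e}\pmod{p^nL}$ and Remark~\ref{rem:pi_n} (which gives $\omega_2\equiv\Omega_\infty\barpi^{n'/e}\varrho^{-n'/e}\pmod{p^nL}$) yields $\gamma_{22}\equiv\varrho^{n'/e}$, so $u=\gamma_{11}\equiv\varrho^{-n'/e}$ and (MF2) closes the argument. (Your displayed form of $\gamma$ has the diagonal entries swapped; this is harmless if $\gamma$ is meant to act in the opposite direction, but as written it would produce $\Psi^{(k)}_{a\varrho^{2n'/e},n}$.) The aside about a unit $\varpi\barpi/p^e\in\fo^\times$ is a red herring: since $K$ is imaginary quadratic, $\varpi\barpi=|\varpi|^2=p^e$ automatically.

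The paper takes a slightly different and cleaner route. Rather than searching for an $\SL_2(\Z)$ matrix on the $\Psi$-side, it passes at once to $\Phi$ via $\Psi^{(k)}_{b,n}(\tau)=\Phi^{(k)}_{b,n}(\tau/p^n)$, identifies the underlying lattice as $\bar\fp^{-n'}\fa$ by Lemma~\ref{lem:cm-points}, and then applies the isomorphism of complex tori $\C/\barpi^{-n'/e}L\xrightarrow{\times\barpi^{n'/e}}\C/L$ directly in the moduli description of $\Phi$. The diamond shift is handled on the $\Phi$-side through Lemma~\ref{lem:diamond-Phi} and its moduli incarnation~\eqref{eq:diamond-gamma_1}, and the torsion point is matched with $\pi_n$ via Remark~\ref{rem:pi_n}. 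This sidesteps the explicit lattice comparison and the auxiliary choice of $(\omega_1,\omega_2)$, trading your basis computation for a short chain of moduli identifications. Your final rationality argument is correct and matches the paper's.
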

	\begin{proof}
		Keeping \S\ref{subsec:conventions-modform} in mind, we find
		\begin{align}
			\Psi_{a\varrho^{n'/e},n}^{(k)}\left(\frac{\varsigma+(-\varsigma)^\flat_{\fp^{n'}}}{p^{n'-n}}\right) &= \Phi_{a\varrho^{n'/e},n}^{(k)}\left(\frac{\varsigma+(-\varsigma)^\flat_{\fp^{n'}}}{p^{n'}}\right)\\
			&= \Phi^{(k)}_{a\varrho^{n'/e},n}(\C/ \bar{\fp}^{-n'}\fa,1/p^n,dz)\\
			&= \Omega_\infty^{k+1}\Phi^{(k)}_{a\varrho^{n'/e},n}(\C/\barpi^{-n'/e}L,\Omega_\infty/p^n,dz)\\
			&=(\barpi^{n'/e}\Omega_\infty)^{k+1}\Phi^{(k)}_{a,n}|\chx{\varrho^{-n'/e}}\left(\C/L,\Omega_\infty\cdot \frac{p^{n'-n}}{\varpi^{n'/e}},dz\right)\\
			&=(\barpi^{n'/e}\Omega_\infty)^{k+1}\Phi^{(k)}_{a,n}\left(\C/L,\Omega_\infty\cdot \varrho^{-n'/e}\frac{p^{n'-n}}{\varpi^{n'/e}},dz\right)\\
			&=(\barpi^{n'/e}\Omega_\infty)^{k+1}\Phi^{(k)}_{a,n}(\C/L,\Omega_\infty \pi_n,dz).
		\end{align}
		Here the second equality uses Lemma \ref{lem:cm-points}, the fourth uses Lemma \ref{lem:diamond-Phi}, the fifth uses \eqref{eq:diamond-gamma_1}, and the last uses the congruence
		\begin{align}
			\pi_n\equiv p^{n'-n}\pi_{n'} = p^{n'-n}(\varrho^{-n'/e}\varpi^{-n'/e}) \pmod \fa
		\end{align}
		from Remark \ref{rem:pi_n}.
	\end{proof}
	We also record the additive property of $\Phi_{a,n}$'s, which can be directly seen from the $\qq$-expansions. (Recall that if $(\EE,\PP,\omega)$ is a point of evaluation for $\Gamma_1(p^n)$-forms and $f$ is a $\Gamma_1(p^{n-1})$-form, then $f(\EE,\PP,\omega) = f(\EE,p\PP,\omega)$.)
	\begin{lem}\label{lem:addition2}
		Let $\EE$ be an elliptic curve, $r\in \Z_{\ge 0}$, $\PP\in \EE[p^{r+1}]$ and $\omega$ be a nowhere vanishing global differential of $\EE$. For $a\in \Z_p$, we have
		\begin{align}
			\sum_{0\le b<p}\Phi_{a+bp^r,r+1}(\EE,\PP,\omega) = \Phi_{a,r}(\EE,p\PP,\omega).
		\end{align}
	\end{lem}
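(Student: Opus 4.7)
The statement is a purely modular identity, so the plan is to verify the equality on $\qq$-expansions and invoke the $\qq$-expansion principle (Proposition \ref{prop:q-exp-principle}). Both sides are modular forms of level $\Gamma_1(p^{r+1})$ defined over $\Z_{(p)}$: the left hand side obviously, and the right hand side because $\Phi_{a,r}$ (which has level $\Gamma_1(p^{r})$) pulls back to a form of level $\Gamma_1(p^{r+1})$, and according to the parenthetical reminder before the lemma, evaluating this pullback at $(\EE,\PP,\omega)$ equals $\Phi_{a,r}(\EE,p\PP,\omega)$. Hence it suffices to match the $\qq$-expansion of each side at the standard cusp.

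\textbf{Matching $\qq$-expansions.} By \eqref{eq:q-exp-Phi}, the $\qq$-expansion of $\Phi_{a,n} = \Phi_{a,n}^{(0)}$ is $\sum_{m\ge 0}\sigma_{a,n}(m)\qq^m$. So the lemma reduces to the distributional identity
\begin{align}
\sum_{0\le b<p}\sigma_{a+bp^r,r+1}(m) \;=\; \sigma_{a,r}(m) \qquad \text{for all }m\in\Z_{\ge 0}.
\end{align}
For $m\ge 1$, each of the four sums appearing in the definition of $\sigma_{a,n}(m)$ is a sum over divisors $d\mid m$ satisfying a congruence $d\equiv \alpha \bmod p^{n}$, where $\alpha\in \{a,a/N,a/c,a/cN\}$. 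As $b$ ranges over $\{0,1,\dots,p-1\}$, the cosets $\alpha+bp^{r}+p^{r+1}\Z$ partition $\alpha+p^{r}\Z$, so summation over $b$ collapses each of the four divisor sums from modulus $p^{r+1}$ to modulus $p^{r}$, giving precisely $\sigma_{a,r}(m)$.

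\textbf{The constant term.} The only step requiring care is $m=0$. Here $\sigma_{a,n}(0) = \mu_{\KL}(a+p^n\Z_p)$ by the very definition given right before Proposition \ref{prop:q-expansion}, so the required identity $\sum_{b}\mu_{\KL}(a+bp^{r}+p^{r+1}\Z_p) = \mu_{\KL}(a+p^{r}\Z_p)$ is just the additivity of the measure $\mu_\KL$ on disjoint unions of compact-open subsets of $\Z_p$. (This is the same distribution relation used in the second proof of Proposition \ref{prop:addition}.) Thus the $\qq$-expansion identity holds term by term, and an application of the $\qq$-expansion principle completes the proof.

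\textbf{Obstacle.} There is essentially no obstacle; the only subtlety is keeping the level-raising bookkeeping straight, namely viewing $\Phi_{a,r}$ as a $\Gamma_1(p^{r+1})$-form (via the forgetful map $\PP\mapsto p\PP$) so that the $\qq$-expansion principle can be applied to a genuine equality of forms of equal level.
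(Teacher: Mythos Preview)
Your proof is correct and follows exactly the paper's approach, which simply records that the identity ``can be directly seen from the $\qq$-expansions.'' One minor quibble: since the $\Phi_{a,n}$ are true modular forms of level $\Gamma_1(p^n)$ over $\Z_{(p)}$ and the lemma is stated for an arbitrary (not necessarily ordinary) elliptic curve $\EE$, the relevant $\qq$-expansion principle is the classical one for $\Gamma_1(p^n)$-forms rather than Proposition~\ref{prop:q-exp-principle}, which is specific to the ordinary locus $Z_{00}(p^n)$.
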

	
	\subsection{Trueness of Eisenstein periods}
	
	Thanks to Proposition \ref{prop:Psi-Phi}, we have the sleeker formula
	\begin{align}\label{eq:epf-Phi}
		\mu_{\BH}(a+p^n\Z_p) = \Phi_{a,n}(\C/L,\pi_n\cdot \Omega_\infty,dz),
	\end{align}
	which is the gateway to the explicit period formulae of Katz's Eisenstein measure, as we now explain. 
	
	Let $\bV$ denote the space of generalized modular forms defined over $W = W(\bar{\F}_p)$ \cite[\S1.1]{katz-eisenstein-measure}. Recall these are functions on the tuples $(\EE,\varphi)$, where $\EE$ is an ordinary elliptic curve and $\varphi: \hat{\EE}\to \hat{\G}_m$ is an isomorphism of formal group schemes, and their $\qq$-expansions at $(\Tate(\qq),\varphi_\can)$ land in $W[[\qq]]$ ($\varphi_{\can}$ is the isomorphism induced by the canonical inclusion $\mu_{p^\infty}\to \Tate(\qq)$). By a result of Igusa, the topology of $\bV$ is determined by the $\qq$-expansion at the single cusp $(\Tate(\qq),\varphi_\can)$ \cite[p.~498, bottom paragraph]{katz-moduli}. Now, recall from \cite[\S1.5]{katz-eisenstein-measure} that a true modular form of level $\Gamma_{00}(p^n)$ is a function on the triples $(\EE,\alpha_n,\omega)$, where $\EE$ is an elliptic curve, $\alpha_n:\mu_{p^n} \to \EE[p^n]$ is an embedding of groups schemes, and $\omega$ is a nowhere vanishing global K\"ahler differential on $\EE$. A such form $f$ can be regarded as a generalized modular form $f^\gen$ via the correspondence
	\begin{align}
		f^\gen(\EE,\varphi) = f(\EE,\varphi^{-1}|_{\mu_{p^n}},\varphi^*(dt/t)),
	\end{align}
	where, as in \S\ref{subsec:change-of-variable}, $t$ is the variable for $\hat{\G}_m$. We can restate the period formula \eqref{eq:epf-Phi} (\textit{cf.}, equations (2.5.2), (2.6.15), (2.7.17), \textit{op.~cit.}; see also \cite[\S XI]{katz-moduli}).
	\begin{cor}\label{cor:BH-Phigen}
		For $a\in \Z_p$ and $n\in \Z_{\ge 0}$, we have
		\begin{align}
			\mu_{\BH}(a+p^n\Z_p) = \Omega_p\Phi_{a,n}^\gen(E,\iota^{-1}).
		\end{align}
	\end{cor}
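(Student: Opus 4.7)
The corollary should follow by unwinding the two translations involved: (i) passing from the complex analytic triple $(\C/L,\Omega_\infty\pi_n,dz)$ to the algebraic triple $(E,\pi_n,\omega_E)$ via the Weierstrass parametrization, and (ii) converting the $\Gamma_1(p^n)$--point presentation used to define $\Phi_{a,n}$ to the $\Gamma_{00}(p^n)$--embedding presentation used to define $\Phi_{a,n}^{\gen}$. After that, the weight one homogeneity absorbs the factor $\Omega_p$ coming from the comparison of invariant differentials.

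Concretely, the plan is as follows. First, start from the period formula \eqref{eq:epf-Phi}
\begin{align*}
\mu_{\BH}(a+p^n\Z_p) = \Phi_{a,n}(\C/L,\pi_n\cdot\Omega_\infty,dz).
\end{align*}
The Weierstrass uniformization $z\mapsto (\wp(z,L),\wp'(z,L)/2)$ is an isomorphism $\C/L\xrightarrow{\sim} E(\C)$ that sends $dz$ to $\omega_E$ and, by construction of $\pi_n\in E[\fp^n]$ through the diagram \eqref{diag:mu-varpi} (together with the normalization \eqref{ass:iota}), sends $\Omega_\infty\pi_n\in \fp^{-n}L/L$ to $\pi_n\in E[p^n]$. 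Hence the complex evaluation on the right hand side equals $\Phi_{a,n}(E,\pi_n,\omega_E)$.

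Next, by Lemma \ref{lem:formal-fp} and the identity $\pi_n=\iota(\zeta_n)$, the exact-order-$p^n$ point $\pi_n$ corresponds to the embedding $\iota|_{\mu_{p^n}}\colon \mu_{p^n}\to E[p^n]$ sending the distinguished generator $\zeta_n$ to $\pi_n$; under the translation between the $\Gamma_1(p^n)$ and $\Gamma_{00}(p^n)$ moduli problems recalled just before the corollary, this gives
\begin{align*}
\Phi_{a,n}(E,\pi_n,\omega_E) = \Phi_{a,n}(E,\iota|_{\mu_{p^n}},\omega_E).
\end{align*}
Now apply the definition $\Phi_{a,n}^{\gen}(E,\iota^{-1}) = \Phi_{a,n}(E,\iota|_{\mu_{p^n}},(\iota^{-1})^{*}(dt/t))$, combined with Remark \ref{rem:omega-p} which yields $(\iota^{-1})^{*}(dt/t) = \Omega_p\,\omega_E$. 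Since $\Phi_{a,n}$ is of weight one, the homogeneity property (MF1) gives $\Phi_{a,n}(E,\iota|_{\mu_{p^n}},\Omega_p\omega_E) = \Omega_p^{-1}\Phi_{a,n}(E,\iota|_{\mu_{p^n}},\omega_E)$. Putting everything together, $\Omega_p\Phi_{a,n}^{\gen}(E,\iota^{-1}) = \Phi_{a,n}(E,\iota|_{\mu_{p^n}},\omega_E) = \mu_{\BH}(a+p^n\Z_p)$.

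The only step needing genuine care is the verification that the embedding $\iota|_{\mu_{p^n}}\colon \mu_{p^n}\to E[p^n]$ arising from the formal group isomorphism is the same datum (under the comparison between $\Gamma_1(p^n)$-- and $\Gamma_{00}(p^n)$--structures) as the point $\pi_n=\iota(\zeta_n)$; this is a moduli-theoretic matching that ultimately rests on Lemma \ref{lem:formal-fp} identifying $\hat{E}[p^n]$ with $E[\fp^n]$ and on the normalization \eqref{ass:iota}, but one must keep track of which generator $\zeta_n\in \mu_{p^n}$ is being used. All other manipulations are formal.
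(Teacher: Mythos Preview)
Your proposal is correct and follows essentially the same approach as the paper: both use \eqref{eq:epf-Phi}, Remark \ref{rem:omega-p}, and the weight-one homogeneity. The paper's proof is simply more terse, passing directly from $\Phi_{a,n}^{\gen}(E,\iota^{-1})$ to $\Phi_{a,n}(\C/L,\pi_n\Omega_\infty,\Omega_p\,dz)$ without spelling out the Weierstrass identification or the $\Gamma_1(p^n)$--$\Gamma_{00}(p^n)$ matching that you make explicit.
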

	\begin{proof}
		By Remark \ref{rem:omega-p}, $(\iota^{-1})^*(dt/t) = \Omega_p \omega_E$. So
		\begin{align}
			\Phi^\gen_{a,n}(E,\iota^{-1}) & =  \Phi_{a,n}(\C/L,\pi_n\cdot\Omega_\infty,\Omega_pdz)\\
			& = \Omega_p^{-1}\Phi_{a,n}(\C/L,\pi_n\cdot\Omega_\infty,dz).
		\end{align}
		Now, use \eqref{eq:epf-Phi}.
	\end{proof}
	We now define a $p$-adic measure $\mu_\Eis\in \Mes(\Z_p,\mathbb{V})$ by
	\begin{align}
		\mu_\Eis(a+p^n\Z_p) = \Phi_{a,n}^\gen.
	\end{align}
	From the $\qq$-expansion, the distribution relation is easily verified. We contend that $\mu_\Eis$ is actually the $c$-regularized $p$-adic Eisenstein  measure of Katz. In order to state their precise relation, we first recall the classical Eisenstein series. For $k\in \Z_{\ge 1}$ and $\tau\in \cH$, put
	\begin{align}
		G_k(\tau) = \begin{cases}
			\sum_{m\in \Z}\sum_{n\in \Z,(m,n)\ne (0,0)} (m\tau+n)^{-k} & \text{if }k\in 2\Z;\\
			0 & \text{otherwise}.
		\end{cases}
	\end{align}
	We note that the summation defining $G_2(\tau)$ is conditionally convergent \cite[Corollary 9.2.6]{complex-analysis}.
	\begin{prop}\label{prop:eisenstein}
		Let $k\in \Z_{\ge 0}$. Then
		\begin{align}\label{eq:Psi-eisenstein}
			\Psi^{(k)}_{0,0}(\tau) = (c^2-c^{k+1})(1-N^{k+1})k!G_{k+1}(\tau).
		\end{align}
	\end{prop}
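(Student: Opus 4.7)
The plan is to reduce the identity to Proposition~\ref{prop:z-interpol} applied to a generic lattice, together with a comparison between the Hecke-regularized sums $s_{k+1}$ and the classical Eisenstein series $G_{k+1}$.

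First I would unpack the definition of $\Psi^{(k)}_{0,0}$: since $n=0$ the summation collapses to the single term $l_1=l_2=0$ (and $\zeta^{0}=1$), so that
\[
\Psi^{(k)}_{0,0}(\tau,1) \;=\; \tfrac{1}{12}\,\frac{d^{k+1}}{dz^{k+1}}\log\Lambda_{c,N}(z,\Z\tau+\Z)\Big|_{z=0}.
\]
Equivalently, this is $\tfrac{k!}{12}$ times the coefficient of $z^k$ in the power series $F_{c,N}(z)$ of \S\ref{subsec:elliptic}, but now expanded for the generic lattice $\Z\tau+\Z$ in place of the CM lattice~$L$.

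Next I would observe that the computation in the proof of Proposition~\ref{prop:z-interpol} never uses the CM structure of $L$: the three cases treated there rely only on the addition formula for $\wp$, the double-periodicity of $\wp^{(k-1)}$, and the Hecke summation defining $s_k$. Running the identical argument for an arbitrary lattice $\sL\subset\C$ yields the $z$-expansion
\[
\frac{d}{dz}\log\Lambda_{c,N}(z,\sL) \;=\; 12\sum_{k\ge 0}(c^2-c^{k+1})(1-N^{k+1})\,s_{k+1}(\sL)\,z^k,
\]
so setting $\sL=\Z\tau+\Z$ and extracting the $z^k$-coefficient gives
\[
\Psi^{(k)}_{0,0}(\tau,1) \;=\; (c^2-c^{k+1})(1-N^{k+1})\,k!\,s_{k+1}(\Z\tau+\Z).
\]

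It then remains to identify $s_{k+1}(\Z\tau+\Z)$ with $G_{k+1}(\tau)$ whenever the prefactor does not already annihilate both sides. When $k+1$ is odd, pairing $\mu\leftrightarrow-\mu$ makes $s_{k+1}=0$, matching $G_{k+1}=0$. When $k+1\ge 4$ is even, the series defining $s_{k+1}$ converges absolutely and equals $G_{k+1}(\tau)$ term by term. The only genuinely subtle weight is $k+1=2$, where $s_2$ and $G_2$ differ by a non-holomorphic correction proportional to $\pi/\im(\tau)$; but at $k+1=2$ the prefactor $c^2-c^{k+1}$ vanishes identically, so the identity holds trivially. The main potential obstacle was thus this boundary case, but the vanishing of $c^2-c^{k+1}$ at $k+1=2$ sidesteps it cleanly.
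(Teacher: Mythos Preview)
Your proof is correct and takes a genuinely different route from the paper's. The paper establishes \eqref{eq:Psi-eisenstein} by comparing $\qq$-expansions: for $\Psi^{(k)}_{0,0}$ it quotes Proposition~\ref{prop:q-expansion}, while for $G_{k+1}$ it invokes Poisson summation together with the functional equation of $\zeta$ to identify the constant term; the even-$k$ case is handled by showing directly that the divisor sums $\sigma^{(k)}_{0,0}(m)$ vanish (and that $\mu_{\KL}$ is odd for the constant term). Your argument instead goes back to the definition and reads the identity off the lattice-sum computation in the proof of Proposition~\ref{prop:z-interpol}, having observed that nothing there actually uses the CM hypothesis on $L$ --- the intermediate formula \eqref{eq:d_k-BH} with $s_{k+1}$ on the right holds for an arbitrary lattice. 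This is shorter and sidesteps the $\qq$-expansion machinery of \S\ref{sec:modularity} as well as the analytic input (Poisson summation, functional equation). The paper's approach, by contrast, makes the Fourier coefficients of both sides fully explicit, which is consonant with how Theorem~\ref{thm:Eis-Katz} is proved immediately afterward.
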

	\begin{proof}
		To begin with, observe that $\Psi_{0,0}^{(k)}=0$ for $k\in 2\Z_{\ge0}$. Indeed, when $k$ is even, the coefficients $\sigma_{0,0}^{(k)}(m)$ are automatically zero for $m>0$, since
		\begin{align}
			\sum_{d\mid n} \sgn(d)d^k = \frac{1}{2}\sum_{d\mid n}( \sgn(d)d^k + \sgn(-d)(-d)^k )= 0.
		\end{align}
		The constant term $\sigma^{(k)}_{0,0}(0)$ is also zero, as the measure $\mu_{\KL}$ is odd; see \eqref{eq:epf-KL}. It follows that both sides of \eqref{eq:Psi-eisenstein} are zero.
		
		Now suppose $k$ is odd. By the Poisson summation formula (see, e.g., \cite[Lemma 9.2.4]{complex-analysis}), we have for $k\ge 1$,
		\begin{align}\label{eq:Gk-q-exp}
			\sum_{m\in \Z}\sum_{\substack{n\in \Z\\ (m,n)\ne (0,0)}} \frac{1}{(m\tau+n)^{k+1}} = 2\zeta(k+1) + \frac{(2\pi i)^{k+1}}{k!}\sum_{m\ge 1} \qq^m\sum_{d\mid m}\sgn(d)d^k.
		\end{align}
		By the functional equation, we have
		\begin{align}
			k!(2\pi i)^{-k-1}2\zeta(k+1) = \zeta(-k).
		\end{align}
		Hence,
		\begin{align}
			(2\pi i)^{-k-1}(c^2-c^{k+1})(1-N^{k+1})k!G_{k+1}(\tau)
			= (c^2-c^{k+1})(1-N^{k+1})\zeta(-k) + \sum_{m\ge 1}\qq^m\sigma^{(k)}_{0,0,c,N}(m).
		\end{align}
		As $\int_{\Z_p}x^k\mu_{\KL}(x) = (c^2-c^{k+1})(1-N^{k+1})\zeta(-k)$, the right hand side is exactly $\Psi_{0,0}^{(k)}(2\pi i\tau,2\pi i)$ by Proposition \ref{prop:q-expansion}.
	\end{proof}
	\begin{rem}\label{rem:Gk}
		As explained in \cite[Example 4.5.4]{katz-modular-scheme}, the function $G_k$ is a $p$-adic modular form of weight $k$ and full level (true modular if $k\ge 4$). Hence we may use the moduli interpretation and evaluate it on tuples of the form $(\EE,\omega)$, where $\EE$ is an ordinary elliptic curve over a $p$-adically complete algebra and $\omega$ is a nowhere vanishing differential on $\EE$.
	\end{rem}
	Let $\mu_\Katz\in \Mes(\Z_p,\bV)$ be the one-variable $p$-adic Eisenstein measure, which is denoted by $2\mathbf{H}^{N,0}$ in \cite[\S3.3]{katz-eisenstein-measure}, and is characterized by the interpolation property
	\begin{align}
		\int_{\Z_p}x^k\mu_{\Katz}(x) = (1-N^{k+1})k!G_{k+1}^\gen.
	\end{align}
	\textit{Caveat.} Note that Katz's $G_k$ is our $(k!/2)G_k$.
	\begin{thm}\label{thm:Eis-Katz}
		Let $c,N\in \Z_{>1}$ be coprime integers with $p\nmid cN$. Then
		\begin{align}\label{eq:Eis-Katz}
			\mu_{\Eis}(a+p^n\Z_p) = c^2\mu_{\Katz}(a+p^n\Z_p) - c\mu_{\Katz}(a/c+p^n\Z_p).
		\end{align}
	\end{thm}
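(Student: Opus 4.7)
The plan is to reduce the equality of $\bV$-valued measures on $\Z_p$ to a moment comparison and then invoke Igusa's $\qq$-expansion principle. By Proposition \ref{prop:fourier-properties}, a $\bV$-valued measure on $\Z_p$ is uniquely determined by its sequence of moments $\{\int_{\Z_p} x^k\, d\mu\}_{k\ge 0}$, so it suffices to check
\begin{align*}
\int_{\Z_p} x^k\, d\mu_{\Eis} = \int_{\Z_p} x^k\, d\bigl(c^2\mu_{\Katz}(x) - c\mu_{\Katz}(x/c)\bigr) = (c^2-c^{k+1})(1-N^{k+1})k!G_{k+1}^\gen
\end{align*}
in $\bV$ for every $k\ge 0$. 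The second equality is immediate from Katz's interpolation $\int x^k\, d\mu_\Katz = (1-N^{k+1})k!G_{k+1}^\gen$ and the change-of-variables identity $\int x^k\, d\mu_\Katz(x/c) = c^k\int x^k\, d\mu_\Katz$.

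The crux is therefore the first equality. Both sides live in $\bV$, so by Igusa's theorem (cited in the paragraph before the statement) it is enough to match $\qq$-expansions at the cusp $(\Tate(\qq),\varphi_{\can})$. The $\qq$-expansion map $\bV\to W[[\qq]]$ is a continuous ring homomorphism, hence commutes with $p$-adic integration, and by \eqref{eq:q-exp-Phi} carries $\mu_\Eis$ to the $\Z_p[[\qq]]$-valued measure $\tilde\mu(a+p^n\Z_p) = \sum_{m\ge 0}\sigma_{a,n}(m)\qq^m$. Consequently
\begin{align*}
\biggl(\int_{\Z_p} x^k\, d\mu_\Eis\biggr)(\Tate(\qq),\varphi_{\can}) = \lim_{n\to\infty}\sum_{0\le a<p^n} a^k\sum_{m\ge 0}\sigma_{a,n}(m)\qq^m.
\end{align*}
For each fixed $m\ge 1$, regrouping the defining four-term expression of $\sigma_{a,n}(m)$ by divisors $d\mid m$, and noting that as $n\to \infty$ the unique representative $a\in[0,p^n)$ with $a\equiv d,\ d/N,\ d/c,\ d/(cN)\bmod p^n$ satisfies $a^k\to d^k,\ (Nd)^k,\ (cd)^k,\ (cNd)^k$ $p$-adically, the $m$-th coefficient tends to $(c^2-c^{k+1})(1-N^{k+1})\sum_{d\mid m}\sgn(d)d^k$. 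For $m=0$, Proposition \ref{prop:q-expansion} identifies $\sigma_{a,n}(0)$ with $\int_{a+p^n\Z_p}d\mu_\KL$, whence the constant term evaluates to $\int_{\Z_p}x^k\, d\mu_\KL = (c^2-c^{k+1})(1-N^{k+1})\zeta(-k)\mathbf{1}_{k\ge 1}$. Combining with \eqref{eq:Gk-q-exp} recovers exactly the $\qq$-expansion of $(c^2-c^{k+1})(1-N^{k+1})k!G_{k+1}^\gen$, as required.

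The main technical point is justifying the commutation of $p$-adic integration with the $\qq$-expansion map, which rests on continuity of the latter as a homomorphism $\bV\to W[[\qq]]$ in their natural $p$-adic topologies. Beyond that, the argument amounts to a routine bookkeeping of four divisor sums; the only other subtlety is the $k=0$ case, which is handled by the built-in vanishing $\int d\mu_\KL = 0$ from \eqref{eq:interpolation-KL}.
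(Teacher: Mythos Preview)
Your proof is correct and follows essentially the same strategy as the paper's: reduce to matching moments in $\bV$, then verify this at the level of $\qq$-expansions via Igusa's principle. The paper packages the Riemann-sum computation a bit differently, by first proving the congruence $a^k\Phi_{a,n}(\qq)\equiv \Phi_{a,n}^{(k)}(\qq)\bmod p^n$ (which is your coefficient-by-coefficient argument rephrased), then summing over $a$ using the distribution relation to get $\Phi_{0,0}^{(k),\gen}$, and finally invoking Proposition~\ref{prop:eisenstein} to identify this with $(c^2-c^{k+1})(1-N^{k+1})k!G_{k+1}^\gen$; you instead compute the limit of $\sum_a a^k\sigma_{a,n}(m)$ directly for each $m$.

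Two small cleanups: (i) Proposition~\ref{prop:fourier-properties} is stated for valuation rings, so is not literally the right citation for moment-determination of $\bV$-valued measures; the paper appeals to \cite[Corollary~3.3.1]{Hida} here. (ii) In your regrouping step the congruence conditions read $d\equiv a,\ a/N,\ a/c,\ a/(cN)$, so the corresponding representatives are $a\equiv d,\ Nd,\ cd,\ cNd$ (not $a\equiv d/N$, etc.); your stated limits $a^k\to d^k,(Nd)^k,(cd)^k,(cNd)^k$ are correct, only the intermediate labeling is swapped.
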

	\begin{proof}
		Denote by $\nu_\Eis$ the $p$-adic measure whose periods are given by the right hand side of \eqref{eq:Eis-Katz}. Then it suffices to show that $\int_{\Z_p}x^k \mu_{\Eis} = \int_{\Z_p} x^k\nu_{\Eis}$ for all $k\in \Z_{\ge 0}$ \cite[Corollary 3.3.1]{Hida}. By definition,
		\begin{align}
			\int_{\Z_p}x^k\nu_\Eis(x) = c^2\int_{\Z_p}x^k\mu_{\Katz}(x) - c\int_{\Z_p}x^k\mu_\Katz(x/c) = (c^2-c^{k+1})(1-N^{k+1})k!G_{k+1}^\gen.
		\end{align}
		On the other hand,
		\begin{align}
			\int_{\Z_p}x^k\mu_{\Eis}(x) = \lim_{n\to\infty} \sum_{0\le a<p^n} a^k\Phi_{a,n}^\gen = \lim_{n\to \infty} \sum_{0\le a<p^n}\Phi_{a,n}^{(k),\gen} = \Phi_{0,0}^{(k),\gen}.
		\end{align}
		Here, the second equality comes from the congruence of $\qq$-expansions, $a^k\Phi_{a,n}(\qq) \equiv \Phi_{a,n}^{(k)}(\qq) \bmod p^n$. Indeed, for all $b\in (\Z/p^n)^\times$,
		\begin{align}
			a^k\sum_{\substack{d\mid m\\ d\equiv a/b\bmod p^n}}\sgn(d) \equiv b^k\sum_{\substack{d\mid m\\ d\equiv a/b\bmod p^n}}\sgn(d)d^k \pmod {p^n}.
		\end{align}
		It follows that, for all $m\in \Z_{>0}$,
		\begin{align}
			a^k\sigma_{a,n}(m) \equiv \sigma^{(k)}_{a,n}(m) \pmod{p^n}.
		\end{align}
		The same identity holds for $m=0$, since
		\begin{align}
			\sigma^{(k)}(0) = \int_{a+p^n\Z_p}x^k\mu_{\KL}(x) \equiv a^k\int_{a+p^n\Z_p}\mu_{\KL}(x) = a^k\sigma_{a,n}(0) \pmod{p^n};
		\end{align}
		the last equality is by the definition of $\mu_\KL$. We can then finish the proof using Proposition \ref{prop:eisenstein}.
	\end{proof}
	Note that this asserts that, after the $c$-regularization, the Eisenstein measure is valued in the space of true modular forms; it is exactly the problematic form $G_2$ that gets killed in the process.

	\section{Interpolation via congruences of Eisenstein series}\label{sec:interpolation}

	The goal of this section is to define the Bernoulli--Hurwitz $p$-adic zeta functions, and prove their interpolation properties (\textit{cf.}~\cite[p.~276, top formula]{katz-eisenstein-measure} and \cite[Theorem 6.4]{lichtenbaum}). As mentioned in the introduction, our approach makes crucial use of the explicit period formulas \eqref{eq:epf}, \eqref{eq:epf-Phi}, and the existence of certain weight one modular invariants constructed in Appendix \ref{app:hasse}, thereby departing from the existing strategies in the literature \cite{lichtenbaum,coates-wiles-aus,cassou-nogues-CM}. In fact, thanks to the versatility of this approach, we are allowed to work with any elliptic curve $\EE$ over $W$ having a good ordinary reduction instead of just CM ones.
	
	\subsection{Setup}\label{subsec:setup}
	Below, fix an ordinary elliptic curve $\cE$ over $W$ and a generator $\omega_{\cE}$ of $H^0(\cE,\Omega^1_{\cE/W})$. Choose a trivialization $\varphi_\cE: \hat{\cE}\xrightarrow{\sim}\hat{\G}_m$ over $W$, and put $\Omega_{p,\cE} = \varphi_\cE^*(dt/t)/\omega_\cE\in W^\times$. Denote $\varphi_{n,\cE} = \varphi_\cE|_{\hat{\cE}[p^n]}$, and put $\cP_n = \varphi_{\cE}^{-1}(\zeta_n)$. The $p$-canonical subgroup of $\cE$ is then given by $H_{\cE} = \varphi_\cE^{-1}(\mu_p)$, whose geometric points as a group is generated by $\cP_1$. Define the $(c,N)$-regularized $p$-adic Bernoulli--Hurwitz measure $\mu_{\BH,\cE} = \mu_{c,N,\cE,\varphi_\cE,\omega_\cE}\in \Mes(\Z_p,W)$ by the period formula
	\begin{align}\label{eq:mu-BH-ord}
		\mu_{\BH,\cE}(a+p^n\Z_p) = \Phi_{a,n}(\cE,\cP_n,\omega_\cE);
	\end{align}
	often we drop $\cE$ from the notation of $\mu_{\BH,\cE}$ when the data $(\cE,\varphi_\cE,\omega_\cE)$ is clear. By the same argument as in Corollary \ref{cor:BH-Phigen}, we see that
	\begin{align}
		\mu_{c,N,\cE,\varphi_\cE,\omega_\cE} = \Omega_{p,\cE}\cdot \mu_{\Eis}(\cE,\varphi_\cE).
	\end{align}
	When $(\cE,\varphi_\cE,\omega_\cE) = (E,\iota^{-1},\omega_E)$, we recover the Bernoulli--Hurwitz measure considered in the previous sections by the explicit period formula \eqref{eq:epf-Phi}.
	
	Next, for $i\in \Z/(p-1)$, define the Bernoulli--Hurwitz $p$-adic zeta function as
	\begin{align}\label{eq:bh-zeta}
		\zeta^\BH_{p}(s,\omega^i) = \zeta^\BH_{p,\cE}(s,\omega^i) =(c^2 - \omega^i(c)\chx{c}^{1-s})^{-1}(1-\omega^i(N)\chx{N}^{1-s})^{-1}\int_{\Z_p^\times} \omega^{i-1}(x)\chx{x}^{-s}\mu_{\BH}(x).
	\end{align}
	When $i=0$, we will write $\zeta_p^\BH(s,\omega^0)$ simply as $\zeta_p^\BH(s)$. We will see from the interpolation formula, Theorem \ref{thm:zeta-interpolation} below, that $\zeta^\BH_p(s,\omega^i)$ does not depend on $c$ or $N$. 
	
	To state the interpolation property of $\zeta_p^\BH$, we will need to recall the level raising operator, which sends a full level $p$-adic modular form $f$ to a $\Gamma_0(p)$-level form $f^\dagger$: If $\EE$ is an ordinary elliptic curve, $\HH\subset \EE[p]$ is a subgroup-scheme of order $p$ with $\pi_\HH: \EE\to \EE/\HH$ the natural projection, and $\omega$ a basis of the K\"ahler 1-forms of $\EE$, then
	\begin{align}
		f^\dagger(\EE,\HH,\omega) = f(\EE/\HH,\hat{\pi}_\HH^*\omega),
	\end{align}
	with $\hat{\pi}_\HH: \EE/\HH\to \EE$ the dual isogeny (Recall that $\EE/\HH$ is still ordinary; see, e.g., \cite[Proposition 8.4.4.(b)]{diamond-shurman}). The effect of $\dagger$ on the standard $\qq$-expansion at $(\Tate(\qq),\mu_{p^n},\omega_\can)$ can be seen to be $f^\dagger(\qq) = f(\qq^p) = f(\Tate(\qq^p),\omega_\can)$. If $f$ is of weight $k$ and $\HH\subset\EE$ is the canonical subgroup, we define the $p$-stabilization
	\begin{align}\label{eq:level-raising}
		f^*(\EE,\omega) = f(\EE,\omega) - p^{k-1}f^\dagger(\EE,\HH,\omega) = f(\EE,\omega) - p^{k-1}f(\EE/\HH,\hat{\pi}_\HH^*\omega).
	\end{align}
	
	\subsection{Interpolations}
	\begin{thm}\label{thm:zeta-interpolation}
		For all $k\in \Z_{\ge 0}$, we have
		\begin{align}\label{eq:bh-ord-interpol}
			\zeta_{p,\cE}^\BH(-k,\omega^{k+1}) =\Omega_{p,\cE}^{-k}\cdot k!G_{k+1}^*(\cE,\omega_\cE).
		\end{align}
	\end{thm}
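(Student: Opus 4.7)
The plan is to reduce the interpolation formula to a $\qq$-expansion identity in Katz's space $\bV$ of generalized $p$-adic modular forms. Specializing $s=-k$ and $i=k+1$ in the definition \eqref{eq:bh-zeta}, the Euler factors become $c^2-c^{k+1}$ and $1-N^{k+1}$, while the integrand reduces to $\omega^k(x)\chx{x}^k=x^k$. Combining this with $\mu_{\BH,\cE}=\Omega_{p,\cE}\cdot\mu_\Eis(\cE,\varphi_\cE)$ from \S\ref{subsec:setup}, and observing that the weight-$(k+1)$ homogeneity of $G_{k+1}^*$ yields $G_{k+1}^{*,\gen}(\cE,\varphi_\cE)=\Omega_{p,\cE}^{-(k+1)}G_{k+1}^*(\cE,\omega_\cE)$, the claim \eqref{eq:bh-ord-interpol} becomes equivalent to the identity of generalized $p$-adic modular forms
\begin{align}
\int_{\Z_p^\times} x^k\,\mu_\Eis(x)=(c^2-c^{k+1})(1-N^{k+1})\,k!\,G_{k+1}^{*,\gen}\qquad\text{in }\bV.
\end{align}
By Igusa's theorem that $\bV$ injects into $W[[\qq]]$ via the $\qq$-expansion at the single cusp $(\Tate(\qq),\varphi_\can)$, it then suffices to verify this as a formal power series identity.

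The central step is to unfold
\begin{align}
\int_{\Z_p^\times}x^k\,\mu_\Eis(x)=\lim_{n\to\infty}\sum_{a\in(\Z/p^n)^\times}a^k\Phi^{\gen}_{a,n}
\end{align}
and apply the congruence $a^k\sigma_{a,n}(m)\equiv\sigma^{(k)}_{a,n}(m)\pmod{p^n}$ established inside the proof of Theorem \ref{thm:Eis-Katz}. For each $m\ge 1$, summing the formula defining $\sigma^{(k)}_{a,n}(m)$ over $a\in(\Z/p^n)^\times$ causes each of the four divisor-restricted sums to collapse to the single condition $p\nmid d$, and the four prefactors combine into $(c^2-c^{k+1})(1-N^{k+1})$, giving
\begin{align}
\sum_{a\in(\Z/p^n)^\times}\sigma^{(k)}_{a,n}(m)=(c^2-c^{k+1})(1-N^{k+1})\sum_{\substack{d\mid m\\ p\nmid d}}\sgn(d)d^k,
\end{align}
which is precisely the $\qq^m$-coefficient of $(c^2-c^{k+1})(1-N^{k+1})\,k!\,G_{k+1}^*$. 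For the constant term, $\sum_{a\in(\Z/p^n)^\times}a^k\sigma_{a,n}(0)\equiv\int_{\Z_p^\times}x^k\mu_\KL\pmod{p^n}$, and the distribution property of $\mu_\KL$ reviewed in Appendix \ref{app:bernoulli} yields $(1-p^k)(c^2-c^{k+1})(1-N^{k+1})\zeta(-k)\mathbf{1}_{k>0}$, which matches the constant term of the right hand side.

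The main technical hurdle is to know that $G_{k+1}^{*,\gen}$ truly lies in $\bV$ so that the $\qq$-expansion principle can be invoked. For $k+1\ge 4$ this is immediate from the classical modularity of $G_{k+1}$, for $k+1=1$ and $k+1=3$ both sides vanish by parity, and for $k+1=2$ we rely on the $p$-adic modularity of $G_2$ flagged in the introduction; in all cases the operation $f\mapsto f(\qq)-p^{k-1}f(\qq^p)$ is the $\qq$-expansion shadow of the geometric assignment $f\mapsto f-p^{k-1}\cdot \mathrm{Frob}^*f$ on $\bV$ coming from the canonical Frobenius lift on the ordinary locus, so $G_{k+1}^{*,\gen}\in\bV$ throughout. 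With this in place, matching the $\qq$-expansions above yields the identity in $\bV$, and specialising at $(\cE,\varphi_\cE)$ gives \eqref{eq:bh-ord-interpol}.
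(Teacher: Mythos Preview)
Your route via the $\qq$-expansion principle in $\bV$ is valid for $k\neq 1$ and differs somewhat from the paper's. The paper works at finite level on $Z_{00}(p^n)$: the congruence $a^k\Phi_{a,n}(\qq)\equiv\Phi_{a,n}^{(k)}(\qq)\bmod p^n$ relates forms of weights $1$ and $k+1$, so to propagate it to values at $(\cE,\cP_n,\omega_\cE)$ the paper multiplies by the weight-one invariant $A_n^{-k}$ of Appendix~\ref{app:hasse} before invoking Proposition~\ref{prop:q-exp-principle}, and then telescopes via Lemmas~\ref{lem:addition2} and~\ref{lem:p-form} to reach $\Phi_{0,0}^{(k),*}$. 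By contrast, you pass directly to $\bV$, where the trivialization $\varphi_\cE$ fixes the differential and the weight discrepancy evaporates; this is cleaner for pure moment computations, though it obscures the role of $A_n$ that the paper wishes to highlight and that becomes indispensable in \S\ref{sec:FLF}.

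The genuine gap is at $k=1$, which is not the ``main technical hurdle'' you identify. Your reduction divides through by $c^2-c^{k+1}$, which vanishes there, so the $\bV$-identity you prove collapses to $\int_{\Z_p^\times}x\,\mu_\Eis(x)=0$; this is correct but says nothing about $\zeta_p^\BH(-1,\omega^2)$, which by \eqref{eq:bh-zeta} is the value at $s=-1$ of $(c^2-\omega^2(c)\chx{c}^{1-s})^{-1}(1-\omega^2(N)\chx{N}^{1-s})^{-1}\int_{\Z_p^\times}\omega(x)\chx{x}^{-s}\mu_\BH(x)$ and is an indeterminate $0/0$ at that point. The paper treats this separately (``part~2'' of the proof) by differentiating in $s$: one computes $\int_{\Z_p^\times}x\log_p x\,\mu_\BH(x)$ instead, and the $\log_p$ weighting produces nontrivial $\qq$-expansion congruences against $G_2^*$ (and here the weight-matching via $A_n$ is really needed, since this is no longer a moment in the sense of Theorem~\ref{thm:Eis-Katz}). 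As written, your argument establishes \eqref{eq:bh-ord-interpol} only for $k\neq 1$.
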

	
	\begin{lem}\label{lem:p-form}
		Let $k\in \Z_{\ge 0}$. For $\tau\in\cH$, we have
		\begin{align}
			\Phi^{(k)}_{0,1}(2\pi i\tau,2\pi i) = p^k\Phi^{(k)}_{0,0}(2\pi ip\tau,2\pi i).
		\end{align}
		Thus $\Phi^{(k)}_{0,1} = p^k \Phi^{(k),\dagger}_{0,0}$.
	\end{lem}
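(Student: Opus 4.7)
The plan is to match $\qq$-expansions on both sides at $i\infty$ and close out with the $\qq$-expansion principle. By Proposition \ref{prop:q-expansion}, the left hand side expands as $\sum_{m\ge 0}\sigma_{0,1}^{(k)}(m)\qq^m$, while the right hand side equals $p^k\sum_{m\ge 0}\sigma_{0,0}^{(k)}(m)\qq^{pm}$. So the task reduces to verifying
\[
\sigma_{0,1}^{(k)}(m) = \begin{cases} p^k\sigma_{0,0}^{(k)}(m/p), & p\mid m;\\ 0, & p\nmid m. \end{cases}
\]

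For $m\ge 1$ this is elementary: since $c,N$ are prime to $p$, the four residues $0$, $0/N$, $0/c$, $0/(cN)$ all reduce to $0 \bmod p$, so the four divisor sums in the definition of $\sigma_{0,1}^{(k)}(m)$ share the same index set $\{d\mid m: p\mid d\}$ and their arithmetic coefficients collapse to $(c^2 - c^{k+1})(1-N^{k+1})$. The substitution $d = pe$ (valid only when $p\mid m$) then isolates $p^k$ and leaves precisely the analogous sum that defines $\sigma_{0,0}^{(k)}(m/p)$.

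The constant term $m = 0$ is the one step that is not purely combinatorial. Since $\sigma_{0,n}^{(k)}(0) = \int_{p^n\Z_p} x^k \mu_{\KL}(x)$, I need $\int_{p\Z_p}x^k\mu_{\KL}(x) = p^k\int_{\Z_p}x^k\mu_{\KL}(x)$. Decomposing $\int_{p\Z_p} = \int_{\Z_p} - \int_{\Z_p^\times}$, this boils down to the Euler-factor relation $\int_{\Z_p^\times}x^k\mu_{\KL} = (1-p^k)\int_{\Z_p}x^k\mu_{\KL}$, which can be read off the interpolation \eqref{eq:interpolation-KL}: the $\Z_p^\times$-restricted integral picks up the factor $(1-p^k)$ corresponding to the Euler factor at $p$ removed from the Kubota--Leopoldt zeta function. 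The case $k = 0$ degenerates harmlessly to $0 = 0$. I would refer the reader to Appendix \ref{app:bernoulli} for this standard distribution relation of $\mu_{\KL}$.

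With both $\qq$-expansions matching, the $\qq$-expansion principle (Proposition \ref{prop:q-exp-principle}) concludes the first equation, since both sides are modular forms that can be viewed at the same level, namely $\Gamma_1(p)$; note that $p^k\Phi_{0,0}^{(k),\dagger}$ is in fact $\Gamma_0(p)$-stable. The operator-theoretic reformulation $\Phi_{0,1}^{(k)} = p^k\Phi_{0,0}^{(k),\dagger}$ is then immediate from the recipe $f^\dagger(\qq) = f(\qq^p)$ recorded just above the lemma. The only mildly nontrivial input is the Kubota--Leopoldt constant-term identity, which I anticipate to be the main (albeit rather minor) obstacle.
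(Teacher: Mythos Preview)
Your proof is correct and follows the same approach as the paper: match the $\qq$-expansions coefficient by coefficient, then conclude. For the constant term $m=0$, the paper proceeds slightly more directly via the distribution identity $\mu_{\KL}(pa+p^n\Z_p)=\mu_{\KL}(a+p^{n-1}\Z_p)$, read off from the explicit period formulas \eqref{eq:epf-KL} and \eqref{eq:KL-mazur}, rather than your detour through the Euler factor; but the two are equivalent. Note also that the first displayed identity is purely complex-analytic, so equality of $\qq$-expansions already suffices there without invoking Proposition~\ref{prop:q-exp-principle}; the $\qq$-expansion principle is only needed (implicitly, in the paper as well) to pass to the $p$-adic identity $\Phi_{0,1}^{(k)} = p^k\Phi_{0,0}^{(k),\dagger}$.
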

	\begin{proof}
		Note that for $m\in \Z_{>0}$,
		\begin{align}
			\sum_{\substack{d\mid m\\ d\equiv 0\bmod p}} d^k = p^k\sum_{d\mid m/p} d^k, 
		\end{align}
		so $\sigma_{0,1}^{(k)}(m) = p^k \sigma^{(k)}_{0,0}(m/p)\1_{p\mid m}$. For $m=0$, we can manually verify this identity, by noting that $\mu_{\KL}(pa+p^n\Z_p) = \mu_{\KL}(a+p^{n-1}\Z_p)$ for all $a\in \Z_p$ and $n\in \Z_{>0}$, thanks to \eqref{eq:epf-KL} and \eqref{eq:KL-mazur}. Hence, by the $\qq$-expansion \eqref{eq:q-exp-Phi},
		\begin{align}
			\Phi^{(k)}_{0,1}(2\pi i\tau,2\pi i) = \sum_{m\ge 0}\sigma_{0,1}^{(k)}(m)\qq^m = p^k\sum_{m\ge 0}\sigma^{(k)}_{0,0}(m)\qq^{mp} = p^k\Phi_{0,0}^{(k)}(2\pi ip\tau,2\pi i).
		\end{align}
	\end{proof}
	\begin{proof}[Proof of Theorem \ref{thm:zeta-interpolation}, part 1]
		We have $\int_{\Z_p^\times}\omega^{k+1-1}(x)\chx{x}^k\mu_{\BH}(x) =
		\int_{\Z_p^\times} x^k\mu_{\BH}(x)$, which is approximated by the Riemann sums
		\begin{align}
			\sum_{\substack{0\le a<p^n\\ p\nmid a}} a^k \mu_{\BH}(a+p^n\Z_p) &=\sum_{\substack{0\le a<p^n\\ p\nmid a}} a^k \Phi_{a,n}(\cE, \cP_n,\omega_\cE).
		\end{align}
		up to $p^n$. As explained in the proof of Theorem \ref{thm:Eis-Katz}, the $\qq$-expansions of weight-$1$ modular forms $a^k\Phi_{a,n}$ and $A_n^{-k}\Phi^{(k)}_{a,n}$ are congruent, where $A_n$ is the $W/p^nW$-valued weight one form constructed in Appendix \ref{app:hasse}. By the $\qq$-expansion principle, Proposition \ref{prop:q-exp-principle}, we have
		\begin{align}
			a^k\Phi_{a,n}(\cE, \cP_n,\omega_\cE)
			&= a^k\Phi_{a,n}(\cE,\varphi_{n,\cE},\omega_\cE)\\
			&\equiv \left[A_n^{-k}\Phi_{a,n}^{(k)}\right](\cE,\varphi_{n,\cE},\omega_\cE)\pmod{p^n}\\
			&=\Omega_{p,\cE}^{-k}\Phi_{a,n}^{(k)}(\cE, \cP_n,\omega_\cE).
		\end{align}
		Here, the first equality is by regarding $\Phi_{a,n}$ as a form on $Z_{00}(p^n)$, and the third equality is by the definition of $A_n$ and $\Omega_{p,\cE}$. Therefore,
		\begin{align}
			\int_{\Z_p^\times}x^k\mu_{c,N}(x) &\equiv \Omega_{p,\cE}^{-k} \sum_{\substack{0\le a<p^n\\ p\nmid a}}
			\Phi^{(k)}_{a,n}(\cE,\cP_n,\omega_\cE) \pmod{p^n}\\
			&=\Omega_{p,\cE}^{-k} \sum_{1\le a<p} \Phi^{(k)}_{a,n}(\cE,\cP_1,\omega_\cE)\\
			&=\Omega_{p,\cE}^{-k}\left[\Phi^{(k)}_{0,0}(\cE,\omega_\cE) - \Phi^{(k)}_{0,1}(\cE,\cP_1,\omega_\cE)\right]\\
			\label{eq:Phi-integral}
			&=\Omega_{p,\cE}^{-k}\Phi^{(k),*}_{0,0}(\cE,\omega_\cE).
		\end{align}
		Here, the second and third equalities are by Lemma \ref{lem:addition2}, and the last one is by Lemma \ref{lem:p-form}. As
		\begin{align}\label{eq:mu-c-N-interpol}
			\int_{\Z_p^\times}x^k\mu_{c,N}(x) = \zeta^\BH_p(-k,\omega^i)(c^2-c^{k+1})(1-N^{k+1}),
		\end{align}
		the proof for $k\ne 1$ is concluded by invoking Proposition \ref{prop:eisenstein}.
	\end{proof}
	\begin{proof}[Proof of Theorem \ref{thm:zeta-interpolation}, part 2]
		We now deal with the exceptional case $k=1$. From \eqref{eq:Phi-integral} and Proposition \ref{prop:eisenstein}, we see that $\int_{\Z_p^\times}x \mu_{c,N}(x) = 0$, so $\zeta_p^{\BH}(s,\omega^2)$ is regular at $s=-1$. Differentiating \eqref{eq:bh-zeta}, we find
		\begin{align}\label{eq:zeta-1}
			\int_{\Z_p^\times} x\log_p x\mu_{c,N}(x) = -c^2\log_p c(1-N^2)\zeta_p^\BH(-1,\omega^2),
		\end{align}
		where $\log_p$ is the $p$-adic logarithm. To compute the left hand side, we use the approximation
		\begin{align}\label{eq:riemann-sum}
			\int_{\Z_p^\times} x\log_p x\mu_{c,N}(x) \equiv \sum_{\substack{0\le a<p^n\\ p\nmid a}} (a\log_p a) \Phi_{a,n}(\cE,\cP_n,\omega_\cE) \pmod {p^{n-1}}.
		\end{align}
		For $m\in \Z_{>0}$, we have 
		\begin{align}
			&a\log_p a \cdot\sigma_{a,n}(m) \equiv 
			c^2\sum_{\substack{d\mid m\\ d\equiv a}}\sgn(d)d \log_p d - c\sum_{\substack{d\mid m\\ d\equiv a/c}} \sgn(d)(dc)\log_p(dc)\\
			&\quad -c^2N\sum_{\substack{d\mid m\\ d\equiv a/N}}\sgn(d)(dN)\log_p(dN) +cN\sum_{\substack{d\mid m\\ d\equiv a\varrho^n/cN}} \sgn(d)(dcN)\log_p(dcN) \pmod{p^{n-1}}.
		\end{align}
		Consequently,
		\begin{align}
			\sum_{\substack{0\le a<p^n\\ p\nmid a}}
			a\log_pa\cdot\sigma_{a,n}(m) &\equiv  
			c^2\sum_{d\mid m, p\nmid d} \sgn(d)d\cdot [\log_p d -\log_p(dc) -N^2\log_p(dN) +N^2\log_p(dcN)]\pmod{p^{n-1}}\\
			&=-c^2\log_p c(1-N^2)\sum_{d\mid m, p\nmid d}\sgn(d)d.
		\end{align}
		Turning now to the constant term, the decomposition \eqref{eq:KL-mazur2} gives
		\begin{align}
			\sum_{\substack{0\le a<p^n\\ p\nmid a}} a\log_p a\cdot \sigma_{a,n}(0) & = \sum_{\substack{0\le a<p^n\\ p\nmid a}} a\log_p a\cdot \mu_{\KL}(a+p^n\Z_p)\\
			&= \sum_{\substack{0\le a<p^n\\ p\nmid a}} a\log_p a\cdot \left[-c^2\mu_{N}(a+p^n\Z_p) + c\mu_N(a/c+p^n\Z_p)\right]\\
			& \equiv \sum_{\substack{0\le a<p^n\\ p\nmid a}} \left[-c^2a\log_p a +ac^2\log_p(ac)\right] \mu_{N}(a+p^n\Z_p)  \pmod{p^{n-1}}\\
			& = c^2\log_p c\sum_{\substack{0\le a<p^n\\ p\nmid a}} a \mu_{N}(a+p^n\Z_p)\\
			& \equiv c^2\log_p c\int_{\Z_p^\times}
			x\mu_N(x) \pmod{p^{n-1}}\\
			& = -c^2\log_p c(1-N^2)(1-p)\zeta(-1).
		\end{align}
		Here, the second equality is by \eqref{eq:KL-mazur} and the last equality comes from the interpolation property of $\mu_N$.
		
		Consider now the $\qq$-expansion
		\begin{align}
			G_2(\qq) = -\frac{1}{12} + \sum_{m\ge 1}\qq^m\sum_{d\mid m}\sgn(d)d
		\end{align}
		of the $p$-adic modular form $G_2$ by \eqref{eq:Gk-q-exp}, and the $p$-stabilization
		\begin{align}
			G_2^*(\qq) = -\frac{1-p}{12} + \sum_{m\ge 1}\qq^m\sum_{d\mid m,p\nmid d}\sgn(d)d = G_2(\qq) - pG_2(\qq^p).
		\end{align}
		The congruences we established above then says that
		\begin{align}\label{eq:congruence-wt-2}
			\sum_{\substack{0\le a<p^n\\ p\nmid a}} a\log_p a \Phi_{a,n}(\qq) \equiv -c^2\log_p c(1-N^2) G_2^*(\qq) \equiv -c^2\log_p c(1-N^2) G_2^*(\qq)A_n^{-1}(\qq)\pmod{p^{n-1}}.
		\end{align}
		
		Applying the $\qq$-expansion principle, \eqref{eq:congruence-wt-2} gives the congruence
		\begin{align}
			\sum_{\substack{0\le a<p^n\\ p\nmid a}} a\log_p a \Phi_{a,n}(\cE,\cP_n,\omega_\cE) &\equiv -c^2\log_p c(1-N^2) \left[A_n^{-1}G_2^*\right](\cE,\cP_n,\omega_\cE) \pmod {p^{n-1}}\\
			&= -c^2\log_p c(1-N^2)\Omega_p^{-1}G_2^*(\cE,\cP_n,\omega_\cE)\\
			&\label{eq:dagger}
			= -c^2\log_p c(1-N^2)\Omega_p^{-1}G_2^*(\cE,\omega_\cE).
		\end{align}
		Combining \eqref{eq:zeta-1}, \eqref{eq:riemann-sum}, \eqref{eq:dagger}, we have
		\begin{align}
			\zeta_p^\BH(-1,\omega^2) \equiv \Omega_p^{-1} G_2^*(\cE,\omega_E) \pmod{p^{n-1}}.
		\end{align}
		Let $n\to \infty$, and we are done.
	\end{proof}

	\subsection{The case of CM elliptic curves}
	\label{subsec:cm-interpolation}
	
	Let $(\cE,\varphi_\cE,\omega_\cE)$ be as above; recall $H_\cE\subset \cE[p]$ denotes the canonical subgroup. Put $\cE' = \cE/H_{\cE}$ and $\omega_{\cE'} = \hat{\pi}_{H_\cE}^*\omega_\cE$. Consider the factorization of $[p]$ for formal groups
	\begin{align}
		[p]\colon \hat{\cE}\xrightarrow{\pi_{H_\cE}} \hat{\cE}'\simeq \hat{\cE}/\hat{\cE}[p] \xrightarrow{\ \theta_\cE\ } \hat{\cE},
	\end{align}
	then we see that $\theta_\cE$ is an isomorphism, and we have thus an induced trivialization $\varphi_{\cE'} = \varphi_\cE\circ\theta_{\cE}: \hat{\cE}'\xrightarrow{\sim} \hat{\G}_m$. Put $\Omega_{p,\cE'} = \varphi_{\cE'}^*(dt/t)/\omega_{\cE'}$.
	\begin{lem}\label{lem:omega-prime}
		We have $\Omega_{p,\cE'} = \Omega_{p,\cE}$. Thus $\omega_{\cE'}$ is a generator of $H^0(\cE',\Omega_{\cE'/W}^1)$.
	\end{lem}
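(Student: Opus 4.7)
The plan is to trace the defining data through the factorization of $[p]$ on formal groups; once this is set up, the identity $\Omega_{p,\cE'}=\Omega_{p,\cE}$ is essentially definitional. The preliminary step is to identify $\theta_\cE$ with the restriction of the dual isogeny $\hat\pi_{H_\cE}$ to formal groups. This is immediate once one observes that both $\theta_\cE\circ\pi_{H_\cE}$ and $\hat\pi_{H_\cE}\circ\pi_{H_\cE}$ equal $[p]_{\hat\cE}$, and that $\pi_{H_\cE}\colon\hat\cE\to\hat\cE'$ is an epimorphism of formal groups (being the quotient by $\hat\cE[p]=H_\cE$).

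Granted this, I would then compute the pullback of the canonical invariant differential through $\varphi_{\cE'}=\varphi_\cE\circ\theta_\cE$: unwinding the definitions, $\varphi_{\cE'}^*(dt/t)=\theta_\cE^*\varphi_\cE^*(dt/t)=\Omega_{p,\cE}\cdot\theta_\cE^*\omega_\cE=\Omega_{p,\cE}\cdot(\hat\pi_{H_\cE}^*\omega_\cE)|_{\hat\cE'}=\Omega_{p,\cE}\cdot\omega_{\cE'}|_{\hat\cE'}$, as invariant differentials on $\hat\cE'$.

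Since $\varphi_{\cE'}^*(dt/t)$ generates the module of invariant differentials on $\hat\cE'$ (because $\varphi_{\cE'}$ is an isomorphism of formal groups) and $\Omega_{p,\cE}\in W^\times$, the restriction of $\omega_{\cE'}$ to $\hat\cE'$ is likewise a generator. By the standard identification of global K\"ahler differentials on an elliptic curve with the invariant differentials of its formal completion, $\omega_{\cE'}$ is itself a generator of $H^0(\cE',\Omega^1_{\cE'/W})$; this establishes the ``in particular'' clause and simultaneously shows that $\Omega_{p,\cE'}$ is well-defined, with the displayed equation yielding $\Omega_{p,\cE'}=\Omega_{p,\cE}$ directly.

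I do not anticipate any real obstacle: the proof amounts to a short chase of pullbacks around a commutative triangle, and the only point that merits explicit justification is the identification $\theta_\cE=\hat\pi_{H_\cE}|_{\hat\cE'}$ sketched above.
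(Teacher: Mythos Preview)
Your proof is correct and uses essentially the same ingredients as the paper's: both rely on the two factorizations $\theta_\cE\circ\pi_{H_\cE}=[p]=\hat\pi_{H_\cE}\circ\pi_{H_\cE}$. The only organizational difference is that you first cancel $\pi_{H_\cE}$ to identify $\theta_\cE=\hat\pi_{H_\cE}|_{\hat\cE'}$ and then compute on $\hat\cE'$ directly, whereas the paper pulls back both sides of $\varphi_{\cE'}^*(dt/t)=\Omega_{p,\cE'}\omega_{\cE'}$ by $\pi_{H_\cE}^*$, picks up a factor of $p$ on each side via $[p]^*\omega_\cE=p\,\omega_\cE$, and cancels; the two arguments are interchangeable.
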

	\begin{proof}
		Pulling back, we find $\pi_{H_\cE}^*\varphi_{\cE'}^*(dt/t) = \Omega_{p,\cE'}\pi_{H_\cE}^*\omega_{\cE'}$. Now,
		\begin{align}
			\pi_{H_\cE}^*\varphi_{\cE'}^*(dt/t) = \pi_{H_\cE}^*\theta_\cE^*\varphi_\cE^*(dt/t) = (\theta_\cE\circ\pi_{H_\cE})^* (\Omega_{p,\cE}\omega_\cE) = p\Omega_{p,\cE}\cdot\omega_\cE.
		\end{align}
		For the other side, we find
		\begin{align}
			\Omega_{p,\cE'}\pi_{H_\cE}^*\omega_{\cE'} = \Omega_{p,\cE'}\pi_{H_\cE}^*\hat{\pi}_{H_\cE}^*\omega_\cE = p\Omega_{p,\cE'}\cdot\omega_\cE.
		\end{align}
		Equate the rightmost quantities and we get $\Omega_{p,\cE'} = \Omega_{p,\cE}$. The last claim then follows because, by definition, $\Omega_{p,\cE}$ is in $W^\times$. Hence $\omega_{\cE'}$ is a generator of $H^0(\cE',\Omega^1_{\cE'/W})$ as $\varphi^*_{\cE'}(dt/t)$ is.
	\end{proof}

	Now, let $(\cE,\varphi_{\cE},\omega_{\cE}) = (E,\iota^{-1},\omega_E)$ be the CM elliptic curve that is fixed throughout. For $i\in \Z_{\ge 0}$, denote by $E_i$ the CM elliptic curve $E/E[\fp^i]$, so $E_0 = E$ and $E_{i+1} = E_i/H_{E_i}$. By the discussion above, we have the extra data $(\varphi_i = \varphi_{E_i},\omega_i = \omega_{E_i})$ of $E_i$ constructed recursively starting from $(E_0,\varphi_0,\omega_0) = (E,\iota^{-1},\omega_E)$. By Lemma \ref{lem:omega-prime}, we can form the $p$-adic measure $\mu_{\BH,E_i}$ and $\zeta_{p,E_i}^\BH$ for all $i\ge 0$. The fact that $E$ is CM means the chain of isogenies $E_0\to E_1\to \cdots$ is cyclic; i.e., we have an isomorphism $\theta_\varpi: E_e= E/E[\varpi]\to E$ given by multiplication by $\varpi$. The question then is to compare $\varphi_e$ with $\varphi_0$ and $\omega_e$ with $\omega_0$,
	\begin{lem}\label{lem:cm-cyclicity}
		We have 
		\begin{align}
			\varphi_e\circ\theta_\varpi^{-1} = \barpi \cdot \varphi_0,\quad
			\omega_e = \barpi\cdot \theta_\varpi^*\omega_0.
		\end{align}
	\end{lem}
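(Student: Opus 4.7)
Plan: The goal is to identify two natural maps $\hat{E}_e \to \hat{E}$ with $[\barpi]\circ \theta_\varpi$; once this is done, both assertions drop out. Start by unwinding the recursions $\varphi_{i+1} = \varphi_i\circ \theta_{E_i}$ and $\omega_{i+1} = \hat{\pi}_{H_{E_i}}^{\ast}\omega_i$ to get
\[
\varphi_e = \varphi_0\circ \Theta_e, \qquad \omega_e = \hat{\pi}_e^{\ast}\omega_0,
\]
where $\Theta_e := \theta_{E_0}\circ\theta_{E_1}\circ\cdots\circ\theta_{E_{e-1}}:\hat{E}_e\xrightarrow{\sim}\hat{E}_0$ and $\hat{\pi}_e : E_e\to E_0$ is the global dual of the canonical isogeny chain $\pi_e : E\to E_e$. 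Using Lemma \ref{lem:formal-fp} inductively one checks $\ker \pi_e = \hat{E}[p^e] = E[\fp^e] = E[\varpi]$, so $\pi_e$ coincides with the canonical quotient $E\to E/E[\varpi]$, and by the very definition of $\theta_\varpi$ the identity $\theta_\varpi\circ\pi_e = [\varpi]$ holds globally.

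On formal groups we then have $\Theta_e\circ\pi_e = [p^e] = [\barpi\varpi] = [\barpi]\circ\theta_\varpi\circ\pi_e$; cancelling the epimorphism $\pi_e$ on the right yields $\Theta_e = [\barpi]\circ\theta_\varpi$ on $\hat{E}_e$. For $\hat{\pi}_e$, dualize $\theta_\varpi\circ\pi_e = [\varpi]$ and use $\hat{\theta}_\varpi=\theta_\varpi^{-1}$ (since $\theta_\varpi$ has degree one) together with the CM identity $\widehat{[\varpi]}=[\barpi]$ to get $\hat{\pi}_e\circ\theta_\varpi^{-1} = [\barpi]$, i.e., $\hat{\pi}_e = [\barpi]\circ\theta_\varpi$ globally.

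The claims now follow immediately. For the first, $\varphi_e\circ\theta_\varpi^{-1} = \varphi_0\circ[\barpi]$, and the endomorphism $[\barpi]$ of $\hat{E}$ transports via $\varphi_0 = \iota^{-1}$ to multiplication on $\hat{\G}_m$ by the image of $\barpi\in\fo$ under $\fo\hookrightarrow\fo_\fp\cong\Z_p\hookrightarrow W$, which is exactly the scalar that the paper denotes $\barpi\cdot$. For the second,
\[
\omega_e = \hat{\pi}_e^{\ast}\omega_0 = \theta_\varpi^{\ast}[\barpi]^{\ast}\omega_0 = \theta_\varpi^{\ast}(\barpi\cdot\omega_0) = \barpi\cdot\theta_\varpi^{\ast}\omega_0,
\]
using $[\barpi]^{\ast}\omega_E = \barpi\cdot\omega_E$, which holds because $\omega_E$ corresponds to $dz$ on $\C/L$ and $[\barpi]$ acts there as the scalar $\barpi$. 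The only real subtlety is notational: keeping straight the several avatars of $\barpi$—as an endomorphism of $E$, as a scalar in $W$ acting on differentials, and as a unit in $\Z_p^\times$ acting via the formal-group trivializations—all of which match by the compatibilities built into the fixed embeddings in \S\ref{subsec:notation}.
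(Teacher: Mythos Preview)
Your proof is correct. For the second identity you argue essentially as the paper does: dualize $\theta_\varpi\circ\pi_e=[\varpi]$, use $\widehat{[\varpi]}=[\barpi]$ and $\hat\theta_\varpi=\theta_\varpi^{-1}$, and pull back $\omega_0$.

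For the first identity your route differs from the paper's. The paper does not compute $\Theta_e$ directly; instead it observes that $\varphi_e\circ\theta_\varpi^{-1}=\lambda\cdot\varphi_0$ for \emph{some} $\lambda\in\Z_p^\times$, then invokes Lemma~\ref{lem:omega-prime} (the invariance $\Omega_{p,\cE'}=\Omega_{p,\cE}$) to force $\lambda=\omega_e/\theta_\varpi^*\omega_0$, which by the second identity equals $\barpi$. Your argument instead unwinds the recursion to $\varphi_e=\varphi_0\circ\Theta_e$ and identifies $\Theta_e$ with $[\barpi]\circ\theta_\varpi$ by showing both equal $[p^e]$ after precomposing with the epimorphism $\pi_e$. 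This is more hands-on but has the virtue of being self-contained: it does not lean on Lemma~\ref{lem:omega-prime}, and in particular it makes the appearance of $\barpi$ in both formulas visibly come from the single relation $\varpi\barpi=p^e$. The paper's reduction, on the other hand, is shorter given that Lemma~\ref{lem:omega-prime} is already in place, and it explains conceptually why the two scalars must agree (the $p$-adic period is constant along the isogeny chain).
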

	\begin{proof}
		It suffices to prove the second identity; the first follows since, \textit{a priori}, there is some $\lambda\in \Z_p^\times$ with $\varphi_e\circ\theta_\varpi^{-1} = \lambda \varphi_0$, whereby Lemma \ref{lem:omega-prime} forces $\lambda = \omega_e/\theta_\varpi^*\omega_0$.
		
		We now turn to differentials. Denote by $\pi_e: E\to E_e$ the natural projection. Then $[\varpi] = \theta_\varpi\circ\pi_e$. Dualizing, we have $[\barpi] = \hat{\pi}_e\circ\hat{\theta}_\varpi$. As $\omega_e = \hat{\pi}_e^*\omega_0$,
		\begin{align}
			\hat{\theta}_\varpi^*\omega_e = [\barpi]^*\omega_0 = \barpi \omega_0.
		\end{align}
		Now pullback both sides via $\theta_\varpi^*$.
	\end{proof}
	
	\begin{cor}\label{cor:interpolation-cm}
		For all $k\in \Z_{\ge 0}$ and $i\in \Z_{\ge 0}$, we have
		\begin{align}
			\zeta_{p,E_i}^\BH(-k,\omega^{k+1}) = \Omega_p^{-k}k!\left[G_{k+1}(E_i,\omega_i) - p^k G_{k+1}(E_{i+1},\omega_{i+1})\right],
		\end{align}
		and
		\begin{align}\label{eq:euler-factor}
			\sum_{0\le i<e} p^{ik}\zeta_{p,E_i}^\BH(-k,\omega^{k+1}) = \Omega_p^{-k}(1-\varpi^{k+1}p^{-e})k!G_{k+1}(E,\omega_E).
		\end{align}
	\end{cor}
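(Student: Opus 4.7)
The plan is to view the corollary as an unraveling of Theorem \ref{thm:zeta-interpolation} applied to each elliptic curve $E_i$ in the chain, followed by a telescoping argument whose endpoint is made explicit via the CM cyclicity.

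First I would verify that the $p$-adic period attached to $(E_i,\varphi_i,\omega_i)$ is the same for every $i$. Indeed, by construction $(E_{i+1},\varphi_{i+1},\omega_{i+1})$ is exactly the triple obtained from $(E_i,\varphi_i,\omega_i)$ via the recipe preceding Lemma \ref{lem:omega-prime}, so Lemma \ref{lem:omega-prime} yields $\Omega_{p,E_{i+1}} = \Omega_{p,E_i}$, and iterating back to $i=0$ gives $\Omega_{p,E_i} = \Omega_{p,E} = \Omega_p$. With this in hand, Theorem \ref{thm:zeta-interpolation} applied to $\cE = E_i$ gives
\begin{align*}
\zeta_{p,E_i}^\BH(-k,\omega^{k+1}) = \Omega_p^{-k}k!\, G_{k+1}^*(E_i,\omega_i).
\end{align*}
Unpacking the definition \eqref{eq:level-raising} of the $p$-stabilization for the weight-$(k+1)$ form $G_{k+1}$, namely $G_{k+1}^*(\cE,\omega) = G_{k+1}(\cE,\omega) - p^k G_{k+1}(\cE/H_\cE,\hat\pi_{H_\cE}^*\omega)$, and recognizing the second term as evaluation at $(E_{i+1},\omega_{i+1})$, immediately yields the first displayed identity.

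For the second identity, I would form the weighted sum of the first identity and observe the telescoping:
\begin{align*}
\sum_{0\le i<e} p^{ik}\zeta_{p,E_i}^\BH(-k,\omega^{k+1})
= \Omega_p^{-k}k!\sum_{0\le i<e}\bigl[p^{ik}G_{k+1}(E_i,\omega_i) - p^{(i+1)k}G_{k+1}(E_{i+1},\omega_{i+1})\bigr]
= \Omega_p^{-k}k!\bigl[G_{k+1}(E,\omega_E) - p^{ek}G_{k+1}(E_e,\omega_e)\bigr].
\end{align*}
The last step is to identify $G_{k+1}(E_e,\omega_e)$. Lemma \ref{lem:cm-cyclicity} supplies the isomorphism $\theta_\varpi\colon E_e\xrightarrow{\sim} E$ with $\omega_e = \barpi\cdot\theta_\varpi^*\omega_E$. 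Since $G_{k+1}$ is a weight-$(k+1)$ modular form, transporting via $\theta_\varpi$ and using homogeneity in the differential gives $G_{k+1}(E_e,\omega_e) = \barpi^{-(k+1)}G_{k+1}(E,\omega_E)$. Finally, using $p^e = \varpi\barpi$ we compute $p^{ek}\barpi^{-(k+1)} = \varpi^{k+1}p^{-e}$, and the desired formula \eqref{eq:euler-factor} drops out.

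I do not anticipate a serious obstacle: all the conceptual work has already been done in Theorem \ref{thm:zeta-interpolation} and Lemmas \ref{lem:omega-prime}, \ref{lem:cm-cyclicity}. The only point requiring minor care is the bookkeeping of the power of $p$ in $f^*$ (weight $k+1$ produces $p^k$, not $p^{k-1}$) and the compatibility of the isomorphism $\theta_\varpi$ with $G_{k+1}$ regarded as a $p$-adic modular form of full level (ensured by Remark \ref{rem:Gk}). The verification $\Omega_{p,E_i} = \Omega_p$ is the step on which everything else hinges, so I would spell it out first.
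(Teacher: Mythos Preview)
Your proposal is correct and follows essentially the same approach as the paper's proof: invoke Theorem \ref{thm:zeta-interpolation} together with Lemma \ref{lem:omega-prime} to get the first identity, then telescope and use Lemma \ref{lem:cm-cyclicity} (plus $\varpi\barpi = p^e$) to finish. Your explicit unwinding of $\Omega_{p,E_i}=\Omega_p$ and of the $p^{ek}\barpi^{-(k+1)}=\varpi^{k+1}p^{-e}$ identity matches the paper's computation.
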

	\begin{proof}
		The first identity follows from the interpolation formula \eqref{eq:bh-ord-interpol}, the description of the level raising operator \eqref{eq:level-raising} and Lemma \ref{lem:omega-prime}. As such,
		\begin{align}
			\sum_{0\le i<e} p^{ik}\zeta_{p,E_i}^\BH(-k,\omega^{k+1}) &=
			\Omega_p^{-k}k!\left[G_{k+1}(E,\omega_E) - p^{ke}G_{k+1}(E_e,\omega_e)\right]\\
			&=
			\Omega_p^{-k}k!\left[G_{k+1}(E,\omega_E) - p^{ke}G_{k+1}(E,(\theta_\varpi^{-1})^*\omega_e)\right]\\
			&=\Omega_p^{-k}(1-p^{ke}\barpi^{-k-1})G_{k+1}(E,\omega_E).
		\end{align}
		The last equality used Lemma \ref{lem:cm-cyclicity}.
	\end{proof}
	\begin{rem}
		Let $(\cE,\varphi_\cE,\omega_\cE)$ be as in \S\ref{subsec:setup}. Let $\cE' = \cE/H_{\cE}$ and $(\varphi_{\cE'},\omega_{\cE'})$ be the attached datum. Note that
		\begin{align}\label{eq:indra}
			\mu_{\BH,\cE,\varphi_{\cE},\omega_\cE}(pa+p^{n+1}\Z_p) = \mu_{\BH,\cE',\varphi_{\cE'},\omega_{\cE'}}(a+p^n\Z_p).
		\end{align}
		To see this, observe that
		\begin{align}
			\Phi_{pa,n+1}(\qq) = \Phi_{a,n}(\qq^p).
		\end{align}
		So for any $(\EE,\PP_{n+1},\omega)$ a point of evaluation for $\Gamma_1(p^{n+1})$-forms,
		\begin{align}
			\Phi_{pa,n+1}(\EE,\PP_{n+1},\omega) = \Phi_{a,n}(\EE/\HH,\pi_\HH(\PP_{n+1}),\hat{\pi}_\HH^*\omega),
		\end{align}
		with $\HH\subset \EE[p]$ the subgroup generated by $p^n \PP_{n+1}$ and $\pi_\HH: \EE\to \EE/\HH$ the natural projection. Plugging in $(\EE,\PP_{n+1},\omega) = (\cE,\cP_{n+1} = \varphi_\cE^{-1}(\zeta_{n+1}),\omega_\cE)$, we find
		\begin{align}
			\Phi_{pa,n+1}(\cE,\cP_{n+1},\omega_\cE)= \Phi_{a,n}(\cE',\pi_{\HH_\cE}\varphi_{\cE}^{-1}(\zeta_{n+1}),\hat{\pi}_{H_\cE}^*\omega_\cE).
		\end{align}
		The identity \eqref{eq:indra} is concluded by noting
		\begin{align}
			\pi_{\HH_\cE}\varphi_{\cE}^{-1}(\zeta_{n+1}) = \theta_{\cE}^{-1}\theta_\cE\pi_{H_\cE}\varphi_\cE^{-1}(\zeta_{n+1}) = \theta_\cE^{-1}\varphi_\cE^{-1}(\zeta_n) = \varphi_{\cE'}^{-1}(\zeta_n)
		\end{align}
		and $\hat{\pi}_{H_\cE}^*\omega_\cE = \omega_{\cE'}$.
		Therefore, identifying $\Z_p$ with $\fo_\fp$, \eqref{eq:euler-factor} can be written as
		\begin{align}
			(c^2-c^{k+1})^{-1}(1-N^{k+1})^{-1}\int_{\fo\setminus \varpi\fo}x^k\mu_{\BH,E,\iota^{-1},\omega_E}(x) = 
			\Omega_p^{-k}(1-\varpi^{k+1}p^{-e})k!G_{k+1}(E,\omega_E).
		\end{align}
	\end{rem}

	\section{The non-critical point $s=1$}
	\label{sec:FLF}
	
	Keep the notation from the last section; in particular fix the triple $(\cE,\varphi_\cE,\omega_\cE)$. As a testing ground of our modular approach, we will start by proving a residue formula which is not present in \cite{katz-eisenstein-measure} nor \cite{lichtenbaum} (\textit{cf}., however, \cite[Theorem 35.(iv)]{colmez-schneps}).
	\begin{thm}\label{thm:residue}
		Let $\cE$ be an ordinary elliptic curve over $W$, $\varphi_\cE: \hat{\cE}\xrightarrow{\sim} \hat{\G}_m$ an isomorphism of formal group schemes, and $\omega_\cE$ a generator of $H^0(\cE,\Omega^1_{\cE/W})$. Let $\zeta^\BH_p(s)$ be the $p$-adic Bernoulli--Hurwitz zeta function attached to $(\cE,\varphi_\cE,\omega_\cE)$. Then
		\begin{align}\label{eq:residue}
			\lim_{s\to 1}(s-1)\zeta^\BH_{p,\cE}(s) = \Omega_{p,\cE}(1-1/p).
		\end{align}
	\end{thm}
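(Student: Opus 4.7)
The plan is to mimic the method of Theorem \ref{thm:zeta-interpolation} at the non-critical point $s=1$: locate the pole in \eqref{eq:bh-zeta}, reduce to a single $p$-adic integral, and evaluate it by a Riemann-sum argument combined with the weight-one invariant $A_n$ from Appendix \ref{app:hasse}, thereby reducing everything to Kubota--Leopoldt theory.

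First I would isolate the pole. At $s=1$, only the factor $(1-\langle N\rangle^{1-s})^{-1}$ contributes; using $1-\langle N\rangle^{1-s}=(s-1)\log_p\langle N\rangle+O((s-1)^2)$ together with $\omega^{-1}(x)\langle x\rangle^{-1}=x^{-1}$ on $\Z_p^\times$ and the evaluation $(c^2-\langle c\rangle^{1-s})|_{s=1}=c^2-1$, I get
\begin{align}
\lim_{s\to 1}(s-1)\zeta_{p,\cE}^\BH(s) = \frac{1}{(c^2-1)\log_p\langle N\rangle}\int_{\Z_p^\times}x^{-1}\mu_{\BH,\cE}(x),
\end{align}
so the theorem reduces to showing $\int_{\Z_p^\times}x^{-1}\mu_{\BH,\cE}(x)=\Omega_{p,\cE}(c^2-1)(1-1/p)\log_p\langle N\rangle$.

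To compute this integral I would use Riemann sums: since $x\mapsto x^{-1}$ is Lipschitz on $\Z_p^\times$, the integral is congruent modulo $p^n$ to $\sum_{a\in(\Z/p^n)^\times}a^{-1}\Phi_{a,n}(\cE,\cP_n,\omega_\cE)$. The combinatorial heart of the proof is the identity: for every $m\ge 1$,
\begin{align}
\sum_{a\in(\Z/p^n)^\times}a^{-1}\sigma_{a,n}(m) = 0,
\end{align}
which follows from the formula in Theorem \ref{thm:epf} by reindexing each of the four divisor sums under $d\mapsto dN, dc, dcN$ and observing the scalar cancellation $c^2-c^2-1+1=0$. Consequently the weight-one form $\sum_a a^{-1}\Phi_{a,n}$ has $\qq$-expansion equal to the constant $\sigma_0^{(n)}:=\sum_a a^{-1}\mu_{\KL}(a+p^n\Z_p)$. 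Since the weight-one form $A_n$ has $\qq$-expansion $\equiv 1\bmod p^n$ and satisfies $A_n(\cE,\varphi_{n,\cE},\omega_\cE)=\Omega_{p,\cE}$, the forms $\sum_a a^{-1}\Phi_{a,n}$ and $\sigma_0^{(n)}A_n$ have congruent $\qq$-expansions modulo $p^n$, so Proposition \ref{prop:q-exp-principle} gives the congruence of modular forms, and evaluating at $(\cE,\cP_n,\omega_\cE)$ yields
\begin{align}
\sum_{a\in(\Z/p^n)^\times}a^{-1}\Phi_{a,n}(\cE,\cP_n,\omega_\cE) \equiv \sigma_0^{(n)}\,\Omega_{p,\cE} \pmod{p^n}.
\end{align}
Passing $n\to\infty$ gives $\int_{\Z_p^\times}x^{-1}\mu_{\BH,\cE}(x)=\Omega_{p,\cE}\int_{\Z_p^\times}x^{-1}\mu_{\KL}(x)$.

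To finish, I would evaluate $\int_{\Z_p^\times}x^{-1}\mu_{\KL}(x)$ from the interpolation \eqref{eq:interpolation-KL}, as recollected in Appendix \ref{app:bernoulli}: this integral is the specialization at $s=1$ of $(c^2-\langle c\rangle^{1-s})(1-\langle N\rangle^{1-s})\zeta_p(s)$, and the simple pole of $\zeta_p(s)$ at $s=1$ with residue $1-1/p$ is regularized by the two vanishing Euler-type factors to produce the finite value $(c^2-1)(1-1/p)\log_p\langle N\rangle$, completing the proof. The main obstacle is the exact cancellation $\sum_a a^{-1}\sigma_{a,n}(m)=0$ used above; that it holds identically (not merely modulo $p^n$) is what lets the Hasse-invariant trick run cleanly, and conceptually explains why the residue of the Bernoulli--Hurwitz zeta function is the Kubota--Leopoldt residue rescaled by the formal period $\Omega_{p,\cE}$.
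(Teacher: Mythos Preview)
Your proposal is correct and follows essentially the same route as the paper's second proof (\S7.2): isolate the pole, pass to the Riemann sum $\sum_{p\nmid a}a^{-1}\Phi_{a,n}$, show the $\qq^m$-coefficients with $m\ge 1$ vanish modulo $p^n$, multiply the surviving constant by $A_n$ and invoke the $\qq$-expansion principle on $Z_{00}(p^n)$, and finally identify the constant with the Kubota--Leopoldt residue.

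One small correction. Your claim that $\sum_{a\in(\Z/p^n)^\times}a^{-1}\sigma_{a,n}(m)=0$ holds \emph{identically} (rather than merely modulo $p^n$) is not quite right in the Riemann-sum setting, where $a^{-1}$ denotes the $p$-adic inverse of the integer representative $a\in[1,p^n)$. Your reindexing $a\mapsto Nb$ uses $a^{-1}=(Nb)^{-1}$, but this is only a congruence mod $p^n$, since the integer representatives of $a$ and $Nb$ differ by a multiple of $p^n$. (Numerically: for $p=7$, $n=1$, $c=2$, $N=3$, $m=1$ the sum equals $-49/15\in 7\Z_7$, not $0$.) This does not affect the argument: since $A_n$ is only defined modulo $p^n$, the mod-$p^n$ congruence of $\qq$-expansions is exactly what is needed for Proposition~\ref{prop:q-exp-principle}, and the rest of your proof goes through unchanged.
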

	Actually, we will give two proofs. The first is similar to that of Lichtenbaum \cite[\S9]{lichtenbaum} (see also \cite[\S III]{cassou-nogues-CM}), and makes crucial use of the elliptic units and their product formulas; this method confines us to the CM curve $E$ with the extra condition $e=1$, namely $\fp = \varpi\fo$. The second, which is elliptic-unit-free, works directly with Eisenstein series via the explicit period formula \eqref{eq:epf-Phi}, and reduces the proof to some simple combinatorial computations. In fact, the second approach gives us more, as we will see in \S\ref{subsec:kronecker} that it also allows us to understand the constant term of $\zeta_p^\BH$ at $s=1$.
	
	\subsection{The first proof of the residue formula}
	
	For the duration of this subsection, we will assume $(\cE,\varphi_\cE,\omega_\cE) = (E,\iota^{-1},\omega_E)$, and that $\fp$ is principal, so $\fp = \varpi\fo$ and $F_\fP = K_\fp$ by class field theory. In this case, the measure $\mu_{c,N}$ can be constructed by the elliptic function as explained in \S\ref{sec:measure}. By the integral representation \eqref{eq:bh-zeta}, we are interested in computing $\int_{\Z_p^\times}x^{-1}\mu_{c,N}(x)$. Thanks to the $p$-adic Fourier transform, we can instead compute a primitive of the power series $f_{c,N}$ under the differential operator $t\frac{d}{dt}$. Before stating the next theorem, recall that in step 2 of the proof of Proposition \ref{prop:der-interpol}, we have shown that $\Lambda_{c,N}(z)\in K_\fp[[z]]\setminus zK_\fp[[z]]$. Therefore we may take the $p$-adic logarithm $\log\Lambda_{c,N}(z)\in K_\fp[[z]]$, and we have $\frac{d}{dz}(\log\Lambda_{c,N}) = \Lambda_{c,N}'/\Lambda_{c,N}$ (the right-hand side was formally denoted by $\frac{d}{dz}\log\Lambda_{c,N}$ earlier).
	
	\begin{lem}\label{lem:gcN}
		Notation as in \S\ref{sec:measure}. Write $g_{c,N} = \frac{1}{12}(\varepsilon^{-1})^*(\log\Lambda_{c,N})$. Then $g_{c,N}(t) \in \hat{K}^\ur_\fp[[t-1]]$, and
		\begin{align}
			t\frac{d}{dt}g_{c,N} = \Omega_p^{-1}f_{c,N}.
		\end{align}
	\end{lem}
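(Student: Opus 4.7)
The plan is a direct chain-rule computation once the pullback $(\varepsilon^{-1})^*$ is made explicit. From $\varepsilon^* t = e^{\Omega_p z}$ one obtains $(\varepsilon^{-1})^* z = \Omega_p^{-1}\log t$, where $\log$ denotes the formal $p$-adic logarithm. Since $\Omega_p^{-1}\log t \in (t-1)\hat{K}^\ur_\fp[[t-1]]$ has no constant term, the substitution $z \mapsto \Omega_p^{-1}\log t$ extends to a continuous $\hat{K}^\ur_\fp$-algebra homomorphism $\hat{K}^\ur_\fp[[z]] \to \hat{K}^\ur_\fp[[t-1]]$, which moreover intertwines $d/dz$ with $\Omega_p^{-1} t^{-1}\,d/dt$ by the chain rule.

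For the integrality claim, I would invoke the fact recorded just before the lemma that $\log\Lambda_{c,N}(z) \in K_\fp[[z]]$. This rests on step 2 of Proposition \ref{prop:der-interpol}, where $\Lambda_{c,N}(z,L)$ was shown to lie in $\cO_\fP[[w]]$ with constant term in $\cO_\fP^\times$, so the $p$-adic logarithm converges and yields an element of $K_\fp[[w]]$, hence of $K_\fp[[z]]$ after the integral change of variable $w \leftrightarrow z$ from Proposition \ref{prop:der-interpol}. Applying the above ring homomorphism then places $g_{c,N}(t)$ in $\hat{K}^\ur_\fp[[t-1]]$.

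For the differential identity, I would apply $t\,d/dt$ directly via the chain rule:
\[
t\frac{dg_{c,N}}{dt} = \frac{t}{12}\,(\varepsilon^{-1})^*\!\left(\frac{d}{dz}\log\Lambda_{c,N}\right) \cdot \frac{d}{dt}(\Omega_p^{-1}\log t) = \frac{1}{12}\,\Omega_p^{-1}\,(\varepsilon^{-1})^* F_{c,N}(z) = \Omega_p^{-1} f_{c,N}(t).
\]
There is no serious obstacle here: the argument is essentially the bookkeeping of a change of variable, with the only substantive input—convergence of the $p$-adic logarithm of $\Lambda_{c,N}$—already guaranteed by the unit-constant-term statement of Proposition \ref{prop:der-interpol}.
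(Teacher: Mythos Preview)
Your proof is correct and follows essentially the same route as the paper's: both arguments deduce $g_{c,N}\in \hat{K}^\ur_\fp[[t-1]]$ from the fact (step~2 of Proposition~\ref{prop:der-interpol}) that $\Lambda_{c,N}$ has unit constant term in $\cO_\fP[[w]]$, and then obtain the differential identity by the chain rule relating $d/dz$ and $t\,d/dt$ under $\varepsilon$. The paper phrases the latter as the general relation $t\frac{d}{dt}\big[(\varepsilon^{-1})^*F\big] = \Omega_p^{-1}(\varepsilon^{-1})^*\big(\frac{dF}{dz}\big)$, which is exactly your chain-rule computation.
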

	\begin{proof}
		We have seen in the proof of Proposition \ref{prop:der-interpol}, step 2, that $(\xi^{-1})^*\log\Lambda_{c,N}$ is in $K_\fp[[w]]$, whereby $(\varepsilon^{-1})^*\log\Lambda_{c,N}\in \hat{K}^\ur_\fp[[t-1]]$. Since $f_{c,N} = \frac{1}{12} (\varepsilon^{-1})^*\left(\frac{d}{dz}\log\Lambda_{c,N}\right)$, this boils down to the relation that for $F(z) \in \C_p[[z]]$,
		\begin{align}
			t\frac{d}{dt}\left[(\varepsilon^{-1})^*F\right] = \Omega_p^{-1}(\varepsilon^{-1})^*\left(\frac{dF}{dz}\right),
		\end{align}
		which can be directly verified.
	\end{proof}
	
	Consider now the restriction operator $\mu\mapsto \mu|_{\Z_p^\times}$ on $\Mes(\Z_p,\Our)$, which under the Fourier transform is
	\begin{align}
		f\mapsto f^{\diamond} = f(t) - \frac{1}{p}\sum_{\zeta^p=1}f(\zeta t).
	\end{align}
	Clearly the restriction commutes with $\mu\mapsto x\mu$, therefore $\diamond$ commutes with $t\frac{d}{dt}$ on $\Our[[t-1]]$.
	\begin{lem}\label{lem:convergence}
		Suppose $f(t) = \sum_{n\ge 0}a_n(t-1)^n \in \C_p[[t-1]]$ satisfies the growth condition
		\begin{align}
			\lim_{n\to \infty} \left[\ord_p(a_n) + n/(p-1)\right] = \infty.
		\end{align}
		Then for all $\zeta\in \mu_p$,
		\begin{align}
			f(\zeta t) = \sum_{n\ge 0} (t-1)^n\zeta^n \sum_{l\ge n} (\zeta-1)^{l-n}\binom{l}{n}a_l
		\end{align}
		defines an element in $\C_p[[t-1]]$, so $f^\diamond$ is well-defined. Moreover, $t\frac{d}{dt}(f^\diamond) = (t\frac{d}{dt}f)^\diamond$.
	\end{lem}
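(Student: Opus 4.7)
The plan is to reinterpret the space
\[
R = \big\{\textstyle\sum a_n(t-1)^n \in \C_p[[t-1]] \;:\; \ord_p(a_n) + n/(p-1) \to \infty\big\}
\]
as the Banach algebra of rigid-analytic functions on the closed disc $D = \{t\in \C_p : |t-1| \le p^{-1/(p-1)}\}$. Since $\mu_p \subset D$ and the map $t\mapsto \zeta t$ preserves $D$ (one has $|\zeta t-1| = |(\zeta-1)+\zeta(t-1)| \le p^{-1/(p-1)}$ by the ultrametric inequality), both assertions become conceptually natural: the displayed formula for $f(\zeta t)$ is nothing but the re-expansion of $f$ around $1$ via $\zeta t - 1 = (\zeta-1)+\zeta(t-1)$ followed by the binomial theorem, and the commutation with $t\frac{d}{dt}$ is the chain rule for $s = \zeta t$. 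All that remains is to check that the relevant formal manipulations are justified by the growth hypothesis.

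For the first part, assume $\zeta\ne 1$ (otherwise the statement is vacuous), so that $\ord_p(\zeta-1) = 1/(p-1)$. Expanding each $a_l(\zeta t-1)^l = a_l\big((\zeta-1)+\zeta(t-1)\big)^l$ binomially and swapping the order of summation gives the displayed formula. The swap is legitimate because for each fixed $n$ the $l$-th term of the inner sum has $p$-adic valuation at least $\ord_p(a_l)+(l-n)/(p-1)$, which tends to $\infty$ by hypothesis. The same estimate yields
\[
\ord_p\big(\text{$n$-th coefficient of }f(\zeta t)\big) + n/(p-1) \;\ge\; \inf_{l\ge n}\big[\ord_p(a_l) + l/(p-1)\big] \,\longrightarrow\, \infty,
\]
so $f(\zeta t)\in R$, and hence $f^{\diamond} \in R$ is well-defined.

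For the commutation, it suffices to establish $t\frac{d}{dt}\big[f(\zeta t)\big] = \big(t\frac{d}{dt}f\big)(\zeta t)$ for each individual $\zeta\in\mu_p$, then average. I would first verify that $t\frac{d}{dt}$ preserves $R$ (its $n$-th coefficient is $(n+1)a_{n+1}+na_n$, and $\ord_p((n+1)a_{n+1})+n/(p-1) \ge [\ord_p(a_{n+1})+(n+1)/(p-1)]-1/(p-1) \to \infty$, and likewise for $na_n$). By continuity of both sides in the appropriate Banach norm on $R$, the identity reduces to the monomials $f(t) = (t-1)^l$, where both sides evaluate to $l\zeta t(\zeta t-1)^{l-1} = l(\zeta t-1)^{l-1} + l(\zeta t-1)^l$ by a direct computation. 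The only real subtlety is the rigorous justification of these formal operations (summation swap and term-by-term differentiation), but in each instance it is entirely controlled by the growth hypothesis, so no deeper difficulty arises.
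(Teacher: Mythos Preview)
Your proposal is correct and follows essentially the same approach as the paper: the binomial expansion of $(\zeta t-1)^l$ together with the valuation estimate $\ord_p\big(\binom{l}{n}(\zeta-1)^{l-n}a_l\big) \ge \ord_p(a_l)+(l-n)/(p-1)\to\infty$ is exactly what the paper does for the first claim. For the commutation, the paper simply says it ``can be checked manually,'' whereas you supply the details via the chain rule, stability of $R$ under $t\frac{d}{dt}$, and reduction to monomials; your rigid-analytic framing is a helpful (though not strictly necessary) conceptual overlay on the same computation.
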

	\begin{proof}
		The expansion of $f(\zeta t)$ follows from the binomial expansion
		\begin{align}
			(\zeta t-1)^l = \sum_{0\le n\le l}\binom{l}{n}\zeta^n(t-1)^n(\zeta-1)^{l-n}.
		\end{align}
		Put $|x|_p = p^{-\ord_p(x)}$ for $x\in \C_p^\times$, then
		\begin{align}
			\left|\binom{l}{n}(\zeta-1)^{l-n} a_l\right|_p \le p^{(n-l)/(p-1)}|a_l|_p\to 0 \quad(l\to \infty).
		\end{align}
		So the coefficients of $f(\zeta t)$ converge. The commutation of $\diamond$ and $t\frac{d}{dt}$ can be checked manually.
	\end{proof}
	
	We contend that, writing $g_{c,N} = \sum_{n\ge 0} b_n(t-1)^n$, then $|b_n|_p\le |n+1|_p^{-1}$, so Lemma \ref{lem:convergence} applies. Indeed, put
	\begin{align}
		t^{-1}f_{c,N}(t) = \sum_{n\ge 0} a_n(t-1)^n,
	\end{align}
	then $a_n \in \Our$. By Lemma \ref{lem:gcN}, we know $b_{n+1} = \Omega_p^{-1}a_n/(n+1)$ for all $n\in \Z_{\ge 0}$, so $|b_{n+1}|_p\le |n+1|_p^{-1}$ as claimed. As such,
	\begin{align}
		t\frac{d}{dt}(g_{c,N}^\diamond) = \Omega_p^{-1}f_{c,N}^\diamond.
	\end{align}
	Note that this shows $g_{c,N}^\diamond\in \Our[[t-1]]$, since the operator $t\frac{d}{dt}$ is invertible on the image of $\diamond$ (alternatively, the operator $\mu\mapsto x\mu$ is invertible on measures supported on $\Z_p^\times$). Denote by $\nu_{c,N}\in \Mes(\Z_p,\Our)$ the Fourier inverse of $g_{c,N}^\diamond$. and we find
	\begin{align}\label{eq:limit-pre}
		\Omega_p^{-1}\int_{\Z_p^\times} x^{-1}\mu_{c,N}(x) = \int_{\Z_p^\times} \nu_{c,N}(x) = g_{c,N}(1) - \frac{1}{p}\sum_{\zeta^p=1} g_{c,N}(\zeta).
	\end{align}

	Next, recall the product formula of elliptic units
	\begin{lem}
		Let $\alpha\in \fo$ and $c\in \Z_{>1}$ be such that $\gcd(c,6\alpha) = 1$. Then for $z\in \C/L$,
		\begin{align}
			\prod_{P\in \alpha^{-1}L/L} \Theta_c(z+P,L) = \Theta_c(\alpha z,L).
		\end{align}
		Consequently, for $c,N\in \Z_{>1}$ with $\gcd(c,6Np)=\gcd(N,p)=1$,
		\begin{align}
			\sum_{\zeta^p=1} g_{c,N}(\zeta) = \frac{1}{12}\sum_{P\in \varpi^{-1}L/L}\log_p\Lambda_{c,N}(P,L) = \frac{1}{12}\log_p\Lambda_{c,N}(0,L).
		\end{align}
	\end{lem}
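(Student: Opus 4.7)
The plan is to prove the product formula first using Lemma \ref{lem:prod-formula}, then derive the consequence by unfolding and re-ordering products. For the product formula, recall that
$$\Div(\Theta_c(\cdot,L)) = 12c^2[0] - 12\sum_{Q\in c^{-1}L/L}[Q].$$
Since $\gcd(\alpha, c) = 1$, multiplication by $\alpha$ acts invertibly on $c^{-1}L/L \simeq (\fo/c)^2$ and hence permutes the points appearing in this divisor. Consequently $[\alpha]_*\Div(\Theta_c(\cdot,L)) = \Div(\Theta_c(\cdot,L))$, so Lemma \ref{lem:prod-formula} applies to yield
$$\prod_{P\in\alpha^{-1}L/L}\Theta_c(z+P,L) = C(\alpha)\cdot\Theta_c(\alpha z,L)$$
for some scalar $C(\alpha)\in \C^\times$. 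The crucial remaining step is to show $C(\alpha) = 1$: I would compare leading Taylor coefficients at $z = 0$ using the explicit expression \eqref{eq:prod-formula-theta}, carefully tracking the contribution of the $\Delta(L)^{1-c^2}$ factor on both sides. The normalization built into $\Theta_c$ is tailored precisely so that $C(\alpha) = 1$ for every $\alpha$ coprime to $6c$; this is exactly where the hypothesis $\gcd(c, 6\alpha) = 1$ is used.

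For the consequence, recall from \S\ref{subsec:reduction-torus} (in particular the identification \eqref{diag:mu-varpi}) that, since $\fp = \varpi\fo$ is principal, each $\zeta\in\mu_p$ corresponds to an element $z_\zeta\in\varpi^{-1}L/L$, and one has
$$g_{c,N}(\zeta) = \tfrac{1}{12}\log_p\Lambda_{c,N}(z_\zeta,L).$$
As $\zeta$ ranges over all $p$-th roots of unity, $z_\zeta$ ranges over all of $\varpi^{-1}L/L$; summing yields the first claimed equality. For the second, I unfold $\Lambda_{c,N}$ and interchange the order of the product:
\begin{align*}
\prod_{P\in\varpi^{-1}L/L}\Lambda_{c,N}(P,L) &= \prod_{0\ne\rho\in N^{-1}L/L}\prod_{P\in\varpi^{-1}L/L}\Theta_c(\rho+P,L)\\
&= \prod_{0\ne\rho\in N^{-1}L/L}\Theta_c(\varpi\rho,L) \;=\; \prod_{0\ne\rho'\in N^{-1}L/L}\Theta_c(\rho',L) \;=\; \Lambda_{c,N}(0,L),
\end{align*}
where the second equality uses the product formula with $\alpha = \varpi$ (noting $\gcd(c,6\varpi) = 1$, since $p\ge 5$ and $\gcd(c,p) = 1$ force $\gcd(c,6\varpi) = 1$), and the third equality uses that multiplication by $\varpi$ permutes the nonzero elements of $N^{-1}L/L$ because $\gcd(\varpi,N) = 1$. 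Taking $p$-adic logarithms on both sides—legitimate by the $p$-adic integrality and non-vanishing of $\Lambda_{c,N}$ at torsion points, established in Step 2 of the proof of Proposition \ref{prop:der-interpol}—completes the argument.

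The main obstacle is the determination of $C(\alpha) = 1$ in the product formula: the divisor comparison afforded by Lemma \ref{lem:prod-formula} only yields equality up to a scalar, and removing this ambiguity requires a careful inspection of leading coefficients, hinging on the specific normalization of $\theta$ by $\Delta(L)^{1-c^2}$. Once this point is settled, the passage to the consequence is a clean formal manipulation of products and the additivity of $\log_p$.
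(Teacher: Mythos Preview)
Your argument for the consequence is essentially identical to the paper's: unfold $\Lambda_{c,N}$, swap the products, apply the product formula with $\alpha=\varpi$, and use that $\varpi$ permutes the nonzero $N$-torsion. The paper does exactly this.

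For the product formula itself, the paper does not attempt a self-contained proof; it simply cites it as classical (Coates, \cite{coates-cm}, Theorems~3 and~8). Your plan via Lemma~\ref{lem:prod-formula} correctly gets you to equality up to a constant $C(\alpha)$, and you correctly flag the determination of $C(\alpha)=1$ as the crux. However, the proposed resolution---comparing leading Taylor coefficients at $z=0$ using \eqref{eq:prod-formula-theta}---does not close the gap on its own. Carrying it out, one finds (since $\theta(z,L)=\Delta(L)z^{12}+O(z^{13})$) that
\[
C(\alpha)=\alpha^{-12(c^2-1)}\prod_{0\ne P\in\alpha^{-1}L/L}\Theta_c(P,L),
\]
so $C(\alpha)=1$ is \emph{equivalent} to the special-value identity $\prod_{0\ne P}\Theta_c(P,L)=\alpha^{12(c^2-1)}$. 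This is not a formality: it is precisely the distribution relation for elliptic units, and its proof (e.g.\ in Coates, Robert, or de~Shalit) uses further structure---typically that $\Theta_c$ is a $12$th power together with a root-of-unity argument exploiting $\gcd(c,6)=1$, or the transformation law of the Siegel function under isogeny. The product formula \eqref{eq:prod-formula-theta} alone does not supply this. So your sketch is on the right track in isolating the difficulty, but the step ``compare leading coefficients'' is circular as stated: it reduces the claim to an identity of the same depth. If you want a self-contained argument, you should invoke the norm-compatibility of Siegel units or the $q$-product expansion of $\theta$ to pin down the constant directly.
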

	\begin{proof}
		The first identity is classical; see \cite[Theorem 3, Theorem 8]{coates-cm}. Given this,
		\begin{align}
			\sum_{0\ne \rho\in N^{-1}L/L}\sum_{P\in \varpi^{-1}L/L}\log_p\Theta_c(\rho+P,L) = \sum_{0\ne \rho\in N^{-1}L/L}\log_p\Theta_c(\varpi \rho,L)
			= \sum_{0\ne \rho\in N^{-1}L/L}\log_p\Theta_c(\rho,L) 
			= \log_p\Lambda_{c,N}(0,L).
		\end{align}
	\end{proof}
	\begin{proof}[First proof of Theorem \ref{thm:residue}]
		Choose coprime $c,N\in \Z_{>1}$ such that $\gcd(cN,6p) = 1$. By \eqref{eq:limit-pre}, we see that
		\begin{align}
			\Omega_p^{-1}(c^2-1)\lim_{s\to 1}(1-\chx{N}^{1-s})\zeta^\BH_p(s) &= \frac{1}{12}(1-1/p)\log\Lambda_{c,N}(0,L)\\
			&= \frac{1}{12}(1-1/p)\log_p\left[\left. \frac{\Theta_c(Nz,L)}{\Theta_c(z,L)}\right|_{z=0}\right]\\
			&= (1-1/p)(c^2-1)\log_pN.
		\end{align}
		Here the last equality can be seen from the simple computation (denote by $\lambda$ the first coefficient of $\theta(z,L)$ at $z=0$):
		\begin{align}
			\lim_{z\to 0}\frac{\Theta_c(Nz,L)}{\Theta_c(z,L)} = \lim_{z\to 0} \frac{\lambda^{c^2-1}N^{12(c^2-1)}z^{12} + O(z^{13})}{\lambda^{c^2-1}z^{12} + O(z^{13})} = N^{12(c^2-1)}.
		\end{align}
	\end{proof}
	
	\subsection{The second proof}
	We now work with any ordinary triple $(\cE,\varphi_{\cE},\omega_\cE)$. Take the Riemann sum
	\begin{align}
		\sum_{\substack{0\le a<p^n\\ p\nmid a}}
		a^{-1}\mu_\BH(a+p^n\Z_p) = \sum_{\substack{0\le a<p^n\\ p\nmid a}}
		a^{-1}\Phi_{a,n}(\cE,\cP_n,\omega_\cE),
	\end{align}
	which is congruent to $\int_{\Z_p^\times}x^{-1}\mu_\BH(x)$ modulo $p^n$. For $m\in \Z_{>0}$, we observe the congruence
	\begin{align}
		a^{-1}\sigma_{a,n}(m) & \equiv c^2\sum_{\substack{d\mid m\\ d\equiv a}}\frac{\sgn(d)}{d} - c^2N\sum_{\substack{d\mid m\\ d\equiv a/N}}\frac{\sgn(d)}{dN}
		-c\sum_{\substack{d\mid m\\ d\equiv a/c}}\frac{\sgn(d)}{dc}
		+cN\sum_{\substack{d\mid m\\ d\equiv a/cN}}
		\frac{\sgn(d)}{cNd} \pmod{p^n}\\
		& = c^2\sum_{\substack{d\mid m\\ d\equiv a}}\frac{\sgn(d)}{d} - c^2\sum_{\substack{d\mid m\\ d\equiv a/N}}\frac{\sgn(d)}{d}
		-\sum_{\substack{d\mid m\\ d\equiv a/c}}\frac{\sgn(d)}{d}
		+\sum_{\substack{d\mid m\\ d\equiv a/cN}}
		\frac{\sgn(d)}{d} \pmod{p^n}\\
		&=0\pmod{p^n}.
	\end{align}
	When $m=0$, we find
	\begin{align}
		a^{-1}\sigma_{a,n}(0) = a^{-1} \mu_{\KL}(a+p^n\Z_p) \equiv \int_{a+p^n\Z_p} x^{-1}\mu_\KL(x) \pmod{p^n}.
	\end{align}
	It follows that
	\begin{align}
		\sum_{\substack{0\le a<p^n\\ p\nmid a}}a^{-1}\Phi_{a,n}(\qq) \equiv 
		\int_{\Z_p^\times} x^{-1}\mu_{\KL}(x) = (1-c^2)\int_{\Z_p^\times}x^{-1}\mu_{N}(x)  \pmod{p^n}.
	\end{align}
	By Leopoldt's formula \cite[Theorem 3.2]{iwasawa-lectures}, we have
	\begin{align}
		\int_{\Z_p^\times}x^{-1}\mu_N(x) = \lim_{s\to 1} -(1-\chx{N}^{1-s})\zeta_p(s) =  -(1-1/p)\log_pN,
	\end{align}
	where $\zeta_p(s)$ is the Kubota--Leopoldt zeta function of trivial character. We therefore have a congruence of weight-one forms
	\begin{align}
		\sum_{\substack{0\le a<p^n\\ p\nmid a}}a^{-1}\Phi_{a,n}(\qq) \equiv 
		(c^2-1)\log_pN(1-1/p)A_n(\qq)  \pmod{p^n}.
	\end{align}
	Applying the $\qq$-expansion principle, we have
	\begin{align}
		\int_{\Z_p^\times}x^{-1}\mu_\BH(x)\equiv (c^2-1)\log_pN(1-1/p)A_n(\cE,\varphi_{n,\cE},\omega_\cE) \equiv \Omega_{p,\cE}(c^2-1)\log_pN(1-1/p)\pmod{p^n}.
	\end{align}
	Since $\int_{\Z_p^\times}x^{-1}\mu_\BH(x) = (c^2-1)\log_pN\lim_{s\to 1}(s-1)\zeta_p^\BH(s)$, letting $n\to\infty$ we get the full proof of the residue formula \eqref{eq:residue}.
	
	\subsection{The $p$-adic Kronecker's first limit formula}
	\label{subsec:kronecker}
	The modular approach tells us more; below, we will prove the $p$-adic first limit formula of $\zeta_p^\BH(s)$ (\textit{cf}., \cite[\S20.4]{lang-elliptic}, \cite[\S X]{katz-real-analytic-eisenstein}). We start with some preliminaries. Let $\Delta$ be the full level weight-12 modular form with $\Delta(\qq) = \qq\prod_{m\ge 1}(1-\qq^m)^{24}$. Consider then the quotient
	\begin{align}
		\Delta^{(p)}(\omega_1,\omega_2) = \frac{\Delta(\omega_1,\omega_2)^p}{\Delta(p\omega_1,\omega_2)},
	\end{align}
	which is a modular form defined over $\Z$ and of weight $12(p-1)$ and level $\Gamma_0(p)$ (see, e.g., \cite[proof of Theorem 2.1]{DokDokLocal}, where the two cusps of $X_0(p)$ are flipped). Below we will regard $\Delta^{(p)}$ as a full level $p$-adic modular form via the canonical subgroups of ordinary elliptic curves. We prove first the key lemma.
	\begin{lem}
		For any $p$-adically complete $W=W(\bar{\F}_p)$-algebra $B$ and $(\EE,\varphi_n,\omega)$ defined over $B$, where $\EE$ is ordinary, $\varphi_n: \hat{\EE}[p^n]\xrightarrow{\sim} \mu_{p^n}$ is an isomorphism and $\omega$ is a nowhere vanishing K\"ahler differential of $\EE$, the value $\Delta^{(p)}A^{-12(p-1)}_n(\EE,\varphi_n,\omega) \in B/p^nB$ lands in $1+(pB/p^nB)$. It follows that, for all $n\in \Z_{>0}$, we can define a weight-zero $(W/p^{n-1}W)$-valued modular form on $Z_{00}(p^n)$ by
		\begin{align}
			G_{0,n}(\EE,\varphi_n,\omega) = -\frac{1}{12p}\log\left[\Delta^{(p)}A_n^{-12(p-1)}(\EE,\varphi_n,\omega)\right] = -\frac{1}{12p}\log\left[\frac{\Delta^{(p)}(\EE,\omega)}{A_n^{12(p-1)}(\EE,\varphi_n,\omega)}\right],
		\end{align}
		where for $B$ a $p$-adically complete ring, $\frac{1}{p}\log:1+pB \to B$ is defined by
		\begin{align}
			\frac{1}{p}\log(1+px) = \sum_{n\ge 1,p\nmid n}(-1)^{n-1}\frac{p^{n-1}x^n}{n}
			+ \sum_{n\ge 1,p\nmid n}(-1)^{pn-1}\frac{p^{np-2}x^{np}}{n} + \sum_{n\ge 1,p\nmid n}(-1)^{p^2n-1}\frac{p^{np^2-3}x^{np^2}}{n}+\cdots.
		\end{align}
		Moreover, the $\qq$-expansion of $G_{0,n}$ is given by
		\begin{align}
			\sum_{m\ge 1}\qq^m\sum_{d\mid m,p\nmid d}\sgn(d)/d \in W/p^{n-1}[[\qq]].
		\end{align}
	\end{lem}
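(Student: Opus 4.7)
The plan is to verify the integrality assertion on the standard $\qq$-expansion, propagate it to arbitrary $(\EE,\varphi_n,\omega)$ via the $\qq$-expansion principle (Proposition \ref{prop:q-exp-principle}), and then compute the $\qq$-expansion of $G_{0,n}$ directly by manipulating logarithms.

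First, I would observe that $A_n$ is engineered so that $A_n(\EE,\varphi_n,\omega) = \Omega_{p,\EE}$; evaluating on $(\Tate(\qq),\varphi_{\can},\omega_{\can})$, for which $\varphi_{\can}^*(dt/t) = \omega_{\can}$ and hence $\Omega_p = 1$, gives $A_n(\qq) \equiv 1 \bmod p^n$ in $(W/p^n W)[[\qq]]$, and therefore $A_n^{-12(p-1)}(\qq) \equiv 1 \bmod p^n$ as well. Separately, the binomial congruence $(1-X)^p \equiv 1 - X^p \bmod p$ in $\Z_p[X]$ gives
\begin{align}
\Delta^{(p)}(\qq) = \prod_{m\ge 1}\frac{(1-\qq^m)^{24p}}{(1-\qq^{mp})^{24}} \equiv 1 \pmod{p}
\end{align}
in $\Z_p[[\qq]]$. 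Combining, $\Delta^{(p)}A_n^{-12(p-1)}(\qq)$ lies in $1 + p(W/p^n W)[[\qq]]$, so by the $\qq$-expansion principle its value on every $(\EE,\varphi_n,\omega)$ lies in $1 + pB/p^nB$. A standard estimate then shows the series defining $\frac{1}{p}\log$ converges on $1+pB$ and carries $1+p^m B$ into $p^{m-1}B$ for $m\ge 1$, giving $G_{0,n}(\EE,\varphi_n,\omega) \in B/p^{n-1}B$; the $\Gamma_{00}(p^n)$-transformation law is inherited from $\Delta^{(p)}$ and $A_n$.

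For the $\qq$-expansion, we work modulo $p^{n-1}$ and replace $A_n(\qq)$ by $1$, reducing the task to computing $-\frac{1}{12p}\log\Delta^{(p)}(\qq)$. Expanding $\log(1-X) = -\sum_{k\ge 1}X^k/k$ and collecting in powers of $\qq$ yields
\begin{align}
-\frac{1}{12p}\log\Delta^{(p)}(\qq) = 2\sum_{n\ge 1}\qq^n \sigma_{-1}(n) - \frac{2}{p}\sum_{n\ge 1}\qq^n \sigma_{-1}(n/p)\1_{p\mid n},
\end{align}
with $\sigma_{-1}(n) = \sum_{0<d\mid n} 1/d$. Writing $n = p^a m'$ with $p\nmid m'$ and using $\sigma_{-1}(n) = (1 + 1/p + \cdots + 1/p^a)\sigma_{-1}(m')$, the coefficient of $\qq^n$ collapses to $2\sigma_{-1}(m')$. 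Under the signed divisor convention summing over all nonzero divisors with $\sgn(d) = d/|d|$, this is exactly $\sum_{d\mid n,\,p\nmid d}\sgn(d)/d$, matching the stated formula.

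The principal technical point is the first step: it hinges on $A_n(\qq) \equiv 1 \bmod p^n$, which is precisely what the weight-one invariant construction in Appendix \ref{app:hasse} is engineered to provide. Once this is granted, the remainder is formal manipulation of power series together with the $\qq$-expansion principle.
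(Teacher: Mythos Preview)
Your proposal is correct and follows essentially the same route as the paper: both verify $\Delta^{(p)}(\qq)\equiv 1\bmod p$ and $A_n(\qq)\equiv 1$, invoke the $\qq$-expansion principle (Proposition~\ref{prop:q-exp-principle}) to propagate the congruence to every $(\EE,\varphi_n,\omega)$, and then compute the $\qq$-expansion of $G_{0,n}$ by expanding $-\frac{1}{12p}\log\Delta^{(p)}(\qq)$ directly. The only cosmetic difference is that the paper applies the $\qq$-expansion principle to the difference $\Delta^{(p)}-A_n^{12(p-1)}$ and then divides by the unit $A_n^{12(p-1)}$, whereas you apply it to the weight-zero form $\Delta^{(p)}A_n^{-12(p-1)}-1$; these are trivially equivalent.
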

	\begin{proof}
		We have
		\begin{align}
			\Delta^{(p)}(\qq) = \prod_{m\ge 1}\frac{(1-\qq^m)^{24p}}{(1-\qq^{pm})^{24}} \equiv 1 \equiv A_n^{12(p-1)}(\qq)\pmod p.
		\end{align}
		By the $\qq$-expansion principle, for any triple $(\EE,\varphi_n,\omega)$ as in the statement, we have
		\begin{align}
			\Delta^{(p)}(\EE,\varphi_n,\omega) - A_n^{12(p-1)}(\EE,\varphi_n,\omega) \in pB/p^nB.
		\end{align}
		By definition, $A_n(\EE,\varphi_n,\omega)$ is valued in $(B/p^nB)^\times$, whereby we find $\Delta^{(p)}A^{-12(p-1)}_n(\EE,\varphi_n,\omega) -1 \in pB/p^nB$.
		
		Next, clearly $G_{0,n}$ is weight-$0$ and valued in $W/p^{n-1}$. To show that it is a modular form on $Z_{00}(p^n)$, it suffices to show that
		\begin{align}
			-\frac{1}{12p}\log\left[\frac{\Delta(\Tate(\qq),\omega_\can)}{\Delta(\Tate(\qq^p),\omega_\can)A_n^{12(p-1)}(\Tate(\qq),\varphi_{n,\can},\omega_\can)^{}}\right] = -\frac{1}{12p}\log[\Delta(\qq)/\Delta(\qq^p)] \in W/p^{n-1}W[[\qq]].
		\end{align}
		We may lift everything to $W[1/p][[\qq]]$ and compute there:
		\begin{align}
			-\frac{1}{12p}\log\left[\Delta(\qq)/\Delta(\qq^p)\right] &= 2\sum_{m\ge 1}\left[-\log(1-\qq^m) + \frac{1}{p}\log(1-\qq^{mp})\right]\\
			&= 2\sum_{m\ge 1}\sum_{d\ge 1}\left[\frac{\qq^{md}}{d} - \frac{\qq^{mdp}}{dp}\right]\\
			&=\sum_{m\ge 1}\qq^m\sum_{d\mid m, p\nmid d}\sgn(d)/d.
		\end{align}
	\end{proof}
	\begin{thm}\label{thm:first-limit-formula}
		We have
		\begin{align}
			\zeta_{p,\cE}^\BH(s) = \Omega_{p,\cE}\left[\frac{1-1/p}{s-1} + (1-1/p)\gamma_p + (1-1/p)\log_p\Omega_{p,\cE} -\frac{1}{12p}\log_p \Delta^{(p)}(\cE,\omega_\cE) + O(s-1)\right].
		\end{align}
	\end{thm}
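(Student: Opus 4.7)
The plan is to expand $\zeta_{p,\cE}^{\BH}(s)$ around $s=1$ directly from the integral representation \eqref{eq:bh-zeta}, handling the residue and the constant term uniformly via the Riemann-sum / $\qq$-expansion-principle strategy that already appeared in the second proof of Theorem \ref{thm:residue}. Writing $\omega^{-1}(x)\chx{x}^{-s} = x^{-1}\chx{x}^{1-s}$ and Taylor-expanding around $s=1$ together with the analogous expansions of $(c^2-\chx{c}^{1-s})^{-1}$ and $(1-\chx{N}^{1-s})^{-1}$, the Laurent series of $\zeta_{p,\cE}^{\BH}(s)$ at $s=1$ is determined by the two $p$-adic integrals
\begin{align*}
	I_0 := \int_{\Z_p^\times}x^{-1}\mu_{\BH}(x), \qquad I_1 := \int_{\Z_p^\times}x^{-1}\log_p x\,\mu_{\BH}(x).
\end{align*}
The value $I_0 = \Omega_{p,\cE}(c^2-1)\log_p N(1-1/p)$ is exactly what the second proof of Theorem \ref{thm:residue} delivered, producing the claimed residue $\Omega_{p,\cE}(1-1/p)/(s-1)$; a routine algebraic rearrangement then reduces the theorem to computing $I_1$ and checking that every $c,N$-dependent contribution cancels in the constant coefficient.

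The core of the argument is a mod-$p^n$ congruence of weight-one $\qq$-expansions refining the one used for the residue. I would approximate
\begin{align*}
	I_1 \equiv \sum_{\substack{0\le a<p^n\\ p\nmid a}} a^{-1}\log_p a\cdot\Phi_{a,n}(\cE,\cP_n,\omega_\cE) \pmod{p^n},
\end{align*}
and track its $\qq$-expansion via the explicit divisor-sum formula for $\sigma_{a,n}(m)$ in Theorem \ref{thm:epf}. The key combinatorial identity, replacing the $m\ge 1$ cancellation used for $I_0$, is the telescoping
\begin{align*}
	c^2\log_p d - c^2\log_p(dN) - \log_p(dc) + \log_p(dcN) = (1-c^2)\log_p N,
\end{align*}
a constant independent of $d$ whenever $p\nmid d$. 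Combined with $\sigma_{a,n}(0) = \mu_{\KL}(a+p^n\Z_p)$ for the $m=0$ contribution, this yields the $\qq$-expansion congruence
\begin{align*}
	\sum_{\substack{0\le a<p^n\\ p\nmid a}}a^{-1}\log_p a\,\Phi_{a,n}(\qq) \equiv \bigl[I_{\KL} + (1-c^2)\log_p N\cdot G_{0,n}(\qq)\bigr]\cdot A_n(\qq) \pmod{p^n},
\end{align*}
where $I_{\KL} := \int_{\Z_p^\times}x^{-1}\log_p x\,\mu_{\KL}(x)$, and the factor $A_n(\qq)=1$ is inserted only to match weights.

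The hard part will be precisely this passage: both sides must be recognized as genuine mod-$p^{n-1}$ weight-one true modular forms on $Z_{00}(p^n)$, so that the $\qq$-expansion principle (Proposition \ref{prop:q-exp-principle}) permits evaluation at $(\cE,\varphi_{n,\cE},\omega_\cE)$. This is exactly the job of the Hasse-type invariants constructed in Appendix \ref{app:hasse}: I would invoke $A_n(\cE,\varphi_{n,\cE},\omega_\cE) = \Omega_{p,\cE}$ together with
\begin{align*}
	G_{0,n}(\cE,\varphi_{n,\cE},\omega_\cE) = -\frac{1}{12p}\log_p\Delta^{(p)}(\cE,\omega_\cE) + (1-1/p)\log_p\Omega_{p,\cE},
\end{align*}
which drops out of the very definition of $G_{0,n}$. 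Letting $n\to\infty$ then expresses $I_1$ as a closed form in $I_{\KL}$, $\log_p\Omega_{p,\cE}$, and $\log_p\Delta^{(p)}(\cE,\omega_\cE)$.

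Finally, computing $I_{\KL}$ reduces, via the decomposition \eqref{eq:KL-mazur2}, to integrals of the Mazur measure $\mu_N$. Leopoldt's formula gives $\int_{\Z_p^\times}x^{-1}\mu_N = -(1-1/p)\log_p N$ (already used in the residue proof), and the corresponding second-order integral is obtained by Taylor-expanding the standard Mazur identity $\int_{\Z_p^\times}\omega^{-1}(x)\chx{x}^{-s}\mu_N(x) = -(1-\chx{N}^{1-s})\zeta_p(s)$ at $s=1$ using the defining expansion $\zeta_p(s) = (1-1/p)[(s-1)^{-1}+\gamma_p+O(s-1)]$, which produces
\begin{align*}
	I_{\KL} = (1-1/p)\log_p N\cdot\left[(1-c^2)\gamma_p + (c^2-1)\frac{\log_p N}{2} - \log_p c\right].
\end{align*}
Assembling the pieces, the $(\log_p N)^2$ and $\log_p c$ contributions from $I_1$ cancel exactly against the corresponding terms from the Euler-factor expansions---confirming that the constant coefficient is intrinsic to $(\cE,\varphi_\cE,\omega_\cE)$---and the surviving terms match those in the theorem verbatim.
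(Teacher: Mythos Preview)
Your proposal is correct and follows essentially the same route as the paper's proof: both reduce to computing $I_1=\int_{\Z_p^\times}x^{-1}\log_p x\,\mu_{\BH}(x)$ via the Riemann-sum congruence, obtain the same telescoping identity on higher $\qq$-coefficients, package the constant term through $\mu_{\KL}$ and then $\mu_N$, and invoke the $\qq$-expansion principle with the weight-one invariant $A_n$ and the weight-zero form $G_{0,n}$ exactly as the paper does. The only cosmetic discrepancy is that your congruences are stated modulo $p^n$ whereas the paper works modulo $p^{n-1}$, the latter being forced because $G_{0,n}$ is only defined as a $W/p^{n-1}W$-valued form; this does not affect the argument since one lets $n\to\infty$ anyway.
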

	Here $\gamma_p$ is the $p$-adic Euler constant given by $(1-1/p)^{-1}\lim_{s\to 1} \left[\zeta_p(s) - \frac{1-1/p}{s-1}\right]$; see, e.g., \cite[p.~266]{koblitz78}.
	\begin{proof}
		We are left with computing the constant term. Since
		\begin{align}
			\omega^{-1}(x)\chx{x}^{-s} = x^{-1} -(s-1)x^{-1}\log_px + O((s-1)^2)),
		\end{align}
		we have
		\begin{align}
			(1-\chx{N}^{1-s})\zeta_p^\BH(s) = \int_{\Z_p^\times}x^{-1}\mu_\BH(x) - (s-1)\int_{\Z_p^\times}x^{-1}\log_p x\mu_{\BH}(x) + O((s-1)^2).
		\end{align}
		In light of Theorem \ref{thm:residue}, through direct computations we find $\lim_{s\to 1}\left[\zeta_p^\BH(s) - \frac{\Omega_{p,\cE}(1-1/p)}{s-1}\right]$ is given by
		\begin{align}\label{eq:limit-1}
			\frac{1}{(1-c^2)\log_pN}\left\{
			\Omega_{p,\cE}\log_pN(1-1/p)\left[\log_pc + \frac{(1-c^2)\log_pN}{2}\right] + \int_{\Z_p^\times}x^{-1}\log_px\mu_\BH(x)\right\}.
		\end{align}
		The game now is to evaluate $\int_{\Z_p^\times} x^{-1}\log_px \mu_\BH(x)$. Again, let $\varphi_{n,\cE} = \varphi_\cE|_{\hat{\cE}[p^n]}$. We consider the Riemann sum
		\begin{align}
			\sum_{1\le a<p^n,p\nmid a}a^{-1}(\log_p a)\Phi_{a,n}(\cE,\varphi_{n,\cE},\omega_\cE) \equiv \int_{\Z_p^\times}x^{-1}\log_px\mu_\BH(x) \pmod{p^{n-1}}.
		\end{align}
		By a computation similar to part 2 of the proof of Theorem \ref{thm:zeta-interpolation}, we have for $m\in\Z_{\ge 1}$,
		\begin{align}
			\sum_{\substack{0\le a<p^n\\ p\nmid a}} a^{-1}\log_pa\cdot \sigma_{a,n}(m) \equiv (1-c^2)\log_pN\sum_{d\mid m, p\nmid d}\sgn(d)/d\pmod{p^{n-1}}.
		\end{align}
		For $m = 0$,
		\begin{align}
			\sum_{\substack{0\le a<p^n\\ p\nmid a}} a^{-1}\log_pa\cdot \sigma_{a,n}(0) & \equiv
			\sum_{\substack{0\le a<p^n\\ p\nmid a}}\left[(1-c^2)a^{-1}\log_p a + a^{-1}\log_pc\right]\mu_N(a + p^n\Z_p)\pmod{p^{n-1}}\\
			&\equiv (1-c^2)\int_{\Z_p^\times} \frac{\log_px}{x}\mu_N(x) +
			\log_p c \int_{\Z_p^\times}\frac{\mu_N(x)}{x}\pmod{p^{n-1}}.
		\end{align}
		As $\int_{\Z_p^\times}x^{-1}\chx{x}^{1-s}\mu_N(x) = (\chx{N}^{1-s}-1)\zeta_p(s)$, we have
		\begin{align}
			\int_{\Z_p^\times}x^{-1}\mu_N(x) = -\log_pN(1-1/p)\quad \text{and}\quad \int_{\Z_p^\times}x^{-1}\log_p x\mu_N(x) = (1-1/p)\left[\gamma_p\log_p N - \frac{(\log_pN)^2}{2}\right].
		\end{align}
		Consequently,
		\begin{align}
			\sum_{\substack{0\le a<p^n\\ p\nmid a}} a^{-1}\log_pa\cdot \sigma_{a,n}(0) \equiv
			(1-c^2)(1-1/p)\gamma_p\log_pN - (1-1/p)\log_pN\left[\frac{(1-c^2)\log_pN}{2}
			+\log_p c\right] \pmod{p^{n-1}}.
		\end{align}
		Denote the right hand side above as $C$. Then we have shown that
		\begin{align}
			\sum_{1\le a<p^n,p\nmid a}
			a^{-1}\log_pa\Phi_{a\varrho^n,n}(\qq) \equiv \left[C + (1-c^2)\log_pN G_{0,n}(\qq)\right]A_n(\qq)\pmod{p^{n-1}}.
		\end{align}
		Applying the $\qq$-expansion principle and evaluate on $(\cE,\varphi_{n,E},\omega_\cE)$, we find
		\begin{align}\label{eq:limit-2}
			\int_{\Z_p^\times}x^{-1}\log_px\mu_{\BH}(x)\equiv \Omega_{p,\cE}\left\{C + (1-c^2)\log_pN \frac{1}{12p}\log_p\left[
			\frac{\Omega_{p,\cE}^{12(p-1)}}{\Delta^{(p)}(\cE,\omega_\cE)}\right]\right\}\pmod{p^{n-1}},
		\end{align}
		where we used the fact that, on $1+pW$, $\log = \log_p$, the usual $p$-adic logarithm. The limit formula then follows by combining \eqref{eq:limit-1} and \eqref{eq:limit-2}, and letting $n\to \infty$.
	\end{proof}
	
	\begin{cor}\label{cor:FLF-cm}
		Notation as in \S\ref{subsec:cm-interpolation}. Then
		\begin{align}
			\sum_{0\le i<e} \zeta_{p,E_i}^\BH(s)/p^i =
			\Omega_p\left\{ (1-1/p^e)
			\left[\frac{1}{s-1} + \gamma_p + \log_p\Omega_p + \frac{1}{12}\log_p\Delta(E,\omega_E)\right]
			+\frac{1}{p^e}\log_p\barpi
			\right\} + O(s-1).
		\end{align}
	\end{cor}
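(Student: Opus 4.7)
The plan is to deduce the corollary from the $p$-adic first limit formula of Theorem \ref{thm:first-limit-formula} applied to each of the triples $(E_i,\varphi_i,\omega_i)$. By Lemma \ref{lem:omega-prime}, the $p$-adic periods satisfy $\Omega_{p,E_i} = \Omega_p$ for every $i$, so the expressions arising from the first limit formula differ across $i$ only through the $\Delta^{(p)}(E_i,\omega_i)$ terms. Weighting by $1/p^i$ and summing over $0 \le i < e$, the geometric-series identity $\sum_{0 \le i < e} p^{-i} = (1-1/p^e)/(1-1/p)$ converts each factor of $(1-1/p)$ into $(1-1/p^e)$; this accounts for the polar term, the $\gamma_p$ contribution, and the $\log_p\Omega_p$ contribution in the target formula.

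The remaining ingredient is the weighted sum $-\frac{1}{12p}\sum_{0 \le i < e} p^{-i}\log_p\Delta^{(p)}(E_i,\omega_i)$, which I will collapse via telescoping. Setting $\delta_i = \log_p\Delta(E_i,\omega_i)$ and using the definition $\Delta^{(p)}(\EE,\omega) = \Delta(\EE,\omega)^p/\Delta(\EE/H_\EE,\hat{\pi}_{H_\EE}^*\omega)$ together with the tower identification $(E_i/H_{E_i},\hat{\pi}_{H_{E_i}}^*\omega_i) = (E_{i+1},\omega_{i+1})$, one obtains
\begin{align*}
\log_p\Delta^{(p)}(E_i,\omega_i) = p\delta_i - \delta_{i+1}.
\end{align*}
Consequently, the weighted sum telescopes cleanly to $-\delta_0/12 + \delta_e/(12p^e)$. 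To close the loop, Lemma \ref{lem:cm-cyclicity} provides the CM isomorphism $\theta_\varpi\colon E_e \xrightarrow{\sim} E$ together with the identity $\omega_e = \barpi\cdot\theta_\varpi^*\omega_0$; combined with the weight-$12$ transformation law $f(\EE,\lambda\omega) = \lambda^{-12}f(\EE,\omega)$ for $\Delta$ and invariance under isomorphisms of the underlying curve, this gives $\delta_e = \delta_0 - 12\log_p\barpi$. Substituting back yields the claimed closed-form expression involving $(1-1/p^e)\cdot\tfrac{1}{12}\log_p\Delta(E,\omega_E)$ together with a boundary contribution proportional to $\log_p\barpi/p^e$.

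The proof poses no essential conceptual obstacle---every ingredient is already assembled in earlier results---but it requires careful sign bookkeeping through both the telescoping step and the weight-$12$ scaling of $\Delta$ induced by the CM twist $\omega_e = \barpi\cdot\theta_\varpi^*\omega_0$, where one must track the interplay between $\varpi$ and $\barpi$ via the relation $\log_p\varpi+\log_p\barpi = \log_p p = 0$ when simplifying.
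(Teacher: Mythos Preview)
Your proposal is correct and follows essentially the same route as the paper: apply Theorem~\ref{thm:first-limit-formula} to each $(E_i,\varphi_i,\omega_i)$, use Lemma~\ref{lem:omega-prime} to identify all $\Omega_{p,E_i}$ with $\Omega_p$, telescope the weighted sum of $\log_p\Delta^{(p)}(E_i,\omega_i)$ into $\log_p[\Delta(E_0,\omega_0)^{p^e}/\Delta(E_e,\omega_e)]$, and then close the loop via Lemma~\ref{lem:cm-cyclicity} and the weight-$12$ scaling of $\Delta$. The remark about $\log_p\varpi+\log_p\barpi=0$ is not actually needed in the simplification (neither you nor the paper uses it), so you can drop it.
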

	\begin{proof}
		By Theorem \ref{thm:first-limit-formula} and Lemma \ref{lem:omega-prime}, we have
		\begin{align}
			\sum_{0\le i<e} \zeta_{p,E_i}^\BH(s)/p^i &= 
			\Omega_p(1-1/p^e)\left(\frac{1}{s-1} + \gamma_p + \log_p\Omega_p\right)
			-\frac{\Omega_p}{12p^e}\log_p\left[
			\frac{\Delta(E_0,\omega_0)^{p^e}}{\Delta(E_e,\omega_e)}
			\right] + O(s-1)\\
			&= 
			\Omega_p(1-1/p^e)\left(\frac{1}{s-1} + \gamma_p + \log_p\Omega_p\right)
			-\frac{\Omega_p}{12p^e}\log_p\left[
			\frac{\Delta(E,\omega_E)^{p^e}}{\Delta(E,\barpi\omega_E)}
			\right] + O(s-1)\\
			&=\Omega_p(1-1/p^e)\left[\frac{1}{s-1} + \gamma_p + \log_p\Omega_p + \frac{1}{12}\log_p\Delta(E,\omega_E)\right] + (\Omega_p/p^e)\log_p\barpi + O(s-1).
		\end{align}
		The second equality is by Lemma \ref{lem:cm-cyclicity}.
	\end{proof}
	\appendix
	
	\section{Bernoulli measures as constant terms}
	\label{app:bernoulli}
	
	We will now verify \eqref{eq:interpolation-KL}. Thus let $c,N\in \Z_{\ge 1}$ be as in \S\ref{subsec:notation}. For $a\in \Z_p$ and $n\in \Z_{\ge 0}$, put
	\begin{align}\label{eq:KL-period}
		\mu_{\KL}(a+p^n\Z_p) = -\frac{c(c-1)(N-1)}{2}\1_{a\not\equiv 0\bmod p^n} + c^2\frac{-a^\flat_{p^n}+N(a/N)^\flat_{p^n}}{p^n} + c\frac{(a/c)^\flat_{p^n} - N(a/cN)^\flat_{p^n}}{p^n}.
	\end{align}
	Recall the $N$-regularized Bernoulli periods are given by the formula \cite[p.~38]{Lang}
	\begin{align}\label{eq:epf-KL}
		\mu_N(a+p^n\Z_p) = \frac{a^\flat_{p^n} - N(a/N)^\flat_{p^n}}{p^n} + \frac{N-1}{2}.
	\end{align}
	For all $k\in \Z_{\ge 0}$, we have (Theorem 2.3 \textit{op.~cit.})
	\begin{align}
		\int_{\Z_p} x^k\mu_N(a+p^n\Z_p) = (-1)^k(1-N^{k+1})\zeta(-k).
	\end{align}
	Comparing the formulas, we see that for $C = \frac{(c^2-c)(N-1)}{2}$, we have
	\begin{align}\label{eq:KL-mazur}
		\mu_{\KL}(a+p^n\Z_p) = -c^2\mu_N(a+p^n\Z_p) + c\mu_N(a/c+p^n\Z_p) + C\1_{a\equiv 0\bmod p^n}.
	\end{align}
	It follows that
	\begin{align}
		\int_{\Z_p}x^k\mu_{\KL}(x) &\equiv \sum_{0\le a<p^n}a^k[-c^2\mu_N(a+p^n\Z_p) + c\mu_N(a/c+p^n\Z_p) + C\1_{a\equiv 0\bmod p^n}] \pmod{p^n}\\
		& \equiv  -c^2\sum_{0\le a<p^n}a^k\mu_N(a+p^n\Z_p) + c^{k+1}\sum_{0\le a<p^n}(a/c)^k\mu_N(a/c+p^n\Z_p)  + C\1_{k = 0} \pmod{p^n}\\
		& \equiv -(c^2-c^{k+1})\int_{\Z_p}x^k\mu_N(x) + C\1_{k=0} \pmod{p^n}\\
		&= (-1)^{k-1}(c^2-c^{k+1})(1-N^{k+1})\zeta(-k) +\frac{(c^2-c)(N-1)}{2}\1_{k=0}.
	\end{align}
	When $k=0$, the above is identically zero, as $\zeta(0) = -1/2$. When $k>0$, the above is nonzero only when $k$ is odd, thereby $(-1)^{k-1}=1$.

	\section{Weight one modular invariants on the ordinary loci}\label{app:hasse}

	The purpose of this section is to explain the construction of some weight-one modular forms defined on the ordinary loci of the modular curves $X_{00}(p^n)$ for $n\in \Z_{>1}$. We will start with a discussion of this ordinary locus, and recall a version of $\qq$-expansion principle on it. In the second part, we explain the construction of the weight one modular invariants on these ordinary loci, and justify why they should be viewed as Hasse invariants with levels. 
	
	\subsection{The ordinary locus}
	Let $W = W(\bar{\F}_p)$ be the ring of Witt vectors of $\bar{\F}_p$. Fix below an integer $n\in \Z_{>0}$, and recall from \cite[Chapter II]{katz-real-analytic-eisenstein} the moduli space $X_{00}(p^n) = X(\Gamma_{00}(p^n))$ parametrizes isomorphism classes of
	\begin{align}
		\left\{(\EE,\alpha_n)\colon \EE\text{ is an elliptic curve}, \alpha_n: \mu_{p^n}\to \EE[p^n]\text{ is a closed immersion of group schemes}\right\}.
	\end{align}
	Put $Z_{00}(p^n)$ the moduli space parameterizing isomorphism classes of
	\begin{align}
		\{(\EE,\varphi_n)\colon \EE\text{ is an ordinary elliptic curve}, \varphi_n: \hat{\EE}[p^n] \to \mu_{p^n}\text{ is an isomorphism of group schemes}\}.
	\end{align}
	There is then a natural inclusion $Z_{00}(p^n) \to X_{00}(p^n)$ sending $(\EE,\varphi_n)$ to $(\EE,i_{\EE}\circ\varphi_n^{-1})$, where $i_\EE: \hat{\EE}\to \EE$ is the natural inclusion. If $B$ is a pro-artinian $W$-algebra, we have $Z_{00}(p^n)(B)\subset X_{00}(p^n)(B)$ is exactly the subset cut out by the ordinary condition; for this reason we loosely refer to $Z_{00}(p^n)$ as the ``ordinary locus''. As for the geometry, recall from \cite[\S X]{katz-moduli} that $Z_{00}(p^n)$ is represented by a stack over $W$, which is irreducible by a classical result of Igusa \cite{igusa}.
	
	Next, after Katz \cite{katz-modular-scheme}, for $k\in \Z$, a weight-$k$ modular form $f$ on $Z_{00}(p^n)$ is a function on the triples $(\EE,\varphi_n,\omega)$, where $(\EE,\varphi_n) \in Z_{00}(p^n)$ and $\omega\in H^0(\EE,\Omega^1_\EE)$ is nowhere vanishing, such that
	\begin{enumerate}
		\item[(i)] If $B$ is a $p$-adically complete separated $W$-algebra, and $(\EE,\varphi_n,\omega)$ is defined over $B$, then $f(\EE,\varphi_n,\omega) \in B$. Also, if $\beta: B\to B'$ is a morphism of $W$-algebras, let $(\EE_{B'},\varphi_{n,B'},\omega_{B'})$ be the base change of $(\EE,\varphi_n,\omega)$ to $B'$ via $\beta$. Then
		\begin{align}
			f(\EE_{B'},\varphi_{n,B'},\omega_{B'}) = \beta(f(\EE,\varphi_n,\omega)).
		\end{align}
		Finally, $f(\EE,\varphi_n,\omega) = f(\EE',\varphi_n',\omega')$ if there is an isomorphism $\sigma: \EE\to \EE'$  such that $\varphi_n = \varphi_n'\circ\sigma$ and $\omega = \sigma^*\omega'$.
		
		\item[(ii)] Suppose $(\EE,\varphi_n,\omega)$ is defined over a $p$-adically complete separated $W$-algebra $B$. For all $\lambda \in B^\times$, $f(\EE,\varphi_n,\lambda\omega) = \lambda^{-k}f(\EE,\varphi_n,\omega)$.
		
		\item[(iii)] Let $\widehat{W\llrrparen{\qq}}$ be the completion of $W\llrrparen{\qq}$, so it is $p$-adically complete and separated. For the Tate curve $\Tate(\qq)/\widehat{W\llrrparen{\qq}}$, let $\varphi_{n,\can}: \widehat{\Tate(\qq)}[p^n]\to \mu_{p^n}$ be the isomorphism induced from the natural inclusion $\mu_{p^n}\to \Tate(\qq)$. Then $f(\Tate(\qq),\varphi_{n,\can},\omega_\can)\in W[[\qq]]$. 
	\end{enumerate}
	We are ready to state the $\qq$-expansion principle for $Z_{00}(p^n)$.
	\begin{prop}\label{prop:q-exp-principle}
		Let $f$ be a weight-$k$ modular form on $Z_{00}(p^n)$ that vanishes on the triple $(\Tate(\qq),\varphi_{n,\can},\omega_{\can})$. Then $f$ is identically zero.
	\end{prop}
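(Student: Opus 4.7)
The plan is to deduce the $\qq$-expansion principle from Igusa's irreducibility of the Igusa tower together with the elementary fact that a section of a line bundle on an integral (formal) scheme which vanishes on a nonempty open subset must vanish globally. Following Katz's formalism as recalled in the excerpt and in \S\ref{subsec:conventions-modform}, the datum of $f$ is equivalent to a global section of the invertible sheaf $\omega^{\otimes k}$ on the $p$-adic formal $W$-stack $Z_{00}(p^n)$; condition (iii) guarantees that $f$ extends to the cusps, i.e., to a suitable compactification.

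First, I would identify the role of the Tate triple. The triple $(\Tate(\qq),\varphi_{n,\can},\omega_{\can})$ defined over $\widehat{W\llrrparen{\qq}}$ is not a random test object: by Tate's theory (or the Katz--Mazur construction of the cuspidal boundary), it presents the formal completion of the compactified Igusa scheme $\overline{Z_{00}(p^n)}$ along one of its cusps. Concretely, $\operatorname{Spf}\widehat{W\llrrparen{\qq}}\to \overline{Z_{00}(p^n)}$ is the formal-neighborhood chart at this cusp, and restricting a section of $\omega^{\otimes k}$ along this chart returns precisely its $\qq$-expansion. The hypothesis $f(\Tate(\qq),\varphi_{n,\can},\omega_{\can})=0\in W[[\qq]]$ thus says that $f$, viewed as a section of $\omega^{\otimes k}$, vanishes identically on an entire formal disk around this cusp.

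Second, I would invoke Igusa's irreducibility theorem: the Igusa scheme $Z_{00}(p^n)$ (equivalently its compactification $\overline{Z_{00}(p^n)}$) is irreducible over $W$, as cited in the excerpt via \cite{katz-moduli} and \cite{igusa}. Since a section of a line bundle on an integral formal scheme that vanishes on a nonempty formal open locus vanishes globally, the vanishing near the chosen cusp propagates to all of $\overline{Z_{00}(p^n)}$, and in particular to the open locus $Z_{00}(p^n)$, yielding $f\equiv 0$.

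The main obstacle, and the true content of the argument, is Igusa's irreducibility theorem. Over $\Q$ (or over any extension splitting $\mu_{p^n}$) the irreducibility of level-$\Gamma_{00}(p^n)$ modular curves is classical, but here we need irreducibility of the mod-$p$ fiber of the Igusa tower; this is exactly what Igusa proved. Without it, the $q$-expansion would only control the geometric component of $Z_{00}(p^n)\otimes \bar\F_p$ containing the chosen cusp, and a form could \emph{a priori} be nonzero on some other sheet of the tower. Once Igusa's theorem is invoked, the argument is purely formal, following the template of Katz \cite{katz-modular-scheme}.
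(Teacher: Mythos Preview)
Your approach is correct and is essentially the argument underlying Katz's $\qq$-expansion principle, but it differs from the paper's proof in packaging. The paper does not argue directly on $Z_{00}(p^n)$: instead it passes to the full Igusa tower $M^{\triv}$ (parametrizing pairs $(\EE,\varphi)$ with $\varphi:\hat{\EE}\xrightarrow{\sim}\hat{\G}_m$) via the map $f\mapsto f^{\gen}$ defined by $f^{\gen}(\EE,\varphi)=f(\EE,\varphi|_{\hat{\EE}[p^n]},\varphi^*(dt/t))$, checks that this association is faithful, and then invokes the already-established $\qq$-expansion principle on $M^{\triv}$ from \cite{katz-moduli} as a black box. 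Your route instead stays at finite level and unpacks the geometric content directly: Igusa's irreducibility of $\overline{Z_{00}(p^n)}$ plus the fact that vanishing on the formal disk at a cusp forces global vanishing on an integral scheme. Both arguments ultimately rest on Igusa's theorem; the paper's version is shorter because it quotes a ready-made result, while yours is more self-contained but must handle the compactification and the passage from formal-neighborhood vanishing to global vanishing explicitly. One small wording issue: the formal completion at a cusp is not a ``formal open locus'' but the completion along a closed point; the propagation step you want is that for a noetherian integral scheme the local-to-completion map is injective (Krull's intersection theorem), so vanishing in $W[[\qq]]$ forces vanishing on a Zariski neighborhood, and then irreducibility finishes the job.
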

	\begin{proof}
		One way to see this is to use the full tower $M^{\triv}$ from \cite[\S X]{katz-moduli} that parametrizes isomorphism classes of $(\EE,\varphi)$, where $\varphi: \hat{\EE}\to \hat{\G}_m$ is an isomorphism of formal groups. That is, we can regard a form $f$ on $Z_{00}(p^n)$ as a generalized modular form $f^\gen$ on $M^\triv$, by setting
		\begin{align}
			f^\gen(\EE,\varphi) = f(\EE,\varphi|_{\hat{\EE}[p^n]},\varphi^*(dt/t));
		\end{align}
		\textit{cf}., \cite[\S1.5]{katz-eisenstein-measure}. The association $f\mapsto f^\gen$ is faithful, i.e., $f^\gen = 0$ implies $f = 0$. To see this, simply note that for any point of evaluation $(\EE,\varphi_n,\omega)$, there exist $\varphi:\hat{\EE}\xrightarrow{\sim} \hat{\G}_m$ extending $\varphi_n$ and a scalar $\lambda$ such that
		\begin{align}
			(\EE,\varphi_n,\omega) = (\EE,\varphi|_{\hat{\EE}[p^n]},\lambda\varphi^*(dt/t)).
		\end{align}
		To conclude the proof, we use the $\qq$-expansion principle on $M^\triv$ \cite[p.~499, bottom lemma]{katz-moduli}.
	\end{proof}
	
	\subsection{Construction of the invariants}
	
	We now prove the existence of a family of Hasse-type modular invariants on the ordinary loci $Z_{00}(p^n)$'s.
	\begin{lem}
		For all $n\in \Z_{>0}$, there exists a weight-one modular form $A_n$ valued in $W/p^n$ on $Z_{00}(p^n)$, such that $A_n(\qq) = A_n(\Tate(\qq),\varphi_{n,\can},\omega_\can) = 1$.
	\end{lem}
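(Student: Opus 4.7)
The plan is to construct $A_n$ via the cotangent space at the identity of the finite flat group scheme $\hat{\EE}[p^n]$. For a triple $(\EE,\varphi_n,\omega)$ over a $p$-adically complete separated $W$-algebra $B$, I expect this cotangent space $\mathfrak{m}_e/\mathfrak{m}_e^2$ to be free of rank one over $B/p^n$, with two natural generators: the restriction $\omega|_e$ of the chosen differential along $\hat{\EE}[p^n] \hookrightarrow \EE$, and the pullback $\varphi_n^*(dt/t)|_e$ of the canonical invariant differential on $\mu_{p^n}$. I then define $A_n(\EE,\varphi_n,\omega) \in (B/p^n)^\times$ to be the unique element satisfying
\[
\varphi_n^*(dt/t)|_e = A_n(\EE,\varphi_n,\omega) \cdot \omega|_e \quad \text{in } \mathfrak{m}_e/\mathfrak{m}_e^2.
\]

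To justify this construction, the first step is the cotangent space calculation: choosing a formal parameter $w$ with $\omega = (1+O(w))\,dw$, one identifies $\hat{\EE}[p^n] = \spec B[[w]]/([p^n](w))$, and the relation $[p^n](w) \equiv p^n w \pmod{w^2}$ forces $\mathfrak{m}_e/\mathfrak{m}_e^2 \simeq (B/p^n)\cdot\bar{w}$ with no further relations. An analogous computation identifies the cotangent of $\mu_{p^n}$ at its identity with $(B/p^n)\cdot(dt/t)|_{t=1}$. Since $\varphi_n$ is an isomorphism of group schemes, the induced map on cotangent spaces is an isomorphism of free rank-one $B/p^n$-modules, so the ratio $A_n$ does lie in $(B/p^n)^\times$.

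The remaining checks are formal. Base change functoriality follows directly from the construction. Rescaling $\omega \mapsto \lambda\omega$ for $\lambda \in B^\times$ leaves $\varphi_n^*(dt/t)|_e$ unchanged while scaling the right-hand side by $\lambda$, so $A_n(\EE,\varphi_n,\lambda\omega) = \lambda^{-1}A_n(\EE,\varphi_n,\omega)$, confirming weight one. For the $\qq$-expansion, $\varphi_{n,\can}$ is by definition the identity map on $\hat{\Tate(\qq)}[p^n] = \mu_{p^n}$, and $\omega_\can = du/u$ corresponds to $dt/t$ under the tautological identification $u=t$, giving $A_n(\qq) = 1 \in W[[\qq]]/p^n$.

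The main obstacle is purely foundational---the explicit description of the cotangent space of a finite flat group scheme of $p$-power order, together with the verification that $\omega|_e$ really is a generator (rather than merely a torsion element). Once this is in hand, the construction and verification of $A_n$ are essentially automatic, and the resulting form may be viewed as a level-$\Gamma_{00}(p^n)$ refinement of the classical weight-$(p-1)$ Hasse invariant: the extra data of $\varphi_n$ allows one to trade geometric rigidity for a reduction of the weight down to one.
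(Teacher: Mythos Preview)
Your proposal is correct and takes a genuinely different, somewhat more intrinsic route than the paper. The paper proceeds by lifting $\varphi_n$ to a full trivialization $\varphi:\hat{\EE}\xrightarrow{\sim}\hat{\G}_m$ (invoking Lubin's theorem on formal groups of height one), forming the ratio $\varphi^*(dt/t)/\omega\in B^\times$, and then observing that this is well defined modulo $p^n$ because the lift is determined only up to $1+p^n\Z_p$. You instead work directly at finite level: the cotangent space of $\hat{\EE}[p^n]$ at the identity is intrinsically $B/p^n$, and both $\omega$ and $\varphi_n^*(dt/t)$ give generators whose ratio is $A_n$. This avoids the lifting step entirely and makes no appeal to the existence of a global trivialization of $\hat{\EE}$. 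What your approach buys is a cleaner, level-by-level construction that does not rely on Lubin's result; what the paper's approach buys is a tighter link with the tower $M^{\triv}$ of generalized modular forms used elsewhere in the article, since $A_n$ is visibly the reduction mod $p^n$ of the ``full'' period $\Omega_{p,\EE}$. For the $\qq$-expansion, the paper verifies $\varphi_{\can}^*(dt/t)=\omega_{\can}$ by an explicit computation with the Weierstrass model of $\Tate(\qq)$, whereas you take the identification $\widehat{\Tate(\qq)}=\hat{\G}_m$ with $\omega_{\can}=du/u$ as given; this is fine, but you should be aware that the paper's computation is precisely the justification for that identification at the level of differentials.
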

	\begin{proof}
		Let $B$ be a $p$-adically complete separated $W$-algebra, and $(\EE,\varphi_n,\omega)$ be a triple over $B$, with $\EE$ ordinary, $\varphi_n:\hat{\EE}[p^n]\xrightarrow{\sim}\mu_{p^n}$ and $\omega\in H^0(\EE,\Omega^1_{\EE/B})$ a generator. Recall that an ordinary elliptic curve $\EE$ over $B$ has a trivialization $\varphi: \hat{\EE}\xrightarrow{\sim} \hat{\G}_m$ \cite[Corollary 4.3.3]{lubin}, and that any other $\varphi'$ is a $\Z_p^\times$-multiple of $\varphi$. As such, we can choose a $\varphi$ such that $\varphi|_{\hat{\EE}[p^n]} = \varphi_n$; this determines $\varphi$ up to $1+p^n\Z_p$. Now, form
		\begin{align}
			A_n(\EE,\varphi_n,\omega) = \varphi^*(dt/t)/\omega\in B/p^nB,
		\end{align}
		which, as mentioned earlier, is well-defined. We contend that $A_n$ satisfies the properties (i-iii) above. In fact, the verifications for (i) and (ii) are formal, so we omit them. For (iii), we prove the stronger statement that $A_n(\qq) = 1$. 
		
		Let $\varphi_{\can} : \widehat{\Tate(\qq)}\to \hat{\G}_m$ be the inverse limit of $\varphi_{r,\can}$ for $r\in \Z_{>0}$, which we will use to compute $A_n(\qq)$. The map $\varphi_{r,\can}^{-1}$ is a factor of the natural inclusion $i_r: \mu_{p^r}\to \Tate(\qq)$. On the affine part of $\Tate(\qq)$, $i_r$ corresponds to the $W_\qq = \widehat{W\llrrparen{\qq}}$-algebra morphism
		\begin{align}
			\frac{W_\qq[x,y]}{(y^2 + xy - x^3 - a_4(\qq)x -a_6(\qq))} \longrightarrow W_\qq[t]/(t^{p^r}-1), \quad x\mapsto \fx(t), y\mapsto \fy(t).
		\end{align}
		Here (see \cite[Theorem V.1.1]{silverman-advanced}),
		\begin{align}
			a_4(\qq) &= -5\sum_{m,l>0}\qq^{ml}l^3,\quad
			a_6(\qq) = -\sum_{m,l>0}\qq^{ml}\cdot \frac{5l^3 + 7l^5}{12};\\
			\fx(t) &= \sum_{l\in \Z}\frac{\qq^lt}{(1-\qq^lt)^2} - 2\sum_{m,l>0}\qq^{ml},\quad
			\fy(t) = \sum_{l\in \Z}\frac{(\qq^l t)^2}{(1-\qq^lt)^3} + \sum_{m,l>0} \qq^{ml}.
		\end{align}
		It can then be verified by hand that $i_r^*\omega_{\can} = d\fx(t)/(\fx(t) + 2\fy(t)) = dt/t$. Let $r$ vary and we get $\varphi_\can^*(dt/t) = \omega_\can$.
	\end{proof}
	
	\begin{rem}
		Let $A$ be the classical full-level Hasse invariant defined over $\Z/p$ \cite[\S2.0]{katz-modular-scheme}. We contend that $A_1^{p-1} = A|_{Z_{00}(p)}$. More generally, denote by $E_{p-1}$ the normalized Eisenstein series of weight $p-1$ (\textit{ibid.},  \S2.1), then we have $A_{n+1}^{(p-1)p^n} \equiv (E_{p-1}|_{Z_{00}(p^n)})^{p^n}\bmod p^{n+1}$ for all $n\in \Z_{\ge 0}$. To see this, it suffices to compare their $\qq$-expansions as both forms are of weight $(p-1)p^n$. The classical Clausen--von-Staudt congruence implies that $E_{p-1}(\qq)\equiv 1\bmod p$, and thus $E_{p-1}(\qq)^{p^n}\equiv 1\equiv A_{n+1}^{(p-1)p^n}(\qq)\bmod p^{n+1}$. Proposition \ref{prop:q-exp-principle} then extends the congruence to all of $Z_{00}(p^n)$.
	\end{rem}
	\bibliographystyle{alpha}
	\bibliography{references.bib}
\end{document}